\def\arXiv#1{arXiv:\href{https://arXiv.org/abs/#1}{#1}}
\theoremstyle{plain}
\newtheorem{theorem}{Theorem}
\newtheorem{corollary}[theorem]{Corollary}
\newtheorem{lemma}[theorem]{Lemma}
\newtheorem{proposition}[theorem]{Proposition}
\newtheorem{conjecture}[theorem]{Conjecture}
\newtheorem{problem}[theorem]{Open Problem}
\numberwithin{theorem}{section}
\theoremstyle{definition}
\newtheorem{definition}[theorem]{Definition}
\numberwithin{equation}{section}
\numberwithin{figure}{section}
\newcommand\e{\varepsilon}
\newcommand{\R}{{\mathbb R}}
\newcommand{\Q}{{\mathbb Q}}
\newcommand{\Z}{{\mathbb Z}}
\newcommand{\C}{{\mathbb C}}
\newcommand{\mC}{{\mathcal C}}
\newcommand{\N}{{\mathbb N}}
\newcommand{\Hyp}{{\mathbb H}}
\newcommand{\T}{{\mathcal{T}}}
\newcommand{\K}{{\mathcal{K}}}
\newcommand{\slz}{\mathrm{SL}_2(\mathbb{Z})}
\newcommand{\SL}{\mathop{\textup{SL}}\nolimits}
\newcommand{\Hom}{\mathop{\textup{Hom}}\nolimits}
\newcommand{\GL}{\mathop{\textup{GL}}\nolimits}
\newcommand{\SO}{\mathop{\textup{SO}}\nolimits}
\newcommand{\PSL}{\mathop{\textup{PSL}}\nolimits}
\newcommand{\vol}{\mathop{\textup{vol}}}
\renewcommand{\Im}{\mathop{\textup{Im}}}
\renewcommand{\Re}{\mathop{\textup{Re}}}
\newcommand{\Schw}{\mathcal{S}}
\newcommand{\Neighbors}{\Omega}
\newcommand{\neighbor}{\omega}
\newcommand{\twelvecubed}{N}
\newcommand{\Proj}{\mathbb{P}}
\title[Universal optimality of the $E_8$ and Leech lattices]
{Universal optimality of the $E_8$ and Leech lattices and interpolation formulas}
\author[Cohn]{Henry Cohn}
\address{Microsoft Research New England\\
Cambridge, MA, USA} \email{cohn@microsoft.com}
\author[Kumar]{Abhinav Kumar}
\address{Stony Brook University\\
Stony Brook, NY, USA}
\email{thenav@gmail.com}
\author[Miller]{Stephen D.\ Miller}
\address{Rutgers University\\
Piscataway, NJ, USA} \email{miller@math.rutgers.edu}
\author[Radchenko]{\\Danylo Radchenko} 
\address{Max Planck Institute for Mathematics\\
Bonn, Germany}
\email{danradchenko@gmail.com}
\author[Viazovska]{Maryna Viazovska}
\address{\'Ecole Polytechnique F\'ed\'erale de Lausanne\\
Lausanne, Switzerland}
\email{viazovska@gmail.com}
\thanks{Miller's research was supported by National Science Foundation grants
CNS-1526333 and CNS-1815562, and Viazovska's research was supported by
Swiss National Science Foundation project 184927.  The authors thank the Rutgers
Office of Advanced Research Computing for their computational support and resources.}
\begin{document}

\begin{abstract}
We prove that the $E_8$ root lattice and the Leech lattice are universally
optimal among point configurations in Euclidean spaces of dimensions $8$
and $24$, respectively. In other words, they minimize energy for every
potential function that is a completely monotonic function of squared
distance (for example, inverse power laws or Gaussians), which is a strong
form of robustness not previously known for any configuration in more than
one dimension. This theorem implies their recently shown optimality as
sphere packings, and broadly generalizes it to allow for long-range
interactions.

The proof uses sharp linear programming bounds for energy. To construct the
optimal auxiliary functions used to attain these bounds, we prove a new
interpolation theorem, which is of independent interest.  It reconstructs a
radial Schwartz function $f$ from the values and radial derivatives of $f$
and its Fourier transform $\widehat{f}$ at the radii $\sqrt{2n}$ for
integers $n\ge1$ in $\R^8$ and $n \ge 2$ in $\R^{24}$.  To prove this
theorem, we construct an interpolation basis using integral transforms of
quasimodular forms, generalizing Viazovska's work on sphere packing and
placing it in the context of a more conceptual theory.
\end{abstract}

\maketitle

\tableofcontents

\section{Introduction} \label{sec:intro}

What is the best way to arrange a discrete set of points in $\R^d$?  Of
course the answer depends on the objective: there are many different ways to
measure the quality of a configuration for interpolation, quadrature,
discretization, error correction, or other problems.  A configuration that is
optimal for one purpose will often be good for others, but usually not
optimal for them as well.  Those that optimize many different objectives
simultaneously play a special role in mathematics.  In this paper, we prove a
broad optimality theorem for the $E_8$ and Leech lattices, via a new
interpolation formula for radial Schwartz functions.  Our results help
characterize the exceptional nature of these lattices. (See \cite{E} and
\cite{SPLAG} for their definitions and basic properties.)

\subsection{Potential energy minimization}

One particularly fruitful family of objectives to optimize is energy under
different potential functions. Given a \emph{potential function} $p \colon
(0,\infty) \to \R$, we define the \emph{potential energy} of a finite subset
$\mC$ of $\R^d$ to be
\[
\sum_{\substack{x,y \in \mC\\x \ne y}} p\big(|x-y|\big),
\]
where $|\cdot|$ is the Euclidean norm (note that we include each pair of
points twice, which differs from the convention in physics). Our
primary interest is in infinite sets $\mC$, for which potential energy
requires renormalization because the double sum often diverges. Define a
\emph{point configuration}, or just \emph{configuration}, $\mC$ to be a
nonempty, discrete, closed subset of $\R^d$ (i.e., every ball in $\R^d$
contains only finitely many points of $\mC$).  We say $\mC$ has
\emph{density~$\rho$} if
\[
\lim_{r \to \infty} \frac{\big|{\kern 0.06em}\mC \cap B_r^d(0)\big|}{\vol\mathopen{}\big(B_r^d(0)\big)\mathclose{}} = \rho,
\]
where $B_r^d(0)$ denotes the closed ball of radius $r$ about $0$ in $\R^d$.
For such a set, we can renormalize the energy by considering the average
energy per particle, as follows.

\begin{definition}
Let $p \colon (0,\infty) \to \R$ be any function. The \emph{lower $p$-energy}
of a point configuration $\mC$ in $\R^d$ is
\[
E_p(\mC) := \liminf_{r \to \infty} \frac{1}{\big|{\kern 0.06em}\mC \cap B_r^d(0)\big|}
\sum_{\substack{x,y \in \mC \cap B_r^d(0)\\x \ne y}} p\big(|x-y|\big).
\]
If the limit of the above quantity exists, and not just its limit inferior,
then we call $E_p(\mC)$ the \emph{$p$-energy} of $\mC$ (and say that its
$p$-energy exists).  We allow the possibility that the energy may be $\pm
\infty$.
\end{definition}

The simplest case is when the configuration is a \emph{lattice} $\Lambda$,
i.e., the $\Z$-span of a basis of $\R^d$.  In that case, it has density
\[
\frac{1}{\vol\mathopen{}\big(\R^d / \Lambda \big)\mathclose{}}
\]
and $p$-energy
\[
\sum_{x \in \Lambda \setminus\{0\}} p\big(|x|\big),
\]
assuming this sum is absolutely convergent.  More generally, a \emph{periodic
configuration} is the union of finitely many orbits under the translation
action of some lattice, i.e., the union of pairwise disjoint translates
$\Lambda+v_j$ of a lattice $\Lambda$, with $1 \le j \le N$.  Such a
configuration has density
\[
\frac{N}{\vol\mathopen{}\big(\R^d / \Lambda \big)\mathclose{}}
\]
and $p$-energy
\begin{equation} \label{eq:periodicenergy}
\frac{1}{N}\sum_{j,k=1}^N \sum_{x \in \Lambda \setminus\{v_k-v_j\}} p\big(|x+v_j-v_k|\big),
\end{equation}
again assuming absolute convergence.  Many important configurations are
periodic, but others are not, and we do not assume periodicity in our main
theorems.

Typically we take the potential function $p$ to be decreasing, and we
envision the points of $\mC$ as particles subject to a repulsive force.  Our
framework is purely classical and does not incorporate quantum effects; thus,
we should not think of the particles as atoms.  However, classical models
have other applications \cite{BG}, such as describing mesoscale materials.
Our goal is then to arrange these particles so as to minimize their
$p$-energy, subject to maintaining a fixed density.\footnote{Fixing the
density prevents the particles from minimizing energy by receding to
infinity.  If fixing the density seems unphysical, we could instead impose a
chemical potential that penalizes decreasing the density of the
configuration, to account for exchange with the external environment.  That
turns out to be equivalent, in the sense that one can achieve any desired
density by choosing an appropriate chemical potential. Specifically, the
chemical potential is a Lagrange multiplier that converts the
density-constrained optimization problem to an unconstrained problem. This
approach is called the \emph{grand canonical ensemble}, and it is typically
set up for finite systems before taking a thermodynamic limit. See, for
example, Section~1.2.1(c) and Theorem~3.4.6 in \cite{Ru}.} More precisely, we
compare with the lower $p$-energies of other configurations:

\begin{definition}
Let $\mC$ be a point configuration in $\R^d$ with density $\rho$, where
$\rho>0$, and let $p \colon (0,\infty) \to \R$ be any function.  We say that
$\mC$ \emph{minimizes energy for $p$} if its $p$-energy $E_p(\mC)$ exists and
every configuration in $\R^d$ of density $\rho$ has lower $p$-energy at least
$E_p(\mC)$.  We also call $\mC$ a \emph{ground state} for $p$.
\end{definition}

In certain contrived cases it is easy to determine the minimal energy.  For
example, if $p$ vanishes at the square roots of positive integers and is
nonnegative elsewhere, then $\Z^d$ clearly minimizes $p$-energy. However,
rigorously determining ground states seems hopelessly difficult in general,
because of the complexity of analyzing long-range interactions. This issue
arises in physics and materials science as the \emph{crystallization problem}
\cite{BL,Simon}: how can we understand why particles so often arrange
themselves periodically at low temperatures?  Even the simplest mathematical
models of crystallization are enormously subtle, and surprisingly little has
been proved about them.

Two important classes of potential functions are inverse power laws $r
\mapsto 1/r^s$ with $s>0$ and Gaussians $r \mapsto e^{-\alpha r^2}$ with
$\alpha>0$.  Inverse power laws are special because they are homogeneous,
which implies that their ground states are scale-free: if $\mC$ is a ground
state in $\R^d$ with density $1$, then $\rho^{-1/d}\mC$ is a ground state
with density $\rho$. Gaussians lack this property, and the shape of their
ground states may depend on density (see, for example, \cite{CKS}).  In
applications, Gaussian potential functions are typically used to approximate
effective potential functions for more complex materials. For example, for a
dilute solution of polymer chains in an athermal solvent, Gaussians describe
the effective interaction between the centers of mass of the polymers (see
Sections~3.4 and~3.5 of \cite{Likos}). Point particles with Gaussian
interactions are known as the \emph{Gaussian core model} in physics \cite{S}.

Both inverse power laws and Gaussians arise naturally in number theory (see,
for example, \cite{M} and \cite{SaSt}). Given a lattice $\Lambda$ in $\R^d$,
its \emph{Epstein zeta function} is defined by
\[
\zeta_\Lambda(s) = \sum_{x \in \Lambda\setminus\{0\}} \frac{1}{|x|^{2s}}
\]
for $\Re(s) > d/2$ (and by analytic continuation for all $s$ except for a
pole at $s=d/2$), and its \emph{theta series} is defined by
\[
\Theta_\Lambda(z) = \sum_{x \in \Lambda} e^{\pi i z |x|^2}
\]
for $\Im(z) > 0$.  Then the energy of $\Lambda$ under $r \mapsto 1/r^s$ is
$\zeta_\Lambda(s/2)$ when $s>d$, while its energy under $r \mapsto e^{-\alpha
r^2}$ is $\Theta_\Lambda(i\alpha/\pi)-1$.  In other words, minimizing energy
among lattices amounts to seeking extreme values for number-theoretic special
functions. The restriction to lattices makes this problem much more tractable
than the crystallization problem, but it remains difficult.  The answer is
known in certain special cases, such as sufficiently large $s$ when $d \le 8$
(see \cite{Ry}), but the only cases in which the answer was previously known
for all $s$ and $\alpha$ were one or two dimensions \cite{M}.

Energy minimization also generalizes the sphere packing problem in $\R^d$, in
which we wish to maximize the minimal distance between neighboring particles
while fixing the particle density.  (Centering non-overlapping spheres at the
particles then yields a densest sphere packing.)  One simple way to see why
is to pick a constant $r_0$ and use the potential function
\[
p(r) = \begin{cases}
0 & \textup{if $r \ge r_0$, and}\\
$1$ & \textup{if $0 < r < r_0$.}
\end{cases}
\]
Then the $p$-energy of a periodic configuration vanishes if and only if the
configuration has minimal distance at least $r_0$.  More generally, we can
use a steep potential function such as $r \mapsto 1/r^s$ with $s$ large (or,
similarly, $r \mapsto e^{-\alpha r^2}$ with $\alpha$ large).  As $s \to
\infty$, the contribution to energy from short distances becomes increasingly
important, and in the limit minimizing energy requires maximizing the minimal
distance. In many cases, the ground state will be slightly distorted at any
finite $s$, compared with the limit as $s \to \infty$.  For example, that
seems to happen in five or seven dimensions \cite{CKS}. However, the $E_6$
root lattice in $\R^6$ appears to minimize energy for all sufficiently large
$s$, provably among lattices \cite{Ry} and perhaps among all configurations
\cite{CKS}.

\subsection{Universal optimality}

In contrast to the sphere packing problem, even one-dimensional energy
minimization is not easy to analyze.  Ventevogel and Nijboer \cite{VN} proved
that the integer lattice $\Z$ in $\R$ minimizes energy for every completely
monotonic function of squared distance, i.e., every function of the form $r
\mapsto g\big(r^2\big)$ with $g$ completely monotonic. Recall that a function
$g \colon (0,\infty) \to \R$ is \emph{completely monotonic} if it is
infinitely differentiable and satisfies the inequalities $(-1)^k g^{(k)} \ge
0$ for all $k \ge 0$. In other words, $g$ is nonnegative, weakly decreasing,
convex, and so on. For example, inverse power laws are completely monotonic,
as are decreasing exponential functions. By Bernstein's theorem
\cite[Theorem~9.16]{Simon2}, every completely monotonic function $g \colon
(0,\infty) \to \R$ can be written as a convergent integral
\[
g(r) = \int e^{-\alpha r} \, d\mu(\alpha)
\]
for some measure $\mu$ on $[0,\infty)$. Equivalently, the completely
monotonic functions of squared distance are the cone spanned by the Gaussians
and the constant function $1$. For example, inverse power laws can be
obtained via
\[
\frac{1}{r^s} = \int_0^\infty e^{-\alpha r^2} \frac{\alpha^{s/2-1}}{\Gamma(s/2)}
\, d\alpha
\]
with $s>0$. By contrast, Ventevogel and Nijboer showed that $\Z$ does not
minimize energy for the potential function $r \mapsto 1/(16 + r^4)$, which is
not a completely monotonic function of squared distance.

It follows from Bernstein's theorem that if a periodic configuration is a
ground state for every Gaussian, then the same is true for every completely
monotonic function of squared distance (by monotone convergence, because the
potential is an increasing limit of weighted sums of finitely many
Gaussians). Following Cohn and Kumar \cite{CK07}, who gave a different proof
of optimality in one dimension, we call such a configuration universally
optimal:

\begin{definition}
Let $\mC$ be a point configuration in $\R^d$ with density $\rho$, where $\rho
> 0$. We say $\mC$ is \emph{universally optimal} if it minimizes $p$-energy
whenever $p\colon (0,\infty) \to \R$ is a completely monotonic function of
squared distance.
\end{definition}

Note that the role of density in this definition is purely bookkeeping.  If
$\mC$ is a universal optimum in $\R^d$ with density $1$, then
$\rho^{-1/d}\mC$ is a universal optimum with density $\rho$ for any $\rho>0$,
because the set of completely monotonic functions is invariant under
rescaling the input variable.  We can also reformulate universal optimality
by fixing a Gaussian and varying the density: a periodic configuration $\mC$
in $\R^d$ with density $1$ is universally optimal if and only if for every
$\rho>0$, the configuration $\rho^{-1/d}\mC$ is a ground state for $r \mapsto e^{-\pi r^2}$.
This perspective on the Gaussian core model is common in the physics
literature, such as \cite{S}, because varying the density of particles
governed by a fixed interaction is a common occurrence in physics, while
changing how they interact is more exotic.

It might seem more natural to use completely monotonic functions of distance,
rather than squared distance, but squared distance turns out to be a better
choice (for example, in allowing Gaussians).  One can check that every
completely monotonic function of distance is also a completely monotonic
function of squared distance; equivalently, if $r \mapsto g\big(r^2\big)$ is
completely monotonic, then so is $g$ itself.\footnote{That is, if $p$ is
completely monotonic on $(0,\infty)$, then so is $r \mapsto
p\big(r^{1/2}\big)$.  More generally, if $p$ and $q'$ are both completely
monotonic functions, then so is the composition $p \circ q$.  The reason is
that if one computes the $k$-th derivative $(p \circ q)^{(k)}$, for example
using Fa\`a di Bruno's formula, then each term has sign $(-1)^k$, as
desired.} Thus, using squared distance strengthens the definition.

When a configuration is universally optimal, it has an extraordinary degree
of robustness: it remains optimal for a broad range of potential functions,
rather than depending on the specific potential.  Numerical studies of energy
minimization indicate that universal optima are rare \cite{CKS}, and this
special property highlights their importance across different fields.

Before the present paper, no examples of universal optima in $\R^d$ with
$d>1$ had been rigorously proved. In fact, for $d>1$ no proof was known of a
ground state for any inverse power law or similarly natural repulsive
potential function. The most noteworthy theorem we are aware of along these
lines is a proof by Theil \cite{T} of crystallization for certain
Lennard-Jones-type potentials in $\R^2$. However, the potentials analyzed by
Theil are attractive at long distances, and the proof makes essential use of
this attraction.

Despite the lack of proof, the $A_2$ root lattice (i.e., the hexagonal
lattice) is almost certainly universally optimal in $\R^2$.  It is known to
be universally optimal among lattices \cite{M}, and proving its universal
optimality in full generality is an important open problem.

The case of three dimensions is surprisingly tricky even to describe. For the
potential function $r \mapsto e^{-\pi r^2}$, the appropriately scaled
face-centered cubic lattice is widely conjectured to be optimal among
lattices of density $\rho$ as long as $\rho \le 1$, while the body-centered
cubic lattice is conjectured to be optimal when $\rho \ge 1$. At density $1$,
they have the same energy by Poisson summation, because they are dual to each
other. However, one can sometimes lower the energy by moving beyond lattices:
Stillinger \cite{S} applied Maxwell's double tangent construction to obtain a
small neighborhood around density~$1$, namely $(0.99899854\dots,
1.00100312\dots)$, in which phase coexistence between these lattices improves
upon both of them by a small amount. Specifically, at density~$1$ phase
coexistence lowers the energy by approximately $0.0004\%$, in a way that
seemingly cannot be achieved exactly by any periodic configuration. Thus, the
behavior of the Gaussian core model in three dimensions is more complex than
one might expect from the case of lattices.  Even guessing the ground states
on the basis of simulations is far from straightforward, and proofs seem to
be well beyond present-day mathematics.

In contrast, we completely resolve the cases of eight and twenty-four
dimensions, as conjectured in \cite{CK07}:

\begin{theorem} \label{theorem:univopt}
The $E_8$ root lattice and the Leech lattice are universally optimal in
$\R^8$ and $\R^{24}$, respectively.  Furthermore, they are unique among
periodic configurations, in the following sense.  Let $\mC$ be $E_8$ or the
Leech lattice, and let $\mC'$ be any periodic configuration in the same
dimension with the same density.  If there exists a completely monotonic
function of squared distance $p$ such that $E_p(\mC') = E_p(\mC) < \infty$,
then $\mC'$ is isometric to $\mC$.
\end{theorem}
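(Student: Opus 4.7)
The plan is to apply the linear programming method for energy of Cohn--Kumar. By Bernstein's theorem, recalled in the excerpt, every completely monotonic function of squared distance is a nonnegative superposition of Gaussians $p_\alpha(r) = e^{-\alpha r^2}$ together with the constant~$1$, so (after setting aside the constant, which gives infinite energy) it suffices to treat each Gaussian $p_\alpha$ separately. For a fixed $\alpha > 0$ and $d \in \{8,24\}$, I would construct a radial Schwartz \emph{magic function} $f = f_\alpha \colon \R^d \to \R$ satisfying (i)~$f(x) \le p_\alpha(|x|)$ for $x \ne 0$, (ii)~$\widehat f(y) \ge 0$ for all $y$, (iii)~$f$ tangent to $p_\alpha$ at each radius $\sqrt{2n}$ with $n \ge 1$ (if $d=8$) or $n \ge 2$ (if $d=24$), and (iv)~$\widehat f$ vanishing to second order at the same radii. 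For a periodic configuration $\mC'$ of the same density as $\mC$, Poisson summation on its lattice of periods combined with (i) and (ii) yields
\[
E_{p_\alpha}(\mC') \;\ge\; \rho\,\widehat f(0) \;-\; f(0),
\]
and (iii)--(iv) make this an equality for $\mC$ itself; the case of non-periodic $\mC'$ follows by approximating an arbitrary configuration with periodic ones.

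The magic function $f_\alpha$ is produced directly from the new interpolation theorem announced in the abstract: a radial Schwartz function on $\R^d$ is determined by the values and radial derivatives of $f$ and $\widehat f$ at the radii $\sqrt{2n}$ for $n \ge 1$ (resp.\ $n \ge 2$). Prescribing the data on the $f$-side to be $p_\alpha$ and $p_\alpha'$ and on the $\widehat f$-side to be zero formally defines $f_\alpha$ via the interpolation basis, which is in turn built from integral transforms of quasimodular forms that generalize Viazovska's magic-function constructions in the sphere-packing proof.

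The hard part, as is typical for the linear programming method, will be verifying the global sign conditions (i) and (ii) rather than just the nodal conditions (iii) and (iv). The interpolation formula only pins $f_\alpha$ down on a sparse union of shells; between those shells one must show that $f_\alpha$ does not overshoot $p_\alpha$ and that $\widehat f_\alpha$ stays nonnegative. I would represent $f_\alpha(x)$ and $\widehat f_\alpha(x)$ as contour integrals of the form $\int \varphi(z)\, e^{\pi i z |x|^2}\,dz$ against explicit (quasi)modular integrands $\varphi$ in the upper half-plane, and reduce the two inequalities to pointwise sign statements about $\varphi$. Establishing this positivity uniformly in both $\alpha > 0$ and $|x|$, rather than at a single target function as in the sphere-packing case, is the principal technical obstacle and should occupy the bulk of the work.

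For uniqueness, suppose $E_p(\mC') = E_p(\mC) < \infty$ for some completely monotonic $p = \int p_\alpha\, d\mu(\alpha)$ with $\mC'$ periodic. Integrating the already established inequality $E_{p_\alpha}(\mC') \ge E_{p_\alpha}(\mC)$ against $\mu$ shows that the assumed equality forces $E_{p_\alpha}(\mC') = E_{p_\alpha}(\mC)$ for $\mu$-almost every $\alpha$, hence for at least one $\alpha > 0$. At such an $\alpha$, every intermediate inequality in the LP argument must be sharp: every nonzero difference vector of $\mC'$ must lie on a shell of squared length $2n$ with $n \ge 1$ (resp.\ $n \ge 2$), and the Fourier transform of the periodized counting measure of $\mC'$ must be supported on dual shells at the same radii. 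These two rigidities pin the theta series of $\mC'$ down to that of $\mC$, and known characterizations of $E_8$ and the Leech lattice by their minimal-vector structure then yield $\mC' \cong \mC$ up to isometry.
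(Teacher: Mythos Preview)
Your proposal is essentially the paper's proof: reduce to Gaussians via Bernstein, build the auxiliary function $f_\alpha$ from the interpolation basis, and verify the two sign conditions by reducing them (via the functional equation and an integral representation) to a single kernel-positivity statement, which is then checked by explicit analysis. Two points where your sketch drifts from what actually works:

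\emph{Non-periodic configurations.} ``Approximate by periodic ones'' is not how the general LP bound is established, and it is not clear such an approximation preserves lower energy. The paper instead invokes the general form of Proposition~\ref{prop:LP} proved directly in \cite{CdCI}; once that bound holds for all configurations, optimality follows immediately from the sharp auxiliary function. Relatedly, for a general completely monotonic $p$ the integral $\int F(i\alpha/\pi,x)\,d\mu(\alpha)$ need not be Schwartz, so the paper truncates to $f_\varepsilon = \int_\varepsilon^{1/\varepsilon}$ and lets $\varepsilon\to 0$ via monotone convergence.

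\emph{Uniqueness.} Your argument that the equality conditions ``pin the theta series of $\mC'$ down to that of $\mC$'' is not enough: $\mC'$ is merely periodic, and even among lattices equal theta series does not force isometry. The paper's route is cleaner and avoids the Fourier-side constraint entirely: from $f(x)=p_\alpha(|x|)$ at every nonzero difference one gets that all squared distances in $\mC'$ are even integers $\ge 2n_0$; by \cite[Lemma~8.2]{CE} this forces $\mC'$ (after translating so $0\in\mC'$) to lie in an even integral lattice, density~$1$ then makes $\mC'$ equal to that lattice, and the classification of even unimodular lattices in dimensions $8$ and $24$ with the correct minimum finishes.
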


Of course, the uniqueness assertion cannot hold among all configurations,
because removing a single particle changes neither the density nor the
energy. Uniqueness also trivially fails when $p$ decays slowly enough that
$E_p(\mC) = \infty$, because universal optimality then implies that
$E_p(\mC') = \infty$ for every $\mC'$.  One could attempt to renormalize a
divergent potential (analogously to the analytic continuation of the Epstein
zeta function), but we will not address that possibility. See \cite{HSS} and
\cite{SaSt} for more information about renormalization.

Even for lattices, Theorem~\ref{theorem:univopt} has numerous consequences,
including extreme values of the theta and Epstein zeta functions. For another
application, consider a flat torus $T = \R^d/\Lambda$, where $\Lambda$ is a
lattice in $\R^d$.  The \emph{height} of $T$ is a regularization of $-\log
\det \Delta_T$, where $\Delta_T$ is the Laplacian on $T$ (see \cite{Chiu} or
\cite{SaSt}). If $T$ has volume $1$, then the height is a constant depending
only on $d$ plus
\[
\lim_{s \to d/2} \left(\pi^{-d/2} \Gamma(d/2) \zeta_\Lambda(s) - \frac{1}{s-d/2}\right),
\]
by Theorem~2.3 in \cite{Chiu}. Thus, if $\Lambda$ is universally optimal,
then $T$ minimizes height among all flat $d$-dimensional tori of fixed
volume. The minimal height was previously known only when $d=1$ (trivial),
$d=2$ (due to Osgood, Phillips, and Sarnak \cite{OPS}), and $d=3$ (due to
Sarnak and Str\"ombergsson \cite{SaSt}), to which we can now add $d=8$ and
$d=24$, as conjectured in \cite{SaSt}:

\begin{corollary} \label{cor:epstein}
Let $d$ be $8$ or $24$, and let $\Lambda_d$ be $E_8$ or the Leech lattice,
accordingly. Among all lattices $\Lambda$ in $\R^d$ with determinant $1$, the
minimum value of $\zeta_\Lambda(s)$ for each $s \in
(0,\infty)\setminus\{d/2\}$ is achieved when $\Lambda = \Lambda_d$, as is the
minimum value of $\Theta_\Lambda(it)$ for each $t>0$. Furthermore,
$\R^d/\Lambda_d$ has the smallest height among all $d$-dimensional flat tori
of volume $1$.  For each of these optimization problems, $\Lambda_d$ is the
unique optimal lattice with determinant~$1$, up to isometry.
\end{corollary}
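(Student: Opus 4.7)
The plan is to deduce all four assertions from Theorem~\ref{theorem:univopt}, using that $E_8$ and the Leech lattice have determinant~$1$ and are self-dual.

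For the theta series, the Gaussian $p(r)=e^{-\pi t r^2}$ is completely monotonic in $r^2$ for each fixed $t>0$, and $E_p(\Lambda)=\Theta_\Lambda(it)-1$ for any unit-determinant lattice $\Lambda\subset\R^d$. Theorem~\ref{theorem:univopt} therefore gives $\Theta_\Lambda(it)\ge\Theta_{\Lambda_d}(it)$, with equality forcing $\Lambda\cong\Lambda_d$. Likewise, for $s>d/2$ the inverse power law $r\mapsto r^{-2s}$ is completely monotonic in $r^2$ with energy $\zeta_\Lambda(s)$, yielding $\zeta_\Lambda(s)\ge\zeta_{\Lambda_d}(s)$ directly.

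For $s\in(0,d/2)$ the Epstein zeta function is only defined by analytic continuation and does not fall under Theorem~\ref{theorem:univopt} directly. Here I would use the classical Riemann-type integral representation: starting from $\pi^{-s}\Gamma(s)\zeta_\Lambda(s)=\int_0^\infty t^{s-1}(\Theta_\Lambda(it)-1)\,dt$ for $\Re(s)>d/2$, split at $t=1$ and apply the Poisson/theta identity $\Theta_\Lambda(i/u)=u^{d/2}\Theta_{\Lambda^*}(iu)$ to the piece on $(0,1]$. This produces, for any unit-determinant lattice $\Lambda$, the identity
\[
\pi^{-s}\Gamma(s)\zeta_\Lambda(s)+\frac{1}{s}-\frac{1}{s-d/2}=\int_1^\infty t^{s-1}\big(\Theta_\Lambda(it)-1\big)\,dt+\int_1^\infty t^{d/2-s-1}\big(\Theta_{\Lambda^*}(it)-1\big)\,dt,
\]
valid for all $s\in\C\setminus\{0,d/2\}$ by analytic continuation. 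For real $s$, both integrands are pointwise minimized at every $t\ge 1$ when $\Lambda=\Lambda_d$, by the previous paragraph applied to $\Lambda$ and to $\Lambda^*$ (which again has determinant~$1$), using crucially that $\Lambda_d^*=\Lambda_d$. Since the pole terms depend only on the common density, this yields $\zeta_\Lambda(s)\ge\zeta_{\Lambda_d}(s)$ on all of $(0,\infty)\setminus\{d/2\}$. Passing $s\to d/2$ in the same identity (and absorbing the lattice-independent discrepancy between $\pi^{-s}\Gamma(s)$ and $\pi^{-d/2}\Gamma(d/2)$, which is a $d$-dependent constant) extracts the regularized quantity that defines the height and minimizes it at $\Lambda_d$.

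For uniqueness, equality in any of the nonnegative-weighted integrals above would force $\Theta_\Lambda(it)=\Theta_{\Lambda_d}(it)$ on a set with a limit point, hence everywhere on $(0,\infty)$ by analyticity; then $E_p(\Lambda)=E_p(\Lambda_d)$ for the Gaussian $p(r)=e^{-\pi t r^2}$ at any $t>0$, and the uniqueness clause of Theorem~\ref{theorem:univopt} gives $\Lambda\cong\Lambda_d$. There is no substantive obstacle here beyond this bookkeeping; the one nontrivial ingredient is the self-duality of $E_8$ and Leech, which allows the same lattice $\Lambda_d$ to serve simultaneously as the pointwise minimizer of $\Theta_\Lambda$ and of $\Theta_{\Lambda^*}$ inside the Riemann identity.
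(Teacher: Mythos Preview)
Your proof is correct and follows essentially the same route as the paper: deduce the $\Theta_\Lambda(it)$ and $\zeta_\Lambda(s)$ cases for $s>d/2$ directly from Theorem~\ref{theorem:univopt}, then use the Riemann-type integral representation (the paper cites it as equation~(42) in \cite{SaSt}) together with the self-duality $\Lambda_d^*=\Lambda_d$ to handle $s\in(0,d/2)$ and the height, with uniqueness coming from pointwise equality of the theta integrands and the uniqueness clause of Theorem~\ref{theorem:univopt}.
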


Optimality and uniqueness for $\zeta_\Lambda(s)$ with $s>d/2$ and for
$\Theta_\Lambda(it)$ follow from Theorem~\ref{theorem:univopt}, as does
optimality for the height. To deal with $0<s<d/2$ and to prove uniqueness (as
well as optimality) for the height, we can use the formula
\[
\aligned
\pi^{-s} \Gamma(s) \zeta_\Lambda(s) &= \int_1^\infty \big(\Theta_\Lambda(it)-1\big) t^{s-1} \, dt - \frac{1}{s}\\
& \quad \phantom{}
+ \int_1^\infty \big(\Theta_{\Lambda^*}(it)-1\big) t^{d/2-s-1} \, dt  + \frac{1}{s-d/2}
\endaligned
\]
(see, for example, equation~(42) in \cite{SaSt}) to reduce to the case of
$\Theta(it)$, because $\Lambda_d^* = \Lambda_d$.  Note also that
Corollary~\ref{cor:epstein} and the functional equation for the Epstein zeta
function imply that $\zeta_\Lambda(s)$ is minimized at $\Lambda=\Lambda_d$
when $s<0$ and $\lfloor s \rfloor$ is even, and maximized when $s<0$ and
$\lfloor s \rfloor$ is odd.

Although this corollary was not previously known, it is in principle more
tractable than Theorem~\ref{theorem:univopt}, because the lattice hypothesis
reduces these assertions to optimization problems in a fixed, albeit large,
number of variables.

\subsection{Linear programming bounds}

Our impetus for proving Theorem~\ref{theorem:univopt} was Viazovska's
solution of the sphere packing problem in eight dimensions \cite{V}, as well
as our extension to twenty-four dimensions \cite{CKMRV}.  These papers proved
conjectures of Cohn and Elkies \cite{CE} about the existence of certain
special functions, which imply sphere packing optimality via linear
programming bounds.  The underlying analytic techniques are by no means
limited to sphere packing, and they intrinsically take into account
long-range interactions. In particular, by combining the approach of Cohn and
Elkies with techniques originated by Yudin \cite{Y}, Cohn and Kumar
\cite{CK07} extended this framework of linear programming bounds to potential
energy minimization in Euclidean space. To prove
Theorem~\ref{theorem:univopt}, we prove Conjecture~9.4 in \cite{CK07} for
eight and twenty-four dimensions.  (The case of two dimensions remains open.)
As a corollary of our construction, we also obtain the values at the origin
of the optimal auxiliary functions in these bounds, which agree with
Conjecture~6.1 in \cite{CM}.

Recall that linear programming bounds work as follows (see \cite{C10,C17} for
further background). A \emph{Schwartz function} $f \colon \R^d \to \C$ is an
infinitely differentiable function such that for all $c>0$, the function
$f(x)$ and all its partial derivatives of all orders decay as
$O\big(|x|^{-c}\big)$ as $|x| \to \infty$.  We normalize the Fourier
transform by
\[
\widehat{f}(y) = \int_{\R^d} f(x) e^{-2\pi i \langle x,y \rangle} \, dx,
\]
where $\langle \cdot,\cdot\rangle$ denotes the standard inner product on
$\R^d$.  Then linear programming bounds for energy amount to the following
proposition:

\begin{proposition} \label{prop:LP}
Let $p \colon (0,\infty) \to [0,\infty)$ be any function, and suppose $f
\colon \R^d \to \R$ is a Schwartz function.  If $f(x) \le p\big(|x|\big)$ for
all $x \in \R^d \setminus \{0\}$ and $\widehat{f}(y) \ge 0$ for all $y \in
\R^d$, then every point configuration in $\R^d$ with density $\rho$ has lower
$p$-energy at least $\rho \widehat{f}(0) - f(0)$.
\end{proposition}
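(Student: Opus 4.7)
Set $\mC_r := \mC \cap B_r^d(0)$ and $N_r := |\mC_r|$. The hypothesis $f(x) \le p(|x|)$ for $x \ne 0$ immediately gives
\[
\sum_{\substack{x,y \in \mC_r\\ x \ne y}} p(|x-y|) \;\ge\; \sum_{x,y \in \mC_r} f(x-y) \;-\; N_r f(0),
\]
so, after dividing by $N_r$ and taking $\liminf_{r \to \infty}$, the proposition reduces to showing
\[
\liminf_{r\to\infty}\, \frac{1}{N_r}\sum_{x,y \in \mC_r} f(x-y) \;\ge\; \rho\, \widehat{f}(0). \qquad(\star)
\]
The entire point is to extract the density factor $\rho$ from the hypothesis $\widehat{f} \ge 0$.

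The clean case is a periodic $\mC = \Lambda + \{v_1,\dots,v_N\}$ with $\vol(\R^d/\Lambda) = N/\rho$, for which the left side of $(\star)$ converges to the per-particle average $\frac{1}{N}\sum_{j,k}\sum_{x \in \Lambda} f(x + v_j - v_k)$. Poisson summation rewrites this as
\[
\frac{\rho}{N^2}\sum_{\xi \in \Lambda^*} \widehat{f}(\xi)\, \Bigl|\sum_{j=1}^{N} e^{2\pi i \langle \xi, v_j \rangle}\Bigr|^2 \;\ge\; \rho\,\widehat{f}(0),
\]
where the inequality drops every $\xi \ne 0$ term using $\widehat{f} \ge 0$. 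This proves $(\star)$, and thus the proposition, in the periodic case.

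The aperiodic case is handled by approximation: for large $T$, one periodizes a suitable truncation of $\mC$ by a lattice of covolume of order $T^d$ to obtain a periodic configuration $\mC_T^{\textup{per}}$ whose density tends to $\rho$ as $T \to \infty$. Applying the periodic bound to $\mC_T^{\textup{per}}$ and transferring it back to $\mC$ itself in a nested limit ($r,T \to \infty$ with $r \ll T$) yields $(\star)$ in general. The Schwartz decay of $f$ is essential here: it makes the contributions of nonzero lattice translates in the Poisson sum for $\mC_T^{\textup{per}}$ decay faster than any polynomial in $T$, while the density hypothesis controls the discrepancy between $\mC \cap B_r^d(0)$ and the truncation used to form $\mC_T^{\textup{per}}$. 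The careful bookkeeping of these nested limits is the main technical obstacle, though no input beyond the Schwartz property of $f$ and the definition of density is needed.
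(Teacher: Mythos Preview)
Your treatment of the periodic case is essentially identical to the paper's: both bound the $p$-energy below by the $f$-sum, apply Poisson summation, and drop the $\xi\neq 0$ terms using $\widehat{f}\ge 0$. The only cosmetic difference is that the paper starts directly from the closed-form periodic energy formula \eqref{eq:periodicenergy}, whereas you pass through the finite-ball sums and then assert (correctly, using the Schwartz decay of $f$) that the per-particle $f$-sum converges to $\tfrac{1}{N}\sum_{j,k}\sum_{x\in\Lambda}f(x+v_j-v_k)$.

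For the aperiodic case, note that the paper does not prove it at all: it gives only the periodic argument and then cites Cohn--de~Courcy-Ireland \cite[Proposition~2.2]{CdCI} for the general statement. Your periodization sketch is a reasonable strategy, but as written it is a promissory note rather than a proof. The phrases ``suitable truncation,'' ``transferring it back,'' and ``careful bookkeeping of these nested limits'' each hide real work. In particular, since $f$ is allowed to be negative, the comparison between $\sum_{x,y\in\mC_r}f(x-y)$ and the corresponding sum for $\mC_T^{\textup{per}}$ is not monotone, so one cannot simply say $\mC_r\subseteq\mC_T^{\textup{per}}$ and be done; one must actually show that the boundary and wraparound contributions are $o(N_r)$, which requires quantitative control on how many points of $\mC$ lie in annuli or boundary layers---information that the bare density hypothesis does not obviously provide uniformly. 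If you intend this as a self-contained proof, those estimates must be supplied; if not, it would be more honest to do as the paper does and simply cite \cite{CdCI} for the non-periodic case.
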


In other words, we can certify a lower bound for $p$-energy by exhibiting an
auxiliary function $f$ satisfying specific inequalities.  There is no reason
to believe a sharp lower bound can necessarily be certified in this way.
Indeed, in most cases the certifiable bounds seem to be strictly less than
the true ground state energy, and the gap between them can be large when the
potential function is steep.  For example, for configurations of density~$1$
in $\R^3$ under the Gaussian potential function $r \mapsto e^{-\alpha r^2}$,
the best linear programming bound known (obtained by applying the
constructions from \cite{ACHLT} to potential energy) is roughly 3.59\% less
than the lowest energy known when $\alpha=\pi$, and 15.4\% less when
$\alpha=2\pi$. Nevertheless, this technique suffices to prove
Theorem~\ref{theorem:univopt}.

Cohn and Kumar \cite[Proposition~9.3]{CK07} proved Proposition~\ref{prop:LP}
for the special case of periodic configurations $\mC$. Since the proof is
short and motivates much of what we do in this paper, we include it here. The
proof uses the Poisson summation formula
\[
\sum_{x \in \Lambda} f(x+v) = \frac{1}{\vol\mathopen{}\big(\R^d / \Lambda \big)\mathclose{}}
\sum_{y \in \Lambda^*} \widehat{f}(y) e^{2\pi i \langle v,y \rangle},
\]
which holds when $f \colon \R^d \to \C$ is a Schwartz function, $v \in \R^d$,
$\Lambda$ is a lattice in $\R^d$, and
\[
\Lambda^* = \{y \in \R^d : \textup{$\langle x,y \rangle \in \Z$ for all $x \in \Lambda$}\}
\]
is its \emph{dual lattice}.

\begin{proof}[Proof of Proposition~\ref{prop:LP} for periodic configurations]
Because $\mC$ is periodic, we can write it as the disjoint union of
$\Lambda+v_j$ for $1 \le j \le N$, where $\Lambda$ is a lattice and
$v_1,\dots,v_N \in \R^d$. Then the inequality between $f$ and $p$ and the
formula \eqref{eq:periodicenergy} for energy yield the lower bound
\begin{align*}
E_p(\mC) &=
\frac{1}{N}\sum_{j,k=1}^N \sum_{x \in \Lambda \setminus\{v_k-v_j\}} p\big(|x+v_j-v_k|\big)\\
&\ge \frac{1}{N}\sum_{j,k=1}^N \sum_{x \in \Lambda \setminus\{v_k-v_j\}} f(x+v_j-v_k)\\
&= \frac{1}{N}\sum_{j,k=1}^N \sum_{x \in \Lambda} f(x+v_j-v_k) - f(0).
\end{align*}
(We can apply \eqref{eq:periodicenergy} because $p \ge 0$: if the sum
diverges, then $E_p(\mC) = \infty$ anyway.) Applying Poisson summation to
this lower bound and using the nonnegativity of $\widehat{f}$ and the
equation $N = \rho \vol\mathopen{}\big(\R^d / \Lambda \big)\mathclose{}$ then
shows that
\begin{align*}
E_p(\mC) &\ge  \frac{N}{\vol\mathopen{}\big(\R^d / \Lambda \big)\mathclose{}}
\sum_{y \in \Lambda^*} \widehat{f}(y) \left|\frac{1}{N} \sum_{j=1}^N
e^{2\pi i \langle v_j, y \rangle}\right|^2 - f(0)\\
&\ge \rho \widehat{f}(0) - f(0),
\end{align*}
as desired.
\end{proof}

This proof works only for periodic configurations, but
Proposition~\ref{prop:LP} makes no such assumption. The general case was
proved by Cohn and de~Courcy-Ireland in \cite[Proposition~2.2]{CdCI}.

Proposition~\ref{prop:LP} shows how to obtain a lower bound for $p$-energy
from an auxiliary function $f$ satisfying certain inequalities, but it says
nothing about how to construct $f$.  Optimizing the choice of $f$ to maximize
the resulting bound is an unsolved problem in general.  Without loss of
generality, we can assume that $f$ is a radial function (i.e., $f(x)$ depends
only on $|x|$), because all the constraints are invariant under rotation and
we can therefore radially symmetrize $f$ by averaging all its rotations.  We
are faced with an optimization problem over functions of just one radial
variable, but this problem too seems to be intractable in general.

Fortunately, one can characterize when the bound is sharp for a periodic
configuration.  For simplicity, consider a lattice $\Lambda$. Examining the
loss in the inequalities in the proof given above shows that $f$ proves a
sharp bound for $E_p(\Lambda)$ if and only if both
\begin{equation}
\label{eq:necessaryconds1}
\begin{aligned}
f(x) &= p(|x|)& \qquad&\text{ for all $x \in \Lambda\setminus\{0\}$, and}\\
\widehat{f}(y) &= 0& &\text{ for all $y \in \Lambda^*\setminus\{0\}$.}
\end{aligned}
\end{equation}
Furthermore, equality must hold to second order, because $f(x) \le p(|x|)$
and $\widehat{f}(y) \ge 0$ for all $x$ and $y$.  Equivalently, if $f$ is
radial, then the radial derivatives of $f$ and $\widehat{f}$ satisfy
\begin{equation}
\label{eq:necessaryconds2}
\begin{aligned}
f'(x) &= p'(|x|)& \qquad&\text{ for all $x \in \Lambda\setminus\{0\}$, and}\\
\widehat{f}\,'(y) &= 0& &\text{ for all $y \in \Lambda^*\setminus\{0\}$.}
\end{aligned}
\end{equation}
We will see that these conditions suffice to determine $f$ in $8$ or $24$
dimensions.

\subsection{Interpolation}

For the analogous problem of energy minimization in compact spaces studied in
\cite{CK07}, conditions \eqref{eq:necessaryconds1} and
\eqref{eq:necessaryconds2} make it simple to construct the optimal auxiliary
functions for most of the universal optima that are known. The point
configurations are finite sets, and thus we have only finitely many equality
constraints for $f$ to achieve a sharp bound.  To construct $f$, one can
simply take the lowest-degree polynomial that satisfies these constraints. It
is far from obvious that this construction works, i.e., that $f$ satisfies
the needed inequalities elsewhere, but at least describing this choice of $f$
is straightforward. The description amounts to polynomial interpolation (more
precisely, Hermite interpolation, since one must interpolate both values and
derivatives).

In Euclidean space, describing the optimal auxiliary functions is far more
subtle.  It again amounts to an interpolation problem, this time for radial
Schwartz functions.  The interpolation points are known explicitly for $\R^8$
and $\R^{24}$: the nonzero vectors in $E_8$ have lengths $\sqrt{2n}$ for
integers $n \ge 1$, and those in the Leech lattice have lengths $\sqrt{2n}$
for $n \ge 2$. What is required is to control the values and radial
derivatives of $f$ and $\widehat{f}$ at these infinitely many points.
However, simultaneously controlling $f$ and $\widehat{f}$ is not easy, and we
quickly run up against uncertainty principles \cite{CG}.  The feasibility of
interpolation depends on the exact points at which we are interpolating, and
we do not know how to resolve these questions in general.

The fundamental mystery is how polynomial interpolation generalizes to
infinite-dimensional function spaces.  One important case that has been
thoroughly analyzed is Shannon sampling, which amounts to interpolating the
values of a band-limited function (i.e., an entire function of exponential
type) at linearly spaced points; see \cite{Higgins} for an account of this
theory. Shannon sampling suffices to prove that $\Z$ is universally optimal
\cite[p.~142]{CK07}, but it cannot handle any higher-dimensional cases.

To construct the optimal auxiliary functions in $\R^8$ and $\R^{24}$, we
prove a new interpolation theorem for radial Schwartz functions. Let
$\Schw_{\textup{rad}}(\R^d)$ denote the set of radial Schwartz functions from
$\R^d$ to $\C$.  For $f \in \Schw_{\textup{rad}}(\R^d)$, we abuse notation by
applying $f$ directly to radial distances (i.e., if $r \in [0,\infty)$, then
$f(r)$ denotes the common value of $f(x)$ when $|x|=r$), and we let $f'$
denote the radial derivative. As above, $\widehat{f}$ denotes the
$d$-dimensional Fourier transform of $f$, which is again a radial function,
and $\widehat{f}\,'$ denotes the radial derivative of $\widehat{f}$.

\begin{theorem} \label{theorem:interpolation}
Let $(d,n_0)$ be $(8,1)$ or $(24,2)$. Then every $f \in
\Schw_{\textup{rad}}(\R^d)$ is uniquely determined by the values
$f\big(\sqrt{2n}\big)$, $f'\big(\sqrt{2n}\big)$,
$\widehat{f}\big(\sqrt{2n}\big)$, and $\widehat{f}\,'\big(\sqrt{2n}\big)$ for
integers $n \ge n_0$.  Specifically, there exists an interpolation basis
$a_n, b_n, \widetilde{a}_n, \widetilde{b}_n \in \Schw_{\textup{rad}}(\R^d)$
for $n \ge n_0$ such that for every $f \in \Schw_{\textup{rad}}(\R^d)$ and $x
\in \R^d$,
\begin{equation}\label{eqn:IF}
\begin{split}
f(x) &= \sum_{n=n_0}^\infty f\big(\sqrt{2n}\big)\,a_n(x)+\sum_{n=n_0}^\infty
f'\big(\sqrt{2n}\big)\,b_n(x)\\
&\quad\phantom{}+\sum_{n=n_0}^\infty \widehat{f}\big(\sqrt{2n}\big)\,\widetilde{a}_n(x)
+\sum_{n=n_0}^\infty \widehat{f}\,'\big(\sqrt{2n}\big)\,\widetilde{b}_n(x),
\end{split}	
\end{equation}
where these series converge absolutely.
\end{theorem}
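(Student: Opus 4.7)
The plan follows the strategy pioneered by Viazovska \cite{V} and extended in \cite{CKMRV}, but refined to produce a full interpolation basis rather than just a single pair of magic functions. The construction proceeds in three main stages: building the candidate basis, verifying the pointwise interpolation identities, and proving convergence together with the uniqueness it entails.

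First I would construct the basis functions $a_n, b_n, \widetilde{a}_n, \widetilde{b}_n$ as contour integrals (Laplace--type transforms) of suitable quasimodular forms. Writing $r=|x|$, the idea is to produce Schwartz functions of the shape
\[
F_\pm(r) = \int_\gamma \varphi(z)\, e^{\pi i z r^2}\, dz,
\]
where $\gamma$ is a contour on or near the imaginary axis of the upper half-plane and $\varphi$ is a weakly holomorphic (or quasi-)modular form of weight $d/2$ on a suitable congruence subgroup, with prescribed Fourier expansions at the cusps $i\infty$ and $0$. By decomposing into the $\pm 1$ eigenspaces of the Fourier transform (via the identity $\widehat{F_\pm}=\pm F_\pm$ imposed by the modular transformation $z\mapsto -1/z$) and adjusting the principal parts of $\varphi$ at the two cusps, one can arrange that a single basis element carries exactly one unit of data $f(\sqrt{2n})$, $f'(\sqrt{2n})$, $\widehat{f}(\sqrt{2n})$, or $\widehat{f}\,'(\sqrt{2n})$ and kills all the others. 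Viazovska's explicit magic functions should appear as specific linear combinations of these $a_n, b_n, \widetilde a_n, \widetilde b_n$, and the combinatorics of the construction is organized by the spaces of modular forms of weight $2$ (for $d=8$) and weight $12$ (for $d=24$), with quasimodular $E_2$ corrections handling the derivative data.

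Second, I would verify that these basis elements really do satisfy the Kronecker-delta interpolation identities by explicitly expanding $F_\pm$ at the cusps: the $q$-expansion of $\varphi$ at $i\infty$ controls the values and derivatives of $F_\pm$ at the $\sqrt{2n}$, while its expansion at $0$ controls the behavior of $\widehat{F_\pm}$ at the same radii, each via a term-by-term Laplace transform. Schwartz decay of the basis functions follows from the exponential decay of $\varphi$ toward the relevant cusp combined with standard estimates on the contour integral. A quantitative estimate on $a_n,b_n,\widetilde a_n,\widetilde b_n$ in $n$, polynomial in $n$ times subexponential factors coming from the Fourier coefficients of weakly holomorphic forms (at worst $e^{O(\sqrt n)}$), will be needed for the convergence step.

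The main obstacle is the final step: showing that \eqref{eqn:IF} actually holds for every $f \in \Schw_{\textup{rad}}(\R^d)$. Convergence of the four series is the easier half, since $f(\sqrt{2n})$, $\widehat{f}(\sqrt{2n})$, and their radial derivatives decay faster than any polynomial, which dominates the growth of the basis functions. Given convergence, set
\[
R(x) := f(x) - \sum_{n\ge n_0}\!\bigl(f(\sqrt{2n})a_n(x)+f'(\sqrt{2n})b_n(x)+\widehat{f}(\sqrt{2n})\widetilde{a}_n(x)+\widehat{f}\,'(\sqrt{2n})\widetilde{b}_n(x)\bigr).
\]
Then $R$ is a radial Schwartz function with $R(\sqrt{2n})=R'(\sqrt{2n})=\widehat{R}(\sqrt{2n})=\widehat{R}\,'(\sqrt{2n})=0$ for all $n\ge n_0$, and I must deduce $R\equiv 0$. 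To do so I would pass back to the modular side by forming the two generating series
\[
\Phi_+(z) := \sum_{n\ge n_0} R(\sqrt{2n})\, e^{\pi i z\cdot 2n}, \qquad \Phi_-(z) := \sum_{n\ge n_0} \widehat{R}(\sqrt{2n})\, e^{\pi i z\cdot 2n},
\]
together with their derivative analogues, and show that the vanishing data for $R$ combined with Poisson-summation identities relating the radial profiles of $R$ and $\widehat{R}$ force $\Phi_\pm$ to extend to modular objects of the appropriate weight with no poles and trivial principal parts --- hence to zero. This is exactly the point where the dimensions $8$ and $24$ matter, since only there do the relevant spaces of modular forms have the precise dimension needed to close the counting argument; the necessary second-order vanishing of $R$ at each $\sqrt{2n}$ is what matches the derivative data in the basis. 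Once $\Phi_\pm$ vanish identically, the radial Laplace transform of $R$ vanishes and so does $R$ itself, establishing both uniqueness and the interpolation formula.
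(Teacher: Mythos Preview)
Your remainder argument in the third step is circular. Once you set $R = f - (\text{interpolation sum})$, you already have $R(\sqrt{2n}) = \widehat{R}(\sqrt{2n}) = 0$ by construction, so the generating series $\Phi_\pm(z) = \sum_{n} R(\sqrt{2n})\, e^{2\pi i n z}$ are identically zero from the start; their vanishing carries no information about $R$ at other radii, and no ``passing to the modular side'' can recover it. Nor is $\Phi_\pm$ the radial Laplace transform of $R$, so the last sentence does not follow. The paper never attempts to show directly that a Schwartz function with the vanishing data must be zero. Instead it proves the interpolation formula for the dense family of complex Gaussians $x\mapsto e^{\pi i\tau|x|^2}$, where it becomes the single functional equation $F(\tau,x)+(i/\tau)^{d/2}\widetilde F(-1/\tau,x)=e^{\pi i\tau|x|^2}$ for the generating functions $F,\widetilde F$, and then extends to all of $\Schw_{\textup{rad}}(\R^d)$ by continuity (Lemma~\ref{lemma:dense} and Theorem~\ref{thm:FEimpliesIF}).

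Two further points would also block the program as written. The continuity step just mentioned requires that the Schwartz seminorms of $a_n,b_n,\widetilde a_n,\widetilde b_n$ grow at most polynomially in $n$; your estimate ``at worst $e^{O(\sqrt n)}$'' is too weak, since Schwartz decay of $f$ gives only $|f(\sqrt{2n})|=O(n^{-k})$ and $n^{-k}e^{c\sqrt n}\to\infty$. The paper gets polynomial growth by building a single two-variable kernel $F(\tau,x)$ with moderate growth in $\tau$ and extracting all basis functions at once as Fourier coefficients along a bounded contour in $\tau$, rather than constructing each $a_n$ with its own weakly holomorphic form. Finally, quasimodular forms together with $E_2$ are not sufficient to write down the kernel: the $(-1)$-eigenspace under $|_{d/2}S$ forces the appearance of $\mathcal L=\log\lambda$ and $\mathcal L_S=\log(1-\lambda)$ (Proposition~\ref{prop: HFE minus}), which lie outside the ring of quasimodular forms.
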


One could likely weaken the decay and smoothness conditions on $f$, along the
lines of Proposition~4 in \cite{RV}, but determining the best possible
conditions seems difficult.

Theorem~\ref{theorem:interpolation} tells us that in $\R^8$ or $\R^{24}$, the
optimal auxiliary function $f$ for a potential function $p$ is uniquely
determined by the necessary conditions \eqref{eq:necessaryconds1} and
\eqref{eq:necessaryconds2}, assuming it is a Schwartz function.
Specifically, $f$ satisfies the conditions
\begin{align*}
f\big(\sqrt{2n}\big) &= p\big(\sqrt{2n}\big),&
f'\big(\sqrt{2n}\big) &= p'\big(\sqrt{2n}\big),\\
\widehat{f}\big(\sqrt{2n}\big) &= 0,&
\widehat{f}\,'\big(\sqrt{2n}\big) &= 0
\end{align*}
for $n \ge n_0$, and \eqref{eqn:IF} then gives a formula for $f$ in terms of
the interpolation basis, which we will explicitly construct as part of the
proof of Theorem~\ref{theorem:interpolation}. The same is also true for the
auxiliary functions for sphere packing constructed in \cite{V} and
\cite{CKMRV}:

\begin{corollary}
In $\R^8$ and $\R^{24}$, the optimal auxiliary functions for the linear
programming bounds for sphere packing or Gaussian potential energy
minimization are unique among radial Schwartz functions.
\end{corollary}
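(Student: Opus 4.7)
The plan is to apply Theorem~\ref{theorem:interpolation} to the complete list of interpolation data that any optimal auxiliary function is forced to satisfy by the sharpness of the relevant linear programming bound. In each case the bound involves one-sided inequalities (such as $f(x) \le p(|x|)$ and $\widehat{f}(y) \ge 0$) that are saturated at nonzero vectors of the relevant lattice $\Lambda_d$ (either $E_8$ or the Leech lattice, with $\Lambda_d^* = \Lambda_d$); elementary calculus upgrades each interior contact point to a double contact, pinning down the radial derivative there as well.

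For the Gaussian potential $p(r) = e^{-\alpha r^2}$, the analysis leading to \eqref{eq:necessaryconds1} and \eqref{eq:necessaryconds2} gives, for every $n \ge n_0$, the four equalities
\begin{align*}
f\bigl(\sqrt{2n}\bigr) &= p\bigl(\sqrt{2n}\bigr), &
f'\bigl(\sqrt{2n}\bigr) &= p'\bigl(\sqrt{2n}\bigr), \\
\widehat{f}\bigl(\sqrt{2n}\bigr) &= 0, &
\widehat{f}\,'\bigl(\sqrt{2n}\bigr) &= 0,
\end{align*}
since the nonzero vectors of $\Lambda_d$ have squared norms forming exactly $\{2n : n \ge n_0\}$. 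This exhausts the interpolation data of Theorem~\ref{theorem:interpolation}, which therefore determines $f$ uniquely.

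For the sphere packing problem, a Cohn--Elkies auxiliary function $f$ proving a sharp bound for $\Lambda_d$ satisfies $f(x) \le 0$ on $|x| \ge \sqrt{2n_0}$, $\widehat{f}(y) \ge 0$ everywhere, and sharpness forces $f\bigl(\sqrt{2n}\bigr) = \widehat{f}\bigl(\sqrt{2n}\bigr) = 0$ for all $n \ge n_0$. Nonnegativity of $\widehat{f}$ upgrades each of its zeros to a double zero, giving $\widehat{f}\,'\bigl(\sqrt{2n}\bigr) = 0$ for $n \ge n_0$; likewise $f'\bigl(\sqrt{2n}\bigr) = 0$ for $n > n_0$, since those are interior zeros of $f$ on the set $\{|x| \ge \sqrt{2n_0}\}$. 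The lone piece of interpolation data not pinned down by the bound is $f'\bigl(\sqrt{2n_0}\bigr)$, and rescaling $f$ by a positive constant simply rescales this single number; Theorem~\ref{theorem:interpolation} therefore shows the optimal $f$ is determined up to a scalar multiple. The main obstacle is the double-vanishing argument, particularly at the boundary point $\sqrt{2n_0}$ in the sphere packing problem, where one must confirm that $\sqrt{2n_0}$ sits on the edge of the prescribed inequality region (giving only a simple zero for $f$ there) while $\sqrt{2n}$ for $n > n_0$ sits in its interior (giving a double zero). Once this careful accounting is done, the corollary reduces entirely to invoking Theorem~\ref{theorem:interpolation}.
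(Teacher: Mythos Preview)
Your argument is correct and matches the paper's own reasoning, which is sketched in the paragraphs surrounding the corollary rather than given as a formal proof: the necessary conditions \eqref{eq:necessaryconds1}--\eqref{eq:necessaryconds2} pin down all interpolation data for the energy case, while for sphere packing the paper notes that the optimal function satisfies the same conditions as $b_{n_0}$ ``up to a constant factor,'' exactly as you derive. Your observation that $\sqrt{2n_0}$ is a boundary point (yielding only a simple zero of $f$ there) while the $\sqrt{2n}$ for $n>n_0$ are interior double zeros is precisely the distinction the paper relies on when identifying the sphere packing function with $b_{n_0}$, and your ``up to scalar'' conclusion agrees with the paper's phrasing.
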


Theorem~\ref{theorem:interpolation} was conjectured by Viazovska as part of
the strategy for her solution of the sphere packing problem in $\R^8$.  Note
that the interpolation formula is not at all obvious, or even particularly
plausible. The lack of plausibility accounts for why it had not previously
been conjectured, despite the analogy with energy minimization in compact
spaces.

The proof of the interpolation formula develops the techniques introduced by
Viazovska in \cite{V} into a broader theory.  Radchenko and Viazovska took a
significant step in this direction by proving an interpolation formula for
single roots in one dimension \cite{RV}, but extending it to double roots
introduces further difficulties.

Theorem~\ref{theorem:interpolation} extends naturally to characterize exactly
which sequences can occur as $f\big(\sqrt{2n}\big)$, $f'\big(\sqrt{2n}\big)$,
$\widehat{f}\big(\sqrt{2n}\big)$, and $\widehat{f}\,'\big(\sqrt{2n}\big)$ for
$n \ge n_0$, with $f$ a radial Schwartz function. The only restriction on
these sequences is on their decay rate. To state the result precisely, let
$\Schw(\N)$ be the space of rapidly decreasing sequences of complex numbers.
In other words, $(x_n)_{n \ge 1} \in \Schw(\N)$ if and only if $\lim_{n \to
\infty} n^k x_n = 0$ for all natural numbers $k$.

\begin{theorem} \label{theorem:interpolation-isom}
Let $(d,n_0)$ be $(8,1)$ or $(24,2)$.  Then the map sending $f \in
\Schw_{\textup{rad}}(\R^d)$ to
\[
\left(\big(f\big(\sqrt{2n}\big)\big)_{n \ge n_0},\big(f'\big(\sqrt{2n}\big)\big)_{n \ge n_0},
\big(\widehat{f}\big(\sqrt{2n}\big)\big)_{n \ge n_0},\big(\widehat{f}\,'\big(\sqrt{2n}\big)\big)_{n \ge n_0}\right)
\]
is an isomorphism from $\Schw_{\textup{rad}}(\R^d)$ to $\Schw(\N)^4$, whose
inverse is given by \eqref{eqn:IF}. In other words, the inverse isomorphism maps
\[
\left(\big(\alpha_n)_{n \ge n_0},\big(\beta_n\big)_{n \ge n_0},
\big(\widetilde{\alpha}_n\big)_{n \ge n_0},\big(\widetilde{\beta}_n\big)_{n
\ge n_0}\right)
\]
to the function
\[
\sum_{n=n_0}^\infty \alpha_n\,a_n+\sum_{n=n_0}^\infty \beta_n\,b_n
+\sum_{n=n_0}^\infty
\widetilde{\alpha}_n\,\widetilde{a}_n+\sum_{n=n_0}^\infty
\widetilde{\beta}_n\,\widetilde{b}_n.
\]
\end{theorem}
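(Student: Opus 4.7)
The plan is to deduce the stated isomorphism from Theorem~\ref{theorem:interpolation} combined with uniform Schwartz-seminorm estimates on the interpolation basis $a_n,b_n,\widetilde{a}_n,\widetilde{b}_n$ that emerge from its construction. First I would check that the forward map
\[
\Phi\colon f \longmapsto \Big(\big(f(\sqrt{2n})\big),\,\big(f'(\sqrt{2n})\big),\,\big(\widehat{f}(\sqrt{2n})\big),\,\big(\widehat{f}\,'(\sqrt{2n})\big)\Big)_{n \ge n_0}
\]
is well defined into $\Schw(\N)^4$ and continuous: since the Fourier transform preserves $\Schw_{\textup{rad}}(\R^d)$, all four functions $f, f', \widehat{f}, \widehat{f}\,'$ are Schwartz, so their values at $\sqrt{2n}$ decay faster than any polynomial in $n$, with the rate controlled by a single Schwartz seminorm of $f$. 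Injectivity of $\Phi$ is the content of Theorem~\ref{theorem:interpolation}.

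The main task is then to show that $\Psi$, the candidate inverse given by the series in \eqref{eqn:IF}, is a well-defined continuous map from $\Schw(\N)^4$ into $\Schw_{\textup{rad}}(\R^d)$. For this it suffices to establish polynomial-growth Schwartz-seminorm bounds for the basis functions: for each $k,\ell \ge 0$ there should exist constants $C = C(k,\ell)$ and $M = M(k,\ell)$ such that
\[
\sup_{x \in \R^d}(1+|x|)^k \big|\partial^\ell a_n(x)\big| \;\le\; C\, n^M,
\]
and similarly for $b_n,\widetilde{a}_n,\widetilde{b}_n$. Given such bounds, pairing them with the rapid decay of the coefficient sequences makes the series \eqref{eqn:IF} absolutely convergent in every Schwartz seminorm, so $\Psi$ lands in $\Schw_{\textup{rad}}(\R^d)$ and is continuous.

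The hard part is establishing these polynomial-growth bounds. The interpolation basis will be built from contour integrals of weight-$k$ quasimodular forms multiplied by the kernel $e^{\pi i z |x|^2}$ along a contour in the upper half-plane, with the index $n$ entering through a factor $e^{-2\pi i n z}$ that extracts the $n$-th Fourier coefficient. To control $\|a_n\|_{k,\ell}$ one differentiates under the integral sign (each derivative in $x$ produces only a polynomial factor in $|x|$ and $|z|$), inserts the Fourier expansions of the quasimodular forms with their polynomial-growth coefficient bounds, and deforms the contour so that $|e^{-2\pi i n z}| = e^{2\pi n \Im z}$ is balanced against the decay of the remaining integrand; the $x$-decay comes from the Gaussian kernel once $\Im z > 0$. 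Pushing the contour close enough to the real axis to tame the $e^{2\pi n \Im z}$ factor costs only a polynomial in $n$, which is exactly what is needed.

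Finally, with both $\Phi$ and $\Psi$ continuous, it remains to verify they are mutually inverse. The interpolation identities $a_m(\sqrt{2n}) = \delta_{mn}$, $a_m'(\sqrt{2n}) = 0$, $\widehat{a}_m(\sqrt{2n}) = \widehat{a}_m\,'(\sqrt{2n}) = 0$, and the analogous identities for $b_m,\widetilde{a}_m,\widetilde{b}_m$, hold by construction for each fixed pair $(m,n)$; they extend from finite linear combinations to the full convergent series by continuity of $\Phi$, yielding $\Phi \circ \Psi = \Id$ on $\Schw(\N)^4$, while $\Psi \circ \Phi = \Id$ on $\Schw_{\textup{rad}}(\R^d)$ is precisely \eqref{eqn:IF}. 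This completes the proof that the map is an isomorphism of Fr\'echet spaces with inverse given by \eqref{eqn:IF}.
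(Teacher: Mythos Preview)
Your outline is largely on track, and the polynomial-growth seminorm bounds you spend most of your energy on are in fact already part of the statement of Theorem~\ref{thm:FEimpliesIF} (they are obtained exactly by the ``take $y=1/n$ in the contour integral'' mechanism you sketch), so you can simply quote them rather than redevelop them.

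The genuine gap is in your final paragraph. You assert that the Kronecker-delta identities
\[
a_m(\sqrt{2n})=\delta_{mn},\quad a_m'(\sqrt{2n})=0,\quad \widehat{a}_m(\sqrt{2n})=\widehat{a}_m\,'(\sqrt{2n})=0,
\]
and their analogues for $b_m,\widetilde{a}_m,\widetilde{b}_m$ ``hold by construction.'' They do not. The basis functions are \emph{constructed} as Fourier coefficients of the generating functions $F(\tau,x)$ and $\widetilde{F}(\tau,x)$ via \eqref{eq:anformula}--\eqref{eq:bnformula}, and nothing in that construction directly tells you their values at $\sqrt{2m}$. Equivalently, you need $F(\tau,\sqrt{2m})=e^{2\pi i m\tau}$ and $\widetilde{F}(\tau,\sqrt{2m})=0$, and this is not automatic. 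The paper supplies the missing step through a uniqueness lemma derived from Lemma~\ref{lem:noPsolutions}: if $g,\widetilde g\in\mathcal P$ satisfy the homogeneous system \eqref{DRinsert} together with the decay $o\big(e^{-2\pi(n_0-1)\Im(\tau)}\big)$, then $g=\widetilde g=0$. Applying this to $g(\tau)=F(\tau,\sqrt{2m})-e^{2\pi i m\tau}$ and $\widetilde g(\tau)=\widetilde F(\tau,\sqrt{2m})$ (and then to the radial derivatives) yields the Kronecker-delta relations. The same uniqueness argument is also what proves $\widetilde a_n=\widehat a_n$ and $\widetilde b_n=\widehat b_n$, without which you cannot even interpret the quantities $\widehat a_m(\sqrt{2n})$ appearing in your identities. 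Once these relations are in hand, your continuity argument for $\Phi\circ\Psi=\Id$ goes through; but the uniqueness step is where the actual content lies, and it ultimately rests on the explicit finite-dimensional description of $\operatorname{Ann}_{d/2}(\mathcal I_\pm,\mathcal P)$ from Propositions~\ref{prop: HFE plus} and~\ref{prop: HFE minus}.
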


One consequence of this theorem is that there are no systematic linear
relations between the values $f\big(\sqrt{2n}\big)$, $f'\big(\sqrt{2n}\big)$,
$\widehat{f}\big(\sqrt{2n}\big)$, and $\widehat{f}\,'\big(\sqrt{2n}\big)$ for
$n \ge n_0$ (i.e., relations that hold for all $f \in
\Schw_{\textup{rad}}(\R^d)$).  By contrast, Poisson summation over $E_8$ or
the Leech lattice gives such a relation between $f\big(\sqrt{2n}\big)$ and
$\widehat{f}\big(\sqrt{2n}\big)$ for $n \ge 0$.

Another consequence is that the values and derivatives of the interpolation
basis functions and their Fourier transforms at the interpolation points are
all $0$ except for a single $1$, which cycles through all these
possibilities.  It follows from the inverse isomorphism in
Theorem~\ref{theorem:interpolation-isom} that in terms of the Kronecker
delta,
\begin{align*}
a_n\big(\sqrt{2m}) &= \delta_{m,n}, & a_n'\big(\sqrt{2m}) &= 0, &
\widehat{a}_n\big(\sqrt{2m}) &= 0, & \widehat{a}_n\,\!\!\!'\big(\sqrt{2m}) &= 0,\\
b_n\big(\sqrt{2m}) &= 0, & b_n'\big(\sqrt{2m}) &= \delta_{m,n}, &
\widehat{b}_n\big(\sqrt{2m}) &= 0, & \widehat{b}_n\,\!\!\!'\big(\sqrt{2m}) &= 0,\\
\widetilde{a}_n\big(\sqrt{2m}) &= 0, & \widetilde{a}_n'\big(\sqrt{2m}) &= 0, &
\widehat{\widetilde{a}}_n\big(\sqrt{2m}) &= \delta_{m,n}, & \widehat{\widetilde{a}}_n\,\!\!\!'\big(\sqrt{2m}) &= 0,\\
\widetilde{b}_n\big(\sqrt{2m}) &= 0, & \widetilde{b}_n'\big(\sqrt{2m}) &= 0, &
\widehat{\widetilde{b}}_n\big(\sqrt{2m}) &= 0, & \widehat{\widetilde{b}}_n\,\!\!\!'\big(\sqrt{2m}) &= \delta_{m,n}
\end{align*}
for integers $m,n\ge n_0$. (By contrast, Theorem~\ref{theorem:interpolation}
allows the possibility that our interpolation basis could be redundant.)
These equations uniquely determine the functions $a_n$, $b_n$,
$\widetilde{a}_n$, and $\widetilde{b}_n$, by the interpolation theorem
itself.  Furthermore, it follows that $\widetilde{a}_n = \widehat{a}_n$ and
$\widetilde{b}_n = \widehat{b}_n$.

The function $b_1$ is characterized by $b_1\big(\sqrt{2n}\big) = 0$,
$\widehat{b}_1\big(\sqrt{2n}\big) = 0$, and
$\widehat{b}_1\,\!\!\!'\big(\sqrt{2n}\big) = 0$ for $n \ge n_0$, while
$b_1\,\!\!\!'\big(\sqrt{2n}\big) = 0$ for $n > n_0$ and
$b_1\,\!\!\!'\big(\sqrt{2n_0}\big) = 1$.  Up to a constant factor, these
equations are the same conditions satisfied by the sphere packing auxiliary
functions constructed in \cite{V} for $d=8$ and \cite{CKMRV} for $d=24$.
Thus the constructions in those papers are subsumed as a special case of the
interpolation basis.

The interpolation theorem amounts to constructing certain radial analogues of
Fourier quasicrystals. Define a \emph{radial Fourier quasicrystal} to be a
radial tempered distribution $T$ on $\R^d$ such that both $T$ and
$\widehat{T}$ are supported on discrete sets of radii, where $\widehat{T}$ is
characterized as usual by
\[
\int_{\R^d} \widehat{T}(x) f(x) \, dx = \int_{\R^d} T(x) \widehat{f}(x) \, dx
\]
for $f \in \Schw_{\textup{rad}}(\R^d)$.  To reformulate the
interpolation theorem in these terms, let $\delta_r$ denote a spherical delta
function with mass $1$ on the sphere of radius $r$ about the origin, or
equivalently let
\[
\int_{\R^d} f(x) \delta_r(x) \, dx = f(r)
\]
for $f \in \Schw_{\textup{rad}}(\R^d)$, and define $\mu_r$ by
\[
\int_{\R^d} f(x) \mu_r(x) \, dx = f'(r)
\]
for $f \in \Schw_{\textup{rad}}(\R^d)$. (Note that $\mu_r$ is also supported
on the sphere of radius~$r$ about the origin.)
If we set
\[
T_x = \sum_{n=n_0}^\infty a_n(x) \,\delta_{\sqrt{2n}}+\sum_{n=n_0}^\infty b_n(x) \,\mu_{\sqrt{2n}} - \delta_{|x|},
\]
then
\[
\widehat{T_x} = -\sum_{n=n_0}^\infty \widetilde{a}_n(x) \,\delta_{\sqrt{2n}}
-\sum_{n=n_0}^\infty \widetilde{b}_n(x) \,\mu_{\sqrt{2n}}
\]
by Theorem~\ref{theorem:interpolation}. Thus, $T_x$ is a radial Fourier
quasicrystal.

Dyson \cite{Dyson} highlighted the importance of classifying Fourier
quasicrystals in $\R^1$, and radial Fourier quasicrystals are a natural
generalization of this problem.  In the non-radial case, Fourier
quasicrystals satisfying certain positivity and uniformity hypotheses can be
completely described using Poisson summation \cite{LO}, but even a
conjectural classification remains elusive in general.

\subsection{Proof techniques}

In light of Theorem~\ref{theorem:interpolation} and its interpolation basis,
we can write down the only possible auxiliary function $f$ that could prove a
sharp bound for $E_8$ or the Leech lattice under a potential $p$ (with $d=8$
or $24$, accordingly), at least among radial Schwartz functions:
\begin{equation} \label{eq:defaux}
f(x) = \sum_{n=n_0}^\infty p\big(\sqrt{2n}\big)\,a_n(x)+\sum_{n=n_0}^\infty p'\big(\sqrt{2n}\big)\,b_n(x).
\end{equation}
The proof of Theorem~\ref{theorem:univopt} then amounts to checking that
$f(x) \le p\big(|x|\big)$ for all $x \in \R^d \setminus \{0\}$ and
$\widehat{f}(y) \ge 0$ for all $y \in \R^d$.  (Once we prove these
inequalities, the necessary conditions for a sharp bound become sufficient as
well.) As noted above, it suffices to prove the theorem when $p$ is a
Gaussian, i.e., $p(r) = e^{-\alpha r^2}$ for some constant $\alpha>0$.

Thus, our primary technical contribution is to construct the interpolation
basis. To do so, we analyze the generating functions
\begin{equation} \label{eq:FFourier}
F(\tau,x)=\sum_{n \ge n_0} a_n(x)\,e^{2\pi i n \tau}+2\pi i \tau\sum_{n \ge n_0}\sqrt{2n}\, b_n(x)\,e^{2\pi i n \tau}
\end{equation}
and
\begin{equation} \label{eq:FtFourier}
\widetilde{F}(\tau,x) = \sum_{n \ge n_0} \widetilde{a}_n(x)\,e^{2\pi i n \tau}
+2\pi i \tau\sum_{n \ge n_0}\sqrt{2n}\,\, \widetilde{b}_n(x)\,e^{2\pi i n \tau},
\end{equation}
where $x \in \R^d$ and $\tau$ is in the upper half-plane $\Hyp = \{z \in \C :
\Im(z) > 0\}$.  These generating functions determine the basis, as we show in
\eqref{eq:anformula} and \eqref{eq:bnformula}, and in
Section~\ref{subsec:basisformulas} we prove integral formulas for the basis
functions, which generalize the formulas for $b_1$ from the sphere packing
papers \cite{V,CKMRV}.

One motivation for the seemingly extraneous factors of $2\pi i \tau
\,\sqrt{2n}$ in these generating functions is that they match
\eqref{eq:defaux} with $\tau=i\alpha/\pi$ and $p(r)=e^{-\alpha r^2}=e^{\pi i
\tau r^2}$, because
\[
p\big(\sqrt{2n}\big) = e^{2\pi i n \tau} \qquad\textup{and}\qquad p'\big(\sqrt{2n}\big)
= 2\pi i \tau \,\sqrt{2n} \,e^{2\pi i n \tau}.
\]
In other words, the auxiliary function $f$ from \eqref{eq:defaux} for this
Gaussian potential function is given by $f(x) = F(\tau,x) = F(i\alpha/\pi,x)$.

We can write the interpolation formula for a complex Gaussian $x \mapsto
e^{\pi i \tau |x|^2}$ in terms of $F$ and $\widetilde{F}$. Specifically, the
Fourier transform of $x \mapsto e^{\pi i \tau |x|^2}$ as a function on $\R^d$
is $x \mapsto (i/\tau)^{d/2} e^{\pi i (-1/\tau) |x|^2}$, and hence the
interpolation formula \eqref{eqn:IF} for $x \mapsto e^{\pi i \tau |x|^2}$
amounts to the identity
\begin{equation} \label{eq:interpolation-identity}
F(\tau,x)+(i/\tau)^{d/2}\widetilde{F}(-1/\tau,x)=e^{\pi i \tau |x|^2 }.
\end{equation}
In Section~\ref{sec:sub:31}, we will show using a density argument in
$\Schw_{\textup{rad}}(\R^d)$ that it suffices to prove the interpolation
theorem for complex Gaussians of the form $x \mapsto e^{\pi i \tau |x|^2}$
with $\tau \in\Hyp$.  Thus, constructing the interpolation basis amounts to
solving the functional equation \eqref{eq:interpolation-identity} using
functions $F$ and $\widetilde{F}$ with expansions of the form
\eqref{eq:FFourier} and \eqref{eq:FtFourier}.

These expansions  for $F$ and $\widetilde{F}$ are not quite Fourier
expansions in $\tau$, because they contain terms proportional to $\tau
e^{2\pi i \tau n}$.  In particular, they are not periodic in $\tau$, but they
are annihilated by the second-order difference operator. In other words,
\begin{equation} \label{eq:2nddiffF}
F(\tau+2,x)-2F(\tau+1,x)+F(\tau,x)=0
\end{equation}
and
\begin{equation} \label{eq:2nddiffFt}
\widetilde{F}(\tau+2,x)-2\widetilde{F}(\tau+1,x)+\widetilde{F}(\tau,x)=0.
\end{equation}
Subject to suitable smoothness and growth conditions, proving the
interpolation theorem amounts to constructing functions $F$ and
$\widetilde{F}$ satisfying the functional equations
\eqref{eq:interpolation-identity}, \eqref{eq:2nddiffF}, and
\eqref{eq:2nddiffFt}, as we will show in Theorem~\ref{thm:FEimpliesIF}.

To solve these functional equations, we use Laplace transforms of
quasimodular forms.  This approach was introduced by Viazovska in her
solution of the sphere packing problem in eight dimensions \cite{V}, and we
make heavy use of her techniques.  In \cite{V}, only modular forms of level
at most $2$ and the quasimodular form $E_2$ were needed.  However, these
functions turn out to be insufficient to construct our interpolation basis,
and we must augment them with a logarithm of the Hauptmodul $\lambda$ for
$\Gamma(2)$. Using this enlarged set of functions, we can describe $F$ and
$\widetilde{F}$ explicitly.  The proof of the interpolation theorem then
amounts to verifying analyticity, growth bounds, and functional equations for
our formulas.

Once we have obtained these formulas, only one step remains in the proof of
Theorem~\ref{theorem:univopt}.  Let $p$ be a Gaussian potential function, and
define the auxiliary function $f$ by \eqref{eq:defaux}.  To complete the
proof of Theorem~\ref{theorem:univopt}, we must show that $f(x) \le
p\big(|x|\big)$ for all $x \in \R^d \setminus \{0\}$ and $\widehat{f}(y) \ge
0$ for all $y \in \R^d$.  These inequalities look rather different, but we
will see that they are actually equivalent to each other, thanks to a duality
transformation introduced in Section~6 of \cite{CM}.  We show in
Section~\ref{subsec:sharpbounds} that the underlying inequality
(Proposition~\ref{prop:positivity}) follows from the positivity of the kernel
in the Laplace transform, as well as a truncated version of the kernel when
$d=24$. Unfortunately we have no conceptual proof of this positivity, but we
are able to prove it by combining various analytic methods, including
interval arithmetic computations. This inequality then completes the proof of
universal optimality.

\subsection{Origins of the argument}

The way we present the proof in this paper differs substantially from how we
initially arrived at the ideas. Before we turn to the body of the paper, we will
briefly outline our original approach here, to show how it is rooted in the papers
\cite{V} and \cite{CKMRV}.

We began with the Ansatz that there were basis functions $a_n$, $b_n$,
$\widetilde{a}_n$, and $\widetilde{b}_n$ for which
Theorems~\ref{theorem:interpolation} and~\ref{theorem:interpolation-isom}
held. We then tried to construct these functions using the techniques of
\cite{V} and \cite{CKMRV}, by splitting them into eigenfunctions for the
Fourier transform and constructing each eigenfunction as an integral
transform $\mathcal{T}(\varphi)$ of a corresponding function $\varphi$.
Specifically,
\[
\mathcal{T}(\varphi)(x) = 4 \sin\mathopen{}\big(\pi |x|^2/2\big)^2\mathclose{}\,\int_0^\infty \varphi(it)\, e^{-\pi |x|^2 t} \, dt
\]
for sufficiently large $|x|$, and we extend it to small $|x|$ by analytic
continuation. To ensure that $\mathcal{T}(\varphi)$ is an eigenfunction of
the Fourier transform, the kernel $\varphi$ must satisfy certain
quasimodularity hypotheses. Obtaining eigenfunctions with the desired
behavior at the radii $\sqrt{2m}$ amounts to controlling the asymptotic
behavior of $\varphi(it)$ as $t \to \infty$ up to an additive $o(1)$ term
(i.e., specifying the non-decaying terms in its $q$-expansion).

In each case we examined with eigenvalue $+1$, we could construct a suitable
kernel $\varphi$ by using modular forms for $\SL_2(\Z)$ and the quasimodular
form $E_2$, just as in \cite{V} and \cite{CKMRV}. However, we were unable to
construct the $-1$ eigenfunctions using only modular forms of level $2$, as
we had hoped based on the earlier papers. These modular forms sufficed to
solve the sphere packing problem in dimensions~$8$ and~$24$, but there were
simply not enough of them to construct the putative interpolation basis.
Instead, we found a natural generalization of the proof technique that would
require a special function satisfying certain transformation laws, which we
will state as \eqref{loglambdaslash} below. This function would play a role
analogous to that of $E_2$.

We managed to obtain such a function as an integral, after which we
recognized it as a logarithm $\mathcal{L}$ of the Hauptmodul $\lambda$. Using
this expanded set of functions, we seemed to be able to obtain any given
interpolation basis function we wanted (by constructing both the $+1$ and the
$-1$ eigenfunction components), but the kernel $\varphi$ grew more
complicated as the index $n$ of the basis function increased, and the general
pattern remained unclear. Ad hoc constructions of individual basis functions
did not supply enough information for us to understand the basis as a whole.

To try to clarify the pattern, we sought generating functions
$\mathcal{K}(\tau,z)$ and $\widetilde{\mathcal{K}}(\tau,z)$ along the lines
of \eqref{eq:FFourier} and \eqref{eq:FtFourier} for the kernels, rather than
the interpolation basis functions. For example,
\[
\mathcal{K}(\tau,z) = \sum_{n \ge n_0} \varphi_n(z)\,e^{2\pi i n \tau}+2\pi i
\tau\sum_{n \ge n_0}\sqrt{2n}\, \psi_n(z)\,e^{2\pi i n \tau},
\]
where the kernels $\varphi_n$ and $\psi_n$ satisfy $\mathcal{T}(\varphi_n) =
a_n$ and $\mathcal{T}(\psi_n) = b_n$. Duke and Jenkins \cite{DJ} had analyzed
generating functions for a similar but simpler scenario, and we adapted their
techniques. We guessed by analogy with their work that our generating
functions would have denominator $\Delta(\tau) \Delta(z)^2 (j(\tau) - j(z))$
and numerators given by linear combinations of $1$, $E_2$, and $\mathcal{L}$
with modular forms of certain weights in $\tau$ and $z$ as coefficients. In
other words, the numerators could be parametrized by finitely many
undetermined coefficients. We then solved for the numerators by computing
enough kernels to determine them uniquely. Once we had arrived at explicit
formulas for the generating functions, we proved using techniques inspired by
\cite{DJ} that the formulas were correct, i.e., that the resulting
interpolation basis functions had the desired values and radial derivatives
at the radii $\sqrt{2m}$.

These generating functions provided a construction of the interpolation
basis, but we still did not know that this basis sufficed to reconstruct
every radial Schwartz function. To analyze the basis further, we had to deal
with analytic difficulties. For example, one might naively hope to obtain the
interpolation basis generating function $F(\tau,x)$ from the kernel
generating function $\mathcal{K}(\tau,z)$ when $|x|$ is sufficiently large
via
\[
F(\tau,x) = 4 \sin\mathopen{}\big(\pi |x|^2/2\big)^2\mathclose{}\,\int_0^\infty
\mathcal{K}(\tau,it)\, e^{-\pi |x|^2 t} \, dt,
\]
by interchanging the integral transform with the sum in the generating
function. However, the integral and sum cannot be interchanged, and this
formula is incorrect. Instead, the right side needs an extra $e^{\pi i \tau
|x|^2}$ term. These sorts of analytic issues arise frequently in the study of
$F$ and $\mathcal{K}$, because $z \mapsto \mathcal{K}(\tau,z)$ has poles at
many of the points in the $\SL_2(\Z)$-orbit of $\tau$ (where the
$j(\tau)-j(z)$ factor in the denominator vanishes). Understanding these poles
and their residues turns out to be crucial for our proof of the interpolation
theorem.

As we developed this theory further, our focus shifted to the generating
functions $F$ and $\widetilde{F}$ and their functional equations, and our
original derivation of the kernel generating functions was no longer needed.
In the present paper, the kernel generating functions are analyzed
differently in Section~\ref{sec:analysis}, and it is only after completing
the proof of the interpolation theorem that we show they fulfill their
original purpose in Section~\ref{subsec:basisformulas}. Instead of starting
with the kernel generating function, we begin instead by analyzing the
functional equations for $F$ and $\widetilde{F}$, before relating them to
kernels constructed using modular forms.

\subsection{Organization of the paper}

We begin by collecting background information about modular forms, elliptic
integrals, and radial Schwartz functions in Section~\ref{sec:background}.
Sections~\ref{sec:interpolation} and~\ref{sec:analysis} are the heart of the
paper.  Section~\ref{sec:interpolation} shows how to reduce the interpolation
theorem to the existence of generating functions with certain properties, and
Section~\ref{sec:analysis} describes the generating functions explicitly as
integral transforms of kernels obtained by carefully analyzing an action of
$\PSL_2(\Z)$.  It is not obvious that this construction has all the necessary
properties, and Section~\ref{sec:proofs} completes the proof of the
interpolation theorem by verifying analytic continuation and growth bounds.
Proving universal optimality requires additional inequalities, which are
established in Section~\ref{sec:inequality}.  Finally, we discuss
generalizations and open problems in Section~\ref{sec:generalizations}.

\section{Preliminaries and background on modular forms}
\label{sec:background}

Much of the machinery of our proof rests on properties of classical modular
forms, in particular their growth rates and transformation laws.  This
section summarizes those features which are used later in the paper, as well
as background about elliptic integrals and radial Schwartz functions.  For
more information about modular forms, see
\cite{CohenStromberg,Iwaniec,Serre,Z}.

\subsection{Modular and quasimodular forms}\label{sec:sub:modularforms}

Let $\Hyp$ denote the complex upper half-plane $\{z \in \C : \Im(z) > 0 \}$.
The group $\SL_2(\R)$ acts on $\Hyp$ by  fractional linear transformations
\[
\begin{pmatrix}a & b \\ c & d \end{pmatrix} \cdot z = \frac{az+b}{cz+d}.
\]
For any integer $k$ and $\gamma=\bigl(\begin{smallmatrix} a & b \\ c & d
\end{smallmatrix}\bigr)\in \SL_2(\R)$, define the slash operator on functions
$f\colon\Hyp\rightarrow\C$ by the rule\footnote{When necessary, we will abuse
notation by using a
superscript on the slash notation to disambiguate which variable it applies
to.  For example, we write $f(\tau,z)\mid_2^z(\begin{smallmatrix} 0 & -1 \\ 1 &
\phantom{-}0
\end{smallmatrix})$ or $(f |_2^z (\begin{smallmatrix} 0 & -1 \\ 1 &
\phantom{-}0
\end{smallmatrix}))(\tau,z)$ for $z^{-2}f(\tau,-1/z)$.}
\begin{equation}\label{slashnotation}
(f |_k \gamma) (z) = (cz+d)^{-k} f\left(\frac{az+b}{cz+d}\right).
\end{equation}
We define the \emph{factor of automorphy} $j(\gamma,z)$ by
\begin{equation}
\label{eq:automorphyfactordef}
j(\gamma,z) = cz+d;
\end{equation}
note that it satisfies the identity $j(\gamma_1\gamma_2,z) =
j(\gamma_1,\gamma_2 z)j(\gamma_2, z)$, which implies that
$f|_k(\gamma_1\gamma_2) = (f |_k \gamma_1)|_k \gamma_2$.

Let $\Gamma$ be a discrete subgroup of $\SL_2(\R)$ such that the quotient
$\Gamma \backslash \Hyp$ has finite volume.  Recall that a holomorphic
\emph{modular form} of weight $k$ for $\Gamma$ is a holomorphic function $f
\colon \Hyp \to \C$ such that $f|_k\gamma = f$ for all $\gamma\in\Gamma$, and
that furthermore satisfies a polynomial boundedness condition at each cusp
(see \cite{CohenStromberg,Iwaniec,Serre} for details). The space of
holomorphic modular forms of weight $k$ for $\Gamma$ will be denoted
${\mathcal M}_k(\Gamma)$. It contains the subspace ${\mathcal S}_k(\Gamma)$
of weight $k$ \emph{cusp forms} for $\Gamma$; these are the modular forms
that vanish at all the cusps of $\Gamma \backslash \Hyp$. On the other hand,
we may relax the definition to allow modular functions that are holomorphic
on $\Hyp$ but merely meromorphic at the cusps (equivalently, they satisfy an
exponential growth bound near the cusps). This defines the space ${\mathcal
M}^!_k(\Gamma)$ of \emph{weakly holomorphic} modular forms, which is
infinite-dimensional whenever $\Gamma$ has a cusp.

When $\Gamma$ is a congruence subgroup of $\SL_2(\Z)$, i.e., one that
contains
\[
\Gamma(N) = \left \{ \gamma \in \SL_2(\Z) :
\gamma \equiv \begin{pmatrix} 1 & 0 \\ 0 & 1 \end{pmatrix} \pmod{N} \right \}
\]
for some $N$, the boundedness condition defining ${\mathcal M}_k(\Gamma)$ is
simply that $|(f|_k\gamma)(z)|$ is bounded as $\Im(z) \rightarrow \infty$,
for every $\gamma \in \SL_2(\Z)$.  Similarly, ${\mathcal S}_k(\Gamma)$ is
defined by the condition that $(f|_k\gamma)(z) \to 0$ as $\Im(z) \to \infty$,
while ${\mathcal M}^!_k(\Gamma)$ is defined by the condition that
$|(f|_k\gamma)(z)|$ is bounded above by a polynomial in $e^{\Im(z)}$. Since
$\SL_2(\Z)$ is generated by
\begin{equation} \label{SandTdef}
T = \begin{pmatrix} 1 & 1 \\ 0 & 1 \end{pmatrix} \quad\text{and}\quad S =
\begin{pmatrix} 0 & -1 \\ 1 & \phantom{-}0 \end{pmatrix},
\end{equation}
we will often indicate the action of $\SL_2(\Z)$ on a modular form for a
congruence subgroup by giving the action of the slash operators corresponding
to $S$ and $T$.

The element $-I \in \SL_2(\R)$ acts trivially on $\Hyp$, and thus the action
of $\SL_2(\R)$ descends to an action of $\PSL_2(\R) = \SL_2(\R)/\{\pm I\}$.
We write $\overline{\Gamma}$ to denote the image of a subgroup $\Gamma$ of
$\SL_2(\R)$ in $\PSL_2(\R)$. The group $\PSL_2(\Z) = \overline{\SL_2(\Z)}$
has an elegant presentation in terms of the generators $S$ and $T$, namely
$\PSL_2(\Z) = \langle S,T \mid S^2 = (ST)^3 = I\rangle$, and its subgroup
$\overline{\Gamma(2)}$ of index~$6$ is freely generated by $T^2$ and $ST^2S$
(with cusps $0$, $1$, and $\infty$).

We next describe the structure of the graded rings
\[
{\mathcal M}^!(\Gamma) = \bigoplus_{k\in \Z}{\mathcal M}^!_k(\Gamma)
\]
and
\[
{\mathcal M}(\Gamma) = \bigoplus_{k\in \Z}{\mathcal M}_k(\Gamma)
\]
for the cases of interest in this paper, which are the congruence subgroups
$\Gamma=\Gamma(N)$ for $N=1,2$. In these cases, the weight $k$ of any
nonzero modular form is necessarily even because $-I\in\Gamma$.

\subsubsection{Modular forms for $\Gamma(1)=\SL_2(\Z)$}

Here all modular forms can be described in terms of the Eisenstein series
\[
\aligned
E_{k}(z) & = \frac{1}{2\zeta(k)}\sum_{\substack{(m,n) \in \Z^2\\(m,n)\neq (0,0)}}(mz+n)^{-k}\\
& = 1-\frac{2k}{B_k}\sum_{n=1}^\infty \sigma_{k-1}(n) e^{2\pi i nz}
\endaligned
\]
for even integers $k \geq 4$, where $B_k$ is the $k$-th Bernoulli number and
$\sigma_\ell(n) = \sum_{d \mid n} d^\ell$ is the $\ell$-th power divisor sum
function. The ring ${\mathcal M}(\SL_2(\Z))$ is the free polynomial ring over
$\C$ with generators
\begin{align*}
E_4(z) &=  1 + 240 \sum_{n=1}^\infty \sigma_3(n) q^n \quad \text{and}  \\
E_6(z) &=  1 - 504 \sum_{n=1}^\infty \sigma_5(n) q^n,
\end{align*}
where we use the customary shorthand $q=e^{2\pi i z}$.   In particular,
\begin{equation}\label{modulardimensionformula}
\dim {\mathcal M}_k(\SL_2(\Z)) = \left \lfloor \frac{k}{12} \right \rfloor + \left\{
\begin{array}{ll}
1  & \text{for }k\equiv 0,4,6,8,10\pmod{12}, \text{ and} \\
0 & \text{for }k\equiv 2\pmod{12}
\end{array}
\right.
\end{equation}
for even integers $k\ge 0$, and the identities $E_8=E_4^2$, $E_{10} = E_4E_6$
and $E_{14} = E_4^2E_6$ hold because the modular forms of weight $8$, $10$,
or $14$ form a one-dimensional space. Let
\[
\Delta(z) = \frac{E_4(z)^3 - E_6(z)^2}{1728} = q \prod_{n=1}^\infty (1-q^n)^{24}
\]
denote Ramanujan's cusp form of weight $12$. The product formula shows that
$\Delta$ does not vanish on $\Hyp$ and satisfies the decay estimate
\[
\Delta(x +iy) = O\big(e^{-2\pi y}\big)
\]
for $y \geq 1$, uniformly in $x \in \R$. In particular, $\Delta^{-1}$ is a
weakly holomorphic modular form for $\SL_2(\Z)$. Furthermore, since $\Delta$
vanishes to first order at the unique cusp of $\SL_2(\Z)$, we can use it to
cancel the pole of any form $f \in {\mathcal M}^!(\SL_2(\Z))$: if $f$ has
weight $k$ and a pole of order $r$ at the cusp, then $\Delta^r f \in
{\mathcal M}_{k+12r}(\SL_2(\Z))$.  It follows that
\[
{\mathcal M}^!(\SL_2(\Z)) = \C[E_4, E_6, \Delta^{-1}].
\]
For example, the modular $j$-invariant defined by
\begin{equation}\label{jfunction}
j(z) =  \frac{E_4(z)^3}{\Delta(z)} = q^{-1} + 744 + 196884q + 21493760q^2 + \cdots
\end{equation}
is in ${\mathcal M}_0^!(\SL_2(\Z))$, and its derivative, which is in
${\mathcal M}_2^!(\SL_2(\Z))$, can be expressed as
\begin{equation}\label{jprime}
j'(z) = 2\pi i q \, \frac{dj}{dq} = -2\pi i \frac{E_{14}(z)}{\Delta(z)},
\end{equation}
since both sides share the same leading asymptotics as $\Im(z) \to \infty$ and
$j' \Delta$ lies in the one-dimensional space ${\mathcal M}_{14}(\SL_2(\Z))$.

An important role in this paper (as well as in \cite{V,CKMRV}) is played by
the \emph{quasimodular} form
\begin{equation}\label{logderivDelta}
E_2(z) = 1 - 24 \sum_{n = 1}^\infty \sigma_1(n) q^n =  \frac{1}{2 \pi i } \frac{\Delta'(z)}{\Delta(z)}\,,
\end{equation}
which just barely fails to be modular:
\begin{equation} \label{E2slash}
E_2(z+1) = E_2(z) \qquad \text{and} \qquad
E_2 \mathopen{}\left( \frac{-1}{z} \right)\mathclose{} = z^2 E_2(z) - \frac{6 i z}{\pi}.
\end{equation}
General quasimodular forms for congruence subgroups are polynomials in $E_2$
with modular form coefficients; they may also be obtained by
differentiating modular forms. More precisely, a holomorphic function
$f\colon\Hyp\to\C$ is a quasimodular form for $\Gamma(N)$ of weight $k$ and
depth at most $p$ if it is an element of $\bigoplus_{j=0}^p E_2^j {\mathcal
M}_{k-2j}(\Gamma(N))$. In other words, depth $p$ corresponds to a degree $p$
polynomial in $E_2$ or to taking the $p$-th derivative of a modular form (see
\cite{Choie}, \cite{KZ}, and \cite[Prop.\ 20]{Z}).\footnote{For a more
intrinsic definition of quasimodular form, which applies to non-congruence
subgroups of $\SL_2(\R)$ as well, see \cite[Section 5.3]{Z}.}

\subsubsection{Modular forms for $\Gamma(2)$}
Recall the \emph{Jacobi theta functions}
\[
\aligned
  \Theta_3(z) = \theta_{00}(z) &= \sum_{n \in \Z} e^{\pi i n^2 z}, \\
  \Theta_4(z) = \theta_{01}(z) &= \sum_{n \in \Z} (-1)^n e^{\pi i n^2 z}, \quad \text{and} \\
  \Theta_2(z) = \theta_{10}(z) &= \sum_{n \in \Z} e^{\pi i \left(n+\frac{1}{2}\right)^2 z}
\endaligned
\]
(with their historical numbering), which arise in the classical theory of
theta functions. We define
\begin{equation}\label{UVWdef}
\aligned
  U(z) &= \theta_{00}(z)^4, \\
  V(z) &= \theta_{10}(z)^4, \quad \text{and} \\
  W(z) &= \theta_{01}(z)^4.
\endaligned
\end{equation}
These functions are modular forms of weight $2$ for $\Gamma(2)$, they satisfy
the \emph{Jacobi identity}
\begin{equation}\label{U=V+W}
U = V + W,
\end{equation}
and ${\mathcal M}(\Gamma(2))$ is the polynomial ring generated by $V$ and
$W$.  As was the case for $\Gamma=\Gamma(1)$, multiplication by powers of
$\Delta$ removes singularities at cusps while increasing the weight. Thus any
element of ${\mathcal M}^!(\Gamma(2))$ is again the quotient of an element of
${\mathcal M}(\Gamma(2))$ by some power of $\Delta$, with the behavior at
cusps determined by the numerator and the power of $\Delta$.  (In fact,
${\mathcal M}^!(\Gamma) = {\mathcal M}(\Gamma)[\Delta^{-1}]$ for any
congruence subgroup $\Gamma$, because $\Delta$ is in ${\mathcal M}(\Gamma)$
and vanishes at all cusps.)

The modular forms $U$, $V$, and $W$ transform under $\SL_2(\Z)$ as follows:
\begin{equation}\label{UVWslash}
\aligned
  U |_2T &= W,  & V |_2T &= -V, & W |_2T &= U,  \\
  U |_2S &= -U, & V |_2S &= -W, & W |_2S &= -V.
\endaligned
\end{equation}
These formulas specify how modular forms for $\Gamma(2)$ transform under the
larger group $\SL_2(\Z)$.  Conversely, every modular form for $\SL_2(\Z)$ is
a modular form for $\Gamma(2)$ and thus can be written in terms of $U$, $V$,
and $W$.  For example,
\begin{equation} \label{thetatoEisenstein}
  \begin{aligned}
  E_4 & = \frac{1}{2}(U^2 + V^2 + W^2), \\
  E_6 & = \frac{1}{2}(U+V)(U+W)(W-V), \quad \text{and}\\
  \Delta &= \frac{1}{256}(UVW)^2.
  \end{aligned}
\end{equation}
It will also be convenient to use the holomorphic square root of $\Delta$
defined by
\[
\sqrt{\Delta} = \frac{1}{16}UVW,
\]
which is a modular form of weight $6$ for $\Gamma(2)$.

Eichler and Zagier \cite[Remark after Theorem 8.4]{EZ} showed as part of a
general result that the  algebra $\mathcal M(\Gamma(2))$ is a
six-dimensional free module over ${\mathcal M}_1 := {\mathcal M}(\Gamma(1))$.
Their proof shows that
\begin{equation}\label{EichlerZagier}
{\mathcal M}(\Gamma(2)) = {\mathcal M_1}\oplus U  {\mathcal M_1}\oplus V
{\mathcal M_1}\oplus U^2  {\mathcal M_1}\oplus V^2  {\mathcal M_1}\oplus UVW  {\mathcal M_1},
\end{equation}
with the only subtlety being to show that the modular form $UVW$ of weight
six does not lie in the direct sum of the first five factors.  For
comparison, $UV$ lies in ${\mathcal M_1}\oplus U^2  {\mathcal M_1}\oplus V^2
{\mathcal M_1}$, because $UV = U^2+V^2-E_4$  by \eqref{U=V+W} and
\eqref{thetatoEisenstein}. As $\mathcal M_1$ is itself the free polynomial
ring in $U^2+V^2+W^2$ and $(U+V)(U+W)(W-V)$, the
decomposition~\eqref{EichlerZagier} can also be deduced from the theory of
symmetric polynomials. This decomposition will be used in Section~\ref{sec:annihilatorcalcs}
in the solution of some functional equations.

\subsubsection{The modular function $\lambda$ and its logarithm}\label{sec:sub:sub:modularlambda}

The modular function $\lambda = V/U$ mapping $\Hyp$ to $\C\setminus\{0,1\}$
is a Hauptmodul for the modular curve $X(2) = \Gamma(2) \backslash (\Hyp \cup
\Proj^1(\Q))$ (that is, a generator of its function field over $\C$). For
example, it is related to the Hauptmodul $j$ for $\Gamma(1)$ from
\eqref{jfunction} by
\[
j = \frac{256(1-\lambda + \lambda^2)^3}{\lambda^2(1-\lambda)^2}.
\]
The function $\lambda$ takes the values $0$, $1$, and $\infty$ at the cusps
$\infty$, $0$, and $-1$, respectively, and its restriction $\lambda(it)$ to
the positive imaginary axis decreases from $1$ to $0$ as $t$ increases from
$0$ to $\infty$.  If we let
\begin{equation}\label{lambdaSdef}
\lambda_S(z) := (\lambda |_0 S)(z) = \lambda(-z^{-1}) = 1-\lambda(z),
\end{equation}
then these functions also satisfy the properties
\begin{equation}\label{lambdatransf}
\lambda(z+1) = \frac{\lambda(z)}{\lambda(z)-1} = - \frac{\lambda(z)}{\lambda_S(z)}
\quad \text{and} \quad
\lambda_S(z+1) = \frac{1}{\lambda_S(z)}
\end{equation}
for $z\in \Hyp$.

The nonvanishing of $\lambda$ and $\lambda_S$ on $\Hyp$ allows us to define
\[
{\mathcal L}(z) = \int_0^z \frac{\lambda'(w)}{\lambda(w)}\, dw \quad  \text{and} \quad
{\mathcal L}_S(z) =-\int_z^\infty \frac{\lambda_S'(w)}{\lambda_S(w)}\,dw,
\]
where the contours are chosen to approach the singularities $0$ or $\infty$
on vertical lines. These functions satisfy
\[
{\mathcal L}(it) = \log(\lambda(it)) \quad \text{and} \quad {\mathcal L}_S(it) = \log(\lambda_S(it)) = \log(1-\lambda(it))
\]
for $t>0$, and as such are holomorphic functions for which $e^{\mathcal
L}=\lambda$ and $e^{\mathcal L_S}=\lambda_S$; however, they are not in
general the principal branches of the logarithms of $\lambda$ or $\lambda_S$.
We note the asymptotics
\begin{equation}\label{lambdaasympt}
\aligned
   \mathcal L(z) &= \pi i z +4 \log (2)-8
   q^{1/2}+O(q) \quad \text{and}\\
 \mathcal L_S(z) &= -16 q^{1/2}-\frac{64 q^{3/2}}{3}+O\mathopen{}\left(q^{5/2}\right)\mathclose{}
\endaligned
\end{equation}
as $q \rightarrow 0$, where $q^{n/2} = e^{2\pi i n z/2}$.

The functions $\mathcal{L}$ and $\mathcal{L}_S$ have the transformation
properties
\begin{equation}\label{loglambdaslash}
\aligned
{\mathcal L} |_0T &={\mathcal L} -  {\mathcal L}_S + \pi i, &
 {\mathcal L}_S |_0T &= - {\mathcal L}_S, \\
   {\mathcal L} |_0S &=  {\mathcal L}_S,  & \ \
 {\mathcal L}_S |_0S &={\mathcal L}.
\endaligned
\end{equation}
Indeed, the last pair of assertions follows directly from the definitions and
holomorphy.  The first two assertions, which read
\[
{\mathcal L}(z+1) = {\mathcal L}(z) - {\mathcal L}_S(z) + \pi i
\quad \text{and} \quad
{\mathcal L}_S(z+1) = -{\mathcal L}_S(z),
\]
are proved by showing that the derivatives of both sides are equal (using the
derivatives of the identities in \eqref{lambdatransf}) and by comparing the
asymptotics \eqref{lambdaasympt} to determine the constant of integration.

\subsection{Elliptic integrals}\label{sub:sec:elliptic}

We normalize the \emph{complete elliptic integral of the first kind} by
\[
K(m) = \int_0^{\pi/2} \frac{d\theta}{\sqrt{1 - m \sin^2 \theta}}
\]
and \emph{of the second kind} by
\[
E(m) = \int_0^{\pi/2} \sqrt{1 - m \sin^2 \theta} \, d \theta.
\]
Note that many references, such as \cite[Chapter~19]{NIST}, define $K$ and
$E$ in terms of the elliptic modulus $k$, so that the complete elliptic
integrals are what we call $k \mapsto K(k^2)$ and $k \mapsto E(k^2)$.  Our
normalization is less principled from the perspective of elliptic
function theory, but it has the advantage of simplifying various expressions
that occur later in our paper.

These elliptic integrals satisfy a plethora of beautiful identities, a few of
which we list below. First, $E$ and $K$ are related by
\begin{equation} \label{Kderiv}
K'(m) = \frac{E(m)}{2m(1-m)} - \frac{K(m)}{2m}.
\end{equation}
(Here, $K'$ denotes the derivative of $K$. In the elliptic function
literature, $K'$ is often used instead to denote the elliptic integral with
respect to the complementary modulus $k'=\sqrt{1-k^2}$.) Legendre proved the
identity
\[
K(m)E(1-m) + E(m) K(1-m) - K(m)K(1-m) = \frac{\pi}{2}
\]
(see \cite[pp. 68--69]{MM}). The two identities above can be combined to
obtain
\begin{equation} \label{Kderividentity}
K(m) K'(1-m) + K'(m) K(1-m) = \frac{\pi}{4 m(1-m)}.
\end{equation}
In other words, the Wronskian of $K$ and $m \mapsto K(1-m)$ has a simple
form.

Elliptic integrals are also related to the modular forms of
Section~\ref{sec:sub:modularforms} via classical identities dating back to
Jacobi. For $z\in\Hyp$ on the imaginary axis, the key identity is the
inversion formula
\[
\Theta_3(z)^2 =2\pi^{-1}K(\lambda(z))
\]
(see \cite[\S21.61 and \S22.301]{WW} or \cite[Theorem~5.8]{Segal}). It
follows that for such $z$,
\begin{equation} \label{thetafromK}
\aligned
U(z) & = 4\pi^{-2}K(\lambda(z))^2, \\
V(z) & = 4\pi^{-2}\lambda(z)K(\lambda(z))^2, \quad \text{and} \\
W(z) & = 4\pi^{-2}\lambda_S(z)K(\lambda(z))^2,
\endaligned
\end{equation}
because $U = \Theta_3^4$, $\lambda=V/U$, and $U=V+W$. Using
\eqref{lambdaSdef} and Jacobi's transformation law
\[
\Theta_3(-1/z)^2 = -iz \Theta_3(z)^2
\]
(which follows immediately from Poisson summation), we obtain
\begin{equation}\label{zintermsofKlambda}
\frac{K(1-\lambda(z))}{K(\lambda(z))} = - iz.
\end{equation}
Differentiation combined with \eqref{Kderividentity} yields the identity
\[
\lambda'(z) = 4i\pi^{-1}\lambda(z) (1-\lambda(z)) K(\lambda(z))^2.
\]
Finally, we can use \eqref{logderivDelta}, \eqref{thetatoEisenstein},
\eqref{Kderiv}, and \eqref{thetafromK} to write
\begin{equation} \label{E2fromK}
E_2(z)  =  4 \pi^{-2}K(\lambda(z))  \Big( 3 E(\lambda(z)) - (2-\lambda(z)) K(\lambda(z)) \Big).
\end{equation}
In Section~\ref{sec:inequality} we will use equations
\eqref{thetatoEisenstein}, \eqref{thetafromK}, \eqref{zintermsofKlambda}, and
\eqref{E2fromK} to write the restriction of elements of ${\mathcal
M}^!(\Gamma(2))$ (and related quasimodular expressions) to the imaginary axis
in terms of elliptic integrals of $\lambda$.

The elliptic integrals $E$ and $K$ are holomorphic in the open unit disk.
Their behavior near $1$ is governed by
\begin{equation}\label{EandKnear1}
E(1-z) = A_1(z)+A_2(z)\log(z)\quad \text{and} \quad K(1-z) = A_3(z)+A_4(z)\log(z),
\end{equation}
where each $A_j$ is a holomorphic function on the open unit disk with real
Taylor coefficients about the origin (see \cite[Section 19.12]{NIST} for
explicit formulas). Furthermore, $A_1$ and $A_3$ have nonnegative
coefficients, while $A_2$ and $A_4$ have nonpositive coefficients.

\subsection{Radial Schwartz functions}

For a smooth function $f$ on $\R^d$, define the \emph{Schwartz seminorms} by
\[
\|f\|_{\alpha,\beta} = \sup_{x \in \R^d} \big| x^\alpha \partial^\beta f(x)\big|,
\]
for $\alpha,\beta \in \Z_{\ge 0}^d$, where we use the multi-index notation
\[
x^\alpha = x_1^{\alpha_1} \dotsb x_d^{\alpha_d} \quad\textup{and}\quad
\partial^\beta = \left(\frac{\partial}{\partial x_1}\right)^{\beta_1} \dotsb  \left(\frac{\partial}{\partial x_d}\right)^{\beta_d}.
\]
By definition, $f$ is a Schwartz function if and only if
$\|f\|_{\alpha,\beta} < \infty$ for all $\alpha$ and $\beta$, and these
seminorms define the Schwartz space topology on $\Schw(\R^d)$.  The radial
Schwartz space $\Schw_{\textup{rad}}(\R^d)$ is the subspace of radial
functions in $\Schw(\R^d)$, with the induced topology.

\begin{lemma} \label{lem:Schwartz+radial}
A Schwartz function $f$ on $\R^d$ is radial if and only if there exists an
even Schwartz function $f_0$ on $\R$ such that $f(x) = f_0\big(|x|\big)$ for
all $x \in \R^d$.  Furthermore, the map $f_0 \mapsto f$ is an isomorphism of
topological vector spaces from $\Schw_{\textup{rad}}(\R^1)$ to
$\Schw_{\textup{rad}}(\R^d)$.
\end{lemma}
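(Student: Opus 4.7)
The plan rests on the classical factorization fact (a form of Whitney's theorem on even functions) that every even $C^\infty$ function $f_0$ on $\R$ can be written uniquely as $f_0(t) = g(t^2)$ with $g \in C^\infty(\R)$. Accepting this, the ``if'' direction is immediate: given $f_0 \in \Schw_{\textup{rad}}(\R^1)$, set $f(x) := g(|x|^2)$. This is smooth on $\R^d$ because $|x|^2$ is a polynomial in the coordinates, agrees with $f_0(|x|)$ by construction, and inherits Schwartz decay from $f_0$ because each partial derivative $\partial^\beta f(x)$ expands by the chain rule into a finite sum of polynomials in $x$ multiplied by values $g^{(k)}(|x|^2)$ for $k \le |\beta|$, each of which can be controlled by Schwartz seminorms of $f_0$. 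Conversely, for $f \in \Schw_{\textup{rad}}(\R^d)$ I would define $f_0(t) := f(t\,e_1)$ with $e_1 = (1,0,\ldots,0)$; radial invariance forces $f_0(-t) = f_0(t)$ and $f(x) = f_0(|x|)$, while the restriction of a Schwartz function to a line is again Schwartz. This yields the claimed bijection.

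For the topological isomorphism, continuity of $f \mapsto f_0$ is immediate from the pointwise bound $|t^a f_0^{(b)}(t)| = |t^a (\partial_1^b f)(t e_1)| \le \|f\|_{a e_1,\, b e_1}$, so every seminorm on $\Schw_{\textup{rad}}(\R^1)$ is dominated by a single seminorm on $\Schw_{\textup{rad}}(\R^d)$. For continuity of the inverse, I would differentiate $f(x) = g(|x|^2)$ to obtain $\partial^\beta f(x) = \sum_{k \le |\beta|} P_{\beta,k}(x)\, g^{(k)}(|x|^2)$ for explicit polynomials $P_{\beta,k}$, then multiply by $x^\alpha$ and reduce the problem to bounding $\sup_{s \ge 0} s^m |g^{(k)}(s)|$ in terms of finitely many Schwartz seminorms of $f_0$. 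For $s > 0$, successive differentiation of $f_0(t) = g(t^2)$ and algebraic inversion expresses $g^{(k)}(t^2)$ as a rational combination of the $f_0^{(j)}(t)$ with denominators a power of $t$; away from the origin this yields the desired Schwartz bounds, and on any fixed neighborhood of the origin the smoothness of $g$ provides a uniform bound.

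The main obstacle lies precisely in this neighborhood of the origin: the naive inversion expressing $g^{(k)}$ in terms of $f_0^{(j)}$ develops apparent poles at $t=0$. These are spurious, since the evenness of $f_0$ forces every odd-order derivative $f_0^{(2j+1)}(0)$ to vanish and makes the singular contributions cancel, but tracking this cancellation quantitatively takes work. The cleanest route is to prove (or invoke) a continuous form of Whitney's factorization, asserting that the map $f_0 \mapsto g$ is continuous for the natural Schwartz topologies; once that is in hand, the inverse map $f_0 \mapsto f$ factors as the composition $f_0 \mapsto g \mapsto (x \mapsto g(|x|^2))$ of two continuous maps, completing the proof that the correspondence is a topological isomorphism.
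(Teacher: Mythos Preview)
The paper does not actually prove this lemma: immediately after the statement it says ``For a proof of Lemma~\ref{lem:Schwartz+radial} (in fact, of a slightly stronger result), see \cite[Section~3]{GT}.'' So there is no ``paper's proof'' to compare against; your outline is a genuine attempt at what the paper outsources.

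Your argument is sound. The bijection and the continuity of $f \mapsto f_0$ are as easy as you say, and you have correctly isolated the only real difficulty: obtaining uniform control of $g^{(k)}$ near $s=0$ in terms of seminorms of $f_0$, which amounts to a quantitative (continuous) form of Whitney's even-function theorem. One observation would let you bypass that analysis entirely: $\Schw_{\textup{rad}}(\R^1)$ and $\Schw_{\textup{rad}}(\R^d)$ are closed subspaces of Schwartz space, hence Fr\'echet, and you have already exhibited $f \mapsto f_0$ as a continuous linear bijection between them. The open mapping theorem for Fr\'echet spaces then gives continuity of the inverse $f_0 \mapsto f$ for free, so the delicate estimate near the origin is not needed for the topological isomorphism claim.
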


Of course $\Schw_{\textup{rad}}(\R^1)$ consists of the even functions in
$\Schw(\R^1)$.  For a proof of Lemma~\ref{lem:Schwartz+radial} (in fact, of a
slightly stronger result), see \cite[Section~3]{GT}.

For our purposes, the significance of Lemma~\ref{lem:Schwartz+radial} is that
we can restrict our attention to radial derivatives when dealing with radial
Schwartz functions. Let $D$ denote the radial derivative, defined by $Df(x) =
f_0'\big(|x|\big)$, and define the \emph{radial seminorms} by
\[
\|f\|^\textup{rad}_{k,\ell} = \sup_{x \in \R^d} |x|^k \big|D^\ell f(x)\big|
\]
for $k,\ell \in \Z_{\ge 0}$.  (Note that $\Schw_{\textup{rad}}(\R^d)$ is not
closed under $D$, because the derivative of an even function is odd, but
$D^\ell f(x)$ is nevertheless well defined.)  Then
Lemma~\ref{lem:Schwartz+radial} tells us that a smooth, radial function $f$
is a Schwartz function if and only if $\|f\|^\textup{rad}_{k,\ell} < \infty$
for all $k$ and $\ell$, and these seminorms characterize the topology of
$\Schw_{\textup{rad}}(\R^d)$.

We will also need the first part of the following lemma, which we prove using
the techniques from \cite[Section~6]{RV}. See also Lemma~2 and Remark~4 in
\cite{J-M} for a more general density result.

\begin{lemma} \label{lemma:dense}
The complex Gaussians $x \mapsto e^{\pi i \tau|x|^2}$ with $\tau \in \Hyp$
span a dense subspace of $\Schw_{\textup{rad}}(\R^d)$.  In fact, for any
$y>0$, the same is true if we use only complex Gaussians with $\Im(\tau)=y$.
\end{lemma}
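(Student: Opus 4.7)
The plan is to reduce to the case $d=1$, translate the density statement into its Hahn--Banach dual, and then settle that dual by combining a holomorphy argument with the classical density of (rescaled) Hermite functions. By Lemma \ref{lem:Schwartz+radial}, the natural isomorphism $\Schw_{\textup{rad}}(\R^1)\cong\Schw_{\textup{rad}}(\R^d)$ identifies the even Gaussian $t\mapsto e^{\pi i\tau t^2}$ with the radial Gaussian $x\mapsto e^{\pi i\tau|x|^2}$, so it suffices to work on $\R$. By Hahn--Banach, density of these Gaussians in $\Schw_{\textup{rad}}(\R)$ amounts to showing that every continuous linear functional $\Lambda$ on $\Schw_{\textup{rad}}(\R)$ vanishing on $\{t\mapsto e^{\pi i\tau t^2}:\Im(\tau)=y\}$ is identically zero.

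Set $g_\tau(t)=e^{\pi i\tau t^2}$ and $H(\tau):=\Lambda(g_\tau)$ for $\tau\in\Hyp$. I first verify that $\tau\mapsto g_\tau$ is a holomorphic $\Schw(\R)$-valued map. Fixing $\tau_0\in\Hyp$ and $h\in\C$ with $|h|<\Im(\tau_0)/2$, the elementary bound $|e^{\pi i h t^2}-1|\le \pi|h|t^2\,e^{\pi|h|t^2}$ combined with the Gaussian decay $|g_{\tau_0}(t)|=e^{-\pi\Im(\tau_0)t^2}$ allows one to apply dominated convergence seminorm-by-seminorm to conclude
\[
\frac{g_{\tau_0+h}-g_{\tau_0}}{h}\;\longrightarrow\;\pi i\,t^2\,g_{\tau_0}\quad\text{in }\Schw(\R)\text{ as }h\to 0,
\]
with similar estimates for each radial derivative. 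Composing with the continuous functional $\Lambda$, the function $H$ is holomorphic on $\Hyp$. By hypothesis $H$ vanishes on the horizontal line $\{\Im(\tau)=y\}$, whose points are accumulation points in $\Hyp$; the identity theorem therefore forces $H\equiv 0$ on $\Hyp$.

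Iterating differentiation under $\Lambda$ gives $H^{(n)}(\tau)=\Lambda\bigl((\pi i t^2)^n g_\tau\bigr)$, and evaluating at $\tau=iy$ yields
\[
0\;=\;H^{(n)}(iy)\;=\;(\pi i)^n\,\Lambda\!\left(t\mapsto t^{2n}e^{-\pi y\,t^2}\right)\quad\text{for every }n\ge 0.
\]
Hence $\Lambda$ annihilates the linear span of $\{t^{2n}e^{-\pi y t^2}\}_{n\ge 0}$. This span is dense in $\Schw_{\textup{rad}}(\R)$: the dilation $t\mapsto\sqrt{2\pi y}\,t$ is a topological automorphism of $\Schw(\R)$ preserving the even subspace, under which these functions rescale to $(2\pi y)^n t^{2n}e^{-t^2/2}$, whose linear span coincides (via a triangular change of basis in $n$) with that of the even Hermite functions $H_{2n}(t)e^{-t^2/2}$; and the Hermite functions are a classical Schauder basis of $\Schw(\R)$ whose even-index elements are a Schauder basis of the even subspace. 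Therefore $\Lambda=0$, proving the density claim.

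The only delicate point is the $\Schw(\R)$-valued holomorphy of $\tau\mapsto g_\tau$, which reduces to the dominated-convergence estimate above; the remaining ingredients (Hahn--Banach, the identity theorem, and density of Hermite functions) are standard. I expect this holomorphy verification to be the sole step requiring real care, and it is a short one.
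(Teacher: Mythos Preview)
Your proof is correct and takes a genuinely different route from the paper's. The paper argues constructively: compactly supported smooth radial functions are dense, any such $f$ can be written as $g(|x|^2)e^{-\pi y|x|^2}$ with $g$ smooth and compactly supported on $\R$, one-dimensional Fourier inversion recovers $g(|x|^2)=\int_\R\widehat{g}(t)\,e^{2\pi it|x|^2}\,dt$, and the truncated integrals $\int_{-T}^T$ (and then their Riemann sums) converge to $f$ in $\Schw_{\textup{rad}}(\R^d)$, furnishing explicit finite combinations of Gaussians with $\Im(\tau)=y$. Your argument is dual: after reducing to $d=1$ via Lemma~\ref{lem:Schwartz+radial}, Hahn--Banach converts density into the vanishing of any continuous functional $\Lambda$ annihilating these Gaussians, and holomorphy of $\tau\mapsto\Lambda(g_\tau)$ together with the identity theorem forces $\Lambda(t^{2n}e^{-\pi yt^2})=0$ for all $n$, whence $\Lambda=0$ by the classical density of even Hermite functions in $\Schw_{\textup{rad}}(\R)$. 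The paper's approach is more self-contained and yields explicit approximants; yours is shorter once one imports the Hermite density result as a black box, and the holomorphy-plus-identity-theorem step is an elegant way to pass from a single line of Gaussians to all Gaussian moments.
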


\begin{proof}
Compactly supported functions are dense in $\Schw_{\textup{rad}}(\R^d)$, as
is easily shown by multiplying by a suitable bump function.  Thus it will
suffice to show that compactly supported, smooth, radial functions can be
approximated arbitrarily well by linear combinations of complex Gaussians
with $\Im(\tau)=y$.

Removing a factor of $e^{\pi y |x|^2}$ shows that every compactly supported,
smooth, radial $f$ on $\R^d$ can be written as
\[
f(x) = g(|x|^2)e^{-\pi y |x|^2},
\]
where $g$ is a smooth, compactly supported function on $\R$.  Let
$\widehat{g}$ be its one-dimensional Fourier transform
\[
\widehat{g}(t) = \int_{\R} g(x) e^{-2\pi i t x} \, dx.
\]
Then
\[
g(|x|^2) = \int_{\R} \widehat{g}(t) e^{2\pi i t |x|^2} \, dt = \lim_{T \to \infty} \int_{-T}^T \widehat{g}(t) e^{2\pi i t |x|^2} \, dt
\]
by Fourier inversion, and hence
\[
f(x) = \lim_{T \to \infty} \int_{-T}^T \widehat{g}(t) e^{\pi i(2t+iy) |x|^2} \, dt.
\]
The functions
\begin{equation} \label{eq:integral-approx}
x \mapsto \int_{-T}^T \widehat{g}(t) e^{\pi i(2t+iy) |x|^2} \, dt
\end{equation}
are Schwartz functions that converge to $f$ in $\Schw_{\textup{rad}}(\R^d)$
as $T \to \infty$, because $\widehat{g}$ is rapidly decreasing and we can
therefore control the radial Schwartz seminorms of the error term
\[
x \mapsto \int_{\R\setminus[-T,T]} \widehat{g}(t) e^{\pi i(2t+iy) |x|^2} \, dt.
\]
Furthermore, for each $T$, equally spaced Riemann sums for
\eqref{eq:integral-approx} converge to this function in
$\Schw_{\textup{rad}}(\R^d)$, by the usual error estimate in terms of the
derivative.  These Riemann sums are linear combinations of complex Gaussians
with $\tau = 2t+iy$ for different values of $t$ in $\R$, as desired.
\end{proof}

\section{Functional equations and the group algebra $\C[\PSL_2(\Z)]$}
\label{sec:interpolation}

\subsection{From interpolation to functional equations and back}
\label{sec:sub:31}

As mentioned in \eqref{eq:FFourier} and \eqref{eq:FtFourier}, we consider the
generating functions
\begin{equation} \label{eq2:FFourier}
F(\tau,x)=\sum_{n \ge n_0} a_n(x)\,e^{2\pi i n \tau}+2\pi i \tau\sum_{n \ge n_0}\sqrt{2n}\, b_n(x)\,e^{2\pi i n \tau}
\end{equation}
and
\begin{equation} \label{eq2:FtFourier}
\widetilde{F}(\tau,x) = \sum_{n \ge n_0} \widetilde{a}_n(x)\,e^{2\pi i n \tau}+2\pi i \tau\sum_{n \ge n_0}\sqrt{2n}\,\, \widetilde{b}_n(x)\,e^{2\pi i n \tau}
\end{equation}
for the interpolation basis, where $x \in \R^d$ and $\tau$ is in the upper
half-plane $\Hyp = \{z \in \C : \Im(z) > 0\}$.  In equations
\eqref{eq:interpolation-identity}--\eqref{eq:2nddiffFt}, we derived
functional equations for $F$ and $\widetilde{F}$ from the existence of an
interpolation basis.  We now show that the converse holds as well: the
existence of a well-behaved solution to the functional equations
\eqref{eq:interpolation-identity}--\eqref{eq:2nddiffFt} implies an
interpolation theorem, regardless of the dimension.

\begin{theorem}\label{thm:FEimpliesIF}
Suppose there exist smooth functions
$F,\widetilde{F}\colon\Hyp\times\R^d\to\C$ such that
\begin{enumerate}
\item $F(\tau,x)$ and $\widetilde{F}(\tau,x)$ are holomorphic in $\tau$,
\item $F(\tau,x)$ and $\widetilde{F}(\tau,x)$ are radial in $x$,
\item\label{new part 3 of theorem F} for all nonnegative integers $k$ and
    $\ell$, the radial derivative $D_x$ with respect to $x$ satisfies the
    uniform bounds
\[
|x|^k \big|D_x^\ell F(\tau,x)\big| <
\alpha_{k,\ell}\Im(\tau)^{-\beta_{k,\ell}}+\gamma_{k,\ell}|\tau|^{\delta_{k,\ell}}
\]
and
\[
|x|^k \big|D_x^\ell\widetilde{F}(\tau,x)\big| <
\alpha_{k,\ell}\Im(\tau)^{-\beta_{k,\ell}}+\gamma_{k,\ell}|\tau|^{\delta_{k,\ell}}
\]
for some nonnegative constants $\alpha_{k,\ell}$, $\beta_{k,\ell}$,
$\gamma_{k,\ell}$, and $\delta_{k,\ell}$,
\item\label{new part 4 of theorem F} in the special case $(k,\ell)=(0,0)$,
\[
\max\mathopen{}\big(|F(\tau,x)|,|\widetilde{F}(\tau,x)|\big)\mathclose{} \le \alpha_{0,0}\Im(\tau)^{-\beta_{0,0}}
\]
for $-1\le \Re(\tau)\le 1$ and $x\in \R^d$, with $\beta_{0,0}>0$, and
\item\label{new part 5 of theorem F} $F$ and $\widetilde{F}$ satisfy the
    functional equations
    \eqref{eq:interpolation-identity}--\eqref{eq:2nddiffFt}, i.e.,
\begin{align*}
F(\tau+2,x)-2F(\tau+1,x)+F(\tau,x)&=0,\\
\widetilde{F}(\tau+2,x)-2\widetilde{F}(\tau+1,x)+\widetilde{F}(\tau,x)&=0, \textup{ and}\\
F(\tau,x)+(i/\tau)^{d/2}\widetilde{F}(-1/\tau,x)&=e^{\pi i \tau |x|^2 }.
\end{align*}
\end{enumerate}
Then $F$ and $\widetilde{F}$ have expansions of the form \eqref{eq2:FFourier}
and \eqref{eq2:FtFourier} with $n_0=1$, for some radial Schwartz functions
$a_n,b_n,\widetilde{a}_n,\widetilde{b}_n$. Moreover, for every radial
Schwartz function $f\colon\R^d\to\R$, the interpolation formula
\[
\begin{split}
f(x) &= \sum_{n=1}^\infty f\big(\sqrt{2n}\big)\,a_n(x)+\sum_{n=1}^\infty f'\big(\sqrt{2n}\big)\,b_n(x)\\
&\quad\phantom{}+\sum_{n=1}^\infty \widehat{f}\big(\sqrt{2n}\big)\,\widetilde{a}_n(x)+\sum_{n=1}^\infty \widehat{f}\,'\big(\sqrt{2n}\big)\,\widetilde{b}_n(x),
\end{split}	
\]
holds, and the right side converges absolutely.  Finally, for fixed $k$ and
$\ell$, the radial seminorms
\begin{align*}
\sup_{x \in \R^d} |x|^k \big|a_n^{(\ell)}(x)\big|, \qquad
&\sup_{x \in \R^d} |x|^k \big|b_n^{(\ell)}(x)\big|,\\
\sup_{x \in \R^d} |x|^k \big|\widetilde{a}_n^{(\ell)}(x)\big|, \qquad
&\sup_{x \in \R^d} |x|^k \big|\widetilde{b}_n^{(\ell)}(x)\big|
\end{align*}
all grow at most polynomially in $n$.

Furthermore, $a_1 = \widetilde{a}_1 = b_1 = \widetilde{b}_1=0$ if and only if
$F(\tau,x)$ and $\widetilde{F}(\tau,x)$ are $o\big(e^{-2\pi \Im(\tau)}\big)$
as $\Im(\tau)\rightarrow \infty$ in the strip $-1\le \Re(\tau) \le 1$ with
$x$ fixed.
\end{theorem}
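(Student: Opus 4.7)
The plan is to recover $a_n, b_n, \widetilde{a}_n, \widetilde{b}_n$ as Fourier-type coefficients in $\tau$ of $F$ and $\widetilde{F}$, verify they are Schwartz functions with polynomial-in-$n$ growth of their radial seminorms, and then deduce the interpolation formula for all radial Schwartz functions by continuity from the dense subspace of complex Gaussians provided by Lemma~\ref{lemma:dense}.

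Setting $H(\tau,x) := F(\tau+1,x)-F(\tau,x)$ and $G(\tau,x) := F(\tau,x) - \tau H(\tau,x)$, the difference equation in hypothesis~(5) implies that both $G$ and $H$ are holomorphic and $1$-periodic in $\tau$, so they admit $q$-expansions $G=\sum_n g_n(x)q^n$, $H=\sum_n h_n(x)q^n$ with $q=e^{2\pi i\tau}$. The Fourier integral $h_n(x)=\int_0^1 H(\sigma+it,x)e^{-2\pi i n(\sigma+it)}\,d\sigma$, combined with the bound $|H|\le 2\alpha_{0,0}\Im(\tau)^{-\beta_{0,0}}$ (hypothesis~(4) applied to $\tau$ and $\tau+1$, then extended by 1-periodicity), gives $|h_n(x)|\le 2\alpha_{0,0}t^{-\beta_{0,0}}e^{2\pi nt}$ for any $t>0$; letting $t\to\infty$ forces $h_n\equiv 0$ for $n\le 0$, where $\beta_{0,0}>0$ is needed to handle $n=0$. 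Consequently $|H(\tau,x)|=O_x(e^{-2\pi\Im(\tau)})$, so $\tau H\to 0$ and hence $G=F-\tau H\to 0$ as $\Im(\tau)\to\infty$ on the strip, which by the same integration argument forces $g_n\equiv 0$ for $n\le 0$. Setting $a_n:=g_n$ and $b_n:=h_n/(2\pi i\sqrt{2n})$ (and analogously $\widetilde{a}_n,\widetilde{b}_n$ from $\widetilde{F}$) then produces the expansions \eqref{eq2:FFourier} and \eqref{eq2:FtFourier} with $n_0=1$; these are radial by hypothesis~(2).

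The key technical step is the polynomial-in-$n$ growth of the radial seminorms. Differentiating the integral formula, using the identity $G=(1+\tau)F(\tau,\cdot)-\tau F(\tau+1,\cdot)$ and hypothesis~(3), yields $|x|^k|D_x^\ell G(\sigma+it,x)|\le C_{k,\ell}(1+t)(t^{-\beta_{k,\ell}}+(1+t)^{\delta_{k,\ell}})$ for $\sigma\in[0,1]$. The choice $t=1/n$ balances the exponential weight $e^{2\pi nt}$ against the singularity $t^{-\beta_{k,\ell}}$ and yields $\sup_{x\in\R^d} |x|^k|D_x^\ell g_n(x)|=O_{k,\ell}(n^{\beta_{k,\ell}+1})$; the same argument handles $h_n$, $b_n$ (the $\sqrt{2n}$ denominator only improves the bound), and $\widetilde{a}_n,\widetilde{b}_n$. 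In particular all four sequences consist of Schwartz functions with each radial seminorm bounded polynomially in~$n$.

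For the interpolation formula, substitute the Gaussian $f(x)=e^{\pi i\tau_0|x|^2}$ with $\tau_0\in\Hyp$: one computes $f\big(\sqrt{2n}\big)=e^{2\pi in\tau_0}$ and $f'\big(\sqrt{2n}\big)=2\pi i\tau_0\sqrt{2n}\,e^{2\pi in\tau_0}$, with analogous expressions at $-1/\tau_0$ for $\widehat{f}$ and $\widehat{f}\,'$. Plugging these into the right side of the interpolation formula and recognizing the defining series \eqref{eq2:FFourier} and \eqref{eq2:FtFourier} exhibits it as $F(\tau_0,x)+(i/\tau_0)^{d/2}\widetilde{F}(-1/\tau_0,x)$, which equals $e^{\pi i\tau_0|x|^2}=f(x)$ by hypothesis~(5). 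Combined with the rapid decay of Schwartz point values, the polynomial-in-$n$ bounds from the previous step show that both sides of the interpolation formula are continuous linear functionals of $f$ in the Schwartz topology (and the right side converges absolutely); since they agree on the dense set of Gaussians of any fixed positive imaginary part, they agree on all of $\Schw_{\textup{rad}}(\R^d)$. For the final ``moreover'' claim: if $a_1,b_1,\widetilde{a}_1,\widetilde{b}_1$ all vanish then \eqref{eq2:FFourier} and \eqref{eq2:FtFourier} start at $n=2$, giving $F,\widetilde{F}=O\big(|\tau|e^{-4\pi\Im(\tau)}\big)=o\big(e^{-2\pi\Im(\tau)}\big)$; conversely, if $F(\tau,x)=o\big(e^{-2\pi\Im(\tau)}\big)$ on the strip then $e^{-2\pi i\tau}F(\tau,x)\to 0$ as $\Im(\tau)\to\infty$, and the exponentially decaying $n\ge 2$ tail leaves $a_1(x)+2\pi i\tau\sqrt{2}\,b_1(x)\to 0$, which forces first $b_1(x)=0$ (killing the linear-in-$\tau$ term) and then $a_1(x)=0$. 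The main obstacle throughout is the parameter balancing in the third paragraph: these polynomial-in-$n$ seminorm bounds are precisely what subsequently license both the absolute convergence of the interpolation series and the continuity-based density argument.
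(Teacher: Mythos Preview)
Your proof is correct and follows essentially the same approach as the paper: you extract the coefficients $a_n,b_n$ (and their tilde analogues) from the periodic functions $G=F-\tau H$ and $H=F(\tau+1,\cdot)-F(\tau,\cdot)$, use hypothesis~(4) to kill the nonpositive modes, use the Fourier integral with the choice $t=1/n$ together with hypothesis~(3) to obtain the polynomial seminorm bounds, and then pass from Gaussians to all radial Schwartz functions via Lemma~\ref{lemma:dense} and continuity. The paper's proof is organized identically, with only cosmetic differences in presentation.
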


This last statement concerns starting the interpolation formula at $n_0=2$,
which Theorem~\ref{theorem:interpolation} asserts is the case for $d=24$. The
separate condition \eqref{new part 4 of theorem F} is important for ruling
out a contribution from $n=0$ in the interpolation formula; the restriction
to the strip $-1\le \Re(\tau)\le 1$ is because generic solutions to the
recurrences in part~\eqref{new part 5 of theorem F} grow linearly in
$\Re(\tau)$ (see \eqref{periodicplustauperiodic}).

\begin{proof}
We begin by obtaining the expansion of $F$. The difference
\[
F(\tau+1,x)-F(\tau,x)
\]
is holomorphic in $\tau$ and invariant under $\tau \mapsto \tau+1$, by the
functional equation
\[
F(\tau+2,x)-2F(\tau+1,x)+F(\tau,x)=0.
\]
Thus, for each $x$ there is a holomorphic function $g_x$ on the punctured
disk $\{z \in \C : 0 < |z| < 1\}$ such that
\[
F(\tau+1,x)-F(\tau,x) = g_x(e^{2\pi i \tau}),
\]
because $\tau \mapsto e^{2\pi i \tau}$ is a covering map from $\Hyp$ to the
punctured disk. Furthermore, it follows from part \eqref{new part 4 of
theorem F} of the hypotheses that
\[
\lim_{z \to 0} g_x(z)=0,
\]
and thus $g_x$ extends to a holomorphic function that vanishes at $0$. By
taking the Taylor series of $g_x$ about $0$, we obtain coefficients $b_n(x)$
for $n \ge 1$ such that
\begin{equation} \label{eq:bcoeff}
F(\tau+1,x)-F(\tau,x) = 2\pi i \sum_{n \ge 1}\sqrt{2n}\, b_n(x)\,e^{2\pi i n \tau}.
\end{equation}
To obtain $a_n(x)$, we remove the $b_n(x)$ terms and instead look at
\[
F(\tau,x) - \tau\big(F(\tau+1,x)-F(\tau,x)\big),
\]
which is again holomorphic in $\tau$ and invariant under $\tau \mapsto
\tau+1$.  The parenthetical expression decays exponentially as
$\Im(\tau)\rightarrow \infty$ by \eqref{eq:bcoeff}, so the bound in
part~\eqref{new part 4 of theorem F} again yields coefficients $a_n(x)$ for
$n \ge 1$ such that
\begin{equation} \label{eq:acoeff}
F(\tau,x) - \tau\big(F(\tau+1,x)-F(\tau,x)\big) = \sum_{n \ge 1} a_n(x)\,e^{2\pi i n \tau}.
\end{equation}
Thus,
\begin{equation}\label{periodicplustauperiodic}
F(\tau,x)=\sum_{n \ge 1} a_n(x)\,e^{2\pi i n \tau}+2\pi i \tau\sum_{n \ge 1}\sqrt{2n}\, b_n(x)\,e^{2\pi i n \tau}.
\end{equation}
Because of the symmetry of the hypotheses, the case of $\widetilde{F}$ works
exactly the same way, with coefficients $\widetilde{a}_n(x)$ and
$\widetilde{b}_n(x)$.  The assertion at the end of the theorem statement
about when $a_1 = \widetilde{a}_1 = b_1 = \widetilde{b}_1=0$ is then an
immediate consequence of formula~\eqref{periodicplustauperiodic} and its
counterpart for $\widetilde{F}$.

To check that the coefficients are radial Schwartz functions, we note that
for any $y>0$,
\begin{equation} \label{eq:anformula}
a_n(x) = \int_{-1+iy}^{iy} \left(F(\tau,x)
- \tau\big(F(\tau+1,x)-F(\tau,x)\big)\right) e^{-2\pi i n \tau} \, d\tau
\end{equation}
and
\begin{equation} \label{eq:bnformula}
b_n(x) = \frac{1}{2\pi i \sqrt{2n}} \int_{-1+iy}^{iy}
\big(F(\tau+1,x)-F(\tau,x)\big) e^{-2\pi i n \tau} \, d\tau
\end{equation}
by orthogonality using \eqref{eq:acoeff} and \eqref{eq:bcoeff}. We can take
radial derivatives in $x$ under the integral sign, because all the
derivatives are continuous. If we do so and apply part \eqref{new part 3 of
theorem F} of the hypotheses, we find that the radial seminorms
\[
\sup_{x \in \R^d} |x|^k \big|a_n^{(\ell)}(x)\big| \quad\textup{and}\quad \sup_{x \in \R^d} |x|^k \big|b_n^{(\ell)}(x)\big|
\]
are all finite, for any $k$, $\ell$, and $n$.  Thus, $a_n$ and $b_n$ are
Schwartz functions (see Lemma~\ref{lem:Schwartz+radial}). Furthermore, if we
take $y=1/n$ and integrate over the straight line from $-1+iy$ to $iy$, we
find that these radial seminorms of $a_n$ and $b_n$ grow at most polynomially
in $n$ for each $k$ and $\ell$. By symmetry, the same holds for
$\widetilde{a}_n$ and $\widetilde{b}_n$ as well.

These estimates imply that the sum
\[
\begin{split}
&\sum_{n=1}^\infty f\big(\sqrt{2n}\big)\,a_n(x)+\sum_{n=1}^\infty f'\big(\sqrt{2n}\big)\,b_n(x)\\
&\phantom{}+\sum_{n=1}^\infty \widehat{f}\big(\sqrt{2n}\big)\,\widetilde{a}_n(x)+\sum_{n=1}^\infty \widehat{f}\,'\big(\sqrt{2n}\big)\,\widetilde{b}_n(x),
\end{split}	
\]
converges absolutely whenever $f$ is a radial Schwartz function, and that
this formula defines a continuous linear functional on
$\Schw_{\textup{rad}}(\R^d)$.

All that remains is to prove the interpolation formula. Fix $x_0 \in \R^d$,
and define the functional $\Lambda$ on $\Schw_{\textup{rad}}(\R^d)$ by
\[
\begin{split}
\Lambda(f) &= \sum_{n=1}^\infty f\big(\sqrt{2n}\big)\,a_n(x_0)+\sum_{n=1}^\infty f'\big(\sqrt{2n}\big)\,b_n(x_0)\\
&\quad\phantom{}+\sum_{n=1}^\infty \widehat{f}\big(\sqrt{2n}\big)\,\widetilde{a}_n(x_0)+\sum_{n=1}^\infty \widehat{f}\,'\big(\sqrt{2n}\big)\,\widetilde{b}_n(x_0)\\
&\quad\phantom{}-f(x_0),
\end{split}	
\]
so that the interpolation formula for $x_0$ is equivalent to $\Lambda=0$.
Because $\Lambda$ is continuous, it suffices to prove that $\Lambda(f)$
vanishes when $f$ is a complex Gaussian, i.e., $f(x) = e^{\pi i \tau|x|^2}$
with $\tau \in \Hyp$, by Lemma~\ref{lemma:dense}.  This condition amounts to
the function equation
\[
F(\tau,x_0)+(i/\tau)^{d/2}\widetilde{F}(-1/\tau,x_0)=e^{\pi i \tau |x_0|^2 },
\]
because $\widehat{f}(x) = (i/\tau)^{d/2} e^{\pi i (-1/\tau) |x|^2}$.  Thus,
the interpolation formula holds for all radial Schwartz functions, as
desired.
\end{proof}

Theorem~\ref{thm:FEimpliesIF} reduces Theorem~\ref{theorem:interpolation} to
constructing $F$ and $\widetilde{F}$, but the only hint it gives for how to
do so is the functional equations they must satisfy.  The rest of
Sections~\ref{sec:interpolation} and~\ref{sec:analysis} consists of a
detailed study of these functional equations, in terms of the right action of
$\PSL_2(\Z)$ via the slash operator $|^\tau_{d/2}$ in the $\tau$ variable
(assuming $d/2$ is an even integer). Using the standard generators $S$ and
$T$ from \eqref{SandTdef}, the equation
\[
F(\tau,x)+(i/\tau)^{d/2}\widetilde{F}(-1/\tau,x)=e^{\pi i \tau |x|^2 }
\]
expresses $F$ in terms of $\widetilde{F} |^\tau_{d/2} S$ and vice versa
(because $S^2=I$).  It therefore suffices to construct $F$, from which we can
obtain $\widetilde{F}$.  The remaining functional equations are best stated
in terms of the linear extension of the slash operator action
\eqref{slashnotation} of $\PSL_2(\Z)$ to the group algebra
\[
R = \C[\PSL_2(\Z)]
\]
of finite formal linear combinations of elements of
$\PSL_2(\Z)$. The equation
\[
F(\tau+2,x)-2F(\tau+1,x)+F(\tau,x) = 0
\]
says $F$ is annihilated by the element $(T-I)^2 = T^2 - 2T + I$ of $R$, while
\[
\widetilde{F}(\tau+2,x)-2\widetilde{F}(\tau+1,x)+\widetilde{F}(\tau,x)=0
\]
specifies the action of $S(T-I)^2$ on $F$ (see \eqref{rerestated1.6to1.8}
below). Thus, the functional equations specify the action of the right ideal
${\mathcal I} = (T-I)^2 \cdot R + S (T-I)^2 \cdot R$ on $F$. This ideal does
not amount to all of $R$, and in fact $\dim_\C (R/{\mathcal I})=6$, as we
will see in Proposition~\ref{prop:ideals2}. (For simplicity we write
$R/\mathcal{I}$ rather than $\mathcal{I} \backslash R$, despite the fact that
$\mathcal{I}$ is a right ideal.) To make further progress, we must understand
the structure of $\mathcal I$ and the action of $\PSL_2(\Z)$ on the
six-dimensional vector space $R/{\mathcal I}$.

\subsection{A six-dimensional representation of $\slz$}\label{sec:sub:6dimlrepn}

Many of our arguments use facts about a particular six-dimensional
representation $\sigma$ of $\slz$, which we collect here for later reference.
We will see in \eqref{eq:sigmaVmeaning} that this representation describes
the action of $\PSL_2(\Z)$ on $R/{\mathcal I}$.

Recall that $\PSL_2(\Z)$ has the subgroup $\overline{\Gamma(2)}$ of
index~$6$, which is freely generated by $T^2 = \bigl(\begin{smallmatrix} 1 &
2 \\ 0 & 1
\end{smallmatrix}\bigr)$ and $ST^2S = \bigl(\begin{smallmatrix} -1 & 0 \\ 2 &
-1
\end{smallmatrix}\bigr)$.  The following
lemma gives some standard bounds on the length of a word in these generators
by the size of the matrix, in a way which will be useful for later
applications such as Proposition~\ref{prop:propagationandextension}. It
follows from work of Eichler \cite{Ei}, but we give a direct proof. The idea
of column domination that appears in part~(1) dates back at least to Markov's
1954 book on algorithms \cite[Chapter~VI \S10 2.5]{Markov}.

Let $\|\gamma\|_{\textup{Frob}}=(\operatorname{Tr}(\gamma\gamma^t))^{1/2}$
denote the Frobenius norm of $\gamma\in\SL_2(\R)$, i.e.,
\[
\left\|\begin{pmatrix}
a &b \\
c & d
\end{pmatrix}\right\|_{\textup{Frob}}^2 = a^2+b^2+c^2+d^2.
\]
We apply this norm also to elements of $\PSL_2(\R)$, because
$\|\gamma\|_{\textup{Frob}} = \|{-\gamma}\|_{\textup{Frob}}$. Recall that it
is submultiplicative: $\|\gamma \gamma'\|_{\textup{Frob}} \le \|\gamma\|_{\textup{Frob}}  \| \gamma'\|_{\textup{Frob}}$
for all $\gamma$ and $\gamma'$.

\begin{lemma}\label{lem:columndomination}
Let $\gamma_1=T^2$ and $\gamma_2=ST^2S$, so that every element $\gamma\in
\overline{\Gamma(2)} \subseteq \PSL_2(\Z)$ has a unique expression as a
finite reduced word $\gamma_1^{e_1}\gamma_2^{f_1}\gamma_1^{e_2}\cdots$, with
each $e_i,f_i\in\Z\setminus\{0\}$ except perhaps $e_1=0$.
\begin{enumerate}
\item (Column domination property) The second column of $\gamma$ has
    strictly greater Euclidean norm than the first column if and only if
    $\gamma$'s reduced word ends in a nonzero power of $\gamma_1=T^2$.
\item The Frobenius norm of $\gamma$ satisfies
\[
|e_1|+|f_1|+|e_2|+\cdots \le \|\gamma\|_{\textup{Frob}}^2\le
(2+4e_1^2)(2+4f_1^2)(2+4e_2^2)\cdots.
\]
\item The initial subwords
\[
\gamma_1^{\operatorname{sgn}(e_1)},\gamma_1^{2\operatorname{sgn}(e_1)},\ldots,\gamma_1^{e_1},
\gamma_1^{e_1}\gamma_2^{\operatorname{sgn}(f_1)},\dots,\gamma_1^{e_1}\gamma_2^{f_1},\dots,\gamma
\]
of the reduced word of $\gamma$ have strictly increasing Frobenius norms.
\end{enumerate}
\end{lemma}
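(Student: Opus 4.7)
My plan is to treat parts (1), (3), and (2) in that order, with (3) using (1) and (2) following from (3). The central observation is that right-multiplication by $\gamma_1^e$ and $\gamma_2^f$ acts on the columns $c_1,c_2$ of a matrix by simple shear operations: in $\PSL_2(\Z)$ we may represent $\gamma_1^e$ by $\bigl(\begin{smallmatrix} 1 & 2e \\ 0 & 1 \end{smallmatrix}\bigr)$ and $\gamma_2^f$ by $\bigl(\begin{smallmatrix} 1 & 0 \\ -2f & 1 \end{smallmatrix}\bigr)$, so right-multiplying by $\gamma_1^e$ sends $(c_1,c_2)\mapsto(c_1,\,c_2+2e c_1)$, while right-multiplying by $\gamma_2^f$ sends $(c_1,c_2)\mapsto(c_1-2f c_2,\,c_2)$.

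For (1), I would induct on the length of the reduced word. The base case $\gamma=I$ has $\|c_1\|=\|c_2\|$, and each of $\gamma_1^{\pm 1}, \gamma_2^{\pm 1}$ modifies exactly one column. If $\gamma$ ends in $\gamma_2$, so by induction $\|c_1\|>\|c_2\|$, and we append $\gamma_1^e$ with $e\ne 0$, the triangle inequality gives $\|c_2+2e c_1\|\ge 2|e|\,\|c_1\|-\|c_2\|\ge 2\|c_1\|-\|c_2\|>\|c_1\|$, exactly what is needed. Symmetric arguments handle appending $\gamma_2^f$ after a $\gamma_1$-ending word, and also the initial extension from the identity.

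For (3), a direct calculation shows that right-multiplying by $\gamma_1^\epsilon$ ($\epsilon=\pm 1$) changes $\|\gamma\|_{\textup{Frob}}^2$ by $4(\|c_1\|^2+\epsilon\langle c_1,c_2\rangle)$, and by $\gamma_2^\epsilon$ by $4(\|c_2\|^2-\epsilon\langle c_1,c_2\rangle)$. I would show strict positivity of this quantity by case analysis on the state of $\gamma$ just before the extension. Three subcases arise for $\gamma_1^\epsilon$: starting from the identity is trivial; starting a new $\gamma_1$-block after a $\gamma_2$-block uses (1) to get $\|c_1\|>\|c_2\|$, and then Cauchy--Schwarz (strict because $\det\gamma=\pm 1$ forces linear independence) yields $|\langle c_1,c_2\rangle|<\|c_1\|^2$; extending within a current $\gamma_1^k$ block with $\epsilon=\textup{sgn}(k)$ is the delicate case, where I would write $\gamma=\gamma'\gamma_1^k$ and decompose $c_2=c_2^{(0)}+2k c_1^{(0)}$ in terms of the columns of $\gamma'$, reducing the quantity to $(1+2|k|)\|c_1^{(0)}\|^2+\epsilon\langle c_1^{(0)},c_2^{(0)}\rangle$, which is strictly positive because Cauchy--Schwarz applied to $c_1^{(0)},c_2^{(0)}$ (together with (1) for $\gamma'$, or the base case when $\gamma'=I$) bounds $|\langle c_1^{(0)},c_2^{(0)}\rangle|$ by $\|c_1^{(0)}\|^2$. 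The three subcases for $\gamma_2^\epsilon$ are symmetric.

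Finally, (2) follows quickly. The lower bound comes from (3) together with the observation that $\|\cdot\|_{\textup{Frob}}^2$ is integer-valued on our matrices, so a strictly increasing chain of positive integers through the $|e_1|+|f_1|+\cdots$ subwords (preceded by $\|I\|_{\textup{Frob}}^2=2$) forces $\|\gamma\|_{\textup{Frob}}^2\ge 2+|e_1|+|f_1|+\cdots$. The upper bound is immediate from submultiplicativity of the Frobenius norm, since $\|\gamma_1^e\|_{\textup{Frob}}^2=2+4e^2$ and $\|\gamma_2^f\|_{\textup{Frob}}^2=2+4f^2$. The main obstacle throughout is the within-block subcase of (3): neither part (1) nor a direct Cauchy--Schwarz on the current $c_1,c_2$ suffices, and peeling off one factor of $\gamma_1^k$ to expose the previous block's columns is the crucial maneuver.
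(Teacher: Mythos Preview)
Your proof is correct and follows essentially the same architecture as the paper's: induction on blocks for (1), then (3), with (2) derived from (3) via submultiplicativity and the integrality of $\|\cdot\|_{\textup{Frob}}^2$. The only notable difference is in the execution of (3): where you do a case analysis on the increment $4(\|c_1\|^2+\epsilon\langle c_1,c_2\rangle)$ and, in the within-block case, peel back $\gamma_1^k$ to expose the previous block's columns, the paper instead observes once and for all that $e\mapsto\|\gamma'\gamma_1^e\|_{\textup{Frob}}^2$ is a quadratic in $e$ whose minimum lies strictly between $-1$ and $1$ (this is exactly the Cauchy--Schwarz bound $|\langle c_1^{(0)},c_2^{(0)}\rangle|\le\|c_1^{(0)}\|\|c_2^{(0)}\|\le\|c_1^{(0)}\|^2$ you use), so monotonicity on $e\ge 1$ and $e\le -1$, together with $h(\pm1)>h(0)$, handles all subcases at once. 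Your route is a bit longer but entirely sound; the paper's quadratic observation is a compact way to package the same inequality.
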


Of course the column vectors of an element of $\PSL_2(\Z)$ are defined only
modulo multiplication by $\pm1$, but that suffices for their norms to be well
defined. The bounds in part (2) are not sharp, but they will suffice for our
purposes.

\begin{proof}
Note that conjugating by $S$ maps $\left(\begin{smallmatrix}
a & b \\
c & d
\end{smallmatrix}
\right)$ to $\left(\begin{smallmatrix}
\phantom{-}d & \,-c \\
-b & \,\phantom{-}a
\end{smallmatrix}
\right)$, which interchanges the column norms as well as the generators
$\gamma_1 = T^2$ and $\gamma_2 = ST^2S$.  Because of this symmetry, part~(1)
implies that the first column of $\gamma$ has strictly greater Euclidean norm
than the second column if and only if $\gamma$'s reduced word ends in a
nonzero power of $\gamma_2=ST^2S$, and that only the identity element of
$\overline{\Gamma(2)}$ has columns of the same Euclidean norm.  We will prove
these three statements together by induction on the total number of factors
$\gamma_1^{e_i}$ or $\gamma_2^{f_i}$ in $\gamma$'s reduced word. The base
case of powers $\gamma_1^{e_1}$ and $\gamma_2^{f_1}$ is straightforward. For
the inductive step, by symmetry we reduce to the case where $\gamma$ has the
form
\[
\gamma=\begin{pmatrix} a & b \\
c & d
\end{pmatrix}\begin{pmatrix} 1 & 2e \\
0 & 1
\end{pmatrix}=
\begin{pmatrix}
  a & b+2ea \\
  c & d+2ec
\end{pmatrix},
\]
where $\left(\begin{smallmatrix}
a & b \\
c & d
\end{smallmatrix}
\right)\in \PSL_2(\Z)$ is not the identity element, $e$ is a nonzero integer,
and $a^2+c^2>b^2+d^2$ by the inductive assumption. The Euclidean norm squared
of $\gamma$'s second column is
\[
h(e) :=
b^2+d^2+4e^2(a^2+c^2)+4 e(ab+cd),
\]
and we must show that $h(e) > a^2+c^2$ when $e \ne 0$.  Because $h(e)$ is
quadratic in $e$ and $h(0) < a^2+c^2$, it suffices to show that $h(\pm 1) >
a^2+c^2$.  Indeed,
\begin{align*}
h(\pm 1) & \ge  b^2+d^2+4(a^2+c^2)- 4  \sqrt{a^2+c^2}\sqrt{b^2+d^2}\\
& = \big(2\sqrt{a^2+c^2}-\sqrt{b^2+d^2}\big)^2 > a^2+c^2,
\end{align*}
as desired, where the first inequality follows from the Cauchy-Schwarz
inequality.  This proves part~(1).

The upper bound in part~(2) follows from the submultiplicativity of the
Frobenius norm.  The lower bound follows from part~(3), because there are
$|e_1|+|f_1|+|e_2|+\cdots$ initial subwords.  Finally, part (3) follows from
part~(1) and the fact that $h(e)$ increases for $e\ge 1$ and decreases for $e
\le -1$ (it is a quadratic function of $e$ whose minimum occurs between
$e=-1$ and $e=1$).
\end{proof}

We next introduce some representations of $\slz$.  First, we define the
three-dimensional representation $\rho_3\colon\slz\rightarrow \GL_3(\Z)$ by
the formula
\begin{equation}\label{rho3def}
  \rho_3\begin{pmatrix}
         a &b \\
         c & d
       \end{pmatrix}=\begin{pmatrix}
                 \phantom{-}a^2 & \phantom{-}2ab & -b^2  \\
                 \phantom{-}ac & \phantom{-}ad+bc & -bd  \\
                 -c^2 & -2cd & \phantom{-}d^2
               \end{pmatrix} ;
\end{equation}
it is the restriction of a three-dimensional representation\footnote{There is
a unique irreducible, continuous representation  of $\SL_2(\R)$ of each
finite dimension, up to isomorphism.} of $\SL_2(\R)$ to $\slz$.  We define
$\rho_2\colon\slz\rightarrow \GL_2(\Z)$ by its action on the generators,
\[
  \rho_2(T) = \rho_2\begin{pmatrix}
         1 & 1 \\
         0 & 1
       \end{pmatrix}=\begin{pmatrix}
         -1 & 1 \\
         \phantom{-}0 & 1
       \end{pmatrix} \ \text{and} \
  \rho_2(S) = \rho_2\begin{pmatrix}
         0 & -1 \\
         1 & \phantom{-}0
       \end{pmatrix}=\begin{pmatrix}
         \phantom{-}0 & -1 \\
         -1 & \phantom{-}0
       \end{pmatrix};
\]
its image is a dihedral group of order 6 and its kernel is $\Gamma(2)$, so
$\rho_2$ is just a faithful representation of the dihedral group
$\SL_2(\Z)/\Gamma(2)$. Finally, define the function
$\vec{v}\colon\slz\rightarrow \Z^2$ by $\vec{v}(S)=(0, 0)$ and
$\vec{v}(T)=(1, -1)$ (thought of as row vectors), and then in general by the
cocycle formula
\begin{equation}\label{vecvcocyclelaw}
\vec{v}(\gamma \gamma')=\vec{v}(\gamma)\rho_2(\gamma')+\vec{v}(\gamma');
\end{equation}
to check that this cocycle is well defined, we can define it via
\eqref{vecvcocyclelaw} on the free group generated by $S$ and $T$, and then
check that it annihilates $S^2$ and $(ST)^3$, so that it factors though
$\PSL_2(\Z)$. Then
\[
\rho(\gamma)=\begin{pmatrix}
\rho_3(\gamma) & 0& 0\\
0& 1 & \vec{v}(\gamma) \\
0 & 0 & \rho_2(\gamma) \\
\end{pmatrix}
\]
defines a six-dimensional representation of $\slz$.

Many of our later calculations use a conjugate $\sigma$ of $\rho$ defined by
\begin{equation}\label{sigmaVrhoconjugate}
\sigma(\gamma) = g_{\rho\sigma}^{-1}\rho(\gamma)g_{\rho\sigma},
\end{equation}
where
\[
g_{\rho\sigma}=\left(
\begin{smallmatrix}
\phantom{-}1 &\,\, \phantom{-}0 &\,\, \phantom{-}1 &\,\, -1 &\,\, \phantom{-}0 &\,\, -1 \\
 \phantom{-}1 &\,\, \phantom{-}0 &\,\, \phantom{-}0 &\,\, -1 &\,\, \phantom{-}0 &\,\, \phantom{-}0 \\
 \phantom{-}1 &\,\, -1 &\,\, \phantom{-}0 &\,\, -1 &\,\, \phantom{-}1 &\,\, \phantom{-}0 \\
 \phantom{-}3 &\,\, -1 &\,\, -1 &\,\, \phantom{-}3 &\,\, -1 &\,\, -1 \\
 -2 &\,\, \phantom{-}0 &\,\, \phantom{-}2 &\,\, -2 &\,\, \phantom{-}0 &\,\, \phantom{-}2 \\
 \phantom{-}2 &\,\, -2 &\,\, \phantom{-}0 &\,\, \phantom{-}2 &\,\, -2 &\,\, \phantom{-}0 \\
\end{smallmatrix}
\right),
\]
which is characterized by its values
\[
\sigma(S)=
\left(\begin{smallmatrix}
0&0&0&1&0&0\\
0&0&1&0&0&0\\
0&1&0&0&0&0\\
1&0&0&0&0&0\\
0&0&0&0&0&1\\
0&0&0&0&1&0
\end{smallmatrix}\right)\quad\text{and}\quad
\sigma(T)=
\left(\begin{smallmatrix}
\phantom{-}0&\,\,\phantom{-}1&\,\,\phantom{-}0&\,\,\phantom{-}0&\,\,\phantom{-}0&\,\,\phantom{-}0\\
-1&\,\,\phantom{-}2&\,\,\phantom{-}0&\,\,\phantom{-}0&\,\,\phantom{-}0&\,\,\phantom{-}0\\
\phantom{-}2&\,\,\phantom{-}0&\,\,\phantom{-}0&\,\,\phantom{-}0&\,\,\phantom{-}0&\,\,-1\\
\phantom{-}0&\,\,\phantom{-}0&\,\,\phantom{-}0&\,\,\phantom{-}0&\,\,\phantom{-}1&\,\,\phantom{-}0\\
\phantom{-}0&\,\,\phantom{-}0&\,\,\phantom{-}0&\,\,-1&\,\,\phantom{-}2&\,\,\phantom{-}0\\
\phantom{-}0&\,\,\phantom{-}0&\,\,-1&\,\,\phantom{-}2&\,\,\phantom{-}0&\,\,\phantom{-}0
\end{smallmatrix}\right)
\]
on the generators \eqref{SandTdef}.  See the paragraph after the proof of
Proposition~\ref{prop:ideals2} for more discussion of the role of $\sigma$ in
this paper and its relationship with $\rho$.

The next result shows that the (integral) matrix entries of $\rho(\gamma)$
and $\sigma(\gamma)$ do not grow quickly in terms of those of $\gamma$.

\begin{lemma}\label{lem:polynomiallybounded}
There exist absolute constants $C,N>0$ such that each matrix entry of
$\rho\begin{pmatrix}
a & b \\
c & d
\end{pmatrix}$ and of $\sigma\begin{pmatrix}
a & b \\
c & d
\end{pmatrix}$ is bounded by $C(a^2+b^2+c^2+d^2)^N$ in absolute value.
\end{lemma}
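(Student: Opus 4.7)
The plan is to handle the four block components of $\rho$ separately and then transfer the bound to $\sigma$ using the fixed conjugation. The two diagonal blocks are easy: the formula \eqref{rho3def} shows that every entry of $\rho_3(\gamma)$ is an explicit homogeneous quadratic polynomial in $a,b,c,d$, so $|\rho_3(\gamma)_{ij}|\le 2(a^2+b^2+c^2+d^2)$, while $\rho_2$ factors through $\SL_2(\Z)/\Gamma(2)$, whose image is a dihedral group of order~$6$, so $\|\rho_2(\gamma)\|$ is bounded by an absolute constant.

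The main obstacle is the off-diagonal cocycle $\vec v(\gamma)$, whose growth is controlled only implicitly through the iterated cocycle relation~\eqref{vecvcocyclelaw}. I will first unfold that relation: if $\gamma = g_1g_2\cdots g_n$ is any expression of $\gamma$ as a word in the three letters $S,T,T^{-1}$, then
\[
\vec v(\gamma) \;=\; \sum_{i=1}^{n}\vec v(g_i)\,\rho_2(g_{i+1}g_{i+2}\cdots g_n).
\]
Since $\vec v(S),\vec v(T),\vec v(T^{-1})$ and all values of $\rho_2$ are universally bounded, one immediately gets $\|\vec v(\gamma)\|\le C_1\,n$, so it suffices to bound the word length $n$ by a polynomial in $\|\gamma\|_{\textup{Frob}}$.

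To bound $n$ polynomially, I will exploit the index-$6$ inclusion $\overline{\Gamma(2)}\subseteq\PSL_2(\Z)$ together with Lemma~\ref{lem:columndomination}. Fix a set $E$ of six coset representatives for $\overline{\Gamma(2)}\backslash\PSL_2(\Z)$, each of which is a short fixed word in $S,T$. Writing $\gamma=\gamma'\epsilon$ with $\gamma'\in\overline{\Gamma(2)}$ and $\epsilon\in E$, sub-multiplicativity of the Frobenius norm gives $\|\gamma'\|_{\textup{Frob}}\le \|\epsilon^{-1}\|_{\textup{Frob}}\,\|\gamma\|_{\textup{Frob}}\le C_2\|\gamma\|_{\textup{Frob}}$. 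Expanding $\gamma'=\gamma_1^{e_1}\gamma_2^{f_1}\gamma_1^{e_2}\cdots$ in the free generators $\gamma_1=T^2$ and $\gamma_2=ST^2S$, part~(2) of Lemma~\ref{lem:columndomination} yields $|e_1|+|f_1|+|e_2|+\cdots\le\|\gamma'\|_{\textup{Frob}}^2$. Substituting $\gamma_1,\gamma_2$ back in terms of $S,T$ blows the word length up by at most a factor of~$4$, so we may take $n\le 4\|\gamma'\|_{\textup{Frob}}^2+\mathcal{O}(1)\le C_3\|\gamma\|_{\textup{Frob}}^2$. Combined with $\|\vec v(\gamma)\|\le C_1 n$, this gives $\|\vec v(\gamma)\|\le C\|\gamma\|_{\textup{Frob}}^4$, and assembling the three blocks proves the bound for $\rho$.

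Finally, the bound for $\sigma$ is free: because $\sigma(\gamma)=g_{\rho\sigma}^{-1}\rho(\gamma)g_{\rho\sigma}$ with $g_{\rho\sigma}$ a single invertible integer matrix independent of $\gamma$, every entry of $\sigma(\gamma)$ is a fixed $\C$-linear combination of entries of $\rho(\gamma)$, so the polynomial bound transfers with the same exponent (up to adjusting the constant $C$). The only genuinely nontrivial step is the polynomial control of the word length via Lemma~\ref{lem:columndomination}; once that is in hand, everything else is bookkeeping.
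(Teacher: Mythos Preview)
Your proof is correct and follows essentially the same route as the paper: handle $\rho_3$ by its explicit quadratic formula, handle $\rho_2$ by its finite image, reduce the cocycle $\vec v$ to $\overline{\Gamma(2)}$ via a bounded set of coset representatives, bound the resulting word length by $\|\gamma\|_{\textup{Frob}}^2$ via part~(2) of Lemma~\ref{lem:columndomination}, and transfer everything to $\sigma$ by the fixed conjugation. The paper packages the $\vec v$ step slightly more cleanly by noting that $\Gamma(2)=\ker\rho_2$, so the cocycle relation~\eqref{vecvcocyclelaw} restricted to $\Gamma(2)$ makes $\vec v$ an honest homomorphism to $\Z\oplus\Z$; this gives the bound on $\vec v(\gamma')$ directly in terms of $|e_1|+|f_1|+\cdots$ without the intermediate step of bounding word length in $S,T,T^{-1}$. (Incidentally, your chain of inequalities actually yields $\|\vec v(\gamma)\|\le C\|\gamma\|_{\textup{Frob}}^2$, not $\|\gamma\|_{\textup{Frob}}^4$, but either exponent suffices for the lemma.)
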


The boundedness assertion in this lemma depends on the realization of the
abstract group $\slz$ in integer matrices.  In particular, it implies that
the restrictions of $\rho$ or $\sigma$ to free subgroups of $\slz$ satisfy
the same bound, a fact that is false for general representations of these
subgroups.  For example, the entries of $\sigma(T)^n$ grow only polynomially
in $n$, while exponential growth can occur for other representations of the
subgroup $\langle T \rangle$.

\begin{proof}
The assertions for these two conjugate representations are equivalent. The
representation $\rho_3$ satisfies this boundedness condition by virtue of its
explicit algebraic formula in \eqref{rho3def}, while $\rho_2$ has a finite
image.  Thus it suffices to verify that $\vec{v}(\gamma)$ with
$\gamma=\left(\begin{smallmatrix}
a & b \\
c & d
\end{smallmatrix}\right)$ satisfies the claimed bound.
Furthermore, formula \eqref{vecvcocyclelaw} with $\gamma\in \Gamma(2)$ and
$\gamma'$ one of the six coset representatives for $\Gamma(2)$ shows that it
suffices to prove this last bound for $\gamma\in \Gamma(2)$. Because
$\Gamma(2)$ is the kernel of $\rho_2$, formula \eqref{vecvcocyclelaw}
restricted to $\Gamma(2)$ shows that $\vec{v}$ is a homomorphism from
$\Gamma(2)$ to $\Z\oplus \Z$. Writing $\gamma$ as
$\gamma_1^{e_1}\gamma_2^{f_1}\gamma_1^{e_2}\cdots$ as in the statement of
Lemma~\ref{lem:columndomination}, we see that the entries of
$\vec{v}(\gamma)$ are bounded in absolute value by a constant multiple of
$|e_1|+|f_1|+|e_2|+\cdots$, and the result follows from part~(2) of
Lemma~\ref{lem:columndomination}.
\end{proof}

\subsection{The group algebra $R=\C[\PSL_2(\Z)]$}\label{sec:sub:groupring}

In constructing a solution to the identities
\eqref{eq:interpolation-identity}--\eqref{eq:2nddiffFt}, it is convenient to
use the slash operator action of $R=\C[\PSL_2(\Z)]$. Then
\eqref{eq:interpolation-identity}--\eqref{eq:2nddiffFt} state that
\begin{equation}\label{restated1.6to1.8}
F+i^{d/2} \widetilde{F}|^\tau_{d/2}S = e^{\pi i \tau |x|^2} \quad\text{and}\quad
F|^\tau_{d/2}(T-I)^2=\widetilde{F}|^\tau_{d/2}(T-I)^2=0,
\end{equation}
where $I$ denotes the identity element while $S$ and $T$ are defined by
\eqref{SandTdef}.

This subsection is devoted to studying some properties of $R$ that are used
later in the paper, in particular quotients of the translation action of
$\PSL_2(\Z)$ on $R$. Recall that $\PSL_2(\Z)$ is the free product of the
subgroups $\{I,S\}$ and $\{I,ST,STST\}$. If we write
\[
x=S \quad\text{and}\quad y=ST,
\]
then every element $\gamma$ of $\Gamma=\PSL_2(\Z)=\langle x,y \mid
x^2=y^3=1\rangle$ has a unique reduced expression as a product $w_1w_2\cdots
w_\ell$, where $\ell=\ell(\gamma)$ is the length of the product,  each  $w_j$
is either $x$, $y$, or $y^2$, and the only allowable consecutive pairs $w_j$
and $w_{j+1}$ are
\[
(w_j,w_{j+1})=(x,y),\ (x,y^2),\ (y,x), \text{~or~} (y^2,x).
\]
We extend the notion of length to $R=\C[\Gamma]$ by defining
$\ell(\sum_{\gamma\in\Gamma}c_\gamma \gamma)$ to be the maximum of all
$\ell(\gamma)$ for which $c_\gamma\neq 0$ (otherwise, $\ell(0)=-\infty$).

The order two element $x=S$ acts by left multiplication on $R$, which can be
diagonalized into $\pm 1$ eigenspaces using the decomposition
\begin{equation}\label{diagonalizex}
r=\frac{I+S}{2}r+\frac{I-S}{2}r
\end{equation}
for $r \in R$. In particular,
\begin{equation}\label{diagonalizexcor}
\{r\in R : (S\pm I)r=0\}=(S\mp I)R.
\end{equation}
Similarly we obtain
\begin{equation}\label{diagonalizeycor}
\{r\in R : (y-1)r=0\}=(y^2+y+1)R
\end{equation}
from the idempotent decomposition
\begin{equation*}\label{diagonalizey}
r=\sum_{j=0}^2 \frac{1+e^{2\pi  ij/3} y+e^{4\pi  ij/3}y^2}{3}r
\end{equation*}
into three distinct eigenspaces for left multiplication by $y$.

The equation $(T-I)v=w$ is a discretization of the derivative from
single-variable calculus, and it can be solved using a discretization of the
integral as follows.

\begin{lemma} \label{lemma:T-Iv=w}
Let $w = \sum_{\gamma\in\Gamma}c_\gamma \gamma$ with $c_\gamma \in \C$.  Then
there exists a solution $v \in R$ to
\[
(T-I)v=(xy-1)v=w
\]
if and only if $\sum_{n\in\Z}c_{(xy)^n\gamma}=0$ for all $\gamma$, in which case the unique
solution in $R$ is $v=\sum_{\gamma\in \Gamma}d_\gamma \gamma$ with
$d_{\gamma}=\sum_{n> 0}c_{(xy)^n\gamma}$.
\end{lemma}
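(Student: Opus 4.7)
The plan is to treat left multiplication by $T = xy$ as a shift operator on the basis $\Gamma = \PSL_2(\Z)$ of $R$ and to solve the equation orbit by orbit, exactly as one solves a first-order linear recursion on $\Z$. Writing $v = \sum_\gamma d_\gamma \gamma$ and matching coefficients, the identity $Tv - v = w$ unpacks into the one-step recurrence
\[
d_{T^{-1}\gamma} - d_\gamma = c_\gamma, \qquad \gamma \in \Gamma,
\]
where the $T^{-1}$ arises because left multiplication by $T$ sends the basis element $\gamma$ to $T\gamma$. Since $T$ has infinite order in $\PSL_2(\Z)$, each $T$-orbit on $\Gamma$ is in bijection with $\Z$, and the recurrence decouples into an independent problem on each such orbit.

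Next I would iterate the recurrence in both directions from a chosen base point $\gamma$ on an orbit, obtaining
\[
d_{T^{n}\gamma} = d_\gamma - \sum_{k=1}^{n} c_{T^{k}\gamma} \ (n \geq 1)
\qquad\text{and}\qquad
d_{T^{-m}\gamma} = d_\gamma + \sum_{k=0}^{m-1} c_{T^{-k}\gamma} \ (m \geq 1).
\]
Because $w$ has finite support, the sequence $(c_{T^{k}\gamma})_{k \in \Z}$ vanishes for all but finitely many $k$, and the requirement $v \in R$ forces $d_{T^{n}\gamma} \to 0$ as $n \to \pm\infty$. Letting $n \to +\infty$ pins down $d_\gamma = \sum_{k>0} c_{T^{k}\gamma}$, which is exactly the closed form in the statement, while letting $m \to +\infty$ gives $d_\gamma = -\sum_{k \geq 0} c_{T^{-k}\gamma}$. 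Compatibility of these two expressions for $d_\gamma$ is equivalent to the vanishing of the full orbital sum $\sum_{n \in \Z} c_{T^{n}\gamma}$, which supplies the necessary condition.

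For sufficiency I would run the argument in reverse: assuming the orbital sums vanish, define $d_\gamma := \sum_{n>0} c_{T^{n}\gamma}$, note that on each orbit only finitely many terms contribute, so that $d_{T^{n}\gamma} = 0$ for $|n|$ large, and verify the recurrence by direct substitution. This places $v$ in $R$ and shows $(T-I)v = w$. Uniqueness comes free from the same orbit analysis: any solution of $(T-I)v = 0$ has coefficients constant on each (infinite) $T$-orbit, and finite support then forces those constants to be zero. There is essentially no obstacle here beyond bookkeeping; the only point requiring mild care is the left-versus-right multiplication convention, which is what introduces $T^{-1}$ in the recurrence and explains why the closed-form solution is a sum over $n > 0$ rather than $n < 0$.
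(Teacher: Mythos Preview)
Your proof is correct and follows essentially the same approach as the paper: match coefficients to obtain the recurrence $d_{T^{-1}\gamma} - d_\gamma = c_\gamma$, telescope along each (infinite) $T$-orbit to derive the necessary orbital-sum condition and the closed form $d_\gamma = \sum_{n>0} c_{T^n\gamma}$, and deduce uniqueness from the fact that a finitely supported sequence constant on each orbit must vanish. The paper's version is more terse but identical in substance.
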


\begin{proof}
Suppose $v=\sum_{\gamma\in \Gamma}d_\gamma \gamma$.  Then $(xy-1)v=w$ means
$c_\gamma = d_{(xy)^{-1} \gamma} - d_\gamma$ for all $\gamma$, and hence
$\sum_{n\in\Z}c_{(xy)^n\gamma}=0$ via telescoping.  Conversely, if
$\sum_{n\in\Z}c_{(xy)^n\gamma}=0$ for all $\gamma$, then only finitely many of the numbers
$d_{\gamma}:=\sum_{n> 0}c_{(xy)^n\gamma}$ are nonzero, and they satisfy
$d_{(xy)^{-1} \gamma} - d_\gamma = c_\gamma$, as desired.  To see that the
solution is unique, note that $(xy-1)v=0$ implies $d_\gamma =
d_{(xy)^{-1}\gamma}$ for all $\gamma$, which can happen only when $v=0$
because the coefficients have finite support.
\end{proof}

\begin{corollary} \label{cor:noT-1solcrit}
Suppose $w = \sum_\gamma c_\gamma \gamma$ and there exists $\gamma \in
\Gamma$ such that $c_{(xy)^n\gamma}\neq 0$ for exactly one $n \in \Z$.  Then
$w \notin (T-I)R$.
\end{corollary}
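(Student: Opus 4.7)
The plan is to derive this directly from Lemma~\ref{lemma:T-Iv=w} by contraposition. The lemma characterizes when $w = \sum_\gamma c_\gamma \gamma$ lies in $(T-I)R$: namely, exactly when for every coset representative $\gamma$ of the cyclic subgroup $\langle xy \rangle$, the sum $\sum_{n \in \Z} c_{(xy)^n \gamma}$ along the corresponding left coset vanishes.

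First I would observe that the hypothesis supplies a single $\gamma_0 \in \Gamma$ for which the sum $\sum_{n \in \Z} c_{(xy)^n \gamma_0}$ consists of exactly one nonzero summand (and hence is itself nonzero). Since the lemma's criterion requires \emph{every} such coset sum to vanish in order for $w$ to lie in $(T-I)R$, the existence of this single offending coset is enough to conclude $w \notin (T-I)R$.

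The only minor point to verify is that the coset sum is actually well defined as a finite sum, which is automatic because $w \in R$ has finite support, so only finitely many $c_{(xy)^n \gamma_0}$ can be nonzero. There is no real obstacle here; the result is essentially a restatement of the necessity direction of Lemma~\ref{lemma:T-Iv=w}, packaged as a convenient non-solvability test that will later be applied to exhibit explicit elements outside $(T-I)R$.
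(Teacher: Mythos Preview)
Your proposal is correct and matches the paper's approach: the corollary is stated without a separate proof precisely because it is the contrapositive of the necessity direction of Lemma~\ref{lemma:T-Iv=w}, with the single nonzero summand guaranteeing that the coset sum $\sum_{n\in\Z} c_{(xy)^n\gamma_0}$ is nonzero.
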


\begin{lemma} \label{lem:T-IvS+w}
The set of all $v \in R$ for which there exist $w \in R$ satisfying
\[
(T-I)v=(S+I)w
\]
is the right ideal $(y^2-y+1)(x+1)R$.  In other words,
\[
\{v \in R : (xy-1)v \in (x+1)R\} = (y^2-y+1)(x+1)R.
\]
\end{lemma}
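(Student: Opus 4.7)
The plan is to reduce the defining condition to the cleaner statement $(1+y)v \in (1+x)R$, and then invert $(1+y)$ by exploiting the relation $y^3 = 1$.

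First I would rewrite the hypothesis using the characterization \eqref{diagonalizexcor}, which says that $(x+1)R$ equals the kernel of left multiplication by $(1-x)$. Thus $(xy-1)v \in (x+1)R$ is equivalent to $(1-x)(xy-1)v = 0$. A short calculation, using $(1-x)x = -(1-x)$ which follows from $x^2 = 1$, gives
\[
(1-x)(xy-1) \;=\; -(1-x)y - (1-x) \;=\; -(1-x)(1+y),
\]
and invoking \eqref{diagonalizexcor} once more turns the hypothesis into the equivalent condition $(1+y)v \in (1+x)R$. So it suffices to show that $\{v \in R : (1+y)v \in (1+x)R\}$ equals $(y^2-y+1)(x+1)R$.

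The linchpin is the one-line identity
\[
(1+y)(y^2-y+1) \;=\; 1 + y^3 \;=\; 2,
\]
which holds because $y^3 = 1$. It says that $\tfrac{1}{2}(y^2-y+1)$ is a two-sided inverse of $(1+y)$ inside the commutative subalgebra $\C[y] \subseteq R$.

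Both inclusions then fall out immediately. For the $\supseteq$ direction, if $v = (y^2-y+1)(x+1)r$ then $(1+y)v = [(1+y)(y^2-y+1)](x+1)r = 2(x+1)r \in (1+x)R$. For the $\subseteq$ direction, if $(1+y)v = (1+x)w$, then multiplying on the left by $(y^2-y+1)$ gives $2v = (y^2-y+1)(1+x)w$, so $v = (y^2-y+1)(x+1)(w/2) \in (y^2-y+1)(x+1)R$. I do not anticipate any real obstacle: once the identity $(1+y)(y^2-y+1) = 2$ is spotted, the proof reduces to a two-line manipulation inside the commutative subalgebra $\C[y]$ combined with the $\pm 1$-eigenspace decomposition of $R$ under left multiplication by $x$.
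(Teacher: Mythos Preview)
Your proof is correct and follows essentially the same approach as the paper: both reduce the condition $(xy-1)v \in (x+1)R$ to $(y+1)v \in (x+1)R$ and then invert $(1+y)$ via $(1+y)(y^2-y+1)=2$. The only cosmetic difference is in the reduction step---the paper uses the one-line identity $xy-1 = (x+1)y - (y+1)$ directly, whereas you pass through the eigenspace characterization \eqref{diagonalizexcor}; both routes arrive at the same intermediate statement and the same invertibility argument.
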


\begin{proof}
Because $xy-1 = (x+1)y - (y+1)$, we see that $(xy-1)v \in (x+1)R$ if and only
if $(y+1)v \in (x+1)R$.  However, $y+1$ is invertible in the ring $R$, with
multiplicative inverse $(y^2-y+1)/2$, because $y^3=1$.  Thus, $(y+1)v \in
(x+1)R$ is equivalent to $v \in (y^2-y+1)(x+1)R$, as desired.
\end{proof}

\begin{lemma} \label{lem:T-IvS-w}
The set of all $v \in R$ for which there exist $w \in R$ satisfying
\[
(T-I)v=(S-I)w
\]
is the right ideal $(y^2+y+1)R$.
\end{lemma}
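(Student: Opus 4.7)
The plan is to establish both inclusions, following the pattern of Lemma~\ref{lem:T-IvS+w}.

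For the ``if'' direction, I would exploit the identity
\[
(xy-1)(y^2+y+1) \;=\; (x-1)(y^2+y+1) \qquad \text{in } R,
\]
which is a one-line computation using $y^3=1$: both sides expand to $x + xy + xy^2 - 1 - y - y^2$. Hence, whenever $v = (y^2+y+1)s$, the choice $w = (y^2+y+1)s$ satisfies $(xy-1)v = (x-1)w$.

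For the reverse inclusion, the key observation is the factorization
\[
xy - 1 \;=\; (x-1)y + (y-1).
\]
So if $(xy-1)v = (x-1)w$, then $(y-1)v = (x-1)(w - yv) \in (x-1)R$. Since also $(y-1)v \in (y-1)R$ tautologically, I have placed $(y-1)v$ in the intersection
\[
(x-1)R \,\cap\, (y-1)R.
\]
If this intersection is $\{0\}$, then $(y-1)v = 0$, so $v \in \ker L_{y-1} = (y^2+y+1)R$ by the identification recorded after \eqref{diagonalizexcor}, and the lemma follows.

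The one nontrivial step is therefore to show $(x-1)R \cap (y-1)R = \{0\}$. I would use the eigenspace reformulations $(x-1)R = \ker L_{x+1}$ from \eqref{diagonalizexcor} and $(y-1)R = \ker L_{y^2+y+1}$, the latter being immediate from the idempotent decomposition into three $L_y$-eigenspaces given in Section~\ref{sec:sub:groupring}. For any nonzero $r = \sum_\gamma r_\gamma\,\gamma$ in the intersection, these translate into the pointwise relations
\[
r_\alpha + r_{x\alpha} \;=\; 0 \qquad\text{and}\qquad r_\alpha + r_{y\alpha} + r_{y^2\alpha} \;=\; 0
\]
for every $\alpha \in \Gamma$. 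Choose $\alpha$ in the finite support of $r$ of maximum reduced-word length $\ell(\alpha)$, with respect to the free-product normal form $\Gamma = \langle x\rangle * \langle y\rangle$. If the reduced form of $\alpha$ does not begin with $y$ or $y^2$, then $y\alpha$ and $y^2\alpha$ both have length $\ell(\alpha)+1$, so by maximality their $r$-coefficients vanish and the second relation forces $r_\alpha = 0$, a contradiction. Hence $\alpha$ begins with some $y^k$, and then $x\alpha$ has length $\ell(\alpha)+1$; but the first relation gives $r_{x\alpha} = -r_\alpha \neq 0$, again contradicting maximality.

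The main obstacle is precisely this last combinatorial step, since both of the algebraic identities used above are immediate; once the triviality of $(x-1)R \cap (y-1)R$ is in hand, both halves of the lemma fall out in one line each.
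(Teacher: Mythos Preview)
Your proof is correct and follows essentially the same strategy as the paper: both use the identity $(xy-1)(y^2+y+1)=(x-1)(y^2+y+1)$ for one inclusion, reduce the other to $(y-1)v\in(x-1)R$, prove $(x-1)R\cap(y-1)R=\{0\}$ via a length argument in the free product, and conclude $v\in(y^2+y+1)R$. The only difference is tactical: for the intersection step the paper normalizes a representative $u$ of $(x-1)w$ not beginning with $x$ and left-multiplies by $y^2+y+1$ before comparing lengths, whereas you work directly with the coefficient constraints $r_\alpha+r_{x\alpha}=0$ and $r_\alpha+r_{y\alpha}+r_{y^2\alpha}=0$ coming from the kernel descriptions---a slightly more streamlined execution of the same idea.
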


\begin{proof}
The identity $(xy-1)(y^2+y+1)=(x-1)(y^2+y+1)$ shows that any element of that
ideal provides a solution.  Conversely, if $(xy-1)v=(x-1)w$, then multiplying
by $x$ shows that
\[
(1-x)w=(y-x)v=(y-1)v-(x-1)v
\]
and hence $(y-1)v\in(x-1)R$. We will show that $(x-1)R \cap (y-1)R=\{0\}$,
from which it follows that $(y-1)v=0$ and thus $v\in (y^2+y+1)R$ by
\eqref{diagonalizeycor}, as needs to be shown.  If $(y-1)v\in (x-1)R$, we may
write $yv-v=xu-u$, for some $u=\sum_\gamma c_\gamma \gamma$ for which
$c_\gamma=0$ if $\gamma$'s reduced word begins with $x$ (such $\gamma$ can be
replaced by $-x\gamma$, which does not start with $x$, because
$(x-1)(-x)=x-1$). Multiplying both sides by $y^2+y+1$ on the left annihilates
$(y-1)v$ and yields $y^2xu+yxu=y^2u+y u -xu+ u$.  If $u$ is nonzero, then the
right side of this equation has length at most $\ell(u)+1$, while the left
side has terms having length $\ell(u)+2$ which cannot cancel each other.
Thus $u=0$, proving that $(x-I)R\cap (y-I)R=\{0\}$.
\end{proof}

\subsubsection{Some ideals of $R = \C[\PSL_2(\Z)]$}\label{sec:sub:sub:freeideals}

Define the right ideals
\begin{equation}\label{freeideals}
\aligned
{\mathcal I} & = (T-I)^2\cdot R + S(T-I)^2\cdot R, \\
{\mathcal I}_+ & = (S+I)\cdot R + (T-I)^2\cdot R, \\
{\mathcal I}_{-} & =  (S-I)\cdot R + (T-I)^2\cdot R, \\
\widetilde{\mathcal I}_+ & = (T-2I+T^{-1}-2S)\cdot R + (I-STS)\cdot R, \text{ and} \\
\widetilde{\mathcal I}_- & = (T-2I+T^{-1}+2S)\cdot R + (I+STS)\cdot R \\
\endaligned
\end{equation}
of $R$.  (We treat $\pm1$ and $\pm$ synonymously in subscripts.) As mentioned
at the end of Section~\ref{sec:sub:31}, $\mathcal{I}$ consists of the
elements of $R$ whose action on the generating function $F$ is determined by
the functional equations from Theorem~\ref{thm:FEimpliesIF}.  The ideals
${\mathcal I}_{\pm}$ play the same role when we decompose $F$ into
eigenfunctions for $S$ (see also part~(3) of Proposition~\ref{prop:ideals2}),
while the ideals $\widetilde{\mathcal I}_\pm$ are characterized by
Proposition~\ref{prop:ideals3} and will be used to describe the residues of
the kernels constructed in Theorem~\ref{thm: K}.  Note in particular that
$\mathcal{I} \subseteq \mathcal{I}_{\pm}$, because $S(T-I)^2 = (S\pm
I)(T-I)^2 \mp (T-I)^2$, and $\mathcal{I}_+ \cap \mathcal{I}_- = \mathcal{I}$,
because $Sr \equiv \pm r \pmod{\mathcal{I}}$ for $r \in \mathcal{I}_\pm$, so
$r \equiv -r \pmod{\mathcal{I}}$ for $r \in \mathcal{I}_+ \cap
\mathcal{I}_-$.

\begin{proposition}\label{prop:freeideals1}
The sums defining the ideals  $\mathcal I$, $\mathcal I_{+}$, and $\mathcal
I_{-}$ are all direct sums (i.e., these ideals are free modules of rank $2$
over $R$).
\end{proposition}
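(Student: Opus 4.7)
The three directness assertions to establish are $(T-I)^2R \cap S(T-I)^2R=\{0\}$, $(S+I)R\cap (T-I)^2R=\{0\}$, and $(S-I)R \cap (T-I)^2R=\{0\}$; the rank-$2$ freeness then follows because $(T-I)$ is not a right zero divisor in $R$---an immediate consequence of the uniqueness clause of Lemma~\ref{lemma:T-Iv=w} applied to $(T-I)r=0$---so neither are $(T-I)^2$ and $S(T-I)^2$, making each cyclic summand isomorphic to $R$.

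My first move is to reduce the assertion for $\mathcal{I}$ to those for $\mathcal{I}_\pm$. Given $z \in (T-I)^2R\cap S(T-I)^2R$, write $z=(T-I)^2u=S(T-I)^2v$ and set $r := (T-I)^2 v \in (T-I)^2R$. Then $(S+I)r = Sr+r = (T-I)^2 u + (T-I)^2 v \in (T-I)^2R$, so the directness of $\mathcal{I}_+$ forces $(S+I)r=0$, which by \eqref{diagonalizexcor} places $r \in (S-I)R$. Combining $r \in (T-I)^2R$ with the directness of $\mathcal{I}_-$ gives $r=0$, whence $z=Sr=0$.

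For $\mathcal{I}_+$ (the argument for $\mathcal{I}_-$ being symmetric), I suppose $(S+I)w=(T-I)^2u$ and set $v:=(T-I)u$. Then $(T-I)v=(S+I)w\in (S+I)R$, so Lemma~\ref{lem:T-IvS+w} places $v$ in $(y^2-y+1)(x+1)R$. Since tautologically $v\in (T-I)R$, the whole argument reduces to proving the intersection identity
\[
(T-I)R \cap (y^2-y+1)(x+1)R = \{0\};
\]
if this holds, then $v=0$ forces $(T-I)u=0$, hence $u=0$ by injectivity of left multiplication by $T-I$, hence $z=0$. The analogous reduction for $\mathcal{I}_-$ uses Lemma~\ref{lem:T-IvS-w} and asks instead for $(T-I)R \cap (y^2+y+1)R = \{0\}$.

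The hard part is the combinatorial verification of these two reduced intersections, which I plan to carry out using Corollary~\ref{cor:noT-1solcrit}. For a putative nonzero $v = (y^2-y+1)(x+1)t$, the expansion $(y^2-y+1)(x+1) = y^2x+y^2-yx-y+x+1$ shows each $\delta\in\mathrm{supp}(t)$ contributes to six positions of $v$. I plan to pick $\gamma\in\mathrm{supp}(t)$ maximal with respect to a suitable refinement of the reduced word length in the free product $\Gamma=\langle x\rangle*\langle y\rangle$, and to exhibit a right $\langle T\rangle$-coset that meets $\mathrm{supp}(v)$ in exactly one point---arising from a specific one of the six monomials applied to $\gamma$---whose coefficient cannot be cancelled by any contribution from shorter $\delta$'s. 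Corollary~\ref{cor:noT-1solcrit} then forces $v\notin (T-I)R$, yielding the required contradiction. The delicate bookkeeping, which is the main obstacle, lies in choosing the refinement of the ordering and the isolated monomial so that uniqueness is manifest: left multiplication by $S$ does not preserve right $\langle T\rangle$-cosets, so the interaction between the six shifts and the axis-like coset structure of $\langle T\rangle$ must be tracked carefully, most likely via a case analysis on the starting letter of $\gamma$ in its reduced word.
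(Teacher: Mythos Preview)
Your reductions are correct and mirror the paper's proof: the paper also reduces $\mathcal{I}$ to $\mathcal{I}_\pm$ (via a slightly different but equivalent manipulation with $(I-\varepsilon S)(T-I)^2(r+\varepsilon r')=0$), then applies Lemmas~\ref{lem:T-IvS+w} and~\ref{lem:T-IvS-w} to reduce $\mathcal{I}_\pm$ to the intersections $(T-I)R\cap(y^2-y+1)(x+1)R=\{0\}$ and $(T-I)R\cap(y^2+y+1)R=\{0\}$, and finally uses Corollary~\ref{cor:noT-1solcrit} exactly as you propose.

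What you are missing is the execution of the combinatorial step, which is where all the content lies; your plan is correct but vague on the key normalization. The paper's device is to exploit the identity $(x+1)x=x+1$ to put the right factor $t$ into the canonical shape $t=r_0+yr_1+y^2r_2$ with $r_0\in\C$ and every reduced word in $r_1,r_2$ either the identity or beginning with $x$. With this normalization the six-term expansion of $(y^2-y+1)(x+1)t$ separates cleanly: the three ``$x$-terms'' $y^2x r_j$, $-yx r_j$, $x r_j$ have length $\ell(r_j)+2$ or more, while the three ``non-$x$-terms'' have length at most $\ell(r_j)+1$. If (say) $\ell(r_2)\ge\ell(r_1)$ and $\gamma$ is a longest word in $r_2$, then along the $\langle T\rangle$-orbit of $yxy^2\gamma$ the element $(xy)^n yxy^2\gamma$ has length $2n+2+\ell(r_2)$ for $n\ge1$ and $-2n+3+\ell(r_2)$ for $n\le0$, so only $n=0$ can appear, and it appears only in $-yxy^2 r_2$. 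This is the isolated coset you anticipated; the analogous argument handles $\ell(r_1)>\ell(r_2)$ and then the scalar case $r_1=r_2=0$. The $\mathcal{I}_-$ case is simpler: normalize $t=r_0+xr_1$ with $r_0\in\C$ (using that $y^2+y+1$ annihilates $y-1$ and $y^2-1$), and locate the isolated orbit through $yxr_1$.
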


\begin{proof}
To deal with $\mathcal I$, suppose that  $(T-I)^2r=S(T-I)^2r'$ for some
$r,r'\in R$.   Multiply both sides by $S$ and combine to obtain the identity
$(I-\varepsilon S)(T-I)^2(r+\varepsilon r')=0$ for $\varepsilon=\pm1$. By
\eqref{diagonalizexcor}, this shows $(T-I)^2(r+\varepsilon r')\in
(S+\varepsilon I)R$. From this we see that the assertion for $\mathcal I$
follows from those for $\mathcal I_\pm$.

First consider $\mathcal I_+$ and $u,w\in R$ for which $(T-I)^2u=(S+I)w$. By
Lemma~\ref{lem:T-IvS+w}, $(T-I)u\in (y^2-y+1)(x+1)R$, where $x=S$ and $y=ST$
as above. We will prove that there is no $u$ for which $(T-I)u$ is a nonzero
member of this ideal, using Corollary~\ref{cor:noT-1solcrit}.  Suppose
$(T-I)u=(y^2-y+1)(x+1)r$ for some $r\in R$. Without loss of generality, we
may assume $r=r_0+yr_1+y^2r_2$, where $r_0\in \C$ and the reduced words
occurring in $r_1$ or $r_2$ all start with $x$ or are the identity. (The
nontrivial part of this assertion is that we can take $r_0 \in \C$, which
holds because we can use the identity $(x+1)x=x+1$ to incorporate any other
terms from $r_0$ to $\C$, $r_1$, or $r_2$.) Then $(y^2-y+1)(x+1)r$ equals
\begin{multline*}
y^2 x (r_0+yr_1+y^2 r_2)
-y x (r_0+yr_1+y^2 r_2)
+ x (r_0+yr_1+y^2 r_2)\\
\phantom{}+y^2  (r_0+yr_1+y^2 r_2)
-y  (r_0+yr_1+y^2 r_2)
+  (r_0+yr_1+y^2 r_2).
\end{multline*}
Assume now that $r_1$ and $r_2$ are not both zero and that $\ell(r_2)\ge
\ell(r_1)$, so that $\ell((T-I)u)\le \ell(r_2)+3$.  Choose $\gamma \in
\PSL_2(\Z)$ occurring in $r_2$ with $\ell(\gamma)=\ell(r_2)$.  The group
elements $(xy)^n y xy^2\gamma=(y^2x)^{-n} y xy^2 \gamma$ have length
$2n+2+\ell(r_2)$ (if $n\ge 1$) or $-2n+3+\ell(r_2)$ (if $n\le 0$), and so
cannot occur in $(T-I)u$ unless $n=0$, when they occur in exactly one place,
namely in $-yxy^2r_2$ (they do not occur in the three terms in the second
line because those have length at most $\ell(r_2)+1$).
Corollary~\ref{cor:noT-1solcrit} then proves there is no solution in this
case.  The same argument applies if $\ell(r_1)>\ell(r_2)$: if $\gamma$ occurs
in $r_1$ with $\ell(\gamma)=\ell(r_1)$, then the group elements $(xy)^n yxy
\gamma$ occur only in $-yxyr_1$ and with $n=0$.  Thus $r_1=r_2=0$, and unless
$r_0=0$ we may renormalize to set $r_0=1$ and obtain
$(T-I)u=y^2x-yx+x+y^2-y+1$.  Now $yx$ is the only term in this expression of
the form $(xy)^n yx$, and we again obtain a contradiction from
Corollary~\ref{cor:noT-1solcrit}.

The  case  of $\mathcal I_{-}$ is slightly simpler: here $(T-I)^2u=(S-I)w$
implies $(T-I)u\in (y^2+y+1)R$ by Lemma~\ref{lem:T-IvS-w}. Suppose
$(T-I)u=(y^2+y+1)r$ with $r \in R$.  This time there is no loss in generality
in assuming $r=r_0+x r_1$, where $r_0\in \C$ and $r_1$ does not begin with
$x$, because $y^2+y+1$ annihilates both $y-1$ and $y^2-1$. Now
\[
(y^2+y+1)r =
r_0y^2+r_0y+r_0+y^2xr_1+yxr_1+xr_1.
\]
The terms occurring in $(xy)^n yxr_1$ occur in this expression only if $n=0$,
and thus Corollary~\ref{cor:noT-1solcrit} applies unless possibly $r_1=0$. If
$r_1=0$, then the term $(xy)^ny$ occurs only for $n=0$, and so we conclude
that $(T-I)u=0$, as desired.
\end{proof}

\begin{proposition}\label{prop:ideals2} The ideals $\mathcal I$, $\mathcal
I_+$, and $\mathcal I_-$ of $R$ have the following properties:
\begin{enumerate}
\item $\dim_\C R/\mathcal I =6$ and
\begin{equation}\label{Midef}
M_1 = I,\  M_2=T,\ M_3=TS,\ M_4=S,\ M_5=ST,\ M_6=STS
\end{equation}
are a basis of $R/\mathcal I$,

\item $\dim_\C R/\mathcal I_\pm =3$  and $I,T,TS$ are a basis of
    $R/\mathcal I_{\pm}$, and

\item $\mathcal I_\varepsilon=\{r\in R : (S-\varepsilon I)r\in \mathcal
    I\}$ for $\varepsilon = \pm 1$.
\end{enumerate}
\end{proposition}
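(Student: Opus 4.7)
My plan is to prove the three parts in the order (1), (3), (2).

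For part~(1), I will separately establish spanning and linear independence of $[M_1], \ldots, [M_6]$ in $R/\mathcal{I}$. Let $W := \C\langle [M_1], \ldots, [M_6]\rangle$. Since $R/\mathcal{I}$ is cyclic as a right $R$-module, generated by $[I] = [M_1]$, and $\PSL_2(\Z) = \langle S, T\rangle$, for spanning it suffices to show that $W$ is closed under right multiplication by $S$ and $T$. Closure under $S$ is immediate: $M_iS \in \{M_1,\dots,M_6\}$ since $S^2 = I$. For $T$, the containments $(T-I)^2 R, S(T-I)^2 R \subseteq \mathcal{I}$ yield, by induction on $|n|$, the left-reductions $T^n \equiv nT-(n-1)I$ and $ST^n \equiv nST-(n-1)S$ modulo $\mathcal{I}$ for every $n \in \Z$. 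These handle $[M_i]\cdot T$ for $i \in \{1,2,4,5\}$ directly; for $i = 3$ and $i = 6$ I will use the relation $(ST)^3 = I$ to rewrite $TST = ST^{-1}S$ and $STST = T^{-1}S$, and then apply the left-reductions.

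For linear independence in part~(1), I exhibit a right-$R$-linear functional $\phi\colon R \to \C^6$ annihilating $\mathcal{I}$ that sends $M_i$ to the standard row basis vector $e_i^T$. Set $\phi(r) = e_1^T\sigma(r)$. A direct matrix check gives $\sigma(S)^2 = I$ and $(\sigma(S)\sigma(T))^3 = I$, so $\sigma$ extends to a well-defined algebra homomorphism; a second matrix check gives $e_1^T(\sigma(T)-I)^2 = 0$ and $e_1^T\sigma(S)(\sigma(T)-I)^2 = 0$, whence $\phi$ vanishes on the right ideal $\mathcal{I}$. A short calculation propagating $e_1^T$ through the matrices $\sigma(S),\sigma(T)$ gives $\phi(M_i) = e_i^T$, so the induced map $R/\mathcal{I} \to \C^6$ is surjective; combined with spanning, it is an isomorphism.

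Part~(3) is handled by the idempotent decomposition $r = r_+ + r_-$ with $r_\pm = \tfrac12(I\pm S)r$. The identity $(S-\varepsilon I)r = -2\varepsilon r_{-\varepsilon}$ reduces the desired equivalence to showing $r \in \mathcal{I}_\varepsilon$ iff $r_{-\varepsilon}\in\mathcal{I}$. The forward inclusion is routine from $(S-\varepsilon I)(S+\varepsilon I) = 0$ and $(S-\varepsilon I)(T-I)^2 = S(T-I)^2 - \varepsilon(T-I)^2 \in \mathcal{I}$. For the reverse, $r_\varepsilon \in (I+\varepsilon S)R = (S+\varepsilon I)R \subseteq \mathcal{I}_\varepsilon$ and $r_{-\varepsilon} \in \mathcal{I}\subseteq\mathcal{I}_\varepsilon$, with the last containment following from the identity $S(T-I)^2 = (S+\varepsilon I)(T-I)^2 - \varepsilon(T-I)^2$.

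Part~(2) now follows from parts~(1) and~(3). Since $S\mathcal{I}\subseteq\mathcal{I}$, left multiplication by $S$ descends to an involution on $V := R/\mathcal{I}$, and part~(3) identifies $R/\mathcal{I}_\varepsilon$ with its $(-\varepsilon)$-eigenspace $V_{-\varepsilon}$ via $r \mapsto r_{-\varepsilon} + \mathcal{I}$. In the basis from part~(1), left multiplication by $S$ acts as the fixed-point-free permutation $(M_1\,M_4)(M_2\,M_5)(M_3\,M_6)$, so $\dim V_{\pm} = 3$ and hence $\dim R/\mathcal{I}_\varepsilon = 3$. A spanning argument parallel to part~(1), using the additional reduction $Sr \equiv -\varepsilon r$ modulo $\mathcal{I}_\varepsilon$ (from $(S+\varepsilon I)r \in \mathcal{I}_\varepsilon$), shows that $I, T, TS$ span $R/\mathcal{I}_\varepsilon$, and they form a basis by the dimension count. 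The hardest step will be the spanning argument in part~(1): specifically the reductions of $[TS]\cdot T$ and $[STS]\cdot T$, which require combining the $\PSL_2(\Z)$-relation $(ST)^3 = I$ with the left-reductions coming from $\mathcal{I}$.
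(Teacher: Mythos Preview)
Your proposal is correct and follows essentially the same approach as the paper. Both arguments establish spanning in part~(1) by showing the span of $M_1,\dots,M_6$ is closed under right multiplication by $S$ and $T$ (your reductions of $TST$ and $STST$ match the paper's explicit calculation of $\vec{M}\cdot T$), and both prove linear independence via the first row of $\sigma$. The only organizational difference is that you prove (3) before (2) and then read off $\dim R/\mathcal{I}_\varepsilon = 3$ from the eigenspace isomorphism $R/\mathcal{I}_\varepsilon \cong V_{-\varepsilon}$, whereas the paper proves (2) first using the fact $\mathcal{I}_+\cap\mathcal{I}_-=\mathcal{I}$ (established just before the proposition) to inject $R/\mathcal{I}$ into $(R/\mathcal{I}_+)\oplus(R/\mathcal{I}_-)$; these are two phrasings of the same eigenspace decomposition, and your idempotent treatment of part~(3) is a slightly cleaner packaging of the paper's manipulation.
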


\begin{proof}
Consider the six-dimensional representation $\sigma$ of $\PSL_2(\Z)$ defined
in \eqref{sigmaVrhoconjugate}. By linearity $\sigma$ further extends to an
algebra homomorphism from $R=\C[\PSL_2(\Z)]$ to $\C^{6 \times 6}$, whose
first row vanishes on $\mathcal I$ because the first rows of both
$\sigma(S(T-I)^2)$ and $\sigma((T-I)^2)$ vanish.

We begin by proving that $M_1,\dots,M_6$ span $R/\mathcal I$. If $\vec M$
denotes the column vector $(M_1,\dots,M_6)\in R^6$, then the calculations
\begin{equation}\label{explicitsigmaVcalc}
\aligned
\vec{M}\cdot S&=\left(\begin{smallmatrix}S\\TS\\T\\I\\STS\\ST\end{smallmatrix}\right)
=
\sigma(S)\vec{M} \quad\text{and}
\\
\vec{M}\cdot T&=
\left(\begin{smallmatrix}T\\T^2\\TST\\ST\\ST^2\\STST\end{smallmatrix}\right)
=\left(\begin{smallmatrix}T\\-I+2T+(T-I)^2\\2-STS+S(T-I)^2T^{-1}S\\ST\\2ST-S+S(T-I)^2\\
2S-TS+(T-I)^2T^{-1}S\end{smallmatrix}\right)\\
&=\sigma(T)\vec{M}+(T-I)^2\cdot \left(\begin{smallmatrix}0\\I\\0\\0\\0
\\T^{-1}S\end{smallmatrix}\right)+S(T-I)^2\cdot\left(\begin{smallmatrix}0\\0\\T^{-1}S\\0\\ I \\0\end{smallmatrix}\right)
\endaligned
\end{equation}
show that
\begin{equation} \label{eq:sigmaVmeaning}
\vec{M}\cdot \gamma\in \sigma(\gamma)\vec{M}+{\mathcal I}^6
\end{equation}
for every $\gamma\in \PSL_2(\Z)$, because we inductively obtain
\[\vec{M}\cdot \gamma\gamma' \in \big(\sigma(\gamma)\vec{M}+{\mathcal I}^6\big) \cdot \gamma' \in \sigma(\gamma) \sigma(\gamma') \vec{M}+{\mathcal I}^6.\]
(In fact, \eqref{eq:sigmaVmeaning} is the motivation for our definition of
$\sigma$.) It follows that $\gamma$, which is the first entry of
$\vec{M}\cdot \gamma$, is a linear combination of $M_1,\dots,M_6$ plus an
element of $\mathcal I$.

To show that $M_1,\dots,M_6$ are linearly independent in $R/\mathcal I$, we
begin by checking that $\sigma(M_i)$ has first row entries that are all $0$
except for a $1$ in the $i$-th position.  Because the first row of
$\sigma(r)$ vanishes for all $r \in \mathcal I$, no nontrivial linear
combination of $M_1,\dots,M_6$ can lie in $\mathcal I$, which completes the
proof of part~(1).  (Note also that examining the first row of $\sigma$ gives
a convenient algorithm for reducing elements of $\PSL_2(\Z)$ modulo
$\mathcal{I}$.)

Next we prove part~(2).  First, we observe that left multiplication by $S$
preserves each of $\mathcal{I}$ and $\mathcal{I}_\pm$; for $\mathcal{I}$ this
preservation follows immediately from the definition of $\mathcal{I}$, and
for $\mathcal{I}_\pm$ it follows from the calculations in the paragraph
containing \eqref{freeideals}. Because $I$, $T$, $TS$, $S$, $ST$, and $STS$
span $R/\mathcal{I}$, and $S$ acts on the left by $\mp1$ modulo
$\mathcal{I}_\pm$, we see that $I$, $T$, and $TS$ span $R/\mathcal{I}_\pm$.
Now let $\pi \colon R \to (R/\mathcal{I}_+) \oplus (R/\mathcal{I}_-)$ be the
direct sum of the projections modulo these ideals.  Because $\mathcal{I}_+
\cap \mathcal{I}_- = \mathcal{I}$, the kernel of $\pi$ is $\mathcal{I}$, and
thus $R/\mathcal{I}$ maps injectively to $(R/\mathcal{I}_+) \oplus
(R/\mathcal{I}_-)$.  We have seen that $R/\mathcal{I}_\pm$ are both at most
three-dimensional, while $R/\mathcal{I}$ is six-dimensional by part~(1).
Thus, $\dim_\C R/\mathcal{I}_\pm = 3$, and $I$, $T$, and $TS$ form a basis.

All that remains is to prove part~(3). Since $(S-\varepsilon I)(S+\varepsilon
I)=0$ and $(S-\varepsilon I)(T-I)^2r=S(T-I)^2r + (T-I)^2 (-\varepsilon r)$
lies in $\mathcal I$ for all $r\in R$, it is clear that left multiplication
by $S-\varepsilon I$ maps $\mathcal I_\varepsilon$ to $\mathcal I$.  To show
the reverse inclusion, suppose that
\[
(S-\varepsilon I)r=(T-I)^2 r_1+S(T-I)^2 r_2 \in \mathcal I
\]
for some $r,r_1,r_2\in R$, and hence after multiplying both sides by $S$,
\[
-\varepsilon(S-\varepsilon I)r=(T-I)^2 r_2+S(T-I)^2 r_1.
\]
Combining, we see that  $2(S-\varepsilon I)r= (S-\varepsilon
I)(T-I)^2(r_2-\varepsilon r_1)$ and hence $(S-\varepsilon I)\big(2r -
(T-I)^2(r_2-\varepsilon r_1)\big)=0$. By \eqref{diagonalizexcor},
$2r-(T-I)^2(r_2-\varepsilon r_1) \in (S+\varepsilon I)R$, which implies $r\in
\mathcal I_\varepsilon$.
\end{proof}

Note that \eqref{eq:sigmaVmeaning} gives us a natural interpretation of
$\sigma$ in terms of the right action of $\PSL_2(\Z)$ on the six-dimensional
vector space $R/\mathcal I$.  From this perspective, the conjugacy between
$\sigma$ and $\rho$ in Section~\ref{sec:sub:6dimlrepn} means $R/\mathcal{I}$
must decompose into the direct sum of two three-dimensional right
$R$-modules. These submodules are the subspaces $\mathcal{I}_\pm/\mathcal{I}$
of $R/\mathcal{I}$, which are spanned by $I \pm S$, $(I \pm S)T$, and $(I \pm
S)TS$.  Here, $\mathcal{I}_-/\mathcal{I}$ corresponds to $\rho_3$, while
$\mathcal{I}_+/\mathcal{I}$ has a two-dimensional submodule corresponding to
$\rho_2$, namely $(\mathcal{I}_+ \cap \mathcal{I}_\text{aug})/\mathcal{I}$,
where $\mathcal{I}_\text{aug}$ is the augmentation ideal of $R$.  (Recall
that the augmentation ideal is the two-sided ideal generated by $\gamma-I$
with $\gamma \in \PSL_2(\Z)$.  It follows immediately from the definitions of
$\mathcal{I}_\pm$ that $\mathcal{I}_- \subseteq \mathcal{I}_\text{aug}$,
while $\mathcal{I}_+ \not\subseteq \mathcal{I}_\text{aug}$.) Choosing a basis
compatible with the above structure led us to the representation $\rho$ and
the intertwining operator $g_{\rho\sigma}$ introduced in
Section~\ref{sec:sub:6dimlrepn}. Note that the reason why
$\mathcal{I}_+/\mathcal{I}$ has a two-dimensional submodule rather than a
two-dimensional quotient is that $\sigma$ is a left representation, while we
have a right action of $R$ on $R/\mathcal I$. This discrepancy is
unfortunate, but avoiding it would require breaking convention by either
using right representations or replacing the slash operator with a left
action.

Since $\mathcal I=(T-I)^2 \cdot R+ S(T-I)^2 \cdot R$ is free as a right
$R$-module, for each $r\in R$ there exist unique column vectors
$\vec{N}_1(r),\vec{N}_2(r)\in R^6$ such that
\begin{equation}\label{sigmaandN}
\vec{M}\cdot r = \sigma(r)\vec{M}+(T-I)^2 \cdot\vec{N}_1(r) + S(T-I)^2 \cdot\vec{N}_2(r).
\end{equation}
It follows that
\begin{equation}\label{cocyclelaw}
\vec{N}_i(r_1r_2) = \sigma(r_1)\vec{N}_i(r_2)+\vec{N}_i(r_1)\cdot r_2
\end{equation}
for $i=1,2$ and $r_1,r_2\in R$, with the cocycle relation \eqref{cocyclelaw}
describing the composition law for multiple applications of
\eqref{sigmaandN}.  Repeated applications of \eqref{cocyclelaw} result in the
more general formula
\begin{equation}\label{cocyclelawwithmore}
\begin{split}
\vec{N}_i(r_1r_2\cdots r_n) &= \sigma(r_1\cdots r_{n-1})\vec{N}_i(r_n)\\
&\quad \phantom{}
+ \sigma(r_1\cdots r_{n-2})\vec{N}_i(r_{n-1})\cdot r_n
\\
&\quad \phantom {} + \sigma(r_1\cdots r_{n-3})\vec{N}_i(r_{n-2})\cdot r_{n-1}r_n \\
&\quad \phantom {} +
\cdots +
\vec{N}_i(r_{1})\cdot r_2\cdots r_n
\end{split}
\end{equation}
for more than two factors.

\begin{lemma}\label{lem:cocyclebounds}
There exist positive constants $C$ and $N$ such that for all
$\gamma\in\PSL_2(\Z)$, the entries of $\vec{N}_i(\gamma)$ have the form
$\sum_{\delta\in \PSL_2(\Z)}n_\delta \delta$, with
$\sum_{\delta}|n_\delta|\le C\|\gamma\|_{\textup{Frob}}^N$ and
$\|\delta\|_{\textup{Frob}}\le C\|\gamma\|_{\textup{Frob}}$ whenever
$n_\delta\neq 0$.
\end{lemma}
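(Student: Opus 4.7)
The plan is to reduce $\gamma$ to a product of simple pieces, apply the multi-factor cocycle formula \eqref{cocyclelawwithmore}, and bound each resulting ingredient using Lemmas~\ref{lem:columndomination} and~\ref{lem:polynomiallybounded}. First, I would factor $\gamma = \gamma_0 \gamma'$ with $\gamma_0$ one of the six coset representatives for $\overline{\Gamma(2)}$ in $\PSL_2(\Z)$ and $\gamma' \in \overline{\Gamma(2)}$; the two-factor cocycle law \eqref{cocyclelaw} then reduces the problem to establishing the claimed bounds for $\gamma'$, because only finitely many $\gamma_0$ occur and $\|\gamma'\|_{\textup{Frob}} \le C\|\gamma\|_{\textup{Frob}}$. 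I would then use the unique reduced word $\gamma' = \gamma_1^{e_1}\gamma_2^{f_1}\gamma_1^{e_2}\cdots$ from Lemma~\ref{lem:columndomination} to expand $\gamma' = \tilde r_1 \tilde r_2 \cdots \tilde r_N$ as a product of single-step increments $\tilde r_k \in \{T^{\pm 2},\, ST^{\pm 2}S\}$, where $N = |e_1|+|f_1|+|e_2|+\cdots \le \|\gamma'\|_{\textup{Frob}}^2$ by part~(2) of that lemma. Part~(3) gives $\|\tilde r_1 \cdots \tilde r_{k-1}\|_{\textup{Frob}} \le \|\gamma'\|_{\textup{Frob}}$ for every initial subword, and applying part~(3) to $(\gamma')^{-1}$, whose reduced word in $\{\gamma_1^{\pm 1},\gamma_2^{\pm 1}\}$ is the reverse of $\gamma'$'s with each letter inverted, together with the identity $\|A^{-1}\|_{\textup{Frob}} = \|A\|_{\textup{Frob}}$ for $A \in \SL_2(\Z)$, yields the matching bound $\|\tilde r_{k+1} \cdots \tilde r_N\|_{\textup{Frob}} \le \|\gamma'\|_{\textup{Frob}}$ for every terminal subword.

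Next, I would read off $\vec{N}_i(S)=0$ and $\vec{N}_i(T)$ directly from \eqref{explicitsigmaVcalc}, compute $\vec{N}_i(T^{-1})$ by applying \eqref{cocyclelaw} to $T \cdot T^{-1} = I$, and iterate the cocycle law once or twice more to verify that for each $\tilde r_k \in \{T^{\pm 2},\, ST^{\pm 2}S\}$, the coefficients of $\vec{N}_i(\tilde r_k)$ are bounded in magnitude and supported on group elements of bounded Frobenius norm, uniformly in $k$. Applying \eqref{cocyclelawwithmore} to the decomposition $\gamma' = \tilde r_1 \cdots \tilde r_N$ then expresses $\vec{N}_i(\gamma')$ as a sum of $N$ terms of the form $\sigma(\tilde r_1 \cdots \tilde r_{k-1}) \vec{N}_i(\tilde r_k) \cdot (\tilde r_{k+1} \cdots \tilde r_N)$. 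In each summand, Lemma~\ref{lem:polynomiallybounded} bounds the entries of $\sigma(\tilde r_1 \cdots \tilde r_{k-1})$ polynomially in $\|\tilde r_1 \cdots \tilde r_{k-1}\|_{\textup{Frob}} \le \|\gamma'\|_{\textup{Frob}}$, the factor $\vec{N}_i(\tilde r_k)$ contributes only bounded constants, and right multiplication by $\tilde r_{k+1} \cdots \tilde r_N$ sends each group element $\delta$ in the support to $\delta \cdot \tilde r_{k+1} \cdots \tilde r_N$, whose Frobenius norm is at most $\|\delta\|_{\textup{Frob}} \cdot \|\tilde r_{k+1} \cdots \tilde r_N\|_{\textup{Frob}} \le C \|\gamma'\|_{\textup{Frob}}$ by submultiplicativity. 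Summing over the $N \le \|\gamma'\|_{\textup{Frob}}^2$ terms yields the claimed polynomial bound on $\sum_\delta |n_\delta|$ and the linear bound on each $\|\delta\|_{\textup{Frob}}$.

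The main subtlety lies in the interplay between initial and terminal subwords: $\sigma$ is applied to initial subwords $\tilde r_1 \cdots \tilde r_{k-1}$, which must be Frobenius-bounded by $\|\gamma\|_{\textup{Frob}}$ in order to keep the coefficient $L^1$ bound polynomial, while the supporting group elements are shifted on the right by terminal subwords $\tilde r_{k+1} \cdots \tilde r_N$, which must be Frobenius-bounded by $\|\gamma\|_{\textup{Frob}}$ in order to secure the linear (rather than merely quadratic) bound on $\|\delta\|_{\textup{Frob}}$. A careless application of submultiplicativity to the identity $\gamma' = (\tilde r_1 \cdots \tilde r_k)(\tilde r_{k+1} \cdots \tilde r_N)$ gives only $\|\tilde r_{k+1} \cdots \tilde r_N\|_{\textup{Frob}} \le C \|\gamma'\|_{\textup{Frob}}^2$; the content of Lemma~\ref{lem:columndomination}(3), together with its application to $(\gamma')^{-1}$, is precisely what is needed to sharpen this to the linear bound and thereby obtain both subword estimates simultaneously.
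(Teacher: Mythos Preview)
Your proposal is correct and follows essentially the same route as the paper's proof: reduce to $\overline{\Gamma(2)}$ via a coset representative and \eqref{cocyclelaw}, factor into single-step generators $\{T^{\pm2},ST^{\pm2}S\}$, apply \eqref{cocyclelawwithmore}, and bound each of the $N\le\|\gamma'\|_{\textup{Frob}}^2$ terms using Lemma~\ref{lem:polynomiallybounded} for the $\sigma$-factor and Lemma~\ref{lem:columndomination}(3) for both the initial and terminal subword norms. Your explicit observation that the terminal-subword bound comes from applying part~(3) to $(\gamma')^{-1}$ (using $\|A^{-1}\|_{\textup{Frob}}=\|A\|_{\textup{Frob}}$) is exactly what the paper invokes when it asserts that $\|r_ir_{i+1}\cdots r_n\|_{\textup{Frob}}$ is decreasing in $i$.
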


\begin{proof}
Formula \eqref{cocyclelaw} reduces the claim to $\gamma\in
\overline{\Gamma(2)}$, as can be seen by taking $r_2\in \overline{\Gamma(2)}$
and $r_1$ one of the six coset representatives of $\overline{\Gamma(2)}$.
Lemma~\ref{lem:polynomiallybounded} shows the existence of constants $C_0,N_0>0$
such that the matrix entries  of $\sigma(\gamma)$ are bounded by
$C_0\|\gamma\|_{\textup{Frob}}^{N_0}$ in absolute value.  Factor
$\gamma=\gamma_1^{e_1}\gamma_2^{f_1}\gamma_1^{e_2}\cdots$, with
$\gamma_1=T^2$ and $\gamma_2=ST^2S$ as in Lemma~\ref{lem:columndomination},
and refine the factorization to $\gamma = r_1 \dots r_n$ with $r_i \in
\{\gamma_1^{\pm 1}, \gamma_2^{\pm 1}\}$ and $n = |e_1|+|f_1|+|e_2|+\cdots \le
\|\gamma\|_{\textup{Frob}}^2$ by part~(2) of
Lemma~\ref{lem:columndomination}.  By part~(3) of
Lemma~\ref{lem:columndomination}, $\|r_1r_2\cdots r_i\|_{\textup{Frob}}$ is
an increasing function of $i$, while $\|r_ir_{i+1}\cdots
r_n\|_{\textup{Frob}}$ is decreasing. Then \eqref{cocyclelawwithmore}
expresses $\vec{N}_i(\gamma)$ as a combination of $n$ terms, each of which
satisfies the asserted bounds.
\end{proof}

Like all group rings, $R$ is equipped with the anti-involution $\iota$ that
sends $\sum_\gamma c_\gamma \gamma$ to $\sum_\gamma c_\gamma \gamma^{-1}$.

\begin{proposition}\label{prop:ideals3}
Define linear functionals $\phi_\pm\colon R/{\mathcal I}_\pm\to \C$ on the
basis vectors from Proposition~\ref{prop:ideals2} by setting
\begin{equation}\label{phiepsdef}
\phi_\pm(I)=0, \quad \phi_\pm(T)=1, \quad \text{and}\quad \phi_\pm(TS)=0.
\end{equation}
Then
\[
\widetilde{\mathcal I}_\pm = \{r\in R : \phi_\pm(r'\cdot \iota(r))=0 \textup{ for all $r'\in R$}\},
\]
and $I,S,T$ are a basis of $R/\widetilde{\mathcal I}_\pm$.
\end{proposition}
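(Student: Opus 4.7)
The plan is to reformulate the claim as an equality of left ideals of $R$ and then match their codimensions. Writing the right-hand side as $\mathcal{A}_\pm$, one checks that $\mathcal{A}_\pm$ is a right ideal: if $r\in\mathcal{A}_\pm$ and $s\in R$, then $\phi_\pm(r'\iota(rs))=\phi_\pm((r'\iota(s))\iota(r))$ vanishes because $r'\iota(s)$ ranges over all of $R$. Substituting $h=\iota(r)$ shows $\mathcal{A}_\pm=\iota(K_\pm)$, where
\[
K_\pm := \{h\in R : \phi_\pm(Rh)=0\}
\]
is a left ideal. Setting $L_\pm := \iota(\widetilde{\mathcal{I}}_\pm)$, the claim reduces to $L_\pm=K_\pm$.

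To describe $K_\pm$, I use the representation $\sigma$ from Section~\ref{sec:sub:6dimlrepn}. Combining the formula $\gamma\equiv\sum_j\sigma(\gamma)_{1,j}M_j\pmod{\mathcal{I}}$ from the proof of Proposition~\ref{prop:ideals2} with the reductions $S,ST,STS\equiv\mp I,\mp T,\mp TS\pmod{\mathcal{I}_\pm}$ yields $\phi_\pm(r)=\sigma(r)_{1,2}\mp\sigma(r)_{1,5}$ on all of $R$, and hence
\[
\phi_\pm(r'h) = \bigl(\text{first row of }\sigma(r')\bigr)\cdot\bigl(\sigma(h)(e_2\mp e_5)\bigr).
\]
Since the first rows $\sigma(M_i)_{1,\bullet}$ run through the standard basis of $\C^6$, we conclude $K_\pm=\{h:\sigma(h)(e_2\mp e_5)=0\}$. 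A direct orbit calculation using $\sigma(S)(e_2\mp e_5)=e_3\mp e_6$ and $\sigma(T)(e_2\mp e_5)=(e_1\mp e_4)+2(e_2\mp e_5)$ shows the cyclic $\sigma$-submodule generated by $e_2\mp e_5$ equals the three-dimensional space $\mathrm{span}(e_i\mp e_{i+3})_{i=1,2,3}$, giving $\dim R/K_\pm=3$. The inclusion $L_\pm\subseteq K_\pm$ then follows from a direct matrix computation showing that $\sigma$ applied to each of $\iota(T-2+T^{-1}\mp 2S)=T+T^{-1}-2\mp 2S$ and $\iota(I\mp STS)=I\mp ST^{-1}S$ annihilates $e_2\mp e_5$.

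The main step is to show $\dim R/L_\pm\le 3$, which combined with the previous step forces $L_\pm=K_\pm$. I claim that $M:=\C[I]+\C[T]+\C[S]$ exhausts $R/L_\pm$. Since $R/L_\pm$ is cyclic with generator $[I]$ and $R$ is algebra-generated by $S,T,T^{-1}$, it suffices to show $M$ is stable under left multiplication by these three. The relations
\[
T+T^{-1}\equiv 2I\pm 2S \quad\text{and}\quad ST^{-1}S\equiv\pm I \pmod{L_\pm}
\]
coming from $\iota(g_1),\iota(g_2)\in L_\pm$ immediately yield $[T^{-1}],[T^2]\in M$, and left-multiplying the second relation by $S$ and then by $T$ gives $[TS]=\pm[S]$. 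The essential obstacle is showing $[ST]\in M$; for this, I exploit the $\PSL_2(\Z)$ identity $(T^{-1}S)^2=ST$ to form the combination
\[
\iota(g_1)-T^{-1}\iota(g_2) = \pm ST+T-2I\mp 2S \in L_\pm,
\]
which places $[ST]=\mp[T]\pm 2[I]+2[S]$ in $M$ and completes the stability check, thereby proving the proposition.
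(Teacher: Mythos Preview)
Your proof is correct. The overall architecture matches the paper's: establish the inclusion $\widetilde{\mathcal I}_\pm\subseteq J_\pm$ (your $L_\pm\subseteq K_\pm$ after applying $\iota$), show $\dim R/J_\pm=3$, and then show $\dim R/\widetilde{\mathcal I}_\pm\le 3$ by exhibiting $I,S,T$ as a spanning set of the quotient. The spanning argument is essentially the same as the paper's, just transported to the left-ideal side via $\iota$; your use of the group identity $(T^{-1}S)^2=ST$ to produce the relation $\iota(g_1)-T^{-1}\iota(g_2)\in L_\pm$ is the same manipulation the paper performs, reorganized.

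Where you genuinely diverge is in the first two steps. The paper verifies the inclusion by hand, evaluating $\phi_\varepsilon$ at roughly a dozen short words and then checking that $\phi_\varepsilon$ kills $\iota(r_i)$, $T\iota(r_i)$, and $TS\iota(r_i)$; it then obtains $\dim R/J_\pm=3$ by showing the map $r\mapsto(r'\mapsto\phi_\varepsilon(r'\iota(r)))$ surjects onto $\Hom_\C(R/\mathcal I_\pm,\C)$. You instead leverage the representation $\sigma$ already built in Section~\ref{sec:sub:6dimlrepn}: the identity $\phi_\pm(r)=\sigma(r)_{1,2}\mp\sigma(r)_{1,5}$ converts membership in $K_\pm$ into the clean matrix criterion $\sigma(h)(e_2\mp e_5)=0$, so the inclusion becomes two short matrix checks, and $\dim R/K_\pm$ is read off as the dimension of the cyclic $\sigma$-submodule $\mathrm{span}\{e_i\mp e_{i+3}\}_{i=1}^{3}$. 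This is a tidier and more structural route for those steps, with the tradeoff that it leans on the explicit form of $\sigma$ rather than keeping everything at the level of $\phi_\pm$.
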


The motivation for the maps $\phi_\pm$ is that they describe the residues in
part~(3) of Theorem~\ref{thm: K}, and therefore they play a key role in the
contour shifts in Section~\ref{sec:proofs}.

\begin{proof}
Let $\varepsilon = \pm 1$, let $r_1=T-2I+T^{-1}-2\varepsilon S$ and
$r_2=I-\varepsilon STS =I-\varepsilon T^{-1}ST^{-1}$ be the generators of
$\widetilde{\mathcal{I}}_\varepsilon$ from its definition \eqref{freeideals},
and let
\[
J_\varepsilon = \{r\in R : \phi_\varepsilon(r'\cdot \iota(r))=0 \textup{ for all $r'\in R$}\},
\]
which we will show equals  $\widetilde{\mathcal{I}}_\varepsilon$.

To prove that $\widetilde{\mathcal I}_\varepsilon \subseteq J_\varepsilon$,
note first that $J_\varepsilon$ is a right ideal. It will therefore suffice
to show that $\phi_\varepsilon$ annihilates $\iota(r_i)$, $T \iota(r_i)$, and
$TS \iota(r_i)$ for $i=1,2$, because $I$, $T$, and $TS$ span $R/\mathcal
I_\varepsilon$ by part~(2) of Proposition~\ref{prop:ideals2}. We check in
succession that
\begin{align*}
\phi_\varepsilon(S) &= -\varepsilon \phi_\varepsilon(I) = 0,\\
\phi_\varepsilon(ST) &= -\varepsilon \phi_\varepsilon(T) = -\varepsilon,\\
\phi_\varepsilon\big(T^2\big) &= \phi_\varepsilon(2T-I) = 2\phi_\varepsilon(T) = 2,\\
\phi_\varepsilon\big(T^{-1}\big) &= \phi_\varepsilon(2I-T) = -\phi_\varepsilon(T) = -1,\\
\phi_\varepsilon\big(T^{-1}S\big) &= \phi_\varepsilon((2I-T)S) = 0,\\
\phi_\varepsilon(TST) &= \phi_\varepsilon\big(ST^{-1}S\big) = -\varepsilon\phi_\varepsilon\big(T^{-1}S\big) = 0,\\
\phi_\varepsilon\big(ST^{-1}\big) &= -\varepsilon \phi_\varepsilon\big(T^{-1}\big) = \varepsilon,\\
\phi_\varepsilon\big(T^{-1}ST^{-1}\big) &= \phi_\varepsilon(STS) = -\varepsilon\phi_\varepsilon(TS) = 0,\\
\phi_\varepsilon\big(TST^{-1}\big) &= \phi_\varepsilon\big(\big(2I-T^{-1}\big)ST^{-1}\big) = 2\varepsilon, \quad\text{and}\\
\phi_\varepsilon\big(T^2ST\big) &= \phi_\varepsilon((2T-I)ST) = \varepsilon.
\end{align*}
It is then straightforward to verify by direct calculation than
$\phi_\varepsilon$ evaluates to zero on $\iota(r_i)$, $T\iota(r_i)$, and $TS
\iota(r_i)$, because $\iota(r_1)=r_1$ and $\iota(r_2)=I-\e ST^{-1}S=I-\e
TST$. Thus, $\widetilde{\mathcal I}_\varepsilon \subseteq J_\varepsilon$.

To complete the proof, we will show that $\dim
(R/\widetilde{\mathcal{I}}_\varepsilon) \le \dim(R/J_\varepsilon)$. We first
note that the map $\psi\colon R \to \Hom_\C(R/{\mathcal I}_\e, \C)$ taking
$r$ to $r' \mapsto \phi_\e(r' \cdot\iota(r))$ has kernel $J_\e$ by definition
and is surjective, because the linear functionals $\psi(I)$, $\psi(S)$, and
$\psi(T)$ satisfy
\[
\aligned
\psi(I)(I) &= 0, \quad& \psi(I)(T) &= 1, \quad& \psi(I)(TS) &= 0,\\
\psi(S)(I) &= 0, \quad& \psi(S)(T) &= 0, \quad& \psi(S)(TS) &= 1,\\
\psi(T)(I) &= -1, \quad& \psi(T)(T) &= 0, \quad& \psi(T)(TS) &= 2\e\\
\endaligned
\]
and hence span the three-dimensional space of linear functionals. Therefore
$R/J_\e$ is a three-dimensional vector space, and it suffices to show that
$I$, $S$, and $T$ span $R/\widetilde{\mathcal I}_\e$.

To prove that $I$, $S$, and $T$ span $R/\widetilde{\mathcal I}_\e$, we begin
by reducing $ST$, $ST^{-1}$, $TS$, $T^{-1}$, and $T^2$ to linear combinations
of $I$, $S$, and $T$ modulo $\widetilde{\mathcal I}_\e$ via
\begin{align*}
ST &= \e S - \e r_2S\\
&\equiv \e S \pmod{\widetilde{\mathcal I}_\e},\\
ST^{-1} &= \e S + r_2 ST^{-1}\\
& \equiv \e S \pmod{\widetilde{\mathcal I}_\e},\\
TS &= 2\e I + 2S - T^{-1}S + r_1 S\\
& \equiv 2\e I + 2S - T^{-1}S \pmod{\widetilde{\mathcal
I}_\e}\\
& = 2\e I + 2S - \e T + \e r_2 T\\
& \equiv 2\e I + 2S - \e T \pmod{\widetilde{\mathcal
I}_\e},\\
T^{-1} &= \e TS - \e r_2 TS\\
&\equiv \e TS \equiv 2I + 2\e S - T \pmod{\widetilde{\mathcal I}_\e}, \quad\text{and}\\
T^2 &= 2T - I + 2\e ST + r_1T\\
&\equiv 2T - I + 2\e ST \pmod{\widetilde{\mathcal
I}_\e}\\
&\equiv 2T - I + 2S \pmod{\widetilde{\mathcal I}_\e}.
\end{align*}
Using these relations, we can reduce any $\gamma \in \overline{\Gamma}$ (and
therefore any $r \in R$) to a linear combination of $I$, $S$, and $T$ modulo $\widetilde{\mathcal I}_\e$, by
starting from the left of the reduced word representing $\gamma$.
\end{proof}

\section{Solutions of functional equations and modular form kernels}
\label{sec:analysis}

\subsection{The action of $R$ on holomorphic functions and integral kernels}
\label{sec:sub:RactingonP}

We begin by recalling a commonly used class of functions on the upper
half-plane $\Hyp$, which appears in the literature dating at least as far
back as 1974 in work of Knopp \cite{Knopp}: those satisfying a bound of the
form
\begin{equation}\label{classPbound}
|F(\tau)| \le\alpha\big(\Im(\tau)^{-\beta}+|\tau|^\gamma\big)
\end{equation}
for some $\alpha,\beta,\gamma \ge 0$. Such a bound has already appeared in
part~(3) of Theorem~\ref{thm:FEimpliesIF}, and it is satisfied by any power
series in $e^{\pi i \tau}$ whose coefficients grow more slowly than some
polynomial (e.g., the classical modular forms $E_k$ and theta functions from
Section~\ref{sec:sub:modularforms}).  Conversely, the boundedness condition
in the definition of modular form is automatic for functions satisfying
\eqref{classPbound}.

Recall also that a function $F$ has \emph{moderate growth} on the symmetric
space $\Hyp$ if
\begin{equation}\label{HCmoderategrowth}
|F(g\cdot i)| \le C \| g\|^N
\end{equation}
for some $C,N \ge 0$, where $g\cdot i=\frac{ai+b}{ci+d}$ denotes the action
of $g=\left(\begin{smallmatrix}
a & b \\
c & d
\end{smallmatrix}\right)\in \SL_2(\R)$ on $i\in \Hyp$ (with $i^2=-1$)
and $\|\cdot\|$ is some fixed matrix norm on $\SL_2(\R)$, such as the
Frobenius norm. This notion is independent of the choice of matrix norm.

In fact, moderate growth is equivalent to \eqref{classPbound}. First, note
that the notion of moderate growth does not depend on the choice of $g$: if
$g \cdot i = g' \cdot i$ with $g,g' \in \SL_2(\R)$, then $g^{-1} g'  \in
\SO_2(\R)$ and hence $\|g\|_{\textup{Frob}} = \|g'\|_{\textup{Frob}}$. Now to
see the equivalence between the two bounds, write $\tau=x+iy$ and let
$g_\tau=y^{-1/2}\left(
\begin{smallmatrix}
y & x \\
0 & 1
\end{smallmatrix}
\right)$.  Then $g_\tau\cdot i =\tau$ and
\begin{align*}
\|g_\tau\|^2_{\textup{Frob}}&= y^{-1} +y^{-1}(x^2+y^2)\\
& \le y^{-1} + \big(y^{-2} + (x^2+y^2)^2\big)/2\\
&= O\big(y^{-2} + (x^2+y^2)^2\big).
\end{align*}
Thus, moderate growth implies \eqref{classPbound}.  For the other direction,
we must bound $y^{-1}$ and $x^2+y^2$ by polynomials in
$\|g_\tau\|^2_{\textup{Frob}}$.  To do so, we note that
\[
y^{-1}\le\frac{1+x^2+y^2}{y}=\|g_\tau\|^2_{\textup{Frob}} \quad \text{and} \quad
y^2\le
\left(\frac{1+x^2+y^2}{y}\right)^2 =\|g_\tau\|^4_{\textup{Frob}},
\]
and hence
\[
x^2+y^2=y\frac{x^2+y^2}{y}\le
\frac{1}{2}\left(y^2+\frac{(x^2+y^2)^2}{y^2}\right) \le \|g_\tau\|^4_{\textup{Frob}},
\]
as desired.

When $\mathcal{S}$ is a subset of $\Hyp$,
we accordingly use the term \emph{moderate growth on $\mathcal S$} to
describe functions satisfying the bound \eqref{classPbound} for $\tau\in
\mathcal S$. Following Knopp's notation, let
\begin{equation}\label{classP}
\mathcal P = \left\{
F\colon\Hyp\to \C : \text{$F$ is holomorphic and satisfies \eqref{classPbound} for all~}\tau\in\Hyp
\right\}
\end{equation}
denote the space of holomorphic functions having moderate growth on the full
upper half-plane.  In terms of the unit disk model of the hyperbolic plane,
$\mathcal P$ corresponds to holomorphic functions on $\{z \in \C : |z|<1\}$
that are bounded in absolute value by $C(1-|z|)^{-N}$ for some constants
$C,N\ge 0$.

\begin{lemma} \label{lem:HCmoderategrowthPslashed}
Let $F$ be a holomorphic function on $\Hyp$, and $\mathcal S \subseteq \Hyp$.
If
\[
|F(g\cdot i)| \le C   \| g\|_{\textup{Frob}}^N
\]
for all $g \in \SL_2(\R)$ such that $g \cdot i \in \mathcal S$, then
\[
|(F|_k\gamma)(g\cdot i)| \le C \|\gamma\|^{N+|k|}_{\textup{Frob}} \| g\|^{N+2|k|}_{\textup{Frob}}
\]
for all $\gamma,g \in \SL_2(\R)$ such that $g \cdot i \in \gamma^{-1}
\mathcal S$.
\end{lemma}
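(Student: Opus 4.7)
The plan is to unpack the slash operator, apply the cocycle relation for the factor of automorphy $j$, and then bound every factor in terms of Frobenius norms using elementary facts about $\SL_2(\R)$.

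First I would write $(F|_k\gamma)(g\cdot i) = j(\gamma, g\cdot i)^{-k} F(\gamma g \cdot i)$ from the definition \eqref{slashnotation}. Since $g\cdot i \in \gamma^{-1}\mathcal S$ means $\gamma g \cdot i \in \mathcal S$, the hypothesis applies to $\gamma g$ and gives $|F(\gamma g \cdot i)| \le C \|\gamma g\|_{\textup{Frob}}^N$, and then sub-multiplicativity of the Frobenius norm yields $\|\gamma g\|_{\textup{Frob}}^N \le \|\gamma\|_{\textup{Frob}}^N\|g\|_{\textup{Frob}}^N$. This already accounts for the $N$ in both exponents on the right-hand side; what remains is to absorb the factor $|j(\gamma, g\cdot i)|^{-k}$.

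Next I would use the cocycle identity $j(\gamma g, i) = j(\gamma, g\cdot i)\, j(g, i)$ from \eqref{eq:automorphyfactordef} to rewrite $j(\gamma, g\cdot i) = j(\gamma g, i)/j(g, i)$. The two key elementary bounds are $|j(h, i)| \le \|h\|_{\textup{Frob}}$ (since $|ci+d|^2 = c^2+d^2$ is dominated by the sum of squares of all four entries of $h$) and $|j(h,i)|^{-1} \le \|h\|_{\textup{Frob}}$ (because $|j(h,i)|^{-2} = \Im(h\cdot i)$, and $\Im(h\cdot i)$ is bounded above by $\|h\|_{\textup{Frob}}^2$ via the same inequality $y \le (1+x^2+y^2)/y = \|g_\tau\|^2_{\textup{Frob}}$ used in the equivalence of moderate growth and \eqref{classPbound} a few lines earlier in the excerpt). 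Applying whichever of these two bounds is needed to each of $j(\gamma g, i)$ and $j(g,i)^{-1}$, depending on whether $k \ge 0$ or $k<0$, gives $|j(\gamma, g\cdot i)|^{-k} \le \|\gamma g\|_{\textup{Frob}}^{|k|} \|g\|_{\textup{Frob}}^{|k|} \le \|\gamma\|_{\textup{Frob}}^{|k|}\|g\|_{\textup{Frob}}^{2|k|}$.

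Combining the two estimates produces the claimed inequality directly; no case analysis beyond splitting on the sign of $k$ is required. There is no serious obstacle — the only minor subtlety is keeping track of the sign of $k$ so that one uses the correct direction of the two $|j(h,i)|^{\pm 1}$ bounds, which is why the exponent is $|k|$ rather than $k$. Everything else is mechanical manipulation of the cocycle identity and sub-multiplicativity of $\|\cdot\|_{\textup{Frob}}$.
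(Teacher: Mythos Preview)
Your proposal is correct and matches the paper's proof essentially line for line: the paper also writes $j(\gamma,\tau)=j(\gamma g_\tau,i)/j(g_\tau,i)$ via the cocycle identity, bounds both $|j(h,i)|$ and $|j(h,i)|^{-1}$ by $\|h\|_{\textup{Frob}}$, and then combines with sub-multiplicativity and the hypothesis exactly as you do. The only cosmetic difference is that the paper derives $|j(h,i)|^{-1}\le\|h\|_{\textup{Frob}}$ from the Cauchy--Schwarz inequality $(a^2+b^2)(c^2+d^2)\ge(ad-bc)^2=1$ rather than via $\Im(h\cdot i)=|j(h,i)|^{-2}$, but this is the same estimate in different clothing.
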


In particular, $\mathcal P$ is preserved by the slash operation
\eqref{slashnotation}.  It is consequently a representation space for
$\SL_2(\Z)$, and for $\PSL_2(\Z)$ and thus $R=\C[\PSL_2(\Z)]$ when $k$ is
even (so that $\pm\gamma$ act the same for $\gamma \in \SL_2(\Z)$; see
\cite[Section~1.1.6]{CohenStromberg}).

\begin{proof}
The factor of automorphy $j(g,z)=cz+d$ for a matrix
$g=\left(\begin{smallmatrix}
a & b \\
c & d
\end{smallmatrix}\right)\in \SL_2(\R)$ and point $z\in\Hyp$
satisfies the bounds $\|g\|_{\textup{Frob}}^{-1} \le |j(g,i)|\le
\|g\|_{\textup{Frob}}$. The upper bound is trivial, while the lower bound
follows from $\|g\|_{\textup{Frob}}^2 > a^2+b^2 \ge (c^2+d^2)^{-1}$, where
the last inequality $(a^2+b^2)(c^2+d^2)\ge 1= (ad-bc)^2$ is itself a
consequence of the Cauchy-Schwarz inequality. Note that the same inequality
also shows that $\|g\|_{\textup{Frob}}\ge\sqrt{2}$. Let $g_\tau$ be some
element of $\SL_2(\R)$ for which $g_\tau\cdot i=\tau$.  The identity
$j(g_1g_2,z)=j(g_1,g_2z)j(g_2,z)$ shows that $j(\gamma,\tau)=j(\gamma
g_\tau,i)/j(g_\tau,i)$, and thus
\begin{equation}\label{jvsnorms}
|j(\gamma,\tau)|,|j(\gamma,\tau)|^{-1} \le
\|g_\tau\|_{\textup{Frob}} \|\gamma g_\tau\|_{\textup{Frob}}\le
\|\gamma\|_{\textup{Frob}} \|g_\tau\|_{\textup{Frob}}^2
\end{equation}
for all $\gamma\in \SL_2(\R)$ and $\tau\in\Hyp$.  Now the desired bound
follows from the definition
\[
(F|_k\gamma)(z) = j(\gamma,z)^{-k} F(\gamma \cdot z)
\]
of the slash operation.
\end{proof}

We now algebraically reformulate the system \eqref{restated1.6to1.8} in terms
of this action on $\mathcal P$.  For simplicity we assume $d$ is a multiple
of $4$, so that $\PSL_2(\Z)$ can act with weight $d/2$.  Then if we let
$\widetilde F=i^{-d/2}(e^{\pi i \tau |x|^2 }-F)|^\tau_{d/2}S$, system
\eqref{restated1.6to1.8} becomes
\begin{equation}\label{rerestated1.6to1.8}
    F|_{d/2}^\tau(T-I)^2   = 0 \quad  \text{and} \quad  F|_{d/2}^\tau S(T-I)^2  = e^{\pi i \tau |x|^2}|_{d/2}^\tau S(T-I)^2.
\end{equation}
Since $\mathcal I=(T-I)^2\cdot R+ S(T-I)^2\cdot R$, these equations govern
the slash operator action of $\mathcal I$ on $F$ in the $\tau$ variable.
After we solve for $F$ in this section,
bounds on $F$ (e.g., to show membership in $\mathcal P$) will be shown in
Section~\ref{sec:proofs}. Let
\begin{equation}\label{domainD} \mathcal{D}=\left\{ z\in\Hyp
\;:\;\Re(z)\in(-1,1),\;\left|z-\frac12\right|>\frac12,\;\left|z+\frac12\right|>\frac12\right\}
\end{equation}
and
\begin{equation}\label{domainF}
\mathcal{F}=\{ z\in\Hyp \;:\;\Re(z)\in(0,1),\;|z|>1,\;|z-1|>1\}
\end{equation}
be the fundamental domains for $\Gamma(2)$ and $\slz$, respectively, which
are shown in Figure~\ref{fig:domainD} and satisfy
\begin{equation}\label{DclosureFclosure}
\overline{\mathcal{D}} = \overline{\mathcal{F} \cup T^{-1} \mathcal{F}
\cup STS \mathcal{F}
\cup S \mathcal{F}
\cup ST^{-1} \mathcal{F}
\cup TS \mathcal{F}}.
\end{equation}
The following proposition shows that functions with symmetry properties
generalizing \eqref{rerestated1.6to1.8} are determined by their behavior on
the closure $\overline{\mathcal D}$ of $\mathcal D$ in $\Hyp$.

\begin{figure}
\begin{tikzpicture}[scale=4]
\fill[black!30!white] (-1,0) rectangle (1,2);
\fill[gray!40!white] (1,0) arc (0:180:1);
\fill[gray!40!white] (1,0)--(1,1) arc (90:180:1);
\fill[gray!40!white] (0,0) arc (0:90:1)--(-1,0);
\fill[gray!40!white] (-1,0) rectangle (0,2);
\fill[gray!40!white] (-1,0) rectangle (0,2);
\fill[white]  (1,0) arc (0:180:0.5);
\fill[white]  (0,0) arc (0:180:0.5);
\draw[gray] (0,0)--(0,2);
\draw[gray] (-1,0)--(-1,2);
\draw[gray] (1,0)--(1,2);
\draw[gray] (0.5,{sqrt(3/4)}) arc (60:120:1);
\draw[gray] (1,1) arc (90:120:1);
\draw[gray] (-1,1) arc (90:60:1);
\draw[gray] (0,0) arc (0:180:0.5);
\draw[gray] (1,0) arc (0:180:0.5);
\draw[gray] (0.5,0.5) -- (0.5,{sqrt(3/4)});
\draw[gray] (-0.5,0.5) -- (-0.5,{sqrt(3/4)});
\draw[gray] (1.5,0.5) arc (90:180:0.5);
\draw (1.25,1) node[right] {\scriptsize{$T^2\mathcal{D}$}};
\draw[gray] (-1.5,0.5) arc (90:0:0.5);
\draw (-1.25,1) node[left] {\scriptsize{$T^{-2}\mathcal{D}$}};
\draw[gray] (1,0) arc (0:180:0.25);
\draw[gray] (0.5,0) arc (0:180:{1/12});
\draw[gray] ({1/3},0) arc (0:180:{1/6});
\draw (0.5,0.35) node {\scriptsize{$ST^{-2}S\mathcal{D}$}};
\draw[gray] (-0.5,0) arc (0:180:0.25);
\draw[gray] ({-1/3},0) arc (0:180:{1/12});
\draw[gray] (0,0) arc (0:180:{1/6});
\draw (-0.5,0.35) node {\scriptsize{$ST^2S\mathcal{D}$}};
\filldraw[black] (0,0) circle (0.01125);
\draw  (0,0) node[below=2pt] {\scriptsize{$0$}};
\filldraw[black] (-1,0) circle (0.01125);
\draw  (-1,0)node[below=2pt] {\scriptsize{$-1$}};
\filldraw[black] (1,0) circle (0.01125);
\draw (1,0) node[below=2pt] {\scriptsize{$1$}} ;
\filldraw[black] ({1/3},{4/3}) circle (0.01125); \draw  ({1/3},{4/3})
node[above=1pt] {\scriptsize{$\tau$}};
\filldraw[black] ({-2/3},{4/3}) circle (0.01125); \draw  ({-2/3},{4/3})
node[above=1pt] {\scriptsize{$T^{-1}\tau=\tau-1$}};
\filldraw[black] ({-3/17},{12/17}) circle (0.01125); \draw  ({-3/17},{12/17})
node[above=1pt] {\scriptsize{$S\tau=-\frac{1}{\tau}$}};
\filldraw[black] ({14/17},{12/17}) circle (0.01125); \draw  (0.76,{12/17})
node[above=1pt] {\scriptsize{$TS\tau=\frac{\tau-1}{\tau}$}};
\filldraw[black] (0.3,0.6) circle (0.01125); \draw  (0.26,0.6)
node[above=1pt] {\scriptsize{$ST^{-1}\tau=\frac{1}{1-\tau}$}};
\filldraw[black] (-0.7,0.6) circle (0.01125); \draw  (-0.73,0.6)
node[above=1pt] {\scriptsize{$STS\tau=\frac{\tau}{1-\tau}$}};
\end{tikzpicture}
\caption{The fundamental domain  $\mathcal{D}$ for $\Gamma(2)$ defined in
\eqref{domainD} is shaded, with the fundamental domain $\mathcal{F}$ for $\slz$ defined in
\eqref{domainF} shaded darker (note that $\mathcal{F} \subseteq \mathcal{D}$).
The six marked points in the interior of $\mathcal{D}$ are the images of a point $\tau \in \mathcal{F}$.
\label{fig:domainD}}
\end{figure}

\begin{proposition}\label{prop:propagationandextension}
Let $k$ be an even integer, and let $h_1,h_2\colon \Hyp\rightarrow\C$ be
continuous functions.  Then the following hold:
\begin{enumerate}
\item (Analytic continuation)  Suppose $h_1$ and $h_2$ are holomorphic. Let
    $\mathcal O\subseteq \Hyp$ denote an open neighborhood of
    $\overline{\mathcal D}$, and let $f\colon\mathcal O\rightarrow\C$ be a
    holomorphic function satisfying the transformation laws
\begin{equation}\label{propagation1}
  f|_k(T-I)^2 = h_1 \quad \text{and}  \quad f|_k S(T-I)^2=h_2
\end{equation}
whenever both sides are defined (that is, on $\mathcal O\cap T^{-1}\mathcal O
\cap T^{-2}\mathcal O$ for the first equation, and on $S\mathcal O\cap
T^{-1}S\mathcal O \cap T^{-2}S\mathcal O$  for the second equation).  Then
$f$ extends to a holomorphic function on $\Hyp$ satisfying
\eqref{propagation1}.

\item (Propagation of the moderate growth bound) Suppose
    $f\colon\Hyp\rightarrow\C$ is a continuous function that satisfies the
    transformation laws \eqref{propagation1} on $\Hyp$ and grows moderately
    on $\mathcal D$; i.e., there exist nonnegative constants $C_f$ and
    $N_f$ such that
\begin{equation}\label{extensionmodgrowthassump}
  |f(g\cdot i)| \le C_f\|g\|_{\textup{Frob}}^{N_f}
\end{equation}
for $g\cdot i\in\mathcal D$ (see~\eqref{HCmoderategrowth}).  Suppose also
that $h_1$ and $h_2$ have moderate growth, and let $C_{h_1}$, $C_{h_2}$,
$N_{h_1}$, and $N_{h_2}$ be nonnegative constants such that
\begin{equation}\label{hjmodgrowth}
  |h_1(g\cdot i)| \le C_{h_1}\|g\|_{\textup{Frob}}^{N_{h_1}}\quad\text{and} \quad  |h_2(g\cdot i)| \le C_{h_2}\|g\|_{\textup{Frob}}^{N_{h_2}}
\end{equation}
for $g\in\SL_2(\R)$. Then $f$ has the following moderate growth bound on
all of $\Hyp$: for some constants $C,N \ge 0$ depending only on $N_f$,
$N_{h_1}$, $N_{h_2}$, and $k$,
\begin{equation}\label{extendedmoderategrowth}
  |f(g\cdot i)| \le C(C_f+C_{h_1}+C_{h_2})\|g\|_{\textup{Frob}}^N
\end{equation}
for $g\in\SL_2(\R)$. In particular, if $f$, $h_1$, and $h_2$ are all
holomorphic, then $f\in\mathcal P$.
\end{enumerate}
\end{proposition}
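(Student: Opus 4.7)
The plan for both parts rests on the identity \eqref{sigmaandN}, which expresses any $\gamma \in \PSL_2(\Z)$ as $\gamma = \sum_{i=1}^{6} \sigma(\gamma)_{1i}\, M_i + (T-I)^2\, \vec{N}_1(\gamma)_1 + S(T-I)^2\, \vec{N}_2(\gamma)_1$ in $R$. Formally applying the slash operator $|_k$ to $f$ in the $\tau$ variable and enforcing the transformation laws \eqref{propagation1} yields the representation
\[
f|_k \gamma \;=\; \sum_{i=1}^{6} \sigma(\gamma)_{1i}\,(f|_k M_i) \;+\; h_1|_k \vec{N}_1(\gamma)_1 \;+\; h_2|_k \vec{N}_2(\gamma)_1,
\]
whenever it makes sense, and both parts of the proposition flow from this identity.

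For part~(1), I would exploit that $\overline{\Gamma(2)}$ is freely generated by $T^2$ and $ST^2 S$ (Lemma~\ref{lem:columndomination}), so its Cayley graph is a tree and $\Hyp$ is tiled by $\overline{\Gamma(2)}$-translates of $\overline{\mathcal{D}}$. Starting with $f$ on $\mathcal{O}$, I would extend $f$ tile by tile along this tree: the recurrence $f(z+2) = 2f(z+1) - f(z) + h_1(z)$ extends $f$ across each $T^2$-edge (after iteration in overlapping strips to cover a full open neighborhood of the next tile), and analogously $f|_k ST^2 S = 2 (f|_k STS) - f + h_2|_k S$ handles the $ST^{\pm 2}S$-edges. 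Because the Cayley graph is a tree, different reduced words representing the same element do not exist, so no cycle imposes a consistency condition; only agreement across the shared boundaries of adjacent tiles needs to be checked, and this is immediate from the construction. Holomorphy is preserved since $h_1$ and $h_2$ are holomorphic, and \eqref{propagation1} holds on all of $\Hyp$ by construction.

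For part~(2), I would use the displayed identity to bound $|f(\tau)|$. Writing $\tau = \gamma \tau_0$ with $\gamma \in \overline{\Gamma(2)}$ and $\tau_0 \in \overline{\mathcal{D}}$, we have $f(\tau) = j(\gamma, \tau_0)^k (f|_k \gamma)(\tau_0)$. Lemma~\ref{lem:polynomiallybounded} bounds each $\sigma(\gamma)_{1i}$ polynomially in $\|\gamma\|_{\textup{Frob}}$, and Lemma~\ref{lem:cocyclebounds} does the same for both the coefficients and the Frobenius norms of the group elements appearing in $\vec{N}_1(\gamma)_1$ and $\vec{N}_2(\gamma)_1$. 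Lemma~\ref{lem:HCmoderategrowthPslashed} converts \eqref{extensionmodgrowthassump} into a polynomial bound on each $(f|_k M_i)(\tau_0)$, using that $M_i \tau_0$ lies in a $\overline{\Gamma(2)}$-translate of $\overline{\mathcal{D}}$ by an element of \emph{bounded} Frobenius norm (since $M_i$ ranges over a finite set), and converts \eqref{hjmodgrowth} similarly into bounds on each slashed $h_j$ term. Combining these estimates with $|j(\gamma, \tau_0)| \le \|\gamma\|_{\textup{Frob}} \|g_{\tau_0}\|_{\textup{Frob}}^2$ from \eqref{jvsnorms}, the identity $\|g_\tau\|_{\textup{Frob}} = \|\gamma g_{\tau_0}\|_{\textup{Frob}}$, and the standard reduction-theoretic estimates $\|\gamma\|_{\textup{Frob}}, \|g_{\tau_0}\|_{\textup{Frob}} \le C \|g_\tau\|_{\textup{Frob}}^{O(1)}$ (for $\tau_0$ in the fundamental domain), yields the bound \eqref{extendedmoderategrowth}.

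The main obstacle for part~(1) is ensuring that each local step of the tile-by-tile extension actually reaches a full open neighborhood of the next tile: the $T^2$-recurrence only initially defines $f$ on the subset of $T^2 \overline{\mathcal{D}}$ where both $z$ and $z+1$ lie in the current domain, so a subsidiary induction in overlapping strips is required to cover all of it. For part~(2), the delicate point is that $(f|_k M_i)(\tau_0)$ involves $f$ at $M_i \tau_0$, which for some $M_i$ (e.g.\ $M_5 = ST$, as the example $ST(\rho) \notin \overline{\mathcal{D}}$ shows) lies outside $\overline{\mathcal{D}}$; the key observation that saves this is that $M_i$ is drawn from the finite set $\{I, T, TS, S, ST, STS\}$, so $M_i \tau_0$ lies in a $\overline{\Gamma(2)}$-neighbor of $\overline{\mathcal{D}}$ by a group element of bounded size, and re-applying the master identity at this level adds only a bounded multiplicative factor to the final estimate.
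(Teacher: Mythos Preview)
Your Part~(2) strategy matches the paper's: both reduce to the master identity $f|_k\gamma = \sum_i \sigma(\gamma)_{1i}(f|_k M_i) + h_1|_k\vec{N}_1(\gamma)_1 + h_2|_k\vec{N}_2(\gamma)_1$, bound the coefficients via Lemmas~\ref{lem:polynomiallybounded} and~\ref{lem:cocyclebounds}, and finish with reduction-theoretic estimates. The paper takes $\tau_0\in\overline{\mathcal{F}}$ (the $\SL_2(\Z)$-fundamental domain) rather than $\overline{\mathcal{D}}$, which makes the ``$M_i\tau_0$ may leave the domain'' issue cleaner: four of the six $M_i\overline{\mathcal{F}}$ lie in $\overline{\mathcal{D}}$ outright, and the remaining two ($T\mathcal{F}$ and $ST\mathcal{F}$) are handled by one explicit application of the functional equations. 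Your fix via ``re-applying the master identity'' is correct in spirit but slightly heavier than needed.

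Your Part~(1), however, has a real gap in the $ST^{\pm2}S$ step. The recurrence $f|_k ST^2S = 2(f|_k STS) - f + h_2|_kS$ evaluated at $z\in\mathcal{O}$ requires $f(STSz)$, but $STS\notin\overline{\Gamma(2)}$ and $STSz$ need not lie in $\mathcal{O}$: for instance $z=-\tfrac12+i\in\mathcal{D}$ gives $STSz=-\tfrac{7}{13}+\tfrac{4i}{13}$, which sits inside the excluded semicircle $|z+\tfrac12|<\tfrac12$. Unlike the $T^2$ case---where the half-step $T$ shifts within the width-$2$ strip and your ``subsidiary induction'' patches things up---the half-step $STS$ is a genuine fractional linear map whose image of $\mathcal{D}$ escapes $\overline{\mathcal{D}}$ substantially, and no iteration of the same recurrence recovers the missing region. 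The paper sidesteps this entirely: it passes to a neighborhood $\mathcal{O}_{\mathcal{F}}$ of the finer fundamental domain $\overline{\mathcal{F}}$, chooses the basis $\vec{M}'=(I,T^{-1},STS,S,ST^{-1},TS)$ so that $M'_j\mathcal{O}_{\mathcal{F}}\subset\mathcal{O}$ for all $j$, and then \emph{defines} $f(\gamma\tau)$ globally for $\tau\in\mathcal{O}_{\mathcal{F}}$ and $\gamma\in\PSL_2(\Z)$ directly by the master identity. Well-definedness (when $\gamma\tau=\gamma'\tau'$) is checked once via the cocycle relation~\eqref{cocyclelaw} and a local compatibility lemma on $\mathcal{O}_{\mathcal{F}}$; no inductive tile-by-tile extension is attempted, and the problematic intermediate points never arise because every evaluation of $f$ occurs at some $M'_j\tau\in\mathcal{O}$.
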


The fact that the bound \eqref{extendedmoderategrowth} depends linearly on $C_f$,
$C_{h_1}$, and $C_{h_2}$ will be used to obtain uniform moderate growth bounds
in Section~\ref{sec:proofs}.

\begin{proof}
For expositional reasons we begin with the proof of part (2).  When we refer
to a ``constant'' in this proof, we mean that it can depend only on $N_f$,
$N_{h_1}$, $N_{h_2}$, and $k$, and not on $C_f$, $C_{h_1}$, or $C_{h_2}$.
By increasing the exponents if necessary, we
may assume $N_{h_1}=N_{h_2}=N_f$.  Since $f$ is continuous the moderate
growth bound \eqref{extensionmodgrowthassump} holds over the closure
$\overline{\mathcal D}$. Recall the matrices $M_i\in \slz$ from
\eqref{Midef}:
\[
(M_1,M_2,M_3,M_4,M_5,M_6)=(I,T,TS,S,ST,STS)
.
\]
It follows from the assumptions and \eqref{DclosureFclosure} that
\eqref{extensionmodgrowthassump} holds on $M_1\mathcal F=\mathcal F$,
$M_3\mathcal F=TS\mathcal F$,  $M_4\mathcal F=S\mathcal F$, and $M_6\mathcal
F = STS\mathcal F$ (i.e., whenever $g \cdot i$ is in these sets).  Our first
step is to check such an inequality on $M_2\mathcal F = T\mathcal F$ and
$M_5\mathcal F = ST\mathcal F$.

We can analyze the growth of $f$ on $T\mathcal{F}$ as follows, using $f$'s
moderate growth on $\mathcal F\cup T^{-1}\mathcal F\subseteq \mathcal D$ and
the functional equations \eqref{propagation1}. By
Lemma~\ref{lem:HCmoderategrowthPslashed}, the moderate growth of $f$ on
$T^{-1}\mathcal{F}$ implies that $f|_k T^{-1}$ has moderate growth on
$\mathcal{F}$, with suitably adjusted constants as in the lemma, and the
moderate growth of $h_1$ from \eqref{hjmodgrowth} implies that $h_1|_k
T^{-1}$ also has moderate growth.  Now we write $f|_k T =
f|_k\big(2I-T^{-1}\big) + h_1 |_k T^{-1}$ by \eqref{propagation1}, to deduce
that $f|_k T$ has moderate growth on $\mathcal{F}$; hence $f$ has moderate
growth on $T \mathcal{F}$ by Lemma~\ref{lem:HCmoderategrowthPslashed} again.
Written out more explicitly, $|f(g\cdot i)|\le (c_1 C_f+c_2 C_{h_1})
\|g\|_{\textup{Frob}}^{N_f+4|k|}$ on $M_2\mathcal F = T\mathcal F$, for some
constants $c_1,c_2>0$. Likewise, the moderate growth on $S\mathcal F$ and
$ST^{-1}\mathcal F$ implies a bound of the form $|f(g\cdot i)|\le (c_1'
C_f+c_2' C_{h_2}) \|g\|_{\textup{Frob}}^{N_f+4|k|}$ on $M_5\mathcal F =
(ST^2)T^{-1}\mathcal F=ST\mathcal F$, for some constants $c_1',c_2'>0$. Thus,
by Lemma~\ref{lem:HCmoderategrowthPslashed}, there exist constants
$C_1,N_1\ge 0$ such that
\begin{equation}\label{propMjslash}
|(f|_k{M_j})(g\cdot i)|\le C_1(C_f+C_{h_1}+C_{h_2})\|g\|_{\textup{Frob}}^{N_1}
\end{equation}
for $g\cdot i \in \overline{\mathcal F}$ and each $j\le 6$.

For $\tau\in \Hyp$ choose $\gamma=\left(
\begin{smallmatrix}
a & b \\
c & d
\end{smallmatrix}\right)\in\slz$
such that $w:=\gamma^{-1}\cdot \tau\in \overline{\mathcal F}$. Consider
\eqref{sigmaandN} with $r=\gamma$, so that $\gamma=I\cdot\gamma$ is the first
entry of $\sigma(\gamma)\vec{M}+(T-I)^2\cdot
\vec{N}_1(\gamma)+S(T-I)^2\cdot\vec{N}_2(\gamma)$.  We next bound
$(f|_k\gamma)(w)$ by expanding it according to this last decomposition. Write
the matrix entries of $\sigma(\gamma)$ as $\sigma(\gamma)_{ij}$, and the
first vector entries of $\vec{N}_1(\gamma)$ and $\vec{N}_2(\gamma)$ as
$\sum_{\delta\in\PSL_2(\Z)}n_\delta \delta$ and
$\sum_{\delta\in\PSL_2(\Z)}n'_\delta \delta$, respectively (bounds on these
quantities are given in Lemmas~\ref{lem:polynomiallybounded}
and~\ref{lem:cocyclebounds}).  Let $g_\tau$ and $g_w=\gamma^{-1}g_\tau$ be
matrices in $\SL_2(\R)$ which map $i$ to $\tau$ and $w$, respectively. Then
\begin{equation}\label{propagation2}
\begin{split}
(c w+d )^{-k} f(\tau) & =(f|_k\gamma)(g_w\cdot i) \\
& = \sum_{j\le 6} \sigma(\gamma)_{1j} (f|_k M_j)(g_w\cdot i)\\
&\quad\phantom{}+\sum_{\delta\in\PSL_2(\Z)}
\big(n_\delta (h_1|_k \delta)(g_w\cdot i) + n'_{\delta}(h_2|_k \delta)(g_w\cdot i)\big).
\end{split}
\end{equation}
Invoking \eqref{extensionmodgrowthassump}  and \eqref{hjmodgrowth}, along
with Lemma~\ref{lem:HCmoderategrowthPslashed} and \eqref{propMjslash}, yields
the bound
\[
\begin{split}
  |(c  w+d )^{-k} f(\tau)|  \le  C_2(C_f+C_{h_1}+C_{h_2})&
\Bigg(\sum_{j\le 6} |\sigma(\gamma)_{1j}| \,\|g_w\|^{N_2}_{\textup{Frob}}\\
&\ \phantom{} +\sum_{\delta}(|n_\delta| +|n_\delta'|)\|\delta\|^{N_2}_{\textup{Frob}} \|g_w\|^{N_2}_{\textup{Frob}}\Bigg)
\end{split}
\]
for some constants $C_2,N_2\ge 0$.  The estimates from
Lemmas~\ref{lem:polynomiallybounded} and~\ref{lem:cocyclebounds} show that
the sums are at most
$C_3\|\gamma\|_{\textup{Frob}}^{N_3}\|g_w\|_{\textup{Frob}}^{N_3}$ for some
constants $C_3,N_3\ge0$.  The factor of automorphy $c w + d=j(\gamma,w)$ has
a bound of the same form in \eqref{jvsnorms}, and thus
\[
|f(\tau)|\le
C_4(C_f+C_{h_1}+C_{h_2})\|\gamma\|_{\textup{Frob}}^{N_4}\|g_w\|_{\textup{Frob}}^{N_4}
\]
for some constants $C_4,N_4\ge 0$.

We now claim that for $z$ in the standard ``keyhole'' fundamental domain $\{z
\in \Hyp: |z|\ge 1, -\frac 12\le \Re(z)\le \frac 12\}$,
\begin{equation}\label{reductiontheory1}
 \|g_z\|_{\textup{Frob}} \le \|\delta g_z\|_{\textup{Frob}}
\end{equation}
for all $\delta\in\slz$, where $g_z$ is some matrix in $\SL_2(\R)$ with
$g_z\cdot i=z$ (this defines $g_z$ uniquely up to right multiplication by an
element of $\SO_2(\R)$, so the quantities on both sides of  inequality are
independent of such a choice).  Indeed, writing $z=x+iy$ and
$\delta=\left(\begin{smallmatrix}
p & q \\
r & s
\end{smallmatrix}\right)$,
we may take $g_z=y^{-1/2}\left(
\begin{smallmatrix}
y& x \\
0 & 1
\end{smallmatrix}\right)$,
compute the norms squared, and reduce the claim to showing that
\[
(x^2+y^2-1)(p^2+r^2-1)+p^2+q^2+r^2+s^2+2 x (pq+rs)-2 \ge 0
\]
for all $\delta\in \slz$.  Using the defining inequalities on $z$, the
expression on the left is at least $p^2-|pq|+q^2+r^2-|rs|+s^2-2$, which is
nonnegative because the rows of $\delta$ are nonzero and the integral
quadratic form $m^2-mn+n^2$ takes positive integer values on integer pairs
$(m,n)\neq (0,0)$.

Half of the fundamental domain $\mathcal F$ lies in the keyhole fundamental
domain, while the other half lies in its translate by $T$. Therefore
\eqref{reductiontheory1} and the submultiplicativity of the Frobenius norm
imply that there exists a positive constant $C''$ such that
\[
 \|g_w\|_{\textup{Frob}} \le C''\|\delta g_w\|_{\textup{Frob}}
\]
for all $\delta\in\slz$. (Specifically, if $w$ is not in the keyhole
fundamental domain, then $g_w = T g_z$ with $z$ in it. In that case $\|\delta
g_w\|_{\textup{Frob}} = \|\delta T g_z\|_{\textup{Frob}} \ge
\|g_z\|_{\textup{Frob}} \ge \|T\|_{\textup{Frob}}^{-1}
\|g_w\|_{\textup{Frob}}$, so we can take $C'' = \|T\|_{\textup{Frob}} =
\sqrt{3}$.) Now we can use the inequality
\[\|\gamma\|_{\textup{Frob}}\le \|g_\tau\|_{\textup{Frob}}\|g_w^{-1}\|_{\textup{Frob}}
=\|g_\tau\|_{\textup{Frob}} \|g_w\|_{\textup{Frob}} \le C''\|g_\tau\|_{\textup{Frob}}^2
\]
(the second step using the fact
$\|g_w\|_{\textup{Frob}}=\|g_w^{-1}\|_{\textup{Frob}}$ for $g_w\in
\SL_2(\R)$) to deduce that
\begin{align*}
|f(\tau)| &\le  C_4(C_f+C_{h_1}+C_{h_2})\|\gamma\|_{\textup{Frob}}^{N_4}\|g_w\|_{\textup{Frob}}^{N_4} \\
&\le C_4(C_f+C_{h_1}+C_{h_2})(C'')^{N_4} \|g_\tau\|_{\textup{Frob}}^{3N_4},
\end{align*}
which completes the proof of part~(2).

We now turn to the proof of part (1), which shares similar ingredients but
instead works with a different basis for $R/{\mathcal I}$, namely the entries
of the column vector
\[
\vec{M}'=  (M_1',M_2',M_3',M_4',M_5',M_6') = (I,T^{-1},STS,S,ST^{-1},TS)
\]
coming from \eqref{DclosureFclosure} (i.e., specifying the translates of
$\mathcal F$ that tile $\mathcal D$).  Checking that $\vec{M}'$ consists of a
basis amounts to  observing that $T^{-1} \equiv 2I-T \pmod{\mathcal I}$ and
$ST^{-1} \equiv 2S-ST \pmod{\mathcal I}$.

As was the case in \eqref{sigmaandN}, there exist a representation
$\sigma'\colon\PSL_2(\Z)\rightarrow \GL_6(\Z)$ and maps
$\vec{N}'_i\colon\PSL_2(\Z)\rightarrow R^6$ such that
\begin{equation}\label{sigma'andN'}
\vec{M}'\cdot \gamma = \sigma'(\gamma)\vec{M}'+(T-I)^2\cdot\vec{N}'_1(\gamma)+S(T-I)^2\cdot\vec{N}'_2(\gamma)
\end{equation}
for all $\gamma\in \PSL_2(\Z)$, and such that these maps satisfy the
analogous cocycle relation to \eqref{cocyclelaw}.  In particular, $\vec{M}'$
is an integral change of basis from $\vec{M}$ modulo $\mathcal I^6$, and thus
$\sigma'$ is the corresponding conjugate of $\sigma$.

By shrinking $\mathcal O$ if necessary, we assume that its  only
$\Gamma(2)$-translates which intersect it are $T^{2}\mathcal O$,
$T^{-2}\mathcal O$, $ST^{2}S\mathcal O$, and $ST^{-2}S\mathcal O$ (coming
from the boundaries of $\mathcal D$). There exists an open neighborhood
$\mathcal O_{\mathcal F}$ of $\overline{\mathcal F}$ such that $\bigcup_{j\le
6}M_j'\mathcal O_{\mathcal F}\subseteq \mathcal O$; in particular, $f|_kM_j'$
is defined on $\mathcal O_\mathcal F$ for each $j\le 6$.  By shrinking
$\mathcal O_{\mathcal F}$ if necessary, we may assume that $\gamma \mathcal
O_{\mathcal F}$ intersects $\mathcal O_{\mathcal F}$ with $\gamma \in
\PSL_2(\Z)$ only when $\gamma \overline{\mathcal F}$ and $\overline{\mathcal
F}$ share a boundary point. Accordingly, let
\begin{align*}
\Neighbors &=\{\neighbor\in \PSL_2(\Z) : \neighbor
\mathcal O_{\mathcal F}\cap \mathcal O_{\mathcal F}\neq \emptyset\}\\
&=\{S,T,T^{-1},ST^{-1},TS,TST^{-1}\}.
\end{align*}
Consider a pair $\tau,\tau'\in \mathcal O_\mathcal F$ such that
$\tau'=\neighbor\tau$ for some $\neighbor\in \Neighbors$.  We claim for each
$i\le 6$ that
\begin{equation}\label{smallextensionclaim}
\aligned
(f|_k M'_i\neighbor)(\tau) &= \sum_{j\le 6}\sigma'_{ij}(\neighbor) (f|_kM'_j)(\tau)\\
& \quad \phantom{} + (h_1|_kN'_{1i}(\neighbor))(\tau) + (h_2|_kN'_{2i}(\neighbor))(\tau),
\endaligned
\end{equation}
where $\sigma'_{ij}(\neighbor)$ denote the matrix entries of
$\sigma'(\neighbor)$, and $N'_{1i}(\neighbor)$ and $N'_{2i}(\neighbor)$
denote the vector entries of $\vec{N}'_1(\neighbor)$ and
$\vec{N}'_2(\neighbor)$, respectively. This claim would follow immediately
from \eqref{sigma'andN'} if we knew we could apply the functional equations
\eqref{propagation1}, and so all we must do is to verify the hypotheses of
\eqref{propagation1}, namely that $\gamma\tau \in \mathcal O\cap
T^{-1}\mathcal O \cap T^{-2}\mathcal O$ for every term $\gamma$ that occurs
in $N'_{1i}(\neighbor)$, and $\gamma\tau \in S\mathcal O\cap T^{-1}S\mathcal
O \cap T^{-2}S\mathcal O$ for every term $\gamma$ in $N'_{2i}(\neighbor)$. To
begin, we write \eqref{sigma'andN'} as
\begin{equation}\label{smallextension1}
M'_i\neighbor - \sum_{j\le 6}\sigma'_{ij}(\neighbor)M'_j = (T-I)^2 N'_{1i}(\neighbor) + S(T-I)^2 N'_{2i}(\neighbor).
\end{equation}
Similarly to \eqref{explicitsigmaVcalc}, it is straightforward to check for
each possible choice of $i$ and $\neighbor$ that either $N'_{1i}(\neighbor) =
N'_{2i}(\neighbor) = 0$, in which case the hypotheses of \eqref{propagation1}
hold vacuously and \eqref{smallextensionclaim} follows, or one of
$N'_{1i}(\neighbor)$ and $N'_{2i}(\neighbor)$ is zero and the other is a
group element $\gamma \in \PSL_2(\Z)$.  In other words, the right side of
\eqref{smallextension1} is of the form $(T-I)^2\gamma$ or $S(T-I)^2\gamma$
with $\gamma\in\PSL_2(\Z)$ whenever is it nonzero. Since
$M'_i\neighbor\tau=M'_i\tau' \in \mathcal{O}$ and $M_j'\tau \in \mathcal O$ for all $i$ and $j$, all the
terms on the left side of \eqref{smallextension1} map $\tau$ to points in
$\mathcal O$; therefore $\tau$ is mapped to $\mathcal{O}$ by all of
$T^2\gamma, T\gamma, \gamma$ or $ST^2\gamma, ST\gamma, S\gamma$ (depending on
which form the right side has). This assertion is the hypothesis needed for
\eqref{propagation1} to apply at the point $\gamma\tau$, and
\eqref{smallextensionclaim} follows by applying the slash operator to $f$.

Having shown \eqref{smallextensionclaim}, we now extend $f$ to arbitrary
$w\in\Hyp$ by imitating \eqref{propagation2} (but with slightly different
notation).  Namely, we write $w$ as $\gamma\tau$ with $\tau\in \mathcal
O_\mathcal F$ and $\gamma\in \PSL_2(\Z)$, and we define $f(w)$ by
\[
\begin{split}
j(\gamma,\tau)^{-k} f(w) &= (f|_k\gamma)(\tau)\\
&=
\sum_{j\le 6}\sigma'_{1j}(\gamma)(f|_kM'_j)(\tau) + (h_1|_kN'_{11}(\gamma))(\tau)+ (h_2|_kN'_{21}(\gamma))(\tau),
\end{split}
\]
where as usual $j(\gamma,\tau)$ is the factor of automorphy from
\eqref{eq:automorphyfactordef}.  That $f(w)$ is well defined follows from
\eqref{smallextensionclaim} and the cocycle relation for $\sigma'$,
$\vec{N}'_1$, and $\vec{N}'_2$ analogous to \eqref{cocyclelaw}. Specifically,
suppose that $w = \gamma \tau = \gamma' \tau'$, with $\tau, \tau' \in
\mathcal{O}_{\mathcal F}$ and $\gamma,\gamma' \in \PSL_2(\Z)$. Then $\tau' =
\neighbor \tau$ for some $\neighbor \in \Neighbors$, and hence $\gamma =
\gamma' \neighbor$. Starting with the definition of $j(\gamma,\tau)^{-k}
f(w)$ given above, we expand the right side using $\sigma'(\gamma) =
\sigma'(\gamma')\sigma'(\neighbor)$ and the cocycle relations to obtain
\begin{align*}
& \quad \sum_{i \leq 6} \sigma'_{1i}(\gamma') \left(\sum_{j \leq 6} \sigma'_{ij}(\neighbor) (f |_k M'_j)(\tau) + (h_1 |_k N'_{1i} (\neighbor)) (\tau) + (h_2 |_k N'_{2i} (\neighbor)) (\tau)\right)
\\
 & \quad \phantom{} + (h_1 |_k N'_{11}(\gamma') \neighbor) (\tau)
 +  (h_2 |_k N'_{21}(\gamma') \neighbor) (\tau).
\end{align*}
Applying \eqref{smallextensionclaim} to the expression in parentheses shows
that $j(\gamma,\tau)^{-k} f(w)$ equals
\[
\sum_{i \leq 6} \sigma'_{1i}(\gamma') (f|_k M'_i\neighbor)(\tau) +
(h_1 |_k N'_{11}(\gamma') \neighbor) (\tau)
 +  (h_2 |_k N'_{21}(\gamma') \neighbor) (\tau),
\]
or equivalently
\[
j(\neighbor,\tau)^{-k}\left( \sum_i \sigma'_{1i}(\gamma') (f |_k M'_i) + (h_1 |_k N'_{1i}(\gamma'))  +  (h_2 |_k N'_{2i}(\gamma')) \right) (\neighbor \tau)
\]
by the definition of the slash operator $|_k\neighbor$.
Finally, using $\neighbor \tau = \tau'$ and $j(\gamma,\tau) =
j(\gamma',\tau') j(\neighbor,\tau)$ yields
\[
j(\gamma',\tau')^{-k} f(w) = \sum_i \sigma'_{1i}(\gamma') (f |_k M'_i)(\tau')
+ (h_1 |_k N'_{1i}(\gamma')) (\tau') +  (h_2 |_k N'_{2i}(\gamma'))  (\tau'),
\]
which is the definition of $f(w)$ as we would obtain by using $\tau'$ and
$\gamma'$. Therefore $f(w)$ is well defined.

We have now defined a holomorphic function on $\Hyp$ agreeing with $f$ on the
open neighborhood $\bigcup_{i\le 6}M'_j\mathcal O_\mathcal F$ of $\mathcal D$
(and thus also the original neighborhood $\mathcal{O}$). Since the
holomorphic identity \eqref{propagation1} holds for the original function, it
must hold for the extension as well.
\end{proof}

Equation \eqref{rerestated1.6to1.8} recasts the interpolation formula
\eqref{eqn:IF} from Theorem~\ref{theorem:interpolation} in terms of
properties of the function $F\colon\Hyp\times \R^d\to\R$. To construct $F$,
we make the Ansatz that $\widetilde{F}|^\tau_{d/2}S$ can (essentially) be
written as a Laplace transform, which is equivalent to a contour integral
construction introduced by Viazovska in her work on sphere packing \cite{V}
and motivated by cycle integrals of modular forms appearing in \cite{DIT}.
Specifically, we will construct an integral kernel $\K$ on $\Hyp \times \Hyp$
such that
\begin{equation}\label{FfromKlaplace}
F(\tau,x)=e^{\pi i \tau |x|^2}+4 \sin\mathopen{}\big(\pi |x|^2/2\big)^2\mathclose{}\,\int_0^\infty \K(\tau,it)\,e^{-\pi |x|^2 t}\,dt,
\end{equation}
at least for $|x|$ sufficiently large and $\tau$ inside the fundamental
domain $\mathcal F$ for the action of $\slz$ on $\Hyp$ defined in
\eqref{domainF}.  Such a formula requires initially restricting $\tau$,
because the kernel $\K(\tau,z)$ will have poles; see part (1) of
Theorem~\ref{thm: K}.  This is the reason we have taken the fundamental
domain $\mathcal{F}$ to be different from the usual keyhole fundamental
domain for $\slz\backslash\Hyp$, so that in particular $\mathcal{F}$ does not
intersect the imaginary axis $z=it$.

It is natural here to decompose the proposed kernel into eigenfunctions of
$|^\tau_{d/2}S$ as $\K=\frac 12(\K_+ + \K_-)$ using \eqref{diagonalizex},
where
\begin{equation}\label{Kpmdef}
\K_\pm(\tau,z):=\K(\tau,z)\mid^{\tau}_{d/2}(I\mp S).
\end{equation}
Note that $\K_\pm$ has eigenvalue $\mp1$ under $|^\tau_{d/2}S$, not $\pm1$.
However, the notation is consistent with our use of signs elsewhere, such as
in part~(2) of the next theorem.  This labeling reflects the fact that
$\K_\pm$ contributes to the decomposition of $F$ into eigenfunctions of the
Fourier transform with eigenvalue $\pm 1$.  In terms of the relationship
$\widetilde F=(e^{\pi i \tau |x|^2 }-F)|^\tau_{d/2}S$ between $\widetilde{F}$
and $F$ when $d$ is a multiple of $8$, the right side contains $-F
|^\tau_{d/2} S$, which introduces an extra minus sign.

The next result, which is proved in Section~\ref{sec:sub:explicitkernels},
shows the existence of a kernel $\K$ that will be used to construct the
solution $F$ of the functional equations \eqref{rerestated1.6to1.8} via
\eqref{FfromKlaplace} and later extensions of this formula.

\begin{theorem}\label{thm: K} For dimensions $d=8$ and $24$
there exist unique meromorphic functions $\K=\K^{(d)}$ and
$\K_\pm=\K_\pm^{(d)}$ for $d=8$ and $24$ (related by \eqref{Kpmdef}) on
$\Hyp\times\Hyp$ satisfying the following properties.
\begin{enumerate}
\item For  fixed $z\in\Hyp$ the poles of $\K(\tau,z)$ and $\K_\pm(\tau,z)$
    as functions of $\tau$ are all simple and  contained in the $\SL_2(\Z)$-orbit of
    $z$.
\item The kernel $\K$ satisfies the functional equations
\[
\K(\tau,z)\mid^\tau_{d/2}(T-I)^2=0 \quad\text{and}\quad
\K(\tau,z)\mid^\tau_{d/2}S(T-I)^2=0;
\]
that is, $\K|^\tau_{d/2}r=0$ for all $r\in \mathcal I$.  Also, $\K_+$ and
$\K_-$ satisfy the functional equations
\[
\K_\pm(\tau,z)\mid^\tau_{d/2}(T-I)^2=0\quad\text{and}\quad
\K_\pm(\tau,z)\mid^\tau_{d/2}(S\pm I)=0;
\]
that is, $\K_\pm|^\tau_{d/2}r=0$ for all $r\in\mathcal I_\pm$.
\item For $z\in\Hyp$ and $r\in R$, the residues of $\K$ and $\K_\pm$ as
    functions of $\tau$ satisfy
\[
\operatorname{Res}_{\tau=z}\big(\K|^\tau_{d/2}r\big)=-\frac{1}{2\pi}\phi(r)
\]
and
\[
\operatorname{Res}_{\tau=z}\big(\K_\pm|^\tau_{d/2}r\big)=-\frac{1}{2\pi}\phi_\pm(r),
\]
where $\phi\colon R/\mathcal I\to\C$ is the linear map defined by
\begin{flalign*}
&&\phi(I)&=0, & \phi(T)&=1, & \phi(TS)&=0,&&\\
&&\phi(S)&=0, & \phi(ST)&=0, & \phi(STS)&=0&&
\end{flalign*}
and $\phi_\pm\colon R/\mathcal I_\pm\to\C$ is the linear map from
\eqref{phiepsdef} defined by
\begin{flalign*}
&&\phi_\pm(I)&=0, &\phi_\pm(T)&=1, & \phi_\pm(TS)&=0. &&
\end{flalign*}
(Note that $\phi$ and $\phi_\pm$ are defined modulo $\mathcal{I}$ and
$\mathcal{I}_\pm$ thanks to part (2) above, so it suffices to define them
on the bases from Proposition~\ref{prop:ideals2}.)
\item The functions
\[
\Delta(\tau)\Delta(z)(j(\tau)-j(z))\K^{(8)}_\pm(\tau,z)
\]
and
\[
\Delta(\tau)\Delta(z)^2(j(\tau)-j(z))\K^{(24)}_\pm(\tau,z)
\]
are in the class $\mathcal P$ both as functions of $\tau$ and $z$.
Furthermore, for $z$ fixed the kernels satisfy the bounds
\begin{equation}\label{Kpmboundsatinf}
  \K_\pm^{(8)}(\tau,z) = O\big(|\tau e^{2\pi i \tau}|\big) \quad \text{and} \quad
  \K_\pm^{(24)}(\tau,z) = O\big(|\tau e^{4\pi i \tau}|\big)
\end{equation}
as $\Im(\tau)\to\infty$.
\end{enumerate}
\end{theorem}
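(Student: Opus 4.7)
The argument splits into uniqueness (via a modular-form dimension count) and existence (via an explicit construction from modular, quasimodular, and logarithmic building blocks).

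\textbf{Uniqueness.} Fix a sign and dimension $d\in\{8,24\}$, and let $\K_\pm^{(1)}$ and $\K_\pm^{(2)}$ both satisfy properties~(1)--(4). Their difference $D:=\K_\pm^{(1)}-\K_\pm^{(2)}$ is holomorphic in $\tau$, since by~(1) its only possible poles are simple and lie on $\slz\cdot z$, while by~(3) the residues agree and cancel. By~(2), $D|^\tau_{d/2}r=0$ for every $r\in\mathcal{I}_\pm$, and by~(4), $D=O(|\tau\,e^{2\pi i n_0\tau}|)$ as $\Im(\tau)\to\infty$, with $n_0=1$ for $d=8$ and $n_0=2$ for $d=24$. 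Exactly as in the proof of Theorem~\ref{thm:FEimpliesIF}, the combination of $D|^\tau_{d/2}(T-I)^2=0$ with this exponential decay forces a convergent expansion
\[
D(\tau,z)=\sum_{n\ge n_0}\bigl(a_n(z)+2\pi i\tau\sqrt{2n}\,b_n(z)\bigr)\,e^{2\pi i n\tau}.
\]
Combined with $D|^\tau_{d/2}(S\pm I)=0$, this identifies $D(\cdot,z)$, for each fixed $z$, with an element of a finite-dimensional space of quasimodular cuspidal objects of weight $d/2$ for $\slz$ vanishing to order $\ge n_0$ at $\infty$. The dimension formula~\eqref{modulardimensionformula} gives $\mathcal{S}_4(\slz)=0$, while $\mathcal{S}_{12}(\slz)=\C\cdot\Delta$ has no nonzero element vanishing to order $\ge 2$, so in both cases $D\equiv 0$.

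\textbf{Existence via an Ansatz.} The construction of $\K_\pm$ starts from an Ansatz of the form
\[
\K^{(8)}_\pm(\tau,z)=\frac{P_\pm(\tau,z)}{\Delta(\tau)\Delta(z)(j(\tau)-j(z))}\quad\textup{and}\quad\K^{(24)}_\pm(\tau,z)=\frac{Q_\pm(\tau,z)}{\Delta(\tau)\Delta(z)^2(j(\tau)-j(z))},
\]
in which the factor $j(\tau)-j(z)$ supplies simple poles precisely on the $\slz$-orbit of $z$ (matching~(1)), and the numerators $P_\pm,Q_\pm$ are to be chosen holomorphic on $\Hyp\times\Hyp$ and polynomially bounded in each variable, so that~(4) is automatic. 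Working on an open neighborhood of the fundamental domain $\mathcal{F}$ from \eqref{domainF} and using the basis $\{I,T,TS\}$ of $R/\mathcal{I}_\pm$ from Proposition~\ref{prop:ideals2}, one pins down $P_\pm$ and $Q_\pm$ locally by prescribing the residues of $\K_\pm$ at the three nearby orbit points $\tau=z$, $Tz$, and $TSz$ to equal $-\phi_\pm(I)/(2\pi)=0$, $-\phi_\pm(T)/(2\pi)=-1/(2\pi)$, and $-\phi_\pm(TS)/(2\pi)=0$, respectively. The numerators are then realized as explicit polynomial combinations of the $\Gamma(2)$-modular forms $U,V,W$, the quasimodular Eisenstein series $E_2$, and the logarithm $\mathcal{L}$ of the Hauptmodul $\lambda$. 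The logarithmic piece is essential: as mentioned in the introduction, the cocycle structure of $\mathcal{I}_\pm$ inside $R$ cannot be realized using ratios of modular forms alone, and the non-modular transformation law~\eqref{loglambdaslash} of $\mathcal{L}$ supplies the required correction.

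\textbf{Global continuation and the main obstacle.} Once the seed is defined on an open neighborhood of $\overline{\mathcal{F}}$ and shown there to satisfy the $\mathcal{I}_\pm$-slash equations, part~(1) of Proposition~\ref{prop:propagationandextension} extends $\K_\pm$ meromorphically to $\Hyp\times\Hyp$ with the functional equations~(2) valid globally, and part~(2) then promotes the local moderate-growth bound to all of $\Hyp$, delivering~(4). The main obstacle will be verifying that the seed really does satisfy $\K_\pm|^\tau_{d/2}r=0$ for every $r\in\mathcal{I}_\pm$ in a full neighborhood of $\overline{\mathcal{F}}$: this reduces to a finite system of linear identities among $U,V,W,E_2,\mathcal{L}$ controlled by the cocycles $\vec{N}_1,\vec{N}_2$ of \eqref{sigmaandN} and by the functional characterization of $\widetilde{\mathcal{I}}_\pm$ in Proposition~\ref{prop:ideals3}. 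The case $d=24$ should be strictly more delicate than $d=8$, because the extra factor $\Delta(z)$ in the denominator enforces an additional order of vanishing at the cusp, forcing a tighter balance within the Ansatz $Q_\pm$ to maintain the required decay $O(|\tau\,e^{4\pi i\tau}|)$ in~(4).
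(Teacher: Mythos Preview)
Your uniqueness argument has a gap. The difference $D(\cdot,z)$ does lie in $\operatorname{Ann}_{d/2}(\mathcal{I}_\pm,\mathcal{P})$, but that space is \emph{not} the cusp form space $\mathcal{S}_{d/2}(\slz)$, and invoking $\mathcal{S}_4=0$ or $\mathcal{S}_{12}=\C\Delta$ does not settle the matter. The paper computes $\operatorname{Ann}_k(\mathcal{I}_\pm,\mathcal{P})$ explicitly in Propositions~\ref{prop: HFE plus} and~\ref{prop: HFE minus}: for $k=4$ it is two-dimensional, spanned for instance by $\varphi_0E_4$ and $\varphi_{-2}E_6$, neither of which is a cusp form (both behave like $\tau$ at $\infty$). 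The actual argument (Lemma~\ref{lem:noPsolutions}) is a direct $q$-expansion check that no nonzero element of this explicit space has the decay~\eqref{Kpmboundsatinf}. You also need to show $D\in\mathcal{P}$ in the first place; this uses part~(4) together with Lemma~\ref{lem:deltajtz}, which you did not invoke.

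Your existence strategy is not the paper's, and as written it would not work. Proposition~\ref{prop:propagationandextension} extends \emph{holomorphic} functions satisfying \eqref{propagation1}; it says nothing about meromorphic seeds with $z$-dependent poles, so the seed-and-extend route does not directly apply to $\K_\pm$. More importantly, your description ``realized as explicit polynomial combinations of $U,V,W,E_2,\mathcal{L}$'' with the annihilation by $\mathcal{I}_\pm$ left as ``the main obstacle'' misses the structural point. The paper first classifies $\operatorname{Ann}_k(\mathcal{I}_\pm,\mathcal{P})$ and $\operatorname{Ann}_k(\widetilde{\mathcal{I}}_\pm,\mathcal{P})$ in both variables (Propositions~\ref{prop: HFE plus}--\ref{prop: HFE minus dual}), obtaining three-element bases $\{\varphi_j\}$ (or $\{\psi_j\}$) in $\tau$ and $\{\widetilde\varphi_j\}$ (or $\{\widetilde\psi_j\}$) in $z$. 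This forces a global Ansatz
\[
\K^{(d)}_\pm(\tau,z)=\bigl(\varphi_{-2},\varphi_0,\varphi_2\bigr)(\tau)\cdot\Upsilon^{(d)}_+(\tau,z)\cdot\bigl(\widetilde\varphi_{-2},\widetilde\varphi_0,\widetilde\varphi_2\bigr)^t(z)
\]
(and the $\psi$-analogue for the minus sign), in which the $\mathcal{I}_\pm$-annihilation is \emph{automatic} and the only unknowns are the $3\times3$ matrices $\Upsilon^{(d)}_\pm$ of weakly holomorphic modular forms in each variable. Those matrices are then pinned down by the residue conditions~(3) and written out explicitly in Section~\ref{sec:sub:explicitkernels}; note in particular that there is no $\Delta(\tau)$ in the denominator of the actual kernel. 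Properties (1)--(4) are then verified directly from the explicit formulas, with no propagation step needed.
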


It is not difficult to check that the functional equations for $\K_\pm$ in
part~(2) are equivalent to those for $\K$, and the same is true for the
residue calculations in part~(3).  We have stated both cases for
completeness.

Our first step in proving Theorem~\ref{thm: K} is to solve the functional
equations satisfied by $\K_\pm$. Part~(2) of the theorem asserts that
$\K_\pm(\tau,z) \mid^\tau_{d/2} r= 0$ for all $r \in \mathcal{I}_\pm$, and we
will see in Proposition~\ref{prop:Kpmslashtilde} that $\K_\pm(\tau,z)
\mid^z_{2-d/2} r= 0$ for all $r \in \widetilde{\mathcal{I}}_\pm$.
Furthermore, part~(4) reduces the problem to the case of functions in
$\mathcal{P}$, with the factors of $\Delta(\tau)$ and $\Delta(z)$ changing
the weights of the actions in $\tau$ and $z$.

\subsection{Functions in $\mathcal P$ annihilated by $\mathcal I_\pm$ and $\widetilde{\mathcal I}_\pm$}
\label{sec:annihilatorcalcs}

Given a right ideal $J$ of $R=\C[\PSL_2(\Z)]$ and an even integer $k$, let
\[
\operatorname{Ann}_k(J,\mathcal P) = \{ f\in \mathcal P : f|_k r=0 \text{~for all~} r\in J\}.
\]
The following four propositions describe $\operatorname{Ann}_k(J,\mathcal P)$
for $J=\mathcal I_{\pm}=(S\pm I)\cdot R+(T-I)^2\cdot R$ and
$\widetilde{\mathcal I}_{\pm}=(T-2I+T^{-1}\mp 2S)\cdot R +(I\mp STS)\cdot R$
from \eqref{freeideals}.

\begin{proposition}\label{prop: HFE plus}  Let $k$ be an even integer.
The space $\operatorname{Ann}_k(\mathcal I_{+},\mathcal P)$, i.e., the
solutions $f\in \mathcal P$ to the system
\begin{equation}\label{QCeven1}
f|_k(T-I)^2 =0 \quad \text{and} \quad f|_k(S+I)=0,
\end{equation}
is equal to
\begin{equation}\label{eqn: HFE plus space}
\varphi_2\,\mathcal{M}_{k-2}(\SL_2(\Z))+\varphi_0\,\mathcal{M}_k(\SL_2(\Z))+\varphi_{-2}\,{\mathcal M}_{k+2}(\SL_2(\Z)),
\end{equation}
where
\[
\varphi_2(\tau)=\tau E_2(\tau)^2-\frac{6i}{\pi}E_2(\tau),\quad
\varphi_0(\tau)=\tau E_2(\tau)-\frac{3i}{\pi},\quad \text{and} \quad
\varphi_{-2}(\tau)=\tau.
\]
In particular, the dimension of the space of solutions equals
$\max\mathopen{}\big(0,\lceil \frac{ k}{4}\rceil+1\big)\mathclose{}$.
\end{proposition}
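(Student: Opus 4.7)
The plan is to verify the inclusion $\supseteq$ by direct computation, prove directness of the sum, and then obtain the reverse inclusion using the structure of quasimodular forms. For the inclusion $\supseteq$, I would check that each $\varphi_j$ with $j\in\{-2,0,2\}$ satisfies $\varphi_j|_j(T-I)^2 = 0$ and $\varphi_j|_j(S+I) = 0$. The $T$-annihilation is immediate since $\varphi_j(\tau)$ is a polynomial of degree at most one in $\tau$ with $1$-periodic coefficients (polynomials in $E_2$), so its second forward difference in $\tau$ vanishes. The $S$-annihilation follows from the quasimodularity identity in \eqref{E2slash}: for instance, $\varphi_0|_0 S(\tau) = (-1/\tau)[\tau^2 E_2(\tau) - 6i\tau/\pi] - 3i/\pi = -\tau E_2(\tau) + 3i/\pi = -\varphi_0(\tau)$, and analogously for $\varphi_{\pm 2}$. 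Multiplicativity of the slash operator in weights then yields $(\varphi_j g)|_k\gamma = (\varphi_j|_j\gamma)\,g$ for any $g \in \mathcal{M}_{k-j}(\SL_2(\Z))$ and $\gamma\in\SL_2(\Z)$, so the annihilation relations transfer to $\varphi_j g$.

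For directness of the sum, suppose $\varphi_2 g_2 + \varphi_0 g_0 + \varphi_{-2} g_{-2} = 0$ with $g_j \in \mathcal{M}_{k-j}$. Separating the $\tau^1$ and $\tau^0$ coefficients of this polynomial-in-$\tau$ identity yields $E_2^2 g_2 + E_2 g_0 + g_{-2} = 0$ and $2E_2 g_2 + g_0 = 0$. The second equation gives $g_0 = -2E_2 g_2$, but $g_0$ must be a genuine modular form of weight $k$ while $E_2 g_2$ is quasimodular of depth exactly one whenever $g_2 \neq 0$. Hence $g_2 = 0$, then $g_0 = 0$, and finally $g_{-2} = 0$ from the first relation.

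For the reverse inclusion, given $f \in \operatorname{Ann}_k(\mathcal{I}_+,\mathcal{P})$, I would write $f(\tau) = A(\tau) + \tau B(\tau)$ with $A,B$ holomorphic and $1$-periodic on $\Hyp$, which is possible because $f|_k(T-I)^2 = 0$. The moderate-growth bound on $f$ produces $q$-expansions of $A$ and $B$ with at most finitely many negative-power terms. Converting the identity $f|_k S = -f$ into a relation between $A, B$ and their values at $-1/\tau$, and matching the $\tau$-degrees on both sides, one deduces that $B$ transforms as a quasimodular form of weight $k+2$ and depth at most two for $\SL_2(\Z)$. By the classification of such forms, $B = g_{-2} + E_2 g_0 + E_2^2 g_2$ with $g_j \in \mathcal{M}_{k-j}$, and extracting $A$ from the same $S$-condition forces $A = -(6i/\pi)E_2 g_2 - (3i/\pi)g_0$, so $f = \varphi_2 g_2 + \varphi_0 g_0 + \varphi_{-2}g_{-2}$. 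The dimension formula $\max(0,\lceil k/4\rceil + 1)$ then follows from summing $\dim\mathcal{M}_{k-2} + \dim\mathcal{M}_k + \dim\mathcal{M}_{k+2}$ via \eqref{modulardimensionformula}, checking each residue class of $k \bmod 12$. The main obstacle is verifying the quasimodularity of $B$: one must carefully extract the polynomial-in-$\tau$ correction terms from the $S$-functional equation for $f$ to recognize the quasimodular transformation law of the correct depth.
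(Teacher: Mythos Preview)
Your overall strategy matches the paper's: reduce to showing that a certain function built from $f$ is quasimodular of weight $k+2$ and depth at most two, then decompose it as $g_{-2}+E_2 g_0+E_2^2 g_2$ and read off $f=\varphi_{-2}g_{-2}+\varphi_0 g_0+\varphi_2 g_2$. The verification of $\supseteq$ and the directness argument are fine.

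The gap is exactly where you flag it. From $f|_kS=-f$ and $f=A+\tau B$ you get the single relation
\[
A(-1/\tau)-\tau^{-1}B(-1/\tau)=-\tau^k A(\tau)-\tau^{k+1}B(\tau),
\]
but you cannot ``match $\tau$-degrees'' here: $A(-1/\tau)$ and $B(-1/\tau)$ are unknown functions of $\tau$, not polynomial coefficients, so this is one equation in two unknowns. Working it out, one finds $B|_{k+2}S-B=\tau^{-1}A+\tau^{-(k+1)}A(-1/\tau)$, and the term $\tau^{-(k+1)}A(-1/\tau)$ is not visibly of the form $c_1/\tau+c_2/\tau^2$ with $c_1,c_2$ periodic. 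You need a further relation to control $A|_kS$.

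The paper supplies this by introducing a third function $g_2:=f|_k(T-I)S$ alongside $g_0:=B$ and $g_1:=f$. The triple $(g_0,g_1,g_2)$ is closed under $|_kT$ and $|_kS$ via explicit $3\times 3$ matrices; an explicit polynomial-in-$\tau$ change of basis $(g_0,g_1,g_2)\mapsto(h_0,h_1,h_2)$ then makes the $T$-action trivial and produces the staircase
\[
h_2|_{k-2}S=h_2,\qquad h_1|_kS=h_1+2\tau^{-1}h_2,\qquad h_0|_{k+2}S=h_0+\tau^{-1}h_1+\tau^{-2}h_2,
\]
with $h_0=B$ and $h_1=2A$. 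This is precisely the quasimodular transformation law of depth $\le 2$ for $B$, with the periodic correction terms now identified. So your plan is salvageable, but carrying it out requires exactly this third auxiliary function; without it the $(A,B)$ system does not close under $S$.
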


\begin{proof}
Suppose that $f\in \mathcal P$ is a solution to \eqref{QCeven1}. The vector
space spanned by $f$, $f|_k T$, and $f|_k TS$ is preserved by the action of
$R$, because $I,T,TS$ are a basis  of $R/\mathcal{I}_+$ by part~(2) of
Proposition~\ref{prop:ideals2}, and the remainder of the proof will consist
of a careful study of this action.

Set
\[
\begin{split}
g_0 &:=f|_k(T-I) = f|_kT - f,\\
g_1&:=f, \text{ and} \\
g_2 &:=f|_k(T-I)S = f|_kTS + f,
\end{split}
\]
from which it follows that
\begin{equation}\label{QCevenactions}
\aligned
 \begin{pmatrix}g_0|_k T\\g_1|_k T\\g_2|_k T\end{pmatrix} &=
\begin{pmatrix}1&0&0\\1&1&0\\1&2&1\end{pmatrix}
\begin{pmatrix}g_0\\g_1\\g_2\end{pmatrix} \qquad \text{and} \\
 \begin{pmatrix}g_0|_k S\\g_1|_k S\\g_2|_k S\end{pmatrix} &=
\begin{pmatrix}0&\phantom{-}0&1\\ 0&-1&0\\1&\phantom{-}0&0\end{pmatrix}
\begin{pmatrix}g_0\\g_1\\g_2\end{pmatrix}
.
\endaligned
\end{equation}
Define $h_0,h_1,h_2$ by
\begin{equation}\label{eqn: define h012}
\begin{pmatrix} h_0(\tau) \\ h_1(\tau) \\ h_2(\tau) \end{pmatrix} = \begin{pmatrix} 1 & 0 & 0 \\ -2 \tau & 2 & 0 \\ \tau^2 & -2 \tau & 1 \end{pmatrix} \begin{pmatrix}g_0(\tau)\\g_1(\tau)\\g_2(\tau)\end{pmatrix}.
\end{equation}
Denote the above matrix by $M(\tau)$, and the column vectors by $H(\tau)$ and
$G(\tau)$, so that $H(\tau) = M(\tau)G(\tau)$, and denote the matrices from
\eqref{QCevenactions} for the actions of $|_kT$ and $|_kS$ on $G$ by $T_G$
and $S_G$, respectively. Then
\[
(H |_k T)(\tau) = M(\tau+1) T_G M(\tau)^{-1} H(\tau) = \begin{pmatrix} 1 & 0 & 0 \\ 0 & 1 & 0 \\ 0 & 0 & 1  \end{pmatrix} H(\tau) = H(\tau)
\]
and
\[
(H |_k S)(\tau)  = M(-1/\tau) S_G M(\tau)^{-1} H(\tau) = \begin{pmatrix} \tau^2 &
\tau & 1 \\ 0 & 1 & 2/\tau \\ 0 & 0 & 1/\tau^2  \end{pmatrix} H(\tau).
\]
In other words,
\begin{equation}\label{prop311aligned}
\aligned
h_2 |_{k-2} T & = h_2, & h_2 |_{k-2} S &= h_2, \\
  h_1|_k T& =h_1, &  (h_1|_k S)(\tau)&=h_1(\tau)+2\tau^{-1}h_2(\tau), \\
   h_0|_{k+2} T& =h_0, & \text{and} \quad (h_0|_{k+2} S)(\tau)&=h_0(\tau)+\tau^{-1}h_1(\tau)+\tau^{-2}h_2(\tau).
\endaligned
\end{equation}
Therefore, $h_2\in {\mathcal M}_{k-2}(\SL_2(\Z))$ because of this invariance
and since it has moderate growth (it is an element of $\mathcal P$). It
follows from \eqref{E2slash} and the transformation properties of $h_1$ that
$h_1-\frac{\pi i}{3}h_2E_2 \in {\mathcal M}_{k}(\slz)$; in particular, $h_1$
is a quasimodular form of weight $k$ and depth at most $1$ for $\slz$.
Similarly, $h_0-\frac{\pi i}{6}h_1 E_2-\frac{\pi^2}{36}h_2 E_2^2\in {\mathcal
M}_{k+2}(\slz)$, and $h_0$ is therefore a quasimodular form of weight $k+2$
and depth at most $2$ for $\slz$.

Thus we have shown the existence of modular forms $f_0\in \mathcal
M_{k+2}(\SL_2(\Z))$, $f_1\in \mathcal M_{k}(\SL_2(\Z))$, and  $f_2\in
\mathcal M_{k-2}(\SL_2(\Z))$ such that $h_0=f_0+f_1 E_2+f_2 E_2^2$. Expanding
and comparing with the last transformation law for $h_0|_{k+2}S$ in
\eqref{prop311aligned}, we deduce from the periodicity of $h_0$, $h_1$, and
$h_2$ that $h_1=-\frac{6i}{\pi}(f_1+2f_2 E_2)$ and
$h_2=-\frac{36}{\pi^2}f_2$, and thus
\[
f=g_1=\tau h_0+\frac12h_1=\varphi_{-2}f_0+\varphi_0 f_1+\varphi_2 f_2.
\]
Hence $f$ lies in \eqref{eqn: HFE plus space}, and it is straightforward to
verify that all elements of \eqref{eqn: HFE plus space} satisfy the
conditions in \eqref{QCeven1}.  Finally, the dimension assertion follows
directly from \eqref{modulardimensionformula}.
\end{proof}

\begin{proposition}\label{prop: HFE minus}
Let $k$ be an even integer.
The space $\operatorname{Ann}_k(\mathcal I_{-},\mathcal P)$, i.e., the space
of solutions $f\in \mathcal P$ to the system
\begin{equation}\label{QCodd1}
f|_k(T-I)^2 =0 \quad \text{and} \quad f|_k(S-I)=0,
\end{equation}
is equal to
\begin{equation}\label{eqn: HFE minus space}
\psi_4\,{\mathcal M}_{k-4}(\SL_2(\Z))+\psi_2\,{\mathcal M}_{k-2}(\SL_2(\Z))+\psi_{0}\,{\mathcal M}_{k}(\SL_2(\Z)),
\end{equation}
where
\[
\psi_4=\xi_4\cdot {\mathcal L} +(\xi_4|_4 S)\cdot {\mathcal L}_S,\quad
\psi_2=\xi_2\cdot  {\mathcal L} +(\xi_2|_2 S)\cdot  {\mathcal L}_S,\quad
\text{and} \quad \psi_{0}=1,
\]
with
\[
\xi_4=U^2+W^2-2V^2 \quad\text{and}\quad
\xi_2=U+W
\]
defined in terms of the theta functions in \eqref{UVWdef}. In particular, the
dimension of the space of solutions equals $\max\mathopen{}\big(0,\lceil
\frac{k-2}{4}\rceil+1\big)\mathclose{}$.
\end{proposition}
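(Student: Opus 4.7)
The strategy parallels that of Proposition~\ref{prop: HFE plus}, with $\mathcal L$ and $\mathcal L_S$ playing the role there assigned to $\tau$, and with $\xi_2,\xi_4$ assuming the role of $E_2$. For the containment direction, I would verify directly that each summand in \eqref{eqn: HFE minus space} lies in $\operatorname{Ann}_k(\mathcal I_-, \mathcal P)$. The $S$-invariance $\psi_j|_j S = \psi_j$ is immediate from $\mathcal L|_0 S = \mathcal L_S$ and $\mathcal L_S|_0 S = \mathcal L$ in \eqref{loglambdaslash}. Using \eqref{loglambdaslash} together with the $T$-invariance of $\xi_2$ and $\xi_4$ (a consequence of \eqref{UVWslash}), a direct computation yields $\psi_j|_j(T-I) = i\pi\,\xi_j$ for $j=2,4$, and applying $|_j(T-I)$ once more gives zero since $\xi_j|_j T = \xi_j$. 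Multiplication by any element of $\mathcal M_{k-j}(\SL_2(\Z))$ preserves both conditions, while \eqref{lambdaasympt} ensures the moderate growth required for membership in $\mathcal P$.

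For the converse, given $f \in \operatorname{Ann}_k(\mathcal I_-, \mathcal P)$, I would introduce
\[
g_0 := f|_k(T-I), \quad g_1 := f, \quad g_2 := f|_k(T-I)S,
\]
and derive the $T$- and $S$-slash-action matrices on $(g_0, g_1, g_2)^T$ from the three-dimensional structure of $R/\mathcal I_-$ furnished by Proposition~\ref{prop:ideals2}. The central step is to exhibit a $3\times 3$ matrix-valued function $M(\tau)$, with entries polynomial in $\mathcal L,\mathcal L_S$ and coefficients in $\mathcal M(\Gamma(2))$, such that $H := M\,(g_0, g_1, g_2)^T$ has components $h_0, h_1, h_2$ transforming as $\SL_2(\Z)$-modular forms of weights $k$, $k-2$, $k-4$. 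The required transformation laws for $M$ reduce, via \eqref{loglambdaslash} and \eqref{UVWslash}, to identities among modular forms for $\Gamma(2)$. Since $f$, $g_0$, and $g_2$ all lie in $\mathcal P$ and the entries of $M(\tau)$ grow at most polynomially in $|\tau|$, each $h_j$ belongs to $\mathcal P$ and hence to $\mathcal M_{k-2j}(\SL_2(\Z))$. Inverting this relation expresses $f=g_1$ as the desired linear combination of $\psi_0, \psi_2, \psi_4$. Finally, the dimension formula follows by summing $\dim\mathcal M_{k-4}+\dim\mathcal M_{k-2}+\dim\mathcal M_k$ via \eqref{modulardimensionformula} and verifying equality with $\max(0,\lceil(k-2)/4\rceil+1)$ across the residue classes of $k\pmod{12}$.

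The main obstacle is the explicit construction of $M(\tau)$ and the verification that it admits a meromorphic inverse. Unlike the plus case, where the entries of $M(\tau)$ are pure polynomials in $\tau$ with a clean affine $T$-cocycle, here $\mathcal L$ and $\mathcal L_S$ are coupled under $|_0 T$ through the nontrivial law $\mathcal L|_0 T = \mathcal L - \mathcal L_S + i\pi$ of \eqref{loglambdaslash}, and they transform through the modular forms $\xi_2, \xi_4$ rather than through the constant $1$. Invertibility of $M(\tau)$ ultimately rests on the linear independence of $1, \mathcal L, \mathcal L_S$ over $\mathcal M(\Gamma(2))$, which can be deduced from their distinct asymptotics at $i\infty$ given in \eqref{lambdaasympt} combined with the Wronskian-type identity \eqref{Kderividentity}.
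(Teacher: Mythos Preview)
Your forward direction is fine: the verification that each $\psi_j \cdot \mathcal{M}_{k-j}(\SL_2(\Z))$ lies in $\operatorname{Ann}_k(\mathcal{I}_-, \mathcal{P})$ works exactly as you describe, and the computation $\psi_j|_j(T-I)=i\pi\,\xi_j$ is correct.

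The converse, however, has a structural obstacle. Your proposed matrix $M(\tau)$ is supposed to carry the weight-$k$ triple $(g_0, g_1, g_2)$ to forms $h_0, h_1, h_2$ of weights $k, k-2, k-4$, with entries that are polynomials in $\mathcal{L}, \mathcal{L}_S$ with coefficients in $\mathcal{M}(\Gamma(2))$. But $\mathcal{L}$ and $\mathcal{L}_S$ transform with weight $0$ under the slash action by \eqref{loglambdaslash}, and $\mathcal{M}(\Gamma(2))$ contains no nonzero form of negative weight. Hence every entry of such an $M$ has weight $\ge 0$, and each component of $MG$ has weight $\ge k$, never $k-2$ or $k-4$. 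This is a genuine asymmetry with the plus case: there the factor $\tau$ is intrinsically ``weight $-2$'' in the sense that $\tau|_{-2}S = -\tau$, and that is precisely what produces the downward weight shift in Proposition~\ref{prop: HFE plus}. No analogous negative-weight building block is available here. Allowing meromorphic coefficients (say in $\mathcal{M}^!(\Gamma(2))$) would restore the weight shift but introduce poles in $\Hyp$, which complicates the moderate-growth argument needed to land in $\mathcal{M}_{k-2j}(\SL_2(\Z))$.

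The paper sidesteps this entirely by \emph{not} attempting to extract $\SL_2(\Z)$-forms of lowered weight directly. Instead it observes that $h_2 := f|_k(T-I)$ is a weight-$k$ (not lower!) holomorphic modular form for $\Gamma_0(2)$ satisfying $h_2|_k(I+S+ST)=0$; then it writes down the particular solution $\frac{1}{i\pi}\bigl(h_2\,\mathcal{L} + (h_2|_kS)\,\mathcal{L}_S\bigr)$ to the inhomogeneous system $h_0|_k(S-I)=0$, $h_0|_k(T-I)=h_2$, so that $f$ minus this solution lies in $\mathcal{M}_k(\SL_2(\Z))$; and only \emph{then}, working purely inside $\mathcal{M}_k(\Gamma(2))$, uses the Eichler--Zagier decomposition \eqref{EichlerZagier} together with the two linear constraints on $h_2$ to conclude $h_2 \in \xi_4\,\mathcal{M}_{k-4}(\SL_2(\Z)) + \xi_2\,\mathcal{M}_{k-2}(\SL_2(\Z))$. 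The weight shift thus appears only at this last algebraic step, with no division by modular forms and no meromorphic objects anywhere.
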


\begin{proof}
As with the proof of Proposition~\ref{prop: HFE plus}, it is straightforward
to use \eqref{UVWslash} and \eqref{loglambdaslash} to verify that all
elements of \eqref{eqn: HFE minus space} satisfy \eqref{QCodd1}, as well as
to deduce the dimension formula from \eqref{modulardimensionformula}.  Thus
we will show that any solution   $f\in \mathcal P$   to \eqref{QCodd1} lies
in \eqref{eqn: HFE minus space}. Set $h_0:=f$, $h_1:=f|_k(T-I)S$, and
$h_2:=f|_k(T-I)$, from which it follows that
\[
 \begin{pmatrix}h_0|_k T\\h_1|_k T\\h_2|_k T\end{pmatrix}=
\begin{pmatrix}1&\phantom{-}0&\phantom{-}1\\0&-1&-1\\0&\phantom{-}0&\phantom{-}1\end{pmatrix}
\begin{pmatrix}h_0\\h_1\\h_2\end{pmatrix} \ \ \text{and}
\ \ \begin{pmatrix}h_0|_k S\\h_1|_k S\\h_2|_k S\end{pmatrix} =
\begin{pmatrix}1&0&0\\0&0&1\\0&1&0\end{pmatrix}
\begin{pmatrix}h_0\\h_1\\h_2\end{pmatrix}
.
\]
From this we see that $h_2|_kT$ and $h_2|_k(ST^2S)$ equal $h_2$. Since $f\in
\mathcal P$, $h_2(\tau)=f(\tau+1)-f(\tau)$ grows at most polynomially as
$\Im(\tau)\to\infty$, and the same is true for $h_2|_k S = h_1$.
Hence $h_2\in{\mathcal M}_{k}(\Gamma_0(2))$, where
$\Gamma_0(2)=\langle T,ST^2S\rangle$ is the subgroup of matrices in $\slz$
whose bottom-left entries are even. Furthermore,  $h_2|_k(I+S+ST)=0$ and
$h_0$ satisfies the system
\begin{equation}\label{eqn: h0FE}
h_0|_k(S-I)=0\quad \text{and} \quad
h_0|_k (T-I)=h_2.
\end{equation}
A solution to  \eqref{eqn: h0FE} is given by the function
\[
g_0:=\frac{1}{\pi i}\left(h_2\cdot {\mathcal L}+h_1\cdot {\mathcal L}_S\right),
\]
as can be seen by inserting the transformation laws \eqref{loglambdaslash},
and thus $h_0-g_0\in {\mathcal M}_k(\SL_2(\Z))$, since it satisfies the
homogeneous version of \eqref{eqn: h0FE} and inherits  polynomial growth from
$\mathcal P$ and \eqref{lambdaasympt}.

Thus
\[
f=h_0=h\cdot\mathcal L+(h|_k S)\cdot\mathcal L_S+g,
\]
where  $h=\frac{1}{\pi i}h_2\in {\mathcal M}_k(\Gamma_0(2))$   satisfies
$h|_k(I+S+ST)=0$,  and $g\in \mathcal{M}_k(\SL_2(\Z))$.  We now invoke
\eqref{EichlerZagier} and write $h$ uniquely as
\[
h=f_1+Uf_2+Vf_3+U^2f_4+V^2f_5+UVWf_6,
\]
where $f_1\in {\mathcal M}_{k}(\slz)$, $f_2,f_3\in {\mathcal M}_{k-2}(\slz)$,
$f_4,f_5\in {\mathcal M}_{k-4}(\slz)$, and $f_6\in {\mathcal M}_{k-6}(\slz)$.
Using \eqref{UVWslash}, we can compute the set of all solutions to the system
$h|_k(I+S+ST)=h|_k (T-I)=0$ in $\mathcal M_k(\Gamma(2))$. For instance, the
latter condition alone forces $f_6 = f_4 = 0$ and $f_2 = -2f_3$. We find that
the space of solutions is $\xi_4\mathcal M _{k-4}(\slz)+\xi_2 \mathcal
M_{k-2}(\slz)$, which then implies $f$ lies in \eqref{eqn: HFE minus space}.
\end{proof}

For small values of $k$ the  solution spaces in Propositions~\ref{prop: HFE
plus} and~\ref{prop: HFE minus} are small enough to rule out certain
asymptotic behaviors.  For example, the following lemma, which is used below
to show various uniqueness statements, can be proved by direct computation of
asymptotics as $\Im(\tau)\to\infty$ using $q$-expansions (i.e., expansions in
possibly fractional powers of $q = e^{2\pi i z}$ or $e^{2\pi i \tau}$, where
we allow polynomials in $z$ or $\tau$ as coefficients).

\begin{lemma}\label{lem:noPsolutions} No nonzero
$f\in \operatorname{Ann}_4(\mathcal I_\pm,\mathcal P)$ satisfies the bound
$f(\tau)=o(1)$ as $\Im(\tau)\rightarrow\infty$ with $-1 \le \Re(\tau) \le 1$.
Likewise, no nonzero $f\in \operatorname{Ann}_{12}(\mathcal I_\pm,\mathcal
P)$ satisfies the bound $f(\tau)=o\big(e^{-2\pi \Im(\tau)}\big)$ as
$\Im(\tau)\rightarrow\infty$ with $-1 \le \Re(\tau) \le 1$.
\end{lemma}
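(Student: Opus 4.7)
The plan is to use Propositions~\ref{prop: HFE plus} and~\ref{prop: HFE minus} to write down explicit finite bases for $\operatorname{Ann}_k(\mathcal I_\pm, \mathcal P)$ in each of the four cases, compute the asymptotic expansion of each basis element in the variable $q = e^{2\pi i \tau}$ as $\Im(\tau)\to\infty$, and verify that the prescribed decay forces every coefficient in a linear combination to vanish.

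For $k=4$, both $\operatorname{Ann}_4(\mathcal I_+,\mathcal P) = \C\,\varphi_0 E_4 \oplus \C\,\varphi_{-2} E_6$ and $\operatorname{Ann}_4(\mathcal I_-,\mathcal P) = \C\,\psi_4 \oplus \C\, E_4$ are two-dimensional, because $\mathcal M_2(\slz)=0$. Using $E_2, E_4, E_6 = 1 + O(q)$ one finds $\varphi_0 E_4 = \tau - 3i/\pi + O(\tau q)$ and $\varphi_{-2} E_6 = \tau + O(\tau q)$, while $E_4 = 1 + O(q)$, and, using \eqref{lambdaasympt} together with $\xi_4 = 2 + O(q)$ and $\xi_4|_4 S = -1 + 48 q^{1/2} + O(q)$, one checks that the $q^{1/2}$ contributions to $\psi_4 = \xi_4\mathcal L + (\xi_4|_4 S)\mathcal L_S$ cancel (namely $2\cdot(-8q^{1/2}) = -16 q^{1/2}$ from $\xi_4\mathcal L$ against $(-1)\cdot(-16q^{1/2}) = 16 q^{1/2}$ from $(\xi_4|_4 S)\mathcal L_S$), leaving $\psi_4 = 2\pi i\,\tau + 8\log 2 + O(q)$. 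In each two-dimensional space, the functions $1$ and $\tau$ are linearly independent, so no nonzero combination is $o(1)$.

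For $k=12$ the spaces are four-dimensional, with bases $\{\varphi_2 E_4 E_6,\, \varphi_0 E_4^3,\, \varphi_0\Delta,\, \varphi_{-2} E_4^2 E_6\}$ and $\{\psi_4 E_4^2,\, \psi_2 E_4 E_6,\, E_4^3,\, \Delta\}$ respectively. Expanding each using $E_2 = 1 - 24q + O(q^2)$, $E_4 = 1 + 240q + O(q^2)$, $E_6 = 1 - 504q + O(q^2)$, and $\Delta = q + O(q^2)$, each basis element takes the form $\alpha\tau + \beta + (\gamma\tau+\delta)q + O(q^{3/2})$ with no $q^{1/2}$ contribution (in the $\mathcal I_-$ case thanks to the cancellation above, and its analogue that also kills the $q^{3/2}$ coefficients of $\psi_4$ and $\psi_2$). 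The hypothesis $f = o(e^{-2\pi\Im(\tau)}) = o(|q|)$ is strictly stronger than $o(1)$: it forces the coefficients of $1$, $\tau$, $q^{1/2}$, $\tau q$, and $q$ all to vanish, yielding a linear system on the four basis coefficients. A direct calculation shows the system has only the trivial solution. For instance, writing an element of $\operatorname{Ann}_{12}(\mathcal I_+,\mathcal P)$ as $A\varphi_2 E_4 E_6 + B\varphi_0 E_4^3 + C\varphi_0\Delta + D\varphi_{-2} E_4^2 E_6$, the equations for the $\tau$ and constant coefficients give $A+B+D=0$ and $B=-2A$; then the $\tau q$ equation reduces to $C = 1728 A$ while the $q$ equation reduces to $C = 2016 A$, forcing $A=0$ and hence $B=C=D=0$. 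The $\mathcal I_-$ case at $k=12$ is handled analogously, using in addition the leading expansions $\psi_4 E_4^2,\, \psi_2 E_4 E_6 = 2\pi i\tau + 8\log 2 + O(q)$.

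The main obstacle is bookkeeping: one must carefully track both $\tau$-dependent and $\tau$-independent contributions at each order in $q^{1/2}$, and verify the cancellations inside $\psi_2$ and $\psi_4$ needed to ensure that no half-integer-power terms contaminate the $\mathcal I_-$ linear system. Once this is carried out, the proof reduces to a finite linear-algebra computation using only the standard $q$-expansions of $E_2, E_4, E_6, \Delta$ together with \eqref{lambdaasympt}.
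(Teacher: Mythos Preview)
Your approach is correct and is exactly what the paper does: the paper states only that the lemma ``can be proved by direct computation of asymptotics as $\Im(\tau)\to\infty$ using $q$-expansions,'' and your proposal carries out precisely this computation using the explicit bases from Propositions~\ref{prop: HFE plus} and~\ref{prop: HFE minus}. The detailed coefficient checks you give for the $\mathcal I_+$ case at $k=12$ are accurate, and the $\mathcal I_-$ case indeed follows by the same bookkeeping.
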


\begin{proposition}\label{prop: HFE plus dual}
Let $k$ be an even integer. The space
$\operatorname{Ann}_k(\widetilde{\mathcal I}_{+},\mathcal P)$, i.e., the
solutions $f\in \mathcal P$ to the system
\begin{equation}\label{QCeven3}
f|_k(T-2I+T^{-1}-2S) =0 \quad \text{and} \quad f|_k(I-STS) =0,
\end{equation}
is equal to
\begin{equation}\label{eqn: HFE plus dual space}
\widetilde{\varphi}_2\,\mathcal{M}_{k-2}(\slz)+
\widetilde{\varphi}_0\,\mathcal{M}_k(\slz)+
\widetilde{\varphi}_{-2}\,\mathcal{M}_{k+2}(\slz),
\end{equation}
where
\[
\widetilde{\varphi}_2(z)=z^2 ((E_2|_2S)(z))^2, \quad
\widetilde{\varphi}_0(z)=z^2(E_2|_2S)(z),\quad \text{and} \quad
\widetilde{\varphi}_{-2}(z)=z^2.
\]
In particular, the dimension of the space of solutions equals
$\max\mathopen{}\big(0,\lceil \frac{ k}{4}\rceil+1\big)\mathclose{}$.
\end{proposition}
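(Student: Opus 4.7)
The plan is to follow the strategy of Proposition~\ref{prop: HFE plus}, but with the roles of the cusps $\infty$ and $0$ interchanged, so that the quasimodular correction now comes from the $S$-transformation of $E_2$ rather than its $T$-translation. For the forward direction, I would first observe the identities $\widetilde\varphi_0(z)=E_2(-1/z)$ and $\widetilde\varphi_2(z)=z^{-2}E_2(-1/z)^2$, and then use \eqref{E2slash} (together with the action of $T,T^{-1},S,STS$) to verify directly that each of $\widetilde\varphi_{-2},\widetilde\varphi_0,\widetilde\varphi_2$ is annihilated by $\widetilde{\mathcal I}_+$ under the slash action of weight $-2,0,2$ respectively. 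Multiplying each $\widetilde\varphi_j$ by a modular form for $\slz$ of complementary weight (which is annihilated by the augmentation ideal) then produces an element of $\operatorname{Ann}_k(\widetilde{\mathcal I}_+,\mathcal P)$. The three summands are linearly independent because of their distinct asymptotic behavior at the cusps, and the dimension formula follows from \eqref{modulardimensionformula}.

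For the completeness direction, let $f\in\operatorname{Ann}_k(\widetilde{\mathcal I}_+,\mathcal P)$. By Proposition~\ref{prop:ideals2}\,(2) together with the congruences $ST\equiv S$, $T^2\equiv 2T-I+2S$, and $TS\equiv 2I+2S-T\pmod{\widetilde{\mathcal I}_+}$ obtained in the proof of Proposition~\ref{prop:ideals3}, the $T$-action on the triple $(f,\,f|_k S,\,f|_k(T-I))$ spanning the image of $f$ in $R/\widetilde{\mathcal I}_+$ is unipotent with a single Jordan block of size three. I would diagonalize this action by introducing the polynomial-in-$z$ auxiliary functions
\[
g_3=f|_k S,\qquad g_2=\tfrac12 f|_k(T-I)-zf|_k S,\qquad g_1=\tfrac12 f-\tfrac{z}{2}f|_k(T-I)+\tfrac{z^2+z}{2}f|_k S,
\]
each of which is invariant under $|_k T$, hence periodic in $z$, and each of which inherits moderate growth from $f$.

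A direct slash-operator calculation using $f|_k STS=f$ and $f|_k TS\equiv 2f+2f|_k S-f|_k T\pmod{\widetilde{\mathcal I}_+}$ then shows that $g_1|_{k-2}S=g_1$, and that
\[
g_3|_{k+2}S=g_3+\frac{2g_2-g_3}{z}+\frac{2g_1}{z^2},\qquad (2g_2-g_3)|_k S=(2g_2-g_3)+\frac{4g_1}{z}.
\]
Combined with the periodicity and moderate growth just established, these transformation laws force $g_1\in\mathcal M_{k-2}(\slz)$, identify $2g_2-g_3$ as a quasimodular form for $\slz$ of weight $k$ and depth $\le 1$, and identify $g_3$ as a quasimodular form for $\slz$ of weight $k+2$ and depth $\le 2$. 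Expanding $g_3$ and $2g_2-g_3$ in the basis $\{1,E_2,E_2^2\}$ with modular-form coefficients $f_0,f_1,f_2$ and then inverting the relation $f=2g_1+2zg_2+(z^2-z)g_3$ yields the asserted decomposition $f=\widetilde\varphi_{-2}f_0+\widetilde\varphi_0 f_1+\widetilde\varphi_2 f_2$.

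The main technical obstacle is the slash-operator bookkeeping required to derive the precise $S$-transformation laws for $g_2$ and $g_3$, in particular identifying the exact coefficients of the $1/z$ and $1/z^2$ corrections via the identity $(E_2|_2 S)(z)=E_2(z)-6i/(\pi z)$; once these are in hand, the remaining quasimodular-form analysis follows the template of Proposition~\ref{prop: HFE plus}.
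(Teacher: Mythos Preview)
Your approach is correct in outline, but you are redoing work that the paper sidesteps entirely. The paper's proof is a one-line reduction to Proposition~\ref{prop: HFE plus}: for $f\in\operatorname{Ann}_k(\widetilde{\mathcal I}_+,\mathcal P)$ one sets
\[
g_0=f|_kS,\qquad g_1=-\tfrac12 f|_k(I+S-T),\qquad g_2=f,
\]
and checks (using $I-STS=S(I-T)S$ and the congruences you quote from Proposition~\ref{prop:ideals3}) that $(g_0,g_1,g_2)$ satisfies exactly the transformation law~\eqref{QCevenactions}. The proof of Proposition~\ref{prop: HFE plus} then gives immediately that $g_0=f|_kS$ is a quasimodular form of weight $k+2$ and depth at most~$2$, i.e., $f|_kS\in E_2^2\,\mathcal M_{k-2}+E_2\,\mathcal M_k+\mathcal M_{k+2}$. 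Applying $|_kS$ once more and using the modularity of the coefficients yields $f\in\widetilde\varphi_2\,\mathcal M_{k-2}+\widetilde\varphi_0\,\mathcal M_k+\widetilde\varphi_{-2}\,\mathcal M_{k+2}$ with no further bookkeeping.

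What your argument does differently is to build the $T$-periodic functions $g_1,g_2,g_3$ from scratch by polynomial twisting and then derive their $S$-behaviour directly; this amounts to reproving the content of Proposition~\ref{prop: HFE plus} in $S$-conjugated coordinates. It works, and your formulas for $g_1,g_2,g_3$ are exactly the entries of $M(\tau)$ applied to the paper's $(g_0,g_1,g_2)$ (up to harmless factors), so the ``technical obstacle'' you flag is precisely the computation already absorbed into~\eqref{prop311aligned}. The paper's route buys you all of that for free; your route is self-contained but duplicates the earlier argument.
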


\begin{proof}
Given a solution $f\in\mathcal P$ to \eqref{QCeven3},  let
\[
g_0=f|_k S,\quad
g_1=-\textstyle{\frac{1}{2}}f|_k(I+S-T),\quad \text{and} \quad
g_2=f.
\]
Using the fact that $I-STS=S(I-T)S$, one verifies that $g_0,g_1,g_2$ satisfy
the transformation law \eqref{QCevenactions}.  As was shown in the proof of
Proposition~\ref{prop: HFE plus}, $g_0$ is consequently a quasimodular form
of weight $k+2$ and depth at most $2$ for $\slz$. Thus, $g_0 \in E_2^2
\mathcal{M}_{k-2}(\slz) + E_2 \mathcal{M}_{k}(\slz) +
\mathcal{M}_{k+2}(\slz)$. It follows that $f=g_0|_k S$ lies in \eqref{eqn:
HFE plus dual space}. The other aspects of the proof are straightforward to
verify as above.
\end{proof}

\begin{proposition}\label{prop: HFE minus dual}
Let $k$ be an even integer. The space
$\operatorname{Ann}_k(\widetilde{\mathcal I}_{-},\mathcal P)$, i.e., the
solutions $f\in \mathcal P$ to the system
\begin{equation}\label{QCodd3}
f|_k(T-2I+T^{-1}+2S) =0 \quad \text{and} \quad f|_k(I+STS) =0,
\end{equation}
is equal to
\begin{equation}\label{eqn: HFE minus dual space}
\widetilde{\psi}_4\,\mathcal{M}_{k-4}(\slz)+\widetilde{\psi}_2\,\mathcal{M}_{k-2}(\slz)+\widetilde{\psi}_{0}\,
\mathcal{M}_{k}(\slz),
\end{equation}
where
\[
\widetilde{\psi}_4=U^2-V^2,
\quad \widetilde{\psi}_2=W, \quad \text{and} \quad
\widetilde{\psi}_{0}=\mathcal{L}.
\]
In particular, the dimension of the space of solutions equals
$\max\mathopen{}\big(0,\lceil \frac{ k-2}{4}\rceil+1\big)\mathclose{}$.
\end{proposition}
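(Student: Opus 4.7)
The plan is to mirror the two-step strategy used in the proof of Proposition~\ref{prop: HFE minus}.

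For the easy containment --- that each element of \eqref{eqn: HFE minus dual space} lies in $\operatorname{Ann}_k(\widetilde{\mathcal{I}}_-,\mathcal{P})$ --- the key calculations are for the logarithm. Composing the identities in \eqref{loglambdaslash} ($\mathcal{L}|_0 S = \mathcal{L}_S$, $\mathcal{L}_S|_0 T = -\mathcal{L}_S$, $\mathcal{L}_S|_0 S = \mathcal{L}$) gives $\mathcal{L}|_0 STS = -\mathcal{L}$, whence $\mathcal{L}|_0(I+STS) = 0$. Applying $|_0 T^{-1}$ to $\mathcal{L}|_0 T = \mathcal{L} - \mathcal{L}_S + i\pi$ produces $\mathcal{L}|_0 T^{-1} = \mathcal{L} - \mathcal{L}_S - i\pi$, and combining these yields $\mathcal{L}|_0(T + T^{-1} - 2I + 2S) = -2\mathcal{L}_S + 2\mathcal{L}_S = 0$. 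The analogous identities for $\widetilde{\psi}_2 = W$ and $\widetilde{\psi}_4 = U^2 - V^2$ follow from \eqref{UVWslash} and \eqref{U=V+W} by direct computation. The dimension formula then follows from \eqref{modulardimensionformula}, with directness of the three-summand sum checked by comparing leading asymptotics as $\Im(\tau) \to \infty$: the $\pi i \tau$ term in $\mathcal{L}$ from \eqref{lambdaasympt} cannot be matched by $W = O\big(q^{1/2}\big)$ or $U^2 - V^2 = 1 + O(q)$ times a modular form for $\slz$.

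For the reverse containment, the plan is to extract from a solution $f \in \mathcal{P}$ auxiliary modular-like objects that play the role of $h_2 = f|_k(T-I)$ in the proof of Proposition~\ref{prop: HFE minus}. Combining the two equations in \eqref{QCodd3} gives $f|_k(T-I)^2 = -2\, f|_k TS$, and the symmetric relation $f|_k(T^{-1}-I)^2 = -2\, f|_k T^{-1}S$. Setting $h := f|_k(T-T^{-1})$, one then checks directly that $h|_k(T+T^{-1}-2I+2S) = 0$. The next step is to refine $h$ into a genuine modular form in $\mathcal{M}_k(\Gamma(2))$ satisfying an additional symmetry condition under $\slz/\Gamma(2)$; applying the Eichler-Zagier decomposition \eqref{EichlerZagier} together with \eqref{UVWslash} then identifies this refined auxiliary function with an element of $\widetilde{\psi}_2\,\mathcal{M}_{k-2}(\slz) + \widetilde{\psi}_4\,\mathcal{M}_{k-4}(\slz)$. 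Subtracting the corresponding member of \eqref{eqn: HFE minus dual space} from $f$ produces a residual solution whose only allowed cuspidal asymptotics come from $\mathcal{L}$, and the moderate growth condition then forces it into $\mathcal{L}\cdot \mathcal{M}_k(\slz) = \widetilde{\psi}_0\,\mathcal{M}_k(\slz)$.

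The main obstacle is pinpointing the correct refined auxiliary function: whereas in the proof of Proposition~\ref{prop: HFE minus} the hypothesis $f|_k S = f$ makes $h_2 = f|_k(T-I)$ automatically $\Gamma_0(2)$-modular, here $f$ lacks that simple $S$-invariance, and the required modularity must instead be produced by combining $f|_k(T-T^{-1})$ with additional slash-images, guided by the duality $\widetilde{\mathcal{I}}_- \leftrightarrow \mathcal{I}_-$ of Proposition~\ref{prop:ideals3}. Once the correct modular auxiliary object is identified, the remainder of the argument parallels the final portion of the proof of Proposition~\ref{prop: HFE minus}: apply \eqref{EichlerZagier} and \eqref{UVWslash} to pin down its decomposition, then invoke the asymptotics \eqref{lambdaasympt} of $\mathcal{L}$ to handle the logarithmic piece.
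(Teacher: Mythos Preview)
Your proposal has a genuine gap: you explicitly flag ``pinpointing the correct refined auxiliary function'' as the main obstacle and then do not resolve it. Your candidate $h = f|_k(T-T^{-1})$ does not obviously land in $\mathcal{M}_k(\Gamma(2))$, and the rest of your sketch is contingent on finding some unspecified refinement that does. As written, the reverse containment is not proved.

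The paper's route is different and cleaner, and in particular reverses the order of your two steps. Rather than first isolating a $\Gamma(2)$-modular piece and then handling the logarithm, the paper first strips off the logarithmic piece. Set $g := f|_k(S+T-I)$. Using the relations in $R/\widetilde{\mathcal I}_-$ established in the proof of Proposition~\ref{prop:ideals3} (with $\varepsilon=-1$: $TS\equiv -2+2S+T$, $ST\equiv -S$, $T^2\equiv 2T-1+2S$), one checks directly that $g|_kS=g|_kT=g$, so $g\in\mathcal{M}_k(\slz)$ immediately---no $\Gamma(2)$ step is needed here. The point is that $\mathcal{L}|_0(S+T-I)=\mathcal{L}_S+(\mathcal{L}-\mathcal{L}_S+i\pi)-\mathcal{L}=i\pi$ is a \emph{constant}, so $(g\cdot\mathcal{L})|_k(S+T-I)=i\pi\,g$, and hence $h:=f-\frac{1}{i\pi}\,g\cdot\mathcal{L}$ satisfies the homogeneous equation $h|_k(S+T-I)=0$ along with $h|_kS(T+I)=0$ (the latter because $\mathcal{L}_S|_0(T+I)=0$). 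Since $T^2-I=(S+T-I)(T+I)-S(T+I)$ and $ST^2S-I=(ST+S)(TS-S)$, these two homogeneous equations force $h\in\mathcal{M}_k(\Gamma(2))$. Now the Eichler--Zagier decomposition \eqref{EichlerZagier} together with \eqref{UVWslash} pins down $h\in\widetilde\psi_4\,\mathcal{M}_{k-4}(\slz)+\widetilde\psi_2\,\mathcal{M}_{k-2}(\slz)$, exactly as you anticipated for the final stage.

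So the missing idea is the auxiliary $g=f|_k(S+T-I)$: it is already level~$1$, and because $\mathcal{L}|_0(S+T-I)$ is constant, subtracting $g\cdot\mathcal{L}$ (up to scalar) cleanly removes the logarithmic part, after which the remainder is automatically $\Gamma(2)$-modular.
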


\begin{proof}
For the same reasons as before, we again restrict our attention to showing
that solutions $f\in \mathcal P $ to \eqref{QCodd3} lie in \eqref{eqn: HFE
minus dual space}.  Let $g=f|_k(S+T-I)$. We check that $g|_kS = g|_kT = g$,
and so $g \in {\mathcal M}_k(\slz)$ since it has moderate growth. Because
\[
\big(g\cdot \mathcal{L}\big) |_k (S+T-I) = \pi i g,
\]
the function $h:=f-\frac{1}{\pi i}\,g\cdot \mathcal{L}$ has moderate growth
and satisfies the homogeneous equations
\[
h|_k(S+T-I)=0\quad\text{and}\quad h|_k S(T+I)=0,
\]
with the latter equation being a restatement of the second equation in
\eqref{QCodd3} since $\mathcal{L}_S|_0 (T+I) = 0$. Then $h$ is a modular form
of weight $k$ for $\Gamma(2)$, because
\[
T^2-I=(S+T-I)(T+I)-S(T+I)
\]
and
\[
ST^2S-I=(ST+S)(TS-S).
\]
We complete the proof by arguing, as in the proof of Proposition~\ref{prop:
HFE minus}, and again using \eqref{EichlerZagier}, that these conditions
force
\[
h\in \widetilde{\psi}_4{\mathcal M}_{k-4}(\slz) +
\widetilde{\psi}_2{\mathcal M}_{k-2}(\slz). \qedhere
\]
\end{proof}

\subsection{Uniqueness of $\K_\pm$ and their transformation properties}\label{sec:sub:uniqueness}

The characterizations of $\operatorname{Ann}_k({\mathcal I}_\pm,\mathcal P)$
in Section~\ref{sec:annihilatorcalcs} can now be used to establish the
uniqueness assertion in Theorem~\ref{thm: K}.  Indeed, suppose $\K(\tau,z)$
and $\K'(\tau,z)$ are two kernels satisfying conditions (1)--(4). Then for
fixed $z\in\Hyp$, the function $\tau\mapsto
\K_{\pm}(\tau,z)-\K'_{\pm}(\tau,z)$ is annihilated by $|^\tau_{d/2}r$ for all
$r\in \mathcal I_{\pm}$ by part~(2), and it is holomorphic at all
$\tau\in\Hyp$ by parts~(1) and~(3).  Furthermore, it is in $\mathcal{P}$ by
part~(4) combined with the following lemma (recall that $\mathcal{I}
\subseteq \mathcal{I}_\pm$). By Lemma~\ref{lem:noPsolutions} the growth
condition \eqref{Kpmboundsatinf} forces $\K_{\pm}(\tau,z)-\K'_{\pm}(\tau,z)$
to vanish identically, and hence uniqueness follows.

\begin{lemma} \label{lem:deltajtz}
Suppose $f\colon\Hyp\to\C$ is holomorphic, $k$ is an even integer, $f |_k r =
0$ for all $r \in \mathcal{I}$, and $\tau \mapsto \Delta(\tau)(j(\tau)-j(z))
f(\tau)$ is in $\mathcal{P}$ for some fixed $z\in\Hyp$.  Then $f \in
\mathcal{P}$.
\end{lemma}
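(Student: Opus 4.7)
The plan is to deduce the conclusion from Proposition~\ref{prop:propagationandextension}(2) applied to $f$ with $h_1=h_2=0$: since $f|_k r=0$ for every $r\in\mathcal I$, the transformation laws \eqref{propagation1} hold with $h_1=h_2=0$, and these trivially satisfy \eqref{hjmodgrowth}. So it suffices to establish moderate growth of $f$ on the $\Gamma(2)$-fundamental domain $\mathcal D$ defined in \eqref{domainD}.

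Write $h(\tau):=\Delta(\tau)\bigl(j(\tau)-j(z)\bigr)=E_4(\tau)^3-j(z)\Delta(\tau)$, a holomorphic weight-$12$ modular form for $\SL_2(\Z)$ whose zero set on $\Hyp$ is exactly the $\SL_2(\Z)$-orbit of $z$. Let $g:=hf$, which lies in $\mathcal P$ by hypothesis; since $f$ is holomorphic on $\Hyp$, $g$ vanishes at each orbit point of $z$ to at least the order of $h$ there, so $f=g/h$ is defined pointwise and is bounded in a small neighborhood of each such orbit point. To bound $f$ on the rest of $\mathcal D$ I would treat the cusps separately. On the intersection of $\mathcal D$ with any sufficiently large compact subset of $\Hyp$, $f$ is continuous and hence bounded. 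Near the cusp at $i\infty$ inside the strip $-1<\Re(\tau)<1$, the $q$-expansion gives $h(\tau)=1+O\bigl(e^{-2\pi\Im(\tau)}\bigr)$, so $|h|\ge 1/2$ once $\Im(\tau)$ is large, and therefore $|f|\le 2|g|$ directly inherits the moderate growth of $g$.

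The remaining regions are the three real-axis cusps $c\in\{0,1,-1\}$ of $\mathcal D$. For each such $c$, I would pick $\gamma\in\SL_2(\Z)$ with $\gamma\cdot\infty=c$ and apply the weight-$12$ modularity identity $h(\gamma\tau')=j(\gamma,\tau')^{12}\,h(\tau')$. Setting $\tau=\gamma\tau'$ one has $\tau'\to i\infty$ as $\tau\to c$, so $h(\tau')\to 1$, while $|j(\gamma,\tau')|$ is comparable to $1/|\tau-c|$ (respectively $1/|\tau|$ when $c=0$). This produces a lower bound of the form $|h(\tau)|\ge C|\tau-c|^{-12}$ in a small neighborhood of $c$ inside $\mathcal D$. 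Combined with the bound $|g(\tau)|\le\alpha\bigl(\Im(\tau)^{-\beta}+|\tau|^\gamma\bigr)$ coming from $g\in\mathcal P$, this gives
\[
|f(\tau)|\;\le\;C'|\tau-c|^{12}\bigl(\Im(\tau)^{-\beta}+|\tau|^\gamma\bigr)
\]
near $c$; since $|\tau-c|$ is bounded on $\mathcal D$, the right-hand side is itself of the moderate-growth form \eqref{classPbound}.

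The only genuinely delicate step is the last one, namely keeping track of the polynomial factors in $|\tau-c|$ produced by the automorphy factor at the real-axis cusps and confirming that the resulting estimate still fits \eqref{classPbound}. Once this bookkeeping is done on all of $\mathcal D$, Proposition~\ref{prop:propagationandextension}(2) upgrades moderate growth on $\mathcal D$ to moderate growth on all of $\Hyp$, so $f\in\mathcal P$, as desired.
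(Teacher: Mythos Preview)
Your proposal is correct and follows essentially the same approach as the paper: both reduce to Proposition~\ref{prop:propagationandextension}(2) with $h_1=h_2=0$ after verifying moderate growth of $f$ on $\mathcal D$, and both obtain that moderate growth by observing that $\bigl(\Delta(\tau)(j(\tau)-j(z))\bigr)^{-1}$ is polynomially bounded as $\tau$ approaches each cusp of $\mathcal D$. The paper compresses your cusp analysis into one sentence (noting that $\Delta$ is a cusp form and $j$ has a pole at $\infty$, so $\Delta(\tau)(j(\tau)-j(z))=E_4(\tau)^3-j(z)\Delta(\tau)\to 1$ at every cusp, with the weight-$12$ automorphy factor supplying exactly the polynomial correction you wrote out), while you spell out the modularity argument at the real cusps explicitly; the content is the same.
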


\begin{proof}
The function $f$ has moderate growth on $\mathcal{D}$, because
\[
\big(\Delta(\tau)(j(\tau)-j(z))\big)^{-1}
\]
is bounded as $\tau$ approaches any cusp from inside $\mathcal
D$ (specifically, $\Delta$ is a cusp form and $j$ has a pole at infinity).
Now Part~(2) of Proposition~\ref{prop:propagationandextension} with
$h_1=h_2=0$ shows that $f \in \mathcal{P}$, as desired.
\end{proof}

Next we show that the kernels $K_{\pm}(\tau,z)$ also satisfy modular
functional equations in the variable $z$.  In order to do this, first we
generalize the residue statements of part~(3) of Theorem~\ref{thm: K} to an
action in the variable $z$, in addition to $\tau$. Suppose first that $f$ is
a meromorphic function on $\Hyp$, with at most simple poles. Then for any
$\alpha \in \slz$ and $k \in \Z$,
\begin{equation}\label{residue1}
\operatorname{Res}_{\tau=\tau_0}(f|_k \alpha)(\tau) =j(\alpha,\tau_0)^{2-k}\operatorname{Res}_{\tau=\alpha\tau_0}f(\tau)
\end{equation}
in terms of the factor of automorphy from \eqref{eq:automorphyfactordef}.
Therefore
\[
\operatorname{Res}_{\tau=z} \big(\K|_{d/2}^\tau \alpha |_{2-d/2}^z \alpha\big)(\tau,z) =  \operatorname{Res}_{\tau=\alpha z}\K(\tau,\alpha z),
\]
since both sides are equal to $\operatorname{Res}_{\tau=z}
j(\alpha,z)^{d/2-2}\big(\K|^\tau_{d/2}\alpha\big)(\tau,\alpha z)$. This
allows us to compute
\begin{equation} \label{eq:residuecomp}
\aligned
\operatorname{Res}_{\tau=z} \big(\K|_{d/2}^\tau \alpha |_{2-d/2}^z \beta\big)(\tau,z) & =
\operatorname{Res}_{\tau=z} \big(\K|_{d/2}^\tau \alpha\beta^{-1} |_{d/2}^\tau \beta |_{2-d/2}^z \beta\big)(\tau,z)\\
& =
\operatorname{Res}_{\tau=\beta z} \big(\K|_{d/2}^\tau \alpha\beta^{-1}\big)(\tau,\beta z)
\endaligned
\end{equation}
for $\alpha,\beta\in\slz$. Of course these identities also hold with $\K$
replaced by $\K_\pm$.  Combining them with \eqref{residue1}, we see that part
(3) of Theorem~\ref{thm: K} generalizes to
\begin{equation}\label{genpart4}
\operatorname{Res}_{\tau=\gamma z}\K_\pm|_{d/2}^\tau \alpha|_{2-d/2}^z \beta=-\frac{j(\gamma,z)^{d/2-2}}{2\pi}\phi_\pm(\alpha\gamma\beta^{-1})
\end{equation}
for $\alpha,\beta,\gamma\in\PSL_2(\Z)$, because
\[
\begin{split}
\frac{\operatorname{Res}_{\tau = \gamma z} \K_\pm |^\tau_{d/2} \alpha |^z_{2-d/2} \beta}{j(\gamma,z)^{d/2-2}}
 &=  \operatorname{Res}_{\tau=z} \K_\pm |^\tau_{d/2} \alpha |^z_{2-d/2} \beta |^\tau_{d/2} \gamma \qquad \text{(by \eqref{residue1})}\\
 &= \operatorname{Res}_{\tau=z} \K_\pm |^\tau_{d/2} \alpha \gamma |^z_{2-d/2} \beta\\
 &= \operatorname{Res}_{\tau = \beta z} \big(\K_\pm |^\tau_{d/2} \alpha \gamma \beta^{-1}\big) (\tau,\beta z) \qquad \text{(by \eqref{eq:residuecomp})}\\
& = -\frac{\phi_\pm(\alpha \gamma \beta^{-1})}{2\pi}.
\end{split}
\]
Furthermore, for all $r\in
R=\C[\PSL_2(\Z)]$ and $\alpha\in\PSL_2(\Z)$, we have the residue formula
\begin{equation}\label{genpart5}
\operatorname{Res}_{\tau=\alpha z} \big(\K_\pm |_{2-d/2}^z r\big)(\tau,z) = -\frac{j(\alpha,z)^{d/2-2}}{2\pi}\phi_\pm(\alpha\cdot \iota(r));
\end{equation}
indeed, by linearity it suffices to verify this formula in the case that
$r=\iota(r)^{-1}\in\PSL_2(\Z)$, in which case it follows from
\eqref{genpart4}.

\begin{proposition}\label{prop:Kpmslashtilde}
Let $\K_\pm$ be the kernels whose existence and uniqueness are guaranteed by
Theorem~\ref{thm: K}.  Then
\[
\K_\pm(\tau,z)\mid^z_{2-d/2}r=0
\]
for all $r\in\widetilde{\mathcal I}_{\pm}$.
\end{proposition}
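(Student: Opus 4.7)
The strategy is to show that for each $r\in\widetilde{\mathcal I}_\pm$, the function $G(\tau,z):=\big(\K_\pm|^z_{2-d/2} r\big)(\tau,z)$ is identically zero by applying the same uniqueness argument used in Section~\ref{sec:sub:uniqueness} to $G$ as a function of $\tau$ for each fixed $z$. Concretely, fix $r\in\widetilde{\mathcal I}_\pm$ and $z\in\Hyp$; I will verify that $\tau\mapsto G(\tau,z)$ is holomorphic on $\Hyp$, lies in $\mathcal P$, is annihilated by $|^\tau_{d/2} r'$ for every $r'\in\mathcal I_\pm$, and satisfies the decay from \eqref{Kpmboundsatinf}. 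Lemma~\ref{lem:noPsolutions} will then force $G\equiv 0$.

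The decisive input is Proposition~\ref{prop:ideals3}, which characterizes $\widetilde{\mathcal I}_\pm$ precisely as the annihilator of $\phi_\pm(r'\cdot\iota(r))$. Applying \eqref{genpart5} for every $\alpha\in\PSL_2(\Z)$ yields
\[
\operatorname{Res}_{\tau=\alpha z}G(\tau,z) = -\frac{j(\alpha,z)^{d/2-2}}{2\pi}\,\phi_\pm(\alpha\cdot\iota(r)) = 0,
\]
since by Proposition~\ref{prop:ideals3} each such value of $\phi_\pm$ vanishes. The only possible poles of $G(\cdot,z)$ lie in the orbit $\SL_2(\Z)\cdot z$ (each term $\K_\pm(\tau,\alpha z)$ of the finite expansion of $G$ via $r=\sum c_\alpha\alpha$ has poles only there), and they are simple by part~(1) of Theorem~\ref{thm: K}; hence $G(\cdot,z)$ is holomorphic on $\Hyp$. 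The annihilation $G|^\tau_{d/2} r' = 0$ for $r'\in\mathcal I_\pm$ is immediate, because the $|^\tau$ and $|^z$ actions commute and $\K_\pm|^\tau_{d/2} r'=0$ by part~(2).

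Next I verify the growth. Writing $r=\sum_\alpha c_\alpha\alpha$ (finite sum), and using $j(\alpha z)=j(z)$ together with $\Delta(\alpha z)=j(\alpha,z)^{12}\Delta(z)$, each term
\[
\Delta(\tau)\Delta(\alpha z)^m\big(j(\tau)-j(\alpha z)\big)\K_\pm(\tau,\alpha z) = j(\alpha,z)^{12m}\,\Delta(\tau)\Delta(z)^m\big(j(\tau)-j(z)\big)\K_\pm(\tau,\alpha z)
\]
lies in $\mathcal P$ as a function of $\tau$ by part~(4), with $m=1$ for $d=8$ and $m=2$ for $d=24$. Summing over $\alpha$ shows $\Delta(\tau)\Delta(z)^m(j(\tau)-j(z))G(\tau,z)\in\mathcal P$ in $\tau$; since $G(\cdot,z)$ is already known to be holomorphic and annihilated by $\mathcal I\subseteq\mathcal I_\pm$, Lemma~\ref{lem:deltajtz} gives $G(\cdot,z)\in\mathcal P$. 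Similarly the exponential bound \eqref{Kpmboundsatinf} passes from $\K_\pm$ to the finite sum defining $G$, so $G(\tau,z)=O\big(|\tau e^{2\pi i \tau}|\big)$ when $d=8$ and $O\big(|\tau e^{4\pi i \tau}|\big)$ when $d=24$, uniformly on compact sets of $z$.

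Finally I apply Lemma~\ref{lem:noPsolutions} at weight $d/2$: for $d=8$, the bound forces $G(\tau,z)=o(1)$ as $\Im(\tau)\to\infty$, and for $d=24$ it forces $G(\tau,z)=o\big(e^{-2\pi\Im(\tau)}\big)$. In both cases the lemma yields $G(\tau,z)\equiv 0$, which is the claim $\K_\pm|^z_{2-d/2} r =0$ for every $r\in\widetilde{\mathcal I}_\pm$. The only subtle points are the vanishing of the residues at $\tau=\alpha z$ (handled by the defining property of $\widetilde{\mathcal I}_\pm$ via Proposition~\ref{prop:ideals3}) and the bookkeeping to transfer the $\mathcal P$-property and decay of $\K_\pm$ through a finite sum of slashed translates; I expect the main technical care to be in invoking Lemma~\ref{lem:deltajtz} correctly so that $G$ lands in $\mathcal P$ despite the apparent $z$-dependence of the modular factor $\Delta(z)^m$.
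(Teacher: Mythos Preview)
Your proof is correct and follows essentially the same approach as the paper: you show $G(\cdot,z)$ is holomorphic by killing all residues via \eqref{genpart5} and Proposition~\ref{prop:ideals3}, establish membership in $\mathcal P$ through Lemma~\ref{lem:deltajtz}, and conclude vanishing from Lemma~\ref{lem:noPsolutions} together with the decay \eqref{Kpmboundsatinf}. The paper's proof is more terse but identical in structure; your additional bookkeeping (the $j(\alpha z)=j(z)$, $\Delta(\alpha z)=j(\alpha,z)^{12}\Delta(z)$ manipulation and the observation that the $|^\tau$ and $|^z$ actions commute) simply makes explicit what the paper leaves to the reader.
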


\begin{proof}
Let $r\in\widetilde{\mathcal I}_\pm$ and consider the function
$g_r:=\K_\pm|_{2-d/2}^z r$ on $\Hyp\times\Hyp$.  Part (1) of
Theorem~\ref{thm: K} implies that all possible poles of $g_r(\tau,z)$ in
$\tau$ lie at points $\tau=\alpha z$, where $\alpha\in\slz$.  The residues at
such points are computed by formula \eqref{genpart5}, and actually vanish
since $\phi_\pm(\alpha\cdot \iota(r))=0$ by Proposition~\ref{prop:ideals3}.
Thus $\tau \mapsto g_r(\tau,z)$ is holomorphic, and it is in $\mathcal{P}$ by
Lemma~\ref{lem:deltajtz} and part~(4) of Theorem~\ref{thm: K}. Thus it
vanishes by Lemma~\ref{lem:noPsolutions} and the bounds
\eqref{Kpmboundsatinf}.
\end{proof}

\subsection{Proof of Theorem~\ref{thm: K} and kernel asymptotics}\label{sec:sub:explicitkernels}
We can now write down the kernels in Theorem~\ref{thm: K} explicitly. We
claim that they are given by
\[
\aligned
\K^{(d)}_+(\tau,z) &  =
\begin{pmatrix}
   \varphi_{-2}(\tau) \\
 \varphi_0(\tau) \\
 \varphi_2(\tau)
\end{pmatrix}^t
\cdot
\Upsilon_+^{(d)}(\tau,z)
\cdot
\begin{pmatrix}
  \widetilde\varphi_{-2}(z) \\
  \widetilde\varphi_{0}(z) \\
  \widetilde\varphi_{2}(z)
\end{pmatrix}
\qquad \text{and}
\\
\K^{(d)}_-(\tau,z) & =
\begin{pmatrix}
 \psi_{0}(\tau) \\
 \psi_2(\tau) \\
 \psi_4(\tau)
\end{pmatrix}^t
\cdot
\Upsilon_-^{(d)}(\tau,z)
\cdot
\begin{pmatrix}
 \widetilde\psi_{0}(z) \\
 \widetilde\psi_{2}(z) \\
 \widetilde\psi_{4}(z)
\end{pmatrix}
\endaligned
\]
in terms of the bases defined in Propositions~\ref{prop: HFE plus}
through~\ref{prop: HFE minus dual}, for certain coefficient matrices
$\Upsilon_\pm^{(d)}(\tau,z)$. The entries of these matrices can be written
with denominators given by $\Delta(\tau) \Delta(z) (j(\tau)-j(z))$ when $d=8$
and $\Delta(\tau) \Delta(z)^2 (j(\tau)-j(z))$ when $d=24$, in accordance with
part~(4) of Theorem~\ref{thm: K}, and numerators that are modular forms for
$\SL_2(\Z)$ in $\tau$ and $z$. Specifically, the matrix $\Upsilon^{(d)}_+$
will have rows of weight $d/2+2$, $d/2$, and $d/2-2$ in $\tau$ and columns of
weight $4-d/2$, $2-d/2$, and $-d/2$ in $z$, while $\Upsilon^{(d)}_-$ will
have rows of weight $d/2$, $d/2-2$, and $d/2-4$ in $\tau$ and columns of
weight $2-d/2$, $-d/2$, and $-2-d/2$ in $z$. (Note that although these
weights can be negative, the numerator of each entry has positive weights
when we use the denominator specified above.) By Propositions~\ref{prop: HFE plus}
through~\ref{prop: HFE minus dual}, these weights ensure that $\K^{(d)}_\pm
|^\tau_{d/2} r=0$ for all $r \in \mathcal{I}_\pm$ and $\K^{(d)}_\pm
|^z_{2-d/2} r=0$ for all $r \in \widetilde{\mathcal{I}}_\pm$, as required by
Theorem~\ref{thm: K} and Proposition~\ref{prop:Kpmslashtilde}.

The entries of the matrices $\Upsilon^{(d)}_\pm$ are by no means obvious, but
they are uniquely determined by the list of required properties for
$\K^{(d)}_\pm$ in Theorem~\ref{thm: K}. One can solve for them explicitly,
with the following result. Let
\[
(f_{-2}, f_0,f_2,\dots, f_{14}) = (E_{10}/\Delta, 1, E_{14}/\Delta, E_4, E_6, E_8, E_{10}, \Delta, E_{14}),
\]
let $\Pi_{k_1,k_2,k_3}$ be the diagonal matrix
$\mathop{\textup{diag}}(f_{k_1}, f_{k_2}, f_{k_3})$, and let us abbreviate
$j_\tau = j(\tau)$, $j_z = j(z)$, $j_{\tau,z} = j(\tau)-j(z)$, and
$\twelvecubed=1728$. Then we define
\[
\Upsilon_+^{(8)}(\tau,z)=\frac{ \Pi_{6,4,2}(\tau) \begin{pmatrix}
j_{\tau,z} & 0 & 1 \\
-2j_{\tau,z} & -2 & 0 \\
1 & 0 & 0
\end{pmatrix} \Pi_{12,10,8}(z) }{36\pi^{-2} i \Delta(z) (j(\tau)-j(z))}
,
\]
\[
\Upsilon_+^{(24)}(\tau,z) = \frac{ \Pi_{14,12,10}(\tau) \begin{pmatrix}
6 & 0 &  \twelvecubed j_{\tau,z}^{-1} - 6 \\
-12 j_\tau + 5 \twelvecubed & -2 \twelvecubed j_{\tau,z}^{-1} & 12 j_\tau-7 \twelvecubed \\
\twelvecubed j_{\tau,z}^{-1}+6 & 0 & -6
\end{pmatrix} \Pi_{4,2,0}(z) }{36\twelvecubed \pi^{-2} i \Delta(z)}
 ,
\]
\[
\Upsilon_{-}^{(8)}(\tau,z)
= \frac{ \Pi_{4,2,0}(\tau) \begin{pmatrix}
-2 \twelvecubed & 0 & 0 \\
0 & 1 & -1 \\
0 & \twelvecubed-j_\tau & j_\tau
\end{pmatrix} \Pi_{10,8,6}(z)}{2\twelvecubed\pi \Delta(z) (j(\tau)-j(z))}
,
\]
and
\[
  \Upsilon_{-}^{(24)}(\tau,z) =  \frac{ \Pi_{12,10,8}(\tau) \begin{pmatrix}
  - 2\twelvecubed  & -2\twelvecubed j_{\tau,z} & 0 \\
  0 & j_\tau + 2j_{\tau,z} & -1 \\
  0 & \twelvecubed - 2j_{\tau,z} - j_\tau & 1
\end{pmatrix} \Pi_{2,0,-2}(z) }{2\twelvecubed\pi\Delta(z)(j(\tau)-j(z))}.
\]
Note that the particular form in which we have written these matrices has no
special significance beyond being fairly compact (and it does not make the
denominators readily apparent).

All that remains in the proof of Theorem~\ref{thm: K} is to verify that these
formulas work. Poles occur in the entries of $\Upsilon_\pm^{(d)}(\tau,z)$
only from dividing by $j(\tau)-j(z)$. Thus, the poles of these matrix entries
in $\tau$ are contained in the $\SL_2(\Z)$-orbit of $z$, and they are all
simple poles unless $j'(z)=0$. Because $j' = -2\pi i E_4^2E_6/\Delta$ by
\eqref{jprime}, that can happen only if $E_4(z)=0$ or $E_6(z)=0$.  The
Eisenstein series $E_4$ and $E_6$ have single roots on the $\SL_2(\Z)$-orbits
of $e^{2\pi i/3}$ and $i$, respectively, and no other roots by
\cite[Proposition~5.6.5]{CohenStromberg}; the function $j = 1728
E_4^3/(E_4^3-E_6^2)$ takes the values $0$ and $1728$ on these orbits.  Using
these facts, one can check that the matrices $\Upsilon_\pm^{(d)}(\tau,z)$
never have poles in $\tau$ of order greater than one.

We can calculate residues by using the identity
\[
\lim_{\tau \to z} \frac{f(\tau)(\tau-z)}{j(\tau)-j(z)} = \frac{f(z)}{j'(z)} = \frac{i f(z) \Delta(z)}{2\pi E_{14}(z)}
\]
for a holomorphic function $f$ on a neighborhood of $z$, where we interpret
the right side by continuity if $f(z)=j'(z)=0$. We find that for both $d=8$
and $24$,
\[
\aligned
 \operatorname{Res}_{\tau=z}\Upsilon^{(d)}_+(\tau,z) & = \frac{\pi}{72}
\left(\begin{smallmatrix}
0 & \phantom{-}0 & \phantom{-}1 \\
0 & -2 & \phantom{-}0 \\
1 & \phantom{-}0 &  \phantom{-}0
\end{smallmatrix}\right) \qquad \text{and} \\
\operatorname{Res}_{\tau=z}\Upsilon^{(d)}_-(\tau,z) & = \frac{i}{6912\pi^2 \Delta(z)}
\left(\begin{smallmatrix}
-3456\Delta(z) & 0 & 0 \\
0 & E_4(z)^2 & -E_6(z) \\
0 & -E_6(z) &  E_4(z)
\end{smallmatrix}\right).
\endaligned
\]
For matrices $\alpha,\beta\in \slz$,
\[
(\K^{(d)}_+|_{d/2}^\tau \alpha |_{2-d/2}^z \beta)(\tau,z)  =
\begin{pmatrix}
 (\varphi_{-2}|_{-2}\alpha)(\tau) \\
 (\varphi_0|_0 \alpha)(\tau) \\
 (\varphi_2|_2 \alpha)(\tau)
\end{pmatrix}^t
\cdot
\Upsilon_+^{(d)}(\tau,z)
\cdot
\begin{pmatrix}
  (\widetilde\varphi_{-2}|_{-2}\beta)(z) \\
  (\widetilde\varphi_{0}|_0\beta)(z) \\
  (\widetilde\varphi_{2}|_2\beta)(z)
\end{pmatrix}
\]
and
\[
(\K^{(d)}_- |_{d/2}^\tau \alpha |_{2-d/2}^z \beta)(\tau,z)  =
\begin{pmatrix}
 (\psi_{0}|_0 \alpha)(\tau) \\
 (\psi_2|_2 \alpha)(\tau)  \\
 (\psi_4|_4 \alpha)(\tau)
\end{pmatrix}^t
\cdot
\Upsilon_-^{(d)}(\tau,z)
\cdot
\begin{pmatrix}
  (\widetilde\psi_{0}|_0\beta)(z) \\
  (\widetilde\psi_{2}|_2\beta)(z) \\
  (\widetilde\psi_{4}|_4\beta)(z)
\end{pmatrix},
\]
using the $\slz$-automorphy properties of the matrix entries of
$\Upsilon^{(d)}_\pm(\tau,z)$.

Theorem~\ref{thm: K} can now be straightforwardly verified from these
formulas. The uniqueness was already shown in
Section~\ref{sec:sub:uniqueness}, we have seen that property~(1) holds, and
property~(2) follows from Propositions~\ref{prop: HFE plus} and~\ref{prop:
HFE minus} and the weights of the coefficients of $\varphi_k$ and $\psi_k$.
The residue property~(3) follows from computing $\operatorname{Res}_{\tau =
z} \K_+^{(d)} |^\tau_{d/2} \alpha$ as
\[
\frac{\pi}{72}
\begin{pmatrix}
 (\varphi_{-2}|_{-2}\alpha)(z) \\
 (\varphi_0|_0 \alpha)(z) \\
 (\varphi_2|_2 \alpha)(z)
\end{pmatrix}^t
\cdot
\begin{pmatrix}
0 & \phantom{-}0 & \phantom{-}1 \\
0 & -2 & \phantom{-}0 \\
1 & \phantom{-}0 &  \phantom{-}0
\end{pmatrix}
\cdot
\begin{pmatrix}
  \widetilde\varphi_{-2}(z) \\
  \widetilde\varphi_{0}(z) \\
  \widetilde\varphi_{2}(z)
\end{pmatrix}
\]
and $\operatorname{Res}_{\tau = z} \K_-^{(d)} |^\tau_{d/2} \alpha$ as
\[
\frac{i}{6912\pi^2 \Delta(z)}
\begin{pmatrix}
 (\psi_{0}|_0 \alpha)(z) \\
 (\psi_2|_2 \alpha)(z)  \\
 (\psi_4|_4 \alpha)(z)
\end{pmatrix}^t
\cdot
\begin{pmatrix}
-3456\Delta(z) & 0 & 0 \\
0 & E_4(z)^2 & -E_6(z) \\
0 & -E_6(z) &  E_4(z)
\end{pmatrix}
\cdot
\begin{pmatrix}
  \widetilde\psi_{0}(z) \\
  \widetilde\psi_{2}(z) \\
  \widetilde\psi_{4}(z)
\end{pmatrix}
\]
for $\alpha \in \{I,T,TS\}$.  Finally, the membership in $\mathcal{P}$
asserted in property~(4) follows from the formulas by using
$j\Delta\in\mathcal{P}$, and \eqref{Kpmboundsatinf} follows from a
$q$-expansion calculation. This completes the proof of Theorem~\ref{thm: K}.

For the detailed analysis of the generating functions $F$ and $\widetilde{F}$
in Section~\ref{sec:proofs}, we will need to understand the non-decaying
asymptotics of $\K^{(d)}_\pm(\tau,it)$ as $t \to \infty$ with $t \in \R$.  To
do so, we expand $\K^{(d)}_\pm(\tau,z)$ as a series in powers of $e^{\pi i
z}$, whose coefficients are functions of $\tau$ and polynomials in $z$ of
degree at most $2$, and we define $\mathcal{G}^{(d)}_\pm(\tau,z)$ to be the
sum of the $e^{n \pi i z}$ terms for $n \le 0$.  Then
$\K^{(d)}_\pm(\tau,it)=\mathcal G^{(d)}_\pm(\tau,it)+O(t^2 e^{-\pi t})$ as $t
\to\infty$.

Explicit calculation shows that these functions can be expanded in $z$ as
\[
\mathcal{G}^{(d)}_\pm(\tau,z) = \sum_{k=-1}^0 \sum_{j=0}^1 z^j e^{2\pi i k z} \mathcal{G}^{(d)}_{k,j,\pm}(\tau),
\]
where the coefficient functions $\mathcal{G}^{(d)}_{k,j,\pm}(\tau)$ are
polynomials in $\tau$, $E_2(\tau)$, $U(\tau)$, $V(\tau)$, $W(\tau)$,
$\mathcal{L}(\tau)$, and $\mathcal{L}_S(\tau)$, and so in particular they lie
in $\mathcal{P}$. Note that there are no terms with $j=2$ in
$\mathcal{G}^{(d)}_\pm(\tau,z)$, although such terms occur elsewhere in these
series expansions of $\K^{(d)}_\pm(\tau,z)$. In addition, \[\mathcal
G_{-1,j,\pm}^{(8)} =0,\] which will be important in Sections~\ref{sec:proofs}
and~\ref{sec:inequality}.  We furthermore define $\mathcal{G}^{(d)} =
\frac{1}{2}\big(\mathcal{G}^{(d)}_{+}+\mathcal{G}^{(d)}_{-}\big)$ to
correspond with $\K^{(d)}$, and set $\mathcal{G}^{(d)}_{k,j} =
\frac{1}{2}\big(\mathcal{G}^{(d)}_{k,j,+}+\mathcal{G}^{(d)}_{k,j,-}\big)$ so
that
\begin{equation}\label{Gjkexpansion}
\mathcal{G}^{(d)}(\tau,z) = \sum_{k=-1}^0 \sum_{j=0}^1 z^j e^{2\pi i k z} \mathcal{G}^{(d)}_{k,j}(\tau).
\end{equation}

We will need the following lemma in Section~\ref{sec:proofs}.

\begin{lemma}\label{lem:boundsonKnew}
Let
\begin{equation}\label{npmconstants}
n_{+,\tau}=0,\ \ n_{-,\tau}=1,\ \ n_{+,z}=2,\ \  n_{-,z}=1,\ \
\widehat{n}_\tau^{(8)}=2,\ \ \text{and}\ \ \widehat{n}_\tau^{(24)}=4.
\end{equation}
For each $\delta>0$ and $\gamma \in \PSL_2(\Z)$, there exists a constant
$C=C_{\gamma,\delta}>0$ such that for $d \in \{8,24\}$,
\begin{equation}\label{quotientbounds1new}
 \left|\big(\K^{(d)}_\pm|^\tau_{d/2} \gamma|^z_{2-d/2} S\big)(\tau,z)\right|   \le  C\left|\frac{ e^{\pi i (n_{\pm,\tau}\tau+n_{\pm,z}z)}\tau^2 z^2}{\Delta(\tau)\Delta(z)(j(\tau)-j(z))}\right|,
\end{equation}
\begin{equation}\label{quotientbounds1.5}
 \left|  \big((\K^{(d)}_\pm-\mathcal{G}^{(d)}_\pm)|^\tau_{d/2} \gamma\big)(\tau,z)\right|   \le  C\left|\frac{ e^{\pi i n_{\pm,z}z}\tau^2 z^2}{\Delta(\tau)\Delta(z)(j(\tau)-j(z))}\right|,
\end{equation}
and
\begin{equation}\label{quotientbounds2new}
\left|\big(\K^{(d)}_\pm |^z_{2-d/2} S\big)(\tau,z)\right| \le
C\left|\frac{ e^{\pi i (\widehat{n}_\tau^{(d)}\tau+  z)}\tau^2 z^2}{\Delta(\tau)\Delta(z)(j(\tau)-j(z))}\right|,
\end{equation}
for $\Im(\tau),\Im(z)\ge \delta$ with $j(\tau) \ne j(z)$.
\end{lemma}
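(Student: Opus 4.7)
The plan is to prove all three bounds by leveraging the explicit formulas for $\K_\pm^{(d)}$ from Section~\ref{sec:sub:explicitkernels} together with the moderate-growth statements in part~(4) of Theorem~\ref{thm: K}. First I would reduce to the case $\gamma = I$: by Lemma~\ref{lem:HCmoderategrowthPslashed}, applying $|^\tau_{d/2}\gamma$ to a function in $\mathcal P$ introduces at most a factor polynomial in $\|\gamma\|_{\textup{Frob}}$, which is absorbed into the $\gamma$-dependent constant $C_{\gamma,\delta}$.

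Each of the three right-hand sides contains the universal factor $|\tau^2 z^2/(\Delta(\tau)\Delta(z)(j(\tau)-j(z)))|$. Multiplying through by $|\Delta(\tau)\Delta(z)(j(\tau)-j(z))|$, the claims become polynomial bounds in $|\tau|$ and $|z|$ on the resulting products, times an appropriate exponential decay. Part~(4) of Theorem~\ref{thm: K} provides exactly such polynomial bounds: on the region $\Im(\tau), \Im(z) \ge \delta$, the $\mathcal{P}$-membership translates to at-most-polynomial growth in $|\tau|$ and $|z|$, with constants depending only on $\delta$ (since the $\Im(\tau)^{-\beta}$ and $\Im(z)^{-\beta}$ terms are bounded). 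For $d=24$, one extra factor of $\Delta(z)=O(e^{-2\pi\Im(z)})$ is absorbed into the $z$-exponential decay.

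The exponential factors come from direct $q$-expansion analysis of the explicit kernel formulas. The $\tau$-rates $\widehat n_\tau^{(d)}$ of \eqref{quotientbounds2new} already coincide with the bound \eqref{Kpmboundsatinf} proved in part~(4) of Theorem~\ref{thm: K}. The weaker $\tau$-rates $n_{\pm,\tau}$ of \eqref{quotientbounds1new} follow immediately from inspecting the leading $q$-expansion terms of the bases $\varphi_k, \psi_k$ and of the matrix $\Upsilon_\pm^{(d)}$. The $z$-decay $e^{\pi i n_{\pm,z} z}$ in \eqref{quotientbounds1.5} is essentially automatic: by construction, $\mathcal G_\pm^{(d)}$ is precisely the truncation of the $z$-expansion \eqref{Gjkexpansion} to $k \le 0$, so $\K_\pm^{(d)} - \mathcal G_\pm^{(d)}$ decays at least as $e^{2\pi i z}$, matching $n_{\pm,z} \ge 1$. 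For \eqref{quotientbounds1new}, the slash operator $|^z_{2-d/2}S$ converts the behavior of $\K_\pm^{(d)}(\tau,z)$ near the cusp $z=0$ into decay at $z=\infty$; this is analyzed through Proposition~\ref{prop:Kpmslashtilde} (the $\widetilde{\mathcal I}_\pm$-functional equations) together with the transformation laws \eqref{UVWslash} and \eqref{loglambdaslash} for $U, V, W, \mathcal L, \mathcal L_S$ at the cusp $0$.

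The main obstacle is pinning down the precise leading $z$-asymptotics at the cusp $z=0$ for $\K_\pm^{(d)}|^z_{2-d/2}S$, where the logarithmic pieces $\mathcal L$ and $\mathcal L_S$ appearing in $\psi_k$ pick up additional polynomial-in-$z$ terms under $|^z S$ (by \eqref{loglambdaslash}), and where higher-order pole contributions must cancel against those enforced by $\widetilde{\mathcal I}_\pm$. Once these leading asymptotics in both variables are fixed, each of the three bounds reduces to a finite verification on the first few terms of the $q$-expansions in $\tau$ and in the local parameter at the relevant cusp in $z$.
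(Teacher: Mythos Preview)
Your reduction to $\gamma=I$ via Lemma~\ref{lem:HCmoderategrowthPslashed} does not work. That lemma only says the moderate-growth class $\mathcal P$ is preserved under $|_k\gamma$ with modified polynomial constants; it does not preserve the specific exponential rates $e^{\pi i n_{\pm,\tau}\tau}$ demanded on the right-hand sides of \eqref{quotientbounds1new} and \eqref{quotientbounds1.5}. Concretely, take $\gamma=TS$: then $(\K_\pm^{(d)}|^\tau_{d/2}TS)(\tau,z)=\tau^{-d/2}\K_\pm^{(d)}(1-1/\tau,z)$, and as $\Im(\tau)\to\infty$ the first argument $1-1/\tau$ tends to the cusp~$1$, not~$\infty$. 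Neither the domain hypothesis $\Im(\cdot)\ge\delta$ nor the $e^{\pi i n_{\pm,\tau}\tau}$ decay of the $\gamma=I$ case transfers across this substitution; no polynomial correction in $\|\gamma\|_{\textup{Frob}}$ can repair an exponential mismatch.

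The paper's reduction is algebraic rather than analytic: since $\K_\pm^{(d)}$ and $\mathcal G_\pm^{(d)}$ are annihilated by $\mathcal I_\pm$ under $|^\tau_{d/2}$, Proposition~\ref{prop:ideals2} shows that every $\K_\pm^{(d)}|^\tau_{d/2}\gamma$ is an integer combination of the three cases $\gamma\in\{I,T,TS\}$, and each of these three must be checked separately. For each, the explicit formulas of Section~\ref{sec:sub:explicitkernels} show that multiplying the left side by $\Delta(\tau)\Delta(z)(j(\tau)-j(z))$ yields a finite sum $\sum_{j,k=0}^{2}\phi_{jk}(\tau,z)\,\tau^j z^k$ with each $\phi_{jk}$ holomorphic on $\Hyp\times\Hyp$ and $2$-periodic in both variables; the asserted exponential orders in $\tau$ and in $z$ are then read off simultaneously from the double Fourier expansion of each $\phi_{jk}$ in $e^{\pi i\tau}$ and $e^{\pi i z}$. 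In particular your detour through Proposition~\ref{prop:Kpmslashtilde} and the $\widetilde{\mathcal I}_\pm$-equations for the $z$-asymptotics is unnecessary: once the polynomial-times-periodic structure is in hand, the $z$-orders fall out of the same direct $q$-expansion calculation as the $\tau$-orders.
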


\begin{proof}
The kernels $\K^{(d)}_\pm$ are annihilated by $\mathcal{I}_\pm$ under
$|^\tau_{d/2}$, and one can check that the same is true for
$\mathcal{G}^{(d)}_\pm$. Thus, to cover all $\gamma \in \PSL_2(\Z)$ it
suffices to check the cases $\gamma = I$, $T$, or $TS$, because they form a
basis of $R/\mathcal{I}_\pm$ by Proposition~\ref{prop:ideals2}.

For each of these three cases, the explicit formulas for the kernels show
that the expressions inside the absolute values on the left sides, when
multiplied by $\Delta(\tau)\Delta(z)(j(\tau)-j(z))$, are finite sums of the
form $\sum_{j=0}^2\sum_{k=0}^{2}\phi_{jk}(\tau,z)\tau^j z^k$, where
$\phi_{jk}$ is a holomorphic function on $\Hyp\times\Hyp$ satisfying
$\phi_{jk}(\tau+2,z)=\phi_{jk}(\tau,z)=\phi_{jk}(\tau,z+2)$. The claims then
follow from the form of the double power series expansion of
$\phi_{jk}(\tau,z)$ in $e^{\pi i \tau}$ and $e^{\pi iz}$, which can be shown
by direct calculation to vanish to the asserted order in these exponentials.
\end{proof}

Note that the denominators of $\Delta(\tau)\Delta(z)(j(\tau)-j(z))$ are not
an obstacle to computing asymptotics.  For example, if $\tau$ is fixed and
$z=it$ with $t \in (0,\infty)$, then $\Delta(\tau)\Delta(it)(j(\tau)-j(it))$
approaches $-\Delta(\tau)$ as $t\rightarrow\infty$, and it is asymptotic to
$-t^{-12}\Delta(\tau)$ as $t\rightarrow 0$ by modularity, as in the proof of
Lemma~\ref{lem:deltajtz}.

\section{Proof of the interpolation formula}\label{sec:proofs}

\subsection{The continuation of $F$ from $\mathcal D$ to $\Hyp$}\label{sec:sub:continueFtoH}

Let $d$ be $8$ or $24$, and let $\K$ and $\K_\pm$ be the kernels from
Theorem~\ref{thm: K} for this value of $d$ (we also suppress superscripts
$(d)$ for $\mathcal{G}$ and similar terms). As before, we identify radially
symmetric functions of $x\in \R^d$ with functions of $r=|x|$.  Analytic
continuation will be important in both $\tau$ and $r$, and so we view $r$ as
a complex variable.

Using the kernel $\K$, we can now construct the generating function $F$ we
need for Theorem~\ref{thm:FEimpliesIF}. We start with the formal expression
\eqref{FfromKlaplace}, which we decompose as
$F(\tau,r)=F_1(\tau,r)+F_2(\tau,r)$, where
\begin{equation}\label{FfromKlaplace2}
F_1(\tau,r)=e^{\pi i \tau r^2} \quad \text{and} \quad F_2(\tau,r) =
4 \sin\mathopen{}\big(\pi r^2/2\big)^2\mathclose{}\,\int_0^\infty \K(\tau,it)\,e^{-\pi r^2 t}\,dt.
\end{equation}
We have not yet addressed when the integral is defined. Part~(1) of
Theorem~\ref{thm: K} shows that the poles of the integrand in $\tau$ are
constrained to $\slz$-translates of the imaginary axis.  None of these
translates intersects $\mathcal D$ aside from the imaginary axis itself,
which in fact does not contribute any poles since part~(3) of
Theorem~\ref{thm: K} shows that $\K(\tau,it)$ is holomorphic at $\tau=it$ and
$\tau=i/t$ (as $\phi(I)=\phi(S)=0$).  Thus, poles are not an obstacle when
$\tau \in \mathcal{D}$.

To analyze the convergence of the integral in \eqref{FfromKlaplace2}, we must
understand how $\K(\tau,it)$ behaves as $t \to 0$ or $t \to \infty$.
Inequality~\eqref{quotientbounds1new} (with $\gamma=I$) in
Lemma~\ref{lem:boundsonKnew} shows that it decays exponentially in $1/t$ as
$t \to 0$ (see also the paragraph after Lemma~\ref{lem:boundsonKnew}), while
it grows at most exponentially as $t \to \infty$ because
$\K(\tau,it)=\mathcal G(\tau,it)+O(t^2 e^{-\pi t})$. Therefore the integral
in \eqref{FfromKlaplace2} is convergent for $\tau\in\mathcal D$ and $r$ in
some open set $\mathcal O\subseteq \C$ containing all sufficiently large real
numbers. More precisely, our analysis of $\mathcal{G}(\tau,it)$ in
Section~\ref{sec:sub:explicitkernels} shows that it suffices to take $|r|>0$
for $d=8$ and $|r|>\sqrt{2}$ for $d=24$.  Furthermore, $F_2(\tau,r)$ is
holomorphic as a function of either variable on these sets. (This is a
general principle about integrals of analytic functions: by Morera's theorem,
it suffices to show that contour integrals vanish, and that reduces to the
analyticity of the integrand by Fubini's theorem.)

Theorem~\ref{thm:FEimpliesIF} also involves the function
$\widetilde{F}=(e^{\pi i r^2\tau}-F)|^\tau_{d/2}S = - F_2|^\tau_{d/2}S$
defined just above \eqref{rerestated1.6to1.8}.  Since $S\mathcal D=\mathcal
D$ and $-{\mathcal K}|^\tau_{d/2}S=\frac{1}{2}(\mathcal K_+-\mathcal
K_{-})=\widehat{\mathcal K}$, we may use \eqref{FfromKlaplace2} to write
\begin{equation}\label{FtildefromKhatlaplace}
\widetilde{F}(\tau,r)= 4 \sin\mathopen{}\big(\pi r^2/2\big)^2\mathclose{}\,\int_0^\infty \widehat{\K}(\tau,it)\,e^{-\pi r^2 t}\,dt
\end{equation}
for $\tau\in\mathcal D$ and $r\in\mathcal O$.

We next analytically continue $r \mapsto F_2(\tau,r)$ to an open neighborhood
of $\R$ in $\C$ by breaking up the integral as follows. For $\tau \in
\mathcal D$, $r\in\mathcal O$, and fixed $p>0$ we decompose $F_2(\tau,r)$ as
\begin{equation}\label{decomposeF2}
F_2(\tau,r) = F_{2,\textup{low}}(\tau,r)+ F_{2,\textup{trunc}}(\tau,r)+ F_{2,\textup{high}}(\tau,r),
\end{equation}
where
\begin{equation}\label{truncatingF2}
\aligned
 F_{2,\textup{low}}(\tau,r) & =   4 \sin\mathopen{}\big(\pi r^2/2\big)^2\mathclose{}\int_0^p  \K(\tau,it)\,e^{-\pi r^2 t}\,dt, \\
  F_{2,\textup{trunc}}(\tau,r) & = 4 \sin\mathopen{}\big(\pi r^2/2\big)^2\mathclose{}  \int_p^\infty  (\K(\tau,it)-\mathcal{G}(\tau,it))\,e^{-\pi r^2 t}\,dt,  \quad \text{and}  \\
   F_{2,\textup{high}}(\tau,r) & = 4 \sin\mathopen{}\big(\pi r^2/2\big)^2\mathclose{}\int_p^\infty  \mathcal{G}(\tau,it)\,e^{-\pi r^2 t}\,dt
\\
 &=\frac{4 \sin\mathopen{}\big(\pi r^2/2\big)^2\mathclose{}}{\pi} \sum_{k=-1}^{0}e^{-p\pi (2k+r^2)}\bigg( \frac{ \mathcal{G}_{k,0}(\tau)}{2k+r^2}\\
 & \qquad\qquad\qquad\qquad\quad\phantom{}+\frac{i(1+2k p \pi + p \pi r^2) {\mathcal{G}}_{k,1}(\tau) }{\pi (2k+r^2)^2} \bigg),
\endaligned
\end{equation}
where $\mathcal{G}_{k,0}(\tau)$ and $\mathcal{G}_{k,1}(\tau)$ are as in
\eqref{Gjkexpansion}. As noted above, the first two integrals are absolutely
convergent because of the exponential decay in $z$ in
\eqref{quotientbounds1new} and \eqref{quotientbounds1.5} with $\gamma=I$.
Thus both integrals define holomorphic functions for $r$ in a neighborhood of
$\R$ in $\C$. Furthermore, they are both averages of Gaussians, with the
first weighted by $\K(\tau,it)$ (which decays exponentially in $1/t$ as
$t\rightarrow 0$ by \eqref{quotientbounds1new}, and hence damps the
contributions of $e^{-\pi r^2 t}$ for $t$ small) and the second by	
$\K(\tau,it)-\mathcal G(\tau,it)$ (which decays exponentially as
$t\rightarrow\infty$).  To analyze the Schwartz seminorms, we can use the
identity
\begin{equation}
\label{eq:moderateGaussian}
\max_{r \in \R} \big| r^c e^{-\pi  r^2 t}\big| = \left(\frac{c}{2\pi e t}\right)^{c/2}
\end{equation}
for $c\ge0$ and $t>0$, where we interpret the right side of
\eqref{eq:moderateGaussian} as $1$ when $c=0$. It follows by differentiating
under the integral sign and using \eqref{eq:moderateGaussian} to bound the
integrand that the first two integrals in \eqref{truncatingF2} define
Schwartz functions, and that for any fixed $k,\ell \ge 0$, the Schwartz
seminorms
\[
 \max_{r\in\R}\left|r^k\frac{d^\ell}{dr^\ell}F_{2,\textup{low}}(\tau,r)\right| \quad \text{and} \quad
 \max_{r\in\R}\left|r^k\frac{d^\ell}{dr^\ell}F_{2,\textup{trunc}}(\tau,r)\right|
\]
are bounded as $\tau$ ranges over any fixed compact subset of $\mathcal D$.
(Recall from Lemma~\ref{lem:Schwartz+radial} that it suffices to use radial
seminorms.)  Note that the uniformity in $\tau$ makes use of
\eqref{quotientbounds1.5} and \eqref{quotientbounds2new}, and not just our
previous estimate $\K^{(d)}(\tau,it)=\mathcal G^{(d)}(\tau,it)+O(t^2 e^{-\pi
t})$ as $t \to\infty$, although that would suffice to analyze the case of
fixed $\tau$.

The explicit evaluation of $F_{2,\textup{high}}(\tau,r)$ shows that it is in
fact entire in $r$, since the possible singularities at $r^2=-2k$ are
compensated for by the vanishing of $\sin\mathopen{}\big(\pi
r^2/2\big)^2\mathclose{}$ at those points. Furthermore, for any fixed $k,\ell
\ge 0$ the map $\tau\mapsto \max_{r\in\R}\big|r^k
\frac{d^\ell}{dr^\ell}F_{2,\textup{high}}(\tau,r)\big|$ is in $\mathcal P$
since each $\mathcal{G}_{k,j}\in\mathcal P$. Formulas \eqref{decomposeF2} and
\eqref{truncatingF2} accordingly serve as our definition of $F_2(\tau,r)$,
and in turn $F(\tau,r)=e^{\pi i \tau r^2}+F_2(\tau,r)$, for arbitrary
$\tau\in \mathcal D$ and $r\in \R$. Note that although the integrals in
\eqref{truncatingF2} all depend on a parameter $p$, their sum is independent
of $p$ for large $r$ by construction and hence for all $r\in\R$ by analytic
continuation. Aside from a technical point in the proof of
Proposition~\ref{prop:extendF2} that requires two different choices of $p$,
this parameter will be fixed and hence suppressed from the notation (in
Section~\ref{sec:inequality} a similar construction uses the value $p=1.01$).

Having defined $F_2(\tau,r)$ for $\tau\in\mathcal D$, we next turn to its
analytic continuation to the full upper half-plane $\Hyp$.  The anticipated
transformation laws \eqref{rerestated1.6to1.8} for $F$ can be restated via
\eqref{FfromKlaplace2} as
\begin{equation}\label{F2restated1.6to1.8}
\aligned
&F_2(\tau+2,r)-2F_2(\tau+1,r)+F_2(\tau,r)   = 4 e^{\pi i r^2(\tau+1)}\sin\mathopen{}\big(\pi r^2/2\big)^2\mathclose{} \quad \text{and}\\
&(F_2|_{d/2}^\tau S)(\tau+2,r) -2 (F_2|_{d/2}^\tau
S)(\tau+1,r) + (F_2|_{d/2}^\tau S)(\tau,r)     =0.
\endaligned
\end{equation}
In fact, we will use \eqref{F2restated1.6to1.8} to define the extension from
$\mathcal D$ to $\Hyp$, via Proposition~\ref{prop:propagationandextension}.
To satisfy the hypotheses of Proposition~\ref{prop:propagationandextension},
we will need the following extension of $F_2$ to an open neighborhood of the
closure $\overline{\mathcal{D}}$ of $\mathcal D$:

\begin{proposition}\label{prop:extendF2}
There exists an open subset $\mathcal{D}_+$ of $\Hyp$ containing
$\overline{\mathcal{D}}$ such that for each $r$, the function $\tau\mapsto
F_2(\tau,r)$ extends to a holomorphic function on $\mathcal{D}_+$, which
satisfies the recurrences \eqref{F2restated1.6to1.8} whenever the left sides
are defined. Moreover, $r\mapsto F_2(\tau,r)$ is a Schwartz function for each
$\tau$, and for any fixed integers $k,\ell\ge 0$ the Schwartz seminorm
$\max_{r\in \R}\left|r^k \frac{d^\ell}{dr^\ell}F_2(\tau,r)\right|$ is bounded
as $\tau$ ranges over any compact subset of $\mathcal{D}_+$.
\end{proposition}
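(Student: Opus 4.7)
The plan is to extend $F_2(\tau,r)$ to a neighborhood of $\overline{\mathcal{D}}$ by contour deformation in the $z$-variable of the integral representation. After the substitution $z=it$, the formula in \eqref{FfromKlaplace2} reads
\[
F_2(\tau,r)=-4i\sin\mathopen{}\big(\pi r^2/2\big)^2\mathclose{}\int_{\gamma_0}\K(\tau,z)\,e^{i\pi r^2 z}\,dz
\]
along the positive imaginary axis $\gamma_0=\{it:t>0\}$. The poles of $\K(\tau,z)$ in $z$ for fixed $\tau$ lie on the orbit $\SL_2(\Z)\cdot\tau$, and as $\tau$ approaches $\partial\mathcal{D}$, an orbit point of nonzero residue (for example $\tau-1$, with residue determined by $\phi(T^{-1})=-1$ via \eqref{genpart5}) approaches $\gamma_0$. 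To extend across the boundary, I would cover $\overline{\mathcal{D}}$ by finitely many open sets $U_j\subseteq\Hyp$ and choose on each a smooth contour $\gamma_j$ from $0$ to $i\infty$ in $\Hyp$ that avoids $\SL_2(\Z)\cdot\tau$ for every $\tau\in U_j$, then define $F_2$ on $U_j$ by the analogous integral along $\gamma_j$, decomposed into low, trunc, and high pieces as in \eqref{truncatingF2} with $\gamma_j$ in place of $\gamma_0$.

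Consistency of these local definitions on overlaps $U_j\cap U_k$, and with the original definition on $\mathcal{D}$, follows from Cauchy's theorem: the integrand is meromorphic in $z$ with a known pole structure, and any two of the contours $\gamma_j,\gamma_k,\gamma_0$ are homotopic in $\Hyp$ through contours avoiding the relevant poles for any $\tau$ in the overlap. Holomorphy in $\tau$ on each $U_j$ is immediate from differentiation under the integral sign, since the integrand is holomorphic in $\tau$ for each fixed $z\in\gamma_j$ and the integral converges uniformly on compacta.

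The Schwartz property and uniform radial seminorm bounds on compact subsets of $U_j$ follow from the kernel estimates in Lemma~\ref{lem:boundsonKnew}: inequality \eqref{quotientbounds1new} (applied after an $S$-transformation in $z$) gives exponential decay of $\K(\tau,z)$ as $\Im(z)\to 0^+$ along $\gamma_j$, while \eqref{quotientbounds1.5} gives exponential decay of $\K-\mathcal{G}$ as $\Im(z)\to\infty$. Combined with the Gaussian factor $|e^{i\pi r^2 z}|=e^{-\pi r^2\Im(z)}$ and the elementary estimate \eqref{eq:moderateGaussian} applied to $\big|r^k\frac{d^\ell}{dr^\ell}e^{i\pi r^2 z}\big|$, these yield absolutely convergent integrals bounding $\max_{r\in\R}\big|r^k\frac{d^\ell}{dr^\ell}F_2(\tau,r)\big|$ uniformly in $\tau$ on compacta. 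The high piece is explicit as in \eqref{truncatingF2}, and its seminorms depend on $\tau$ only through the functions $\mathcal{G}_{k,j}\in\mathcal{P}$.

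The functional equations \eqref{F2restated1.6to1.8} follow from the kernel identities $\K\mid^\tau_{d/2}(T-I)^2=0$ and $\K\mid^\tau_{d/2}S(T-I)^2=0$ of Theorem~\ref{thm: K}(2), together with residue contributions arising because different contours $\gamma_j,\gamma_k,\gamma_l$ must in general be used for the three terms $F_2(\tau),F_2(\tau+1),F_2(\tau+2)$. Had a common contour worked for all three, the kernel identities would give a vanishing right side. The main obstacle, and the crux of the proof, is the careful bookkeeping of these residue contributions: using \eqref{genpart5} and the linear functional $\phi$ from Theorem~\ref{thm: K}(3), one identifies the poles of $\K(\tau',z)e^{i\pi r^2 z}$ crossed during the contour homotopy, evaluates $e^{i\pi r^2 z}$ at each pole location, and sums the residues with coefficients $+1,-2,+1$ and overall factor $4\sin^2(\pi r^2/2)$, verifying that this sum equals exactly $4e^{i\pi r^2(\tau+1)}\sin^2(\pi r^2/2)$ for the first recurrence and $0$ for the second.
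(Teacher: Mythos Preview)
Your proposal is correct and takes essentially the same approach as the paper: analytic continuation of $F_2$ across $\partial\mathcal{D}$ by deforming the $z$-contour around the pole that drifts onto the imaginary axis, with the functional equations then arising from the residue picked up in reconciling contours. The paper executes this more concretely by introducing the auxiliary $F_2^\sharp$ (the undeformed integral on an explicit region $\mathcal{U}$ just past $\Re(\tau)=1$) and proving the single matching identity $F_2 = F_2^\sharp + 4e^{\pi i r^2(\tau-1)}\sin^2(\pi r^2/2)$, which replaces your cover-and-homotopy bookkeeping with one explicit residue computation (note: that residue is governed by $\phi(T)=1$, not $\phi(T^{-1})$, since $z=\tau-1=T^{-1}\tau$ gives $\alpha^{-1}=T$ in the paper's formula).
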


\begin{proof}
We first show the continuation to the right of $\Re(\tau)=1$.  The
continuation to the left of $\Re(\tau)=-1$ is nearly identical, and those
across the bottom two semicircles of the boundary of $\mathcal D$ are
drastically simpler (owing to the homogeneity of the second equation in
\eqref{F2restated1.6to1.8} and the absence of poles as one crosses those
boundaries).

\begin{figure}
\begin{tikzpicture}[scale=4]
\fill[gray!40!white] (-1,0) rectangle (1,2);
\fill[gray!40!white] (1,0) arc (0:180:1);
\fill[gray!40!white] (1,0)--(1,1) arc (90:180:1);
\fill[gray!40!white] (0,0) arc (0:90:1)--(-1,0);
\fill[gray!40!white] (-1,0) rectangle (0,2);
\fill[gray!40!white] (-1,0) rectangle (0,2);
\fill[white]  (1,0) arc (0:180:0.5);
\fill[white]  (0,0) arc (0:180:0.5);
\draw[gray] (0,0)--(0,2);
\draw[black] (-1,0)--(-1,2);
\draw[black] (1,0)--(1,2);
\draw[gray] (0.5,{sqrt(3/4)}) arc (60:120:1);
\draw[gray] (1,1) arc (90:120:1);
\draw[gray] (-1,1) arc (90:60:1);
\draw[black] (0,0) arc (0:180:0.5);
\draw[black] (1,0) arc (0:180:0.5);
\draw[gray] (0.5,0.5) -- (0.5,{sqrt(3/4)});
\draw[gray] (-0.5,0.5) -- (-0.5,{sqrt(3/4)});

\draw (1.25,1.25) node {$\mathcal U$} ; \draw (-1.25,1.25) node
{$\mathcal U'$} ;

\draw (0.5,0.4) node {$TS\mathcal U$} ; \draw (-0.5,0.4) node {$
T^{-1}S\mathcal U'$} ;

\draw[black] (1.5,{sqrt(3/4)}) -- (1.5,2); \draw[black] (-1.5,{sqrt(3/4)}) -- (-1.5,2);

\draw[black] (1,0) arc (180:120:1); \draw[black] (-1,0) arc (0:60:1);

\draw[black] (1,0) arc (0:120:{1/3}); \draw[black] (0,0) arc
(180:60:{1/3});

\draw[black] (-1,0) arc (180:60:{1/3}); \draw[black] (0,0) arc
(0:120:{1/3});

\filldraw[black] (0,0) circle (0.01125); \draw  (0,0) node[below=2pt] {$0$};
\filldraw[black] (-1,0) circle (0.01125); \draw  (-1,0)node[below=2pt]
{$-1$}; \filldraw[black] (1,0) circle (0.01125); \draw (1,0) node[below=2pt]
{$1$} ;
\end{tikzpicture}
\caption{The regions $\mathcal U$, $\mathcal U'=\{\tau \in
\C:-\overline{\tau}\in\mathcal U\}$, $TS\mathcal U = S\mathcal U'$, and $T^{-1}S\mathcal
U'= S\mathcal U=\{\tau \in \C:-\bar{\tau}\in TS\mathcal U\}$ in the proof of continuation
of $F_2(\tau,r)$. The shaded region is the domain $\mathcal D$ from
\eqref{domainD}, and $\overline{\mathcal{D}}$, $\mathcal{U}$, $\mathcal{U'}$,
$TS\mathcal U$, and $T^{-1}S\mathcal U'$ together form
$\mathcal{D}_+$.\label{fig:regionU}}
\end{figure}
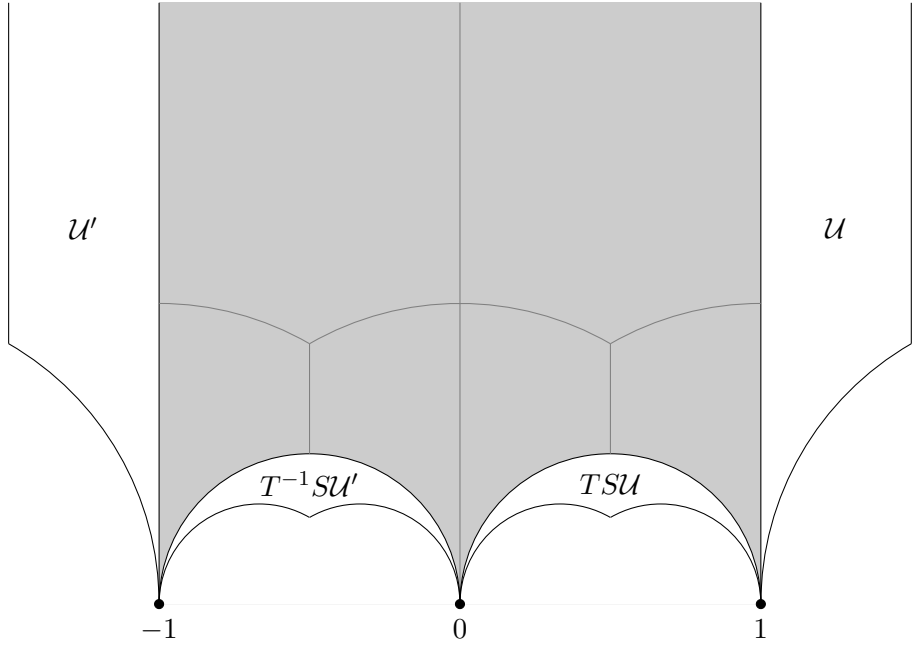

Let $\mathcal U$ denote the interior of the closure of
\[
T\left\{\tau\in \mathcal F : \Re(\tau)<\frac{1}{2}\right\}\cup TST^{-1}\left\{\tau\in
\mathcal F : \Re(\tau)>\frac{1}{2}\right\}.
\]
Then its closure $\overline{\mathcal U}$ includes the line $\Re(\tau)=1$ but
intersects no other $\slz$-translate of the imaginary axis (see
Figure~\ref{fig:regionU}). For $\tau\in \mathcal U$ and $|r|$ sufficiently
large, define
\[
F_2^\sharp(\tau,r):=4 \sin\mathopen{}\big(\pi r^2/2\big)^2\mathclose{}\int_0^\infty \K(\tau,it)\,e^{-\pi r^2 t}\,dt,
\]
i.e., by the same integral formula as in \eqref{FfromKlaplace2}.  It, too,
has an analytic continuation to a neighborhood of $\R$ in $\C$ using the
integral formulas in \eqref{truncatingF2}, for exactly the same reason as
before.

To relate $F_2^\sharp$ and $F_2$, we will examine the poles of $\K(\tau,z)$
using part~(3) of Theorem~\ref{thm: K} and equation \eqref{genpart5} (with
the group algebra element $r$ in this equation given by $r=1$).   For
$\tau\in\overline{\mathcal U}$ the only possible singularities of the
integrand with $z=it$ are at $z=T^{-1}\tau$ or $z = ST^{-1}\tau$, with $\tau$
on the left boundary of $\mathcal{U}$.  For $\alpha \in \PSL_2(\Z)$, there is
a pole at $z = \alpha \tau$ if and only if $\phi(\alpha^{-1}) \ne 0$, because
$z = \alpha \tau$ means $\tau = \alpha^{-1} z$, and by \eqref{genpart5} the
residue is
\[
\aligned
\operatorname{Res}_{z = \alpha \tau} \K(\tau,z) &= \big(\operatorname{Res}_{w = \alpha^{-1} z} \K(w,z)\big)\big|_{z = \alpha \tau}
\lim_{z \to \alpha\tau} \frac{z-\alpha \tau}{\tau - \alpha^{-1} z}\\
&= - \frac{j(\alpha^{-1},\alpha\tau)^{d/2-2}}{2\pi} \phi(\alpha^{-1})
\left(-\frac{1}{j(\alpha,\tau)^2}\right)\\
&=
\frac{\phi(\alpha^{-1})}{2\pi \,j(\alpha,\tau)^{d/2}}.
\endaligned
\]
Thus, there is no pole at $ST^{-1}\tau$, because $\phi(TS)=0$. On the other
hand, there is a pole with residue $1/(2\pi)$ at $z=T^{-1}\tau$, because
$\phi(T) = 1$. We must account for this pole if we wish to cross the line
$\Re(\tau)=1$.

\begin{figure}
\begin{tikzpicture}[scale=3]
\fill[gray!40!white] (-1,0) rectangle (1,2);
\fill[gray!40!white] (1,0) arc (0:180:1);
\fill[gray!40!white] (1,0)--(1,1) arc (90:180:1);
\fill[gray!40!white] (0,0) arc (0:90:1)--(-1,0);
\fill[gray!40!white] (-1,0) rectangle (0,2);
\fill[gray!40!white] (-1,0) rectangle (0,2);
\fill[white]  (1,0) arc (0:180:0.5);
\fill[white]  (0,0) arc (0:180:0.5);
\fill[gray!40!white] (1,1.5) circle (0.25);
\draw[gray] (0,0)--(0,2);
\draw[gray] (-1,0)--(-1,2);
\draw[gray] (1,0)--(1,2);
\draw[gray] (0.5,{sqrt(3/4)}) arc (60:120:1);
\draw[gray] (1,1) arc (90:120:1);
\draw[gray] (-1,1) arc (90:60:1);
\draw[gray] (0,0) arc (0:180:0.5);
\draw[gray] (1,0) arc (0:180:0.5);
\draw[gray] (0.5,0.5) -- (0.5,{sqrt(3/4)});
\draw[gray] (-0.5,0.5) -- (-0.5,{sqrt(3/4)});
\filldraw[black] (1,1.5) circle (0.015);
\draw (1,1.5) node[left=2pt] {$1+it_0$};
\filldraw[black] (0,1.5) circle (0.015);
\draw (0,1.5) node[left=2pt] {$it_0$};
\filldraw[black] (0,1.75) circle (0.015);
\draw (0,1.75) node[left=2pt] {$i(t_0+\varepsilon)$};
\filldraw[black] (0,1.25) circle (0.015);
\draw (0,1.25) node[left=2pt] {$i(t_0-\varepsilon)$};
\draw[thick] (0,1.25) arc (-90:90:0.25);
\draw[thick] (0,0)--(0,1.26);
\draw[thick] (0,1.75)--(0,2);
\end{tikzpicture}
\caption{A contour achieving analytic continuation for $\tau$ near $1+it_0$.}
\label{fig:contoursemicircle}
\end{figure}
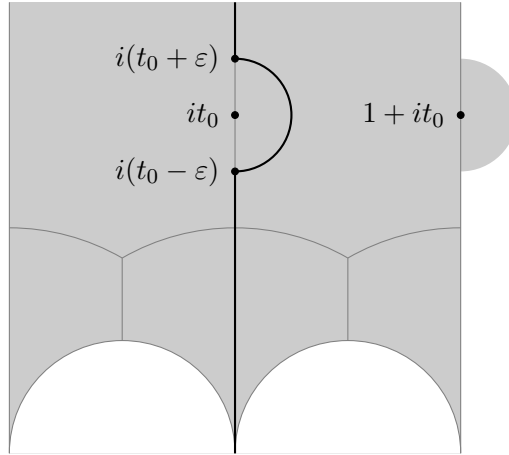

To cross this line, we return to the integrals defining $F_2(\tau,r)$ in
\eqref{decomposeF2} and \eqref{truncatingF2}, which are contour integrals
along pieces of the imaginary axis in the variable $z=it$.  Consider a point
$it_0$ on one of these contours; we wish to continue $\tau \mapsto
F_2(\tau,r)$ from $\Re(\tau)$ slightly less than $1$ to a neighborhood of
$1+it_0$. As shown in Figure~\ref{fig:contoursemicircle}, to do so we can
shift the integration in $z$ from the segment from $i(t_0-\varepsilon)$ to
$i(t_0+\varepsilon)$ to the semicircle $z=it_0+ e^{i\theta}\varepsilon$ with
$-\pi/2\le \theta\le \pi/2$, where the radius $\varepsilon=\varepsilon(t_0)$
is taken to be small enough that this semicircle remains inside $\mathcal D$;
it is at this point that we may need two different choices of $p$ (e.g.,
$p=1$ and $p=2$), so that we can ensure that $|p-t_0|>\varepsilon$ and thus
$ip$ is either above or below this semicircle. The contours having been moved
out of the way of the poles of the integrand, these integrals now give an
expression for $F_2(\tau,r)$ that is holomorphic for $T^{-1}\tau$ slightly to
the left of the contour, in particular, for $\tau$ in a ball of radius
$\varepsilon/2$ around $1+it_0$. We now claim that $F_2(\tau,r)$ is a
Schwartz function of $r$ whose Schwartz seminorms are bounded in the region
$\{\tau \in \C:|\tau-1-it_0|\le \varepsilon/2\}$.  Indeed, the contribution
of the integral from the undeformed contour along the imaginary axis retains
this property, just as it did for $\tau\in\mathcal D$ in the comments
following \eqref{eq:moderateGaussian}. Meanwhile, $\K(\tau,z)$ is continuous
and hence bounded in terms of $t_0$ for such $\tau$ and for $z$ on the
deformed semicircle, which establishes the claimed seminorm bound, just as
right after \eqref{eq:moderateGaussian}.

Having shown the analytic continuation of $F_2(\tau,r)$ past $\Re(\tau)=1$ to
an open subset of $\mathcal U$, we next claim that
\begin{equation}\label{matchF2sharp}
F_2(\tau,r) = F_2^\sharp(\tau,r) + 4 e^{\pi i r^2(\tau-1)}\sin\mathopen{}\big(\pi r^2/2\big)^2\mathclose{}
\end{equation}
on this common domain of definition.  For sufficiently large $r$, this
identity follows from moving the deformed semicircles back into place and
using $\operatorname{Res}_{z = \tau-1}\K(\tau,z) = 1/(2\pi)$, and hence it
holds for all $r\in \R$ by analytic continuation.

Arguing similarly, one continues $\tau\mapsto F_2(\tau,r)$ across
$\Re(\tau)=-1$ to the reflected region $\mathcal U'=\{\tau \in \C
:-\bar{\tau}\in \mathcal U\}$, as well as across the bottom semicircles
$|\tau\pm 1/2|=1/2$ to $TS\mathcal U$ and $T^{-1}S\mathcal U'=\{\tau \in \C :
-\bar{\tau}\in TS\mathcal U\}$. In particular, the integral
\eqref{FfromKlaplace2} extends holomorphically for large $r$ to these last
two domains, because part~(3) of Theorem~\ref{thm: K} shows there is no pole
on those bottom semicircles.  (Specifically, the residue of $\K(\tau,z)$ at
$\tau=\gamma z$ is proportional to $\phi(\gamma)$, and $\phi(STS) = \phi(ST)
= \phi\big(ST^{-1}\big) = \phi\big(ST^{-1}S\big)=0$; note also that
$\phi(TS)=\phi\big(T^{-1}S\big)=0$, so there is no need to take care with
inverses.)

So far, we have seen how to analytically continue $\tau \mapsto F_2(\tau,r)$
to all of $\mathcal{D}_+ := \overline{\mathcal{D}} \cup \mathcal{U} \cup
\mathcal{U}' \cup TS\mathcal{U} \cup T^{-1}S\mathcal{U}'$, and the
boundedness of the Schwartz seminorms holds for the same reason as above. All
that remains is to prove the recurrences \eqref{F2restated1.6to1.8}. We begin
with the first equation in \eqref{F2restated1.6to1.8}, namely
\[
F_2(\tau,r) \mid^\tau_{d/2} (T-I)^2   = 4 e^{\pi i r^2(\tau+1)}\sin\mathopen{}\big(\pi r^2/2\big)^2\mathclose{}
\]
whenever $\tau, \tau+1, \tau+2 \in \mathcal{D}_+$.  The set of such $\tau$ is
connected (it is the interior of the closure of $\mathcal{U}' \cup T^{-2}
\mathcal{U}$), and so by analyticity it suffices to prove this identity when
$r$ is sufficiently large and $\tau+2 \in \mathcal{U}$, in which case
$\tau,\tau+1 \in \mathcal{D}$. Then
\[
F_2(\tau+2,r) = F_2^\sharp(\tau+2,r) + 4 e^{\pi i r^2(\tau+1)}\sin\mathopen{}\big(\pi r^2/2\big)^2\mathclose{}
\]
by \eqref{matchF2sharp}, and therefore
\begin{align*}
F_2(\tau,r) \mid^\tau_{d/2} (T-I)^2&= 4 e^{\pi i r^2(\tau+1)}\sin\mathopen{}\big(\pi r^2/2\big)^2\mathclose{}\\
& \quad \phantom{}
+ F_2^\sharp(\tau,r) \mid^\tau_{d/2} T^2  -2  F_2(\tau,r) \mid^\tau_{d/2} T  + F_2(\tau,r)
\\
&= 4 e^{\pi i r^2(\tau+1)}\sin\mathopen{}\big(\pi r^2/2\big)^2\mathclose{}\\
& \quad \phantom{} +
4 \sin\mathopen{}\big(\pi r^2/2\big)^2\mathclose{}\,\int_0^\infty \big(\K(\tau,it) \mid^\tau_{d/2} (T-I)^2\big)\,e^{-\pi r^2 t}\,dt.
\end{align*}
Because $\K(\tau,it)
\mid^\tau_{d/2} (T-I)^2 = 0$, we obtain
\[
F_2(\tau,r) \mid^\tau_{d/2} (T-I)^2 = 4 e^{\pi i r^2(\tau+1)}\sin\mathopen{}\big(\pi r^2/2\big)^2\mathclose{},
\]
which is the first identity in \eqref{F2restated1.6to1.8}.  The second
identity states that
\[
F_2(\tau,r) \mid^\tau_{d/2} S(T-I)^2  = 0,
\]
and it is proved almost exactly the same way.  Because $\mathcal{D}_+$ is
invariant under $S$, the set of $\tau$ such that $S\tau, ST\tau, ST^2\tau \in
\mathcal{D}_+$ is again connected (it is the same as the set of $\tau$ such
that $\tau, T\tau, T^2\tau \in \mathcal{D}_+$). We can assume $ST^2 \tau \in
T^{-1}S\mathcal{U}' = S \mathcal{U}$ and $r$ is sufficiently large. Then $ST
\tau, S\tau \in \mathcal{D}$, and each of the three terms in $F_2(\tau,r)
\mid^\tau_{d/2} S(T-I)^2$ can be computed using the integral
\eqref{FfromKlaplace2} when $r$ is large enough. Thus, the second identity
follows from $\K(\tau,it) \mid^\tau_{d/2} S(T-I)^2 = 0$.
\end{proof}

We conclude this subsection with some implications of
Proposition~\ref{prop:extendF2} and both parts of
Proposition~\ref{prop:propagationandextension}.

\begin{corollary}\label{corin4.1}
The function $\tau\mapsto F_2(\tau,r)$ extends to a holomorphic function on
$\Hyp$ satisfying the identities
\begin{equation}\label{F2withslash}
F_2|^\tau_{d/2}(T-I)^2 = -e^{\pi i r^2\tau}|^\tau_{d/2}(T-I)^2 \quad \text{and} \quad
F_2|^\tau_{d/2}S(T-I)^2=0,
\end{equation}
and consequently $F=F_1+F_2$ extends to a holomorphic function on $\Hyp$
satisfying \eqref{rerestated1.6to1.8}; in particular, conditions~(1) and~(5)
of Theorem~\ref{thm:FEimpliesIF} hold with $\widetilde F=(e^{\pi i \tau |x|^2
}-F)|^\tau_{d/2}S$.  Furthermore, condition~(3) of
Theorem~\ref{thm:FEimpliesIF} holds if and only if $\max_{r\in \R}\big|r^k
\frac{d^\ell}{dr^\ell}F_2(\tau,r)\big|$ has moderate growth on
$\overline{\mathcal D}$.
\end{corollary}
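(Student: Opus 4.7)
The plan is to invoke Proposition~\ref{prop:extendF2}, which produces $\tau \mapsto F_2(\tau,r)$ on an open neighborhood of $\overline{\mathcal D}$ satisfying the recurrences \eqref{F2restated1.6to1.8}, and then to appeal to Proposition~\ref{prop:propagationandextension}, whose two parts respectively extend $F_2$ to all of $\Hyp$ and upgrade local moderate growth to global moderate growth. The preliminary step is to rewrite \eqref{F2restated1.6to1.8} in slash-operator form via the direct computation
\[
e^{\pi i r^2\tau}|^\tau_{d/2}(T-I)^2 = e^{\pi i r^2\tau}(e^{\pi i r^2}-1)^2 = -4e^{\pi i r^2(\tau+1)}\sin^2(\pi r^2/2),
\]
which shows that \eqref{F2restated1.6to1.8} coincides with \eqref{F2withslash} on their common domain.

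For the analytic continuation I will fix $r \in \R$ and apply part~(1) of Proposition~\ref{prop:propagationandextension} with $f(\tau) = F_2(\tau,r)$, $k = d/2$, $h_1(\tau) = -e^{\pi i r^2\tau}|^\tau_{d/2}(T-I)^2$ (an entire, bounded function of $\tau$), and $h_2 = 0$, thereby obtaining the holomorphic extension of $F_2$ to $\Hyp$ satisfying \eqref{F2withslash} globally. Since $F_1(\tau,r) = e^{\pi i \tau r^2}$ is entire in $\tau$ with $F_1|^\tau_{d/2}(T-I)^2 = e^{\pi i r^2\tau}|^\tau_{d/2}(T-I)^2$, adding $F_1$ and $F_2$ gives $F|^\tau_{d/2}(T-I)^2 = 0$ and $F|^\tau_{d/2}S(T-I)^2 = e^{\pi i \tau r^2}|^\tau_{d/2}S(T-I)^2$, exactly \eqref{rerestated1.6to1.8}. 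Because $i^{-d/2} = 1$ when $d/2 \in \{4,12\}$, the definition $\widetilde F = (e^{\pi i \tau r^2} - F)|^\tau_{d/2}S$ is equivalent to the third equation $F + i^{d/2}\widetilde F|^\tau_{d/2}S = e^{\pi i \tau r^2}$ of part~(5) of Theorem~\ref{thm:FEimpliesIF}, and applying $|^\tau_{d/2}(T-I)^2$ to this defining relation while invoking the second identity of \eqref{rerestated1.6to1.8} yields $\widetilde F|^\tau_{d/2}(T-I)^2 = 0$, completing parts~(1) and~(5) of that theorem.

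For the iff statement, one direction is trivial. For the converse, $F_1 = e^{\pi i \tau r^2}$ satisfies the Schwartz-seminorm bounds of condition~(3) globally via the Gaussian tail estimate \eqref{eq:moderateGaussian}, and $\widetilde F = -F_2|^\tau_{d/2}S$ inherits moderate growth from $F_2$ by Lemma~\ref{lem:HCmoderategrowthPslashed}; thus it suffices to propagate moderate growth of $\sup_{r\in\R}|r^k \partial_r^\ell F_2(\tau,r)|$ from $\overline{\mathcal D}$ to $\Hyp$. For each fixed $r$, the holomorphic function $\tau \mapsto r^k \partial_r^\ell F_2(\tau,r)$ satisfies the same slash-operator recurrences as $F_2$, with $h_1$ replaced by $-r^k \partial_r^\ell(e^{\pi i r^2\tau}|^\tau_{d/2}(T-I)^2)$ and $h_2$ still zero, so I can apply part~(2) of Proposition~\ref{prop:propagationandextension} fiber-wise in $r$. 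The main obstacle, and the reason for the fiber-wise approach, will be selecting the input constants $C_f$ and $C_{h_1}$ \emph{uniformly in $r$} so that afterward the $\sup_r$ preserves moderate growth: $C_f$ is controlled uniformly in $r$ by the assumed moderate growth of the $r$-sup on $\overline{\mathcal D}$, while $h_1$ is a polynomial in $r,\tau$ of degree $k+\ell$ times a linear combination of the three translated Gaussians $e^{\pi i r^2(\tau+n)}$, whose $\sup_r$ is bounded by a polynomial in $|\tau|$ and $\Im(\tau)^{-1/2}$ via \eqref{eq:moderateGaussian}. Since the conclusion \eqref{extendedmoderategrowth} is linear in these constants, taking $\sup_r$ of the pointwise output yields the desired moderate growth of $\sup_r|r^k \partial_r^\ell F_2(\tau,r)|$ on all of $\Hyp$.
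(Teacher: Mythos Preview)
Your proposal is correct and follows essentially the same approach as the paper's proof: both invoke Proposition~\ref{prop:extendF2} for the local data, then apply part~(1) of Proposition~\ref{prop:propagationandextension} with $f=F_2(\cdot,r)$, $h_1=-e^{\pi i r^2\tau}|^\tau_{d/2}(T-I)^2$, $h_2=0$ for the analytic continuation, and part~(2) fiber-wise in $r$ with $f=r^k\partial_r^\ell F_2(\cdot,r)$ for the moderate-growth propagation, exploiting the linearity of \eqref{extendedmoderategrowth} in the constants to pass to the supremum over $r$. Your write-up is slightly more explicit in places (the direct computation verifying \eqref{F2withslash} matches \eqref{F2restated1.6to1.8}, the check that $i^{-d/2}=1$, and the observation that the forward direction of the iff is trivial), but the argument is the same.
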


Of course moderate growth on $\overline{\mathcal D}$ is equivalent to that on
$\mathcal D$, by continuity.

\begin{proof}
The extension and the identities in \eqref{F2withslash} follow immediately
from part~(1) of Proposition~\ref{prop:propagationandextension}, with
$f(\tau)=F_2(\tau,r)$, $h_1=-e^{\pi i r^2\tau}|^\tau_{d/2}(T-I)^2$, and
$h_2=0$. Equation~\eqref{rerestated1.6to1.8} is a restatement of
condition~(5) of Theorem~\ref{thm:FEimpliesIF}, since we have defined
$\widetilde{F}=(e^{\pi i r^2\tau}-F)|^\tau_{d/2}S$. As for condition~(3) of
Theorem~\ref{thm:FEimpliesIF}, the corresponding estimate for $F_2$, namely
that $\max_{r\in \R}\big|r^k \frac{d^\ell}{dr^\ell}F_2(\tau,r)\big|$ has
moderate growth on $\Hyp$, implies those for $F(\tau,r)=e^{\pi i
r^2\tau}+F_2(\tau,r)$ and $\widetilde{F}=-F_2|^\tau_{d/2}S$.  (Here we have
used that $e^{\pi i r^2\tau}$ and all its derivatives with respect to $r$
have moderate growth in $\tau$, uniformly in $r$, which follows from
\eqref{eq:moderateGaussian}.) To reduce the moderate growth to
$\overline{\mathcal D}\subseteq \Hyp$, we can apply part~(2) of
Proposition~\ref{prop:propagationandextension} with $f(\tau)=r^k
\frac{d^\ell}{dr^\ell} F_2(\tau,r)$, $h_1=-r^k \frac{d^\ell}{dr^\ell} e^{\pi
i r^2\tau}|^\tau_{d/2}(T-I)^2$, and $h_2=0$. For fixed $r\in \R$ the bound
\eqref{extendedmoderategrowth} then shows moderate growth in $\tau$ with
constants coming from the Schwartz seminorms of $F_2(\tau,r)$ and $h_1$, and
the final statement follows by maximizing over $r\in \R$.
\end{proof}

\subsection{Proof of Theorem~\ref{theorem:interpolation}}\label{sec:sub:verifypart3}

The interpolation formula \eqref{eqn:IF} will follow from
Theorem~\ref{thm:FEimpliesIF} once we verify all the latter's hypotheses. The
radial hypothesis (2) holds by construction, and hypotheses~(1) and~(5) were
just demonstrated in Corollary~\ref{corin4.1}, which also reduced
hypothesis~(3) to checking the moderate growth of $\max_{r\in \R}\big|r^k
\frac{d^\ell}{dr^\ell}F_2(\tau,r)\big|$ for $\tau\in\overline{\mathcal D}$.
Since the seminorm boundedness assertion in Proposition~\ref{prop:extendF2}
gives the boundedness of $\max_{r\in \R}\big|r^k
\frac{d^\ell}{dr^\ell}F_2(\tau,r)\big|$ for $\tau$ in any fixed compact
subset of $\overline{\mathcal D}$, it further suffices to verify the moderate
growth for $\tau$ lying in a neighborhood in $\overline{\mathcal D}$ of one
of its cusps. Thus to finish the proof of Theorem~\ref{theorem:interpolation}
we will show that
\begin{equation}\label{4.2toshow1}
\max_{r\in \R}\left|r^k \frac{d^\ell}{dr^\ell}F_2(\tau,r)\right| \ \text{has moderate growth for~}\tau\text{~near cusps of~} \overline{\mathcal D},
\end{equation}
and that there is an absolute constant $A>0$ such that
\begin{equation}\label{4.2toshow2}
F^{(8)}(\tau,x), \widetilde{F}^{(8)}(\tau,x) = O\big(|\tau|^A e^{-\pi \Im(\tau)}\big)
\end{equation}
and
\begin{equation}\label{4.2toshow3}
F^{(24)}(\tau,x), \widetilde{F}^{(24)}(\tau,x) = O\big(|\tau|^A e^{-3\pi \Im(\tau)}\big)
\end{equation}
as $\Im(\tau)\rightarrow\infty$ with $-1\le \Re(\tau)\le 1$. Indeed,
\eqref{4.2toshow2} and \eqref{4.2toshow3} are stronger than hypothesis~(4)
and the concluding statement in Theorem~\ref{thm:FEimpliesIF}, which allows
us to deduce $n_0=2$ in $d=24$ dimensions.

The fundamental domain $\mathcal D$ has four cusps, namely $-1$, $0$, $1$,
and $\infty$ (see Figure~\ref{fig:domainD}). Neighborhoods of these cusps are
respectively parametrized by the following elements of $\slz$ acting on
$\tau$ with large imaginary part in the following strips: $T^{-1}S$ applied
to $-1\le \Re(\tau)\le 0$; $S$ applied to $-1\le \Re(\tau)\le 1$; $TS$
applied to $0\le \Re(\tau)\le 1$; and $I$ applied to $-1\le \Re(\tau)\le 1$.
Since factors of automorphy do not affect moderate growth by
\eqref{jvsnorms}, assertion \eqref{4.2toshow1} is equivalent to the moderate
growth of $\max_{r\in \R}\left|r^k
\frac{d^\ell}{dr^\ell}(F_2|^\tau_{d/2}\gamma)(\tau,r)\right|$ for each of
these group elements $\gamma$ and $\tau$ with large imaginary part in its
corresponding strip.  Alternatively, since the corresponding seminorm growth
assertion holds for $F_1(\tau,r)=e^{\pi i \tau r^2}$, for each of these
$\gamma$ it is equivalent to replace $F_2$ by $F$ in this last assertion. For
the rest of this subsection we thus assume
\[-1\le \Re(\tau)\le 1 \text{ and
$\Im(\tau)$ is large}.
\]

Our key tool is a contour shift very similar to the proof of
\cite[Proposition~2]{V}. For $|r|$ and $\Im(\tau)$ both sufficiently large,
we write the factor $\sin\mathopen{}\big(\pi r^2/2\big)^2\mathclose{}$ in
terms of complex exponentials, incorporate them into the integrand from
\eqref{FfromKlaplace2}, and set $z=it$ to obtain
\[
\aligned
F_2(\tau,r) &= i \int_{1}^{1+i\infty} \K(\tau,z-1) \,e^{\pi i r^2 z} \, dz + i \int_{-1}^{-1+i\infty} \K(\tau,z+1) \,e^{\pi i r^2 z} \, dz\\
&\quad\phantom{}
-2i \int_0^{i\infty} \K(\tau,z) \,e^{\pi i r^2 z} \, dz,
\endaligned
\]
where all the contours are vertical rays.  We now claim that a shift to the
contours $\alpha_{-1}$, $\alpha_0$, $\alpha_1$, and $\alpha_\infty$ shown in
Figure~\ref{fig:alphacontours} yields
\begin{equation}\label{contourshift1}
\aligned
  F(\tau,r)&=e^{\pi i r^2\tau}+F_2(\tau,r)\\
&= i \int_{\alpha_1}\K(\tau,z-1)\,e^{\pi i r^2 z}\,dz
+ i \int_{\alpha_{-1}}\K(\tau,z+1)\,e^{\pi i r^2 z}\,dz
\\
& \quad \phantom{}-2i \int_{\alpha_{0}}\K(\tau,z)\,e^{\pi i r^2 z}\,dz\\
& \quad \phantom{} +i \int_{\alpha_{\infty}}\big(\K|^z_{2-d/2}(T+T^{-1}-2I)\big)(\tau,z)\, e^{\pi i r^2 z}\,dz.
\endaligned
\end{equation}
Specifically, we must shift the contours involving $\K(\tau,z-1)$ and
$\K(\tau,z+1)$, and the only poles that can intervene in this contour shift
are the poles of $z \mapsto \K(\tau,z\pm 1)$ at $z=\tau$, with residues
\[
\operatorname{Res}_{z = \tau} \K(\tau,z \pm 1) = \mp\frac{1}{2\pi}
\]
by part~(3) of Theorem~\ref{thm: K}.  When $\Re(\tau)>0$, we obtain a
contribution from $\K(\tau,z-1)$ at $z=\tau$, while $\K(\tau,z+1)$ has no
pole at $z=\tau-1$; when $\Re(\tau)<0$, we obtain a contribution from
$\K(\tau,z+1)$ at $z=\tau$, while $\K(\tau,z-1)$ has no pole at $z=\tau+1$;
finally, when $\Re(\tau)=0$ both contour shifts contribute half as much
(alternatively, this case follows by continuity). In each case, the residues
cancel the $e^{\pi i r^2\tau}$ term from $F(\tau,r)=e^{\pi i
r^2\tau}+F_2(\tau,r)$ and we obtain \eqref{contourshift1}.

\begin{figure}
\begin{tikzpicture}[scale=2.8]
\begin{scope}[xshift=-35]
\draw[gray] (0,0)--(0,2);
\draw[gray] (-1,0)--(-1,2);
\draw[gray] (1,0)--(1,2);
\draw[gray] (0.5,{sqrt(3/4)}) arc (60:120:1);
\draw[gray] (1,1) arc (90:120:1);
\draw[gray] (-1,1) arc (90:60:1);
\draw[gray] (0,0) arc (0:180:0.5);
\draw[gray] (1,0) arc (0:180:0.5);
\draw[gray] (0.5,0.5) -- (0.5,{sqrt(3/4)});
\draw[gray] (-0.5,0.5) -- (-0.5,{sqrt(3/4)});

\draw[black] (-0.86,0.5) node {$\alpha_{-1}$} ; \draw[black,thick]
(-1,0)--(-1,1); \draw[->,black,thick] (-1,0)--(-1,0.7); \draw[black,thick]
(-1,1) arc(90:60:1); \draw[black,thick] (-0.5,{sqrt(3/4)}) arc(120:90:1);
\draw[black,thick] (0,1)--(0,1.2); \draw[->,thick] (-1,1) arc(90:73:1);
\draw[->,thick] (-0.5,{sqrt(3/4)}) arc(120:103:1);

\draw[black] (0.09,0.5) node {$\alpha_{0}$} ; \draw[black,thick]
(0,0)--(0,1.2); \draw[->,black,thick] (0,0)--(0,0.7); \draw[->,black,thick]
(0,0)--(0,1.12);

\draw[black] (1.1,0.5) node {$\alpha_{1}$} ; \draw[black,thick] (1,0)--(1,1);
\draw[->,black,thick] (1,0)--(1,0.7); \draw[black,thick] (1,1)
arc(90:120:1); \draw[black,thick] (0.5,{sqrt(3/4)}) arc(60:90:1);
\draw[black,thick] (0,1)--(0,1.2); \draw[->,thick] (0.5,{sqrt(3/4)})
arc(60:77:1); \draw[->,thick] (1,1) arc(90:107:1);

\draw[black] (0.12,1.5) node {$\alpha_{\infty}$} ; \draw[black,thick]
(0,1.2)--(0,2); \draw[->,black,thick] (0,1.2)--(0,1.7);

\filldraw[black] (0,0) circle ({9/560}); \draw  (0,0) node[below=2pt] {$0$};
\filldraw[black] (-1,0) circle ({9/560}); \draw  (-1,0)node[below=2pt]
{$-1$}; \filldraw[black] (1,0) circle ({9/560}); \draw (1,0) node[below=2pt]
{$1$} ; \filldraw[black] (0,1.2) circle ({9/560}); \draw  (0,1.2)
node[right=1pt] {$6i/5$};
\end{scope}

\begin{scope}[xshift=35]
\draw[gray] (0,0)--(0,2); \draw[gray] (-1,0)--(-1,2); \draw[gray]
(1,0)--(1,2); \draw[gray] (0.5,{sqrt(3/4)}) arc (60:120:1); \draw[gray] (1,1) arc
(90:120:1); \draw[gray] (-1,1) arc (90:60:1); \draw[gray] (0,0) arc
(0:180:0.5); \draw[gray] (1,0) arc (0:180:0.5); \draw[gray] (0.5,0.5) --
(0.5,{sqrt(3/4)}); \draw[gray] (-0.5,0.5) -- (-0.5,{sqrt(3/4)});

\draw[black] (-0.74,0.69) node {$ST\alpha_{-1}$} ; \draw[black,thick]
(-0.5,0.5)--(-0.5,{sqrt(3/4)}); \draw[->,black,thick] (-0.5,{sqrt(3/4)})--(-0.5,0.7);
\draw[black,thick] (-0.5,0.5) arc(90:79.61:0.5); \draw[black,thick]
(-0.5,{sqrt(3/4)}) arc(120:90:1); \draw[black,thick] (0,1)--(0,2);
\draw[->,thick] (0,2)--(0,1.25);
\draw[->,thick] (0,1) arc(90:107:1);

\draw[black] (0.76,0.69) node {$ST^{-1}\alpha_{1}$} ; \draw[black,thick]
(0.5,0.5)--(0.5,{sqrt(3/4)}); \draw[->,black,thick] (0.5,{sqrt(3/4)})--(0.5,0.7);
\draw[black,thick] (0.5,0.5) arc(90:101.39:0.5); \draw[black,thick]
(0.5,{sqrt(3/4)}) arc(60:90:1); \draw[black,thick] (0,1)--(0,2);
\draw[->,thick] (0,1) arc(90:73:1);

\filldraw[black] (0,0) circle ({9/560}); \draw  (0,0) node[below=2pt] {$0$};
\filldraw[black] (-1,0) circle ({9/560}); \draw  (-1,0)node[below=2pt]
{$-1$}; \filldraw[black] (1,0) circle ({9/560}); \draw (1,0) node[below=2pt]
{$1$} ; \filldraw[black] ({25/61},{30/61}) circle ({9/560}); \draw
({25/61},{30/61}) node[below=2pt] {$\frac{-1}{-1+6i/5}$} ; \filldraw[black]
({-25/61},{30/61}) circle ({9/560}); \draw ({-25/61},{30/61}) node[below=2pt]
{$\frac{-1}{1+6i/5}$} ;
\end{scope}
\end{tikzpicture}
\caption{The contours $\alpha_{-1}$, $\alpha_0$, $\alpha_1$, $\alpha_\infty$,
$ST\alpha_{-1}$, and $ST^{-1}\alpha_{1}$ superimposed on the fundamental
domain $\mathcal D$ shown in Figure~\ref{fig:domainD}. All
four contours on the left share a common terminus at $6i/5$, with
$\alpha_{-1}$, $\alpha_0$, and $\alpha_1$ ending there and $\alpha_\infty$
starting there, while those on the right go from $i \infty$ to
$-1/(\pm1+6i/5)$. \label{fig:alphacontours}}
\end{figure}
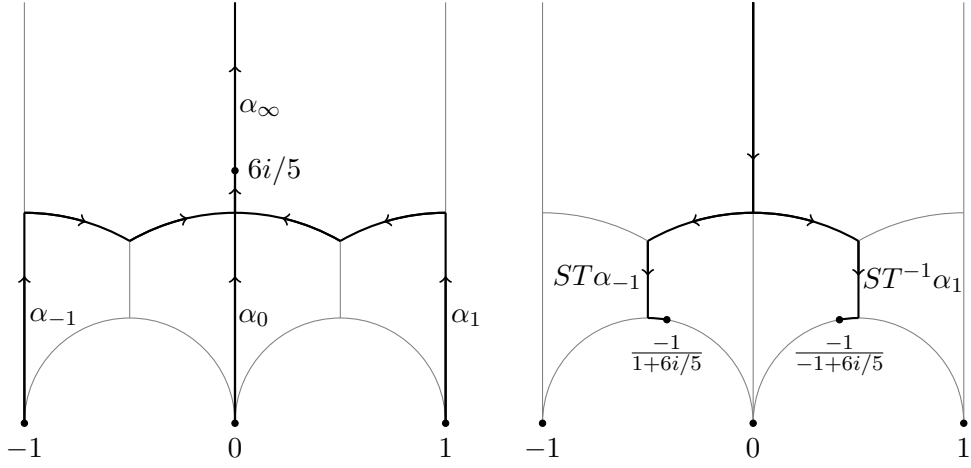

We can rewrite the last integrand in \eqref{contourshift1} using
\[
\aligned
  \K|^z_{2-d/2}(T+T^{-1}-2I) & =
  \frac{1}{2} \K_{+}|^z_{2-d/2}(T+T^{-1}-2I)\\
  & \quad \phantom{}+\frac{1}{2} \K_{-}|^z_{2-d/2}(T+T^{-1}-2I) \\
& =   \K_{+}|^z_{2-d/2}S-\K_{-}|^z_{2-d/2}S \\ &= 2\widehat{\mathcal K}|^z_{2-d/2}S\\
 &= -2\K|^\tau_{d/2}S|^z_{2-d/2}S,
\endaligned
\]
by Proposition~\ref{prop:Kpmslashtilde}, the definition of
$\widetilde{\mathcal I}_\pm$ from \eqref{freeideals}, and \eqref{Kpmdef}.
Thus,
\begin{equation}\label{contourshift1b}
\aligned
  F(\tau,r)
&= i \int_{\alpha_1}\K(\tau,z-1)\,e^{\pi i r^2 z}\,dz
+ i \int_{\alpha_{-1}}\K(\tau,z+1)\,e^{\pi i r^2 z}\,dz
\\
& \quad \phantom{}-2i \int_{\alpha_{0}}\K(\tau,z)\,e^{\pi i r^2 z}\,dz\\
& \quad \phantom{}-2i \int_{\alpha_{\infty}}\big(\K|^\tau_{d/2}S|^z_{2-d/2}S\big)(\tau,z)\,e^{\pi i r^2 z}\,dz.
\endaligned
\end{equation}

If we instead start with formula \eqref{FtildefromKhatlaplace} for
$\widetilde{F}=-F_2|^\tau_{d/2}S$, or with \eqref{FfromKlaplace2} for
$F_2|^\tau_{d/2}TS$ or $F_2|^\tau_{d/2}T^{-1}S$, then no poles are
encountered in these contour shifts, again because of the residue formula
from part~(3) of Theorem~\ref{thm: K}.  Thus after a change of variables
\eqref{contourshift1b} generalizes to
\begin{equation}\label{countorshift3gen}
\aligned
\Phi(\tau,r) &= i \int_{ST^{-1}\alpha_1}\big(\K|^\tau_{d/2}\gamma |^z_{2-d/2} S\big)(\tau,z)\,e^{\pi i r^2 (1-1/z)}\,\frac{dz}{z^{d/2}}  \\
& \quad \phantom{}+i \int_{ST\alpha_{-1}}\big(\K|^\tau_{d/2}\gamma |^z_{2-d/2} S\big)(\tau,z)\,e^{\pi i r^2 (-1-1/z)}\,\frac{dz}{z^{d/2}}\\
& \quad \phantom{}-2i \int_{S\alpha_0}\big(\K|^\tau_{d/2}\gamma |^z_{2-d/2} S\big)(\tau,z)\,e^{\pi i r^2 (-1/z)}\,\frac{dz}{z^{d/2}}\\
& \quad \phantom{}-2i \int_{\alpha_\infty}\big(\K|^\tau_{d/2}S\gamma |^z_{2-d/2} S\big)(\tau,z)\,
e^{\pi i r^2 z}\,
dz,
\endaligned
\end{equation}
where $(\Phi,\gamma)$ is one of the four pairs
\begin{equation}\label{dapairs}
\begin{array}{lll}
(F,\,I), && (-\widetilde{F},\,S),\\
(F_2|^\tau_{d/2}TS,\,TS), && (F_2|^\tau_{d/2}T^{-1}S,\,T^{-1}S),
\end{array}
\end{equation}
and we assume $-1\le \Re(\tau)\le 1$ and $\Im(\tau)$ is large.  Note that
these pairs are exactly the cases we must analyze to treat the four cusps of
$\mathcal{D}$. Since
\begin{equation}\label{slashKagain}
\K=\frac{1}{2}(\K_+ + \K_{-}) \quad \text{and} \quad \widehat{\K}=-\K|^\tau_{d/2}S=\frac{1}{2}(\K_+ - \K_{-}),
\end{equation}
estimates on the first two kernels in \eqref{dapairs} are provided in
\eqref{quotientbounds2new}, and  estimates on the last two kernels  are
provided in \eqref{quotientbounds1new}.

Though its derivation initially assumed $|r|$ large, formula
\eqref{countorshift3gen} actually gives an analytic continuation to all $r\in
\R$, as can be seen from \eqref{quotientbounds1new} and
\eqref{quotientbounds2new}, which show that each of its four integrals is
absolutely convergent and can be differentiated under the integral sign.  In
particular, letting
\[
e_{k,\ell}(z,r)=r^k \frac{d^\ell}{dr^\ell} e^{\pi i z r^2},
\]
we have
\begin{equation}\label{contourshift4}
\aligned
 r^k\frac{d^\ell}{dr^\ell}\Phi(\tau,r) &= i \int_{ST^{-1}\alpha_1}\big(\K|^\tau_{d/2}\gamma |^z_{2-d/2} S\big)(\tau,z)e_{k,\ell}(1-1/z,r)\,\frac{dz}{z^{d/2}}  \\
& \quad \phantom{}+i \int_{ST\alpha_{-1}}\big(\K|^\tau_{d/2}\gamma |^z_{2-d/2} S\big)(\tau,z)e_{k,\ell}(-1-1/z,r)\,\frac{dz}{z^{d/2}}\\
& \quad \phantom{}-2i \int_{S\alpha_0}\big(\K|^\tau_{d/2}\gamma |^z_{2-d/2} S\big)(\tau,z)e_{k,\ell}(-1/z,r)\,\frac{dz}{z^{d/2}}\\
& \quad \phantom{}-2i \int_{\alpha_\infty}\big(\K|^\tau_{d/2}S\gamma |^z_{2-d/2} S\big)(\tau,z)
e_{k,\ell}(z,r)\,
dz.
\endaligned
\end{equation}
Claims \eqref{4.2toshow1}--\eqref{4.2toshow3} are now reduced to bounding the
four integrals in \eqref{contourshift4}. Each of these four contours in
\eqref{contourshift4} lies above $\Im(z)=1/3$, and consists of a
semi-infinite ray along the imaginary axis (possibly) together with a compact
curve in $\Hyp$ (see Figure~\ref{fig:alphacontours}).  Thus for $\Im(\tau)$
sufficiently large the only $\slz$-translates of $\tau$ near any of these
contours are $\tau-1$, $\tau$, and $\tau+1$, which can only possibly
contribute poles for $z$ on the imaginary axis portion of the contour.

We shift the contours (if necessary) to keep $z$ at distance at least $1/4$
from any of these potential poles, in particular by taking the integration
over $C_1\cup C_2\cup C_3$, where $C_1$ runs along the compact curve and the
imaginary axis between $i$ and $i(\Im(\tau)-1/4)$, $C_2$ is a contour between
$i(\Im(\tau)-1/4)$ and $i(\Im(\tau)+1/4)$ keeping distance at least $1/4$
from any $\slz$-translate of $\tau$, and $C_3$ runs along the imaginary axis
between $i(\Im(\tau)+1/4)$ and $\infty$ (see Figure~\ref{fig:semicircle}).

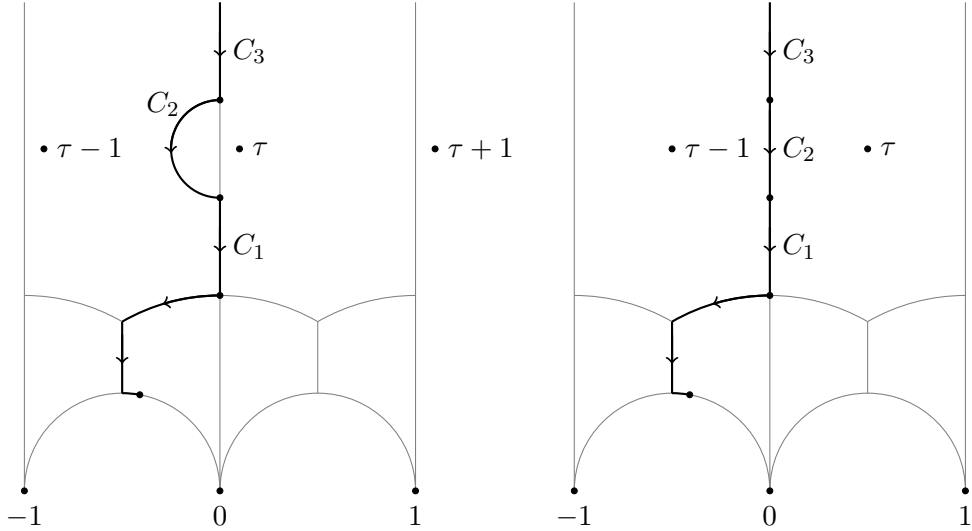
\begin{figure}
\begin{tikzpicture}[scale=2.6]
\begin{scope}[xshift=-40]
\draw[gray] (0,0)--(0,2.5);
\draw[gray] (-1,0)--(-1,2.5);
\draw[gray] (1,0)--(1,2.5);
\draw[gray] (0.5,{sqrt(3/4)}) arc (60:120:1);
\draw[gray] (1,1) arc (90:120:1);
\draw[gray] (-1,1) arc (90:60:1);
\draw[gray] (0,0) arc (0:180:0.5);
\draw[gray] (1,0) arc (0:180:0.5);
\draw[gray] (0.5,0.5) -- (0.5,{sqrt(3/4)});
\draw[gray] (-0.5,0.5) -- (-0.5,{sqrt(3/4)});

\filldraw[black] (0.1,1.75) circle ({9/520}); \draw  (0.1,1.75)
node[right=1pt] {$\tau$};

\filldraw[black] (1.1,1.75) circle ({9/520}); \draw  (1.1,1.75)
node[right=1pt] {$\tau+1$};

\filldraw[black] (-0.9,1.75) circle ({9/520}); \draw  (-0.9,1.75)
node[right=1pt] {$\tau-1$};

\draw[black] (0.15,1.25) node {$C_1$} ; \draw[black,thick] (0,1)--(0,1.5);
\draw[black,thick] (-0.5,0.5)--(-0.5,{sqrt(3/4)}); \draw[->,black,thick]
(-0.5,0.8)--(-0.5,0.65); \draw[black,thick] (-0.5,0.5) arc(90:79.61:0.5);
\draw[black,thick] (-0.5,{sqrt(3/4)}) arc(120:90:1); \draw[->,black,thick]
(0,1.35)--(0,1.22);

\draw[->,thick] (0,1) arc(90:107:1);

\draw[black] (-0.3,2) node {$C_2$} ; \draw[black,thick] (0,1.5) arc
(270:90:0.25); \draw[black,thick,->] (0,2) arc (90:185:0.25);

\draw[black] (0.15,2.25) node {$C_3$} ; \draw[black,thick] (0,2)--(0,2.5);
\draw[->,black,thick] (0,2.35)--(0,2.22);

\filldraw[black] (0,0) circle ({9/520}); \draw  (0,0) node[below=2pt] {$0$};
\filldraw[black] (-1,0) circle ({9/520}); \draw  (-1,0)node[below=2pt]
{$-1$}; \filldraw[black] (1,0) circle ({9/520}); \draw (1,0) node[below=2pt]
{$1$} ;

\filldraw[black] ({-25/61},{30/61}) circle ({9/520}); \filldraw[black] (0,1)
circle ({9/520}); \filldraw[black] (0,1.5) circle ({9/520}); \filldraw[black]
(0,2) circle ({9/520});
\end{scope}

\begin{scope}[xshift=40]
\draw[gray] (0,0)--(0,2.5); \draw[gray] (-1,0)--(-1,2.5); \draw[gray]
(1,0)--(1,2.5); \draw[gray] (0.5,{sqrt(3/4)}) arc (60:120:1); \draw[gray] (1,1)
arc (90:120:1); \draw[gray] (-1,1) arc (90:60:1); \draw[gray] (0,0) arc
(0:180:0.5); \draw[gray] (1,0) arc (0:180:0.5); \draw[gray] (0.5,0.5) --
(0.5,{sqrt(3/4)}); \draw[gray] (-0.5,0.5) -- (-0.5,{sqrt(3/4)});

\filldraw[black] (0.5,1.75) circle ({9/520}); \draw  (0.5,1.75)
node[right=1pt] {$\tau$};

\filldraw[black] (-0.5,1.75) circle ({9/520}); \draw  (-0.5,1.75)
node[right=1pt] {$\tau-1$};

\draw[black] (0.15,1.25) node {$C_1$} ; \draw[black,thick] (0,1)--(0,1.5);
\draw[black,thick] (-0.5,0.5)--(-0.5,{sqrt(3/4)}); \draw[->,black,thick]
(-0.5,0.8)--(-0.5,0.65); \draw[black,thick] (-0.5,0.5) arc(90:79.61:0.5);
\draw[black,thick] (-0.5,{sqrt(3/4)}) arc(120:90:1); \draw[->,black,thick]
(0,1.35)--(0,1.22);

\draw[black] (0.15,1.75) node {$C_2$} ; \draw[black,thick] (0,1.5) -- (0,2);
\draw[->,black,thick] (0,1.95)--(0,1.72);

\draw[black] (0.15,2.25) node {$C_3$} ; \draw[black,thick] (0,2)--(0,2.5);
\draw[->,black,thick] (0,2.35)--(0,2.22);

\draw[->,thick] (0,1) arc(90:107:1);

\filldraw[black] (0,0) circle ({9/520}); \draw  (0,0) node[below=2pt] {$0$};
\filldraw[black] (-1,0) circle ({9/520}); \draw  (-1,0)node[below=2pt]
{$-1$}; \filldraw[black] (1,0) circle ({9/520}); \draw (1,0) node[below=2pt]
{$1$} ;

\filldraw[black] ({-25/61},{30/61}) circle ({9/520}); \filldraw[black] (0,1)
circle ({9/520}); \filldraw[black] (0,1.5) circle ({9/520}); \filldraw[black]
(0,2) circle ({9/520});
\end{scope}

\end{tikzpicture}
\caption{The contours $C_1$, $C_2$, and $C_3$  (shown here for
$ST\alpha_{-1}$) keep distance at least $1/4$ from any $\Z$-translate of
$\tau$.\label{fig:semicircle}}
\end{figure}

Next we use \eqref{slashKagain} with the estimates \eqref{quotientbounds1new}
and \eqref{quotientbounds2new} from Lemma~\ref{lem:boundsonKnew} to bound the
kernel factors by positive linear combinations of terms of the form
\[
\left|\frac{e^{\pi i (B_\tau \tau +B_z z)}\tau^2
z^2}{\Delta(\tau)\Delta(z)(j(\tau)-j(z))}\right|,
\]
with the integers $B_\tau$ and $B_z$ coming from the exponents on the right
sides of those bounds. We claim this last expression is itself bounded by a
constant multiple of
\[
\left|\frac{e^{\pi i (B_\tau \tau +B_z z)}\tau^2 z^2}{e^{2\pi i \tau}-e^{2\pi i z}}\right|
\]
for $\Im(z)\ge 1/3$ and sufficiently large $\Im(\tau)$. Indeed, writing $j$'s
$q$-expansion \eqref{jfunction} as
\[
j(w)=J(e^{2\pi i w})=e^{-2\pi i w}+744+\widetilde{J}(e^{2\pi i n w}),
\]
where $\widetilde{J}(u)=\sum_{n\ge 1}c_j(n)u^n$, we see that
$\widetilde{J}'(u)=O(1)$ as $|u|\rightarrow 0$ and hence
\[
j(\tau)-j(z)=J(e^{2\pi i \tau})-J(e^{2\pi iz})=
e^{-2\pi i \tau}-e^{-2\pi i z}+\int_{e^{2\pi iz}}^{e^{2\pi i\tau}}\widetilde{J}'(u)\,du,
\]
which is $(e^{2\pi i z}-e^{2\pi i \tau})(e^{-2\pi i (\tau+z)}+O(1))$ for
$\Im(\tau),\Im(z)\ge 1/3$.  Furthermore, the product formula for $\Delta$
implies that $|\Delta(z)| \gg |e^{2\pi i z}|$ for $\Im(z) \ge 1/3$, where
$\gg$ indicates inequality up to a positive constant factor. Hence for
sufficiently large $\Im(\tau)$, the $O(1)$ term is less than half the
$e^{-2\pi i (\tau+z)}$ to which it is added, so
$|\Delta(\tau)\Delta(z)(j(\tau)-j(z))|\gg|e^{2\pi i \tau}-e^{2\pi i z}|$, and
the claim follows.

To deal with the functions $e_{k,\ell}$ appearing in \eqref{contourshift4},
we note that for each $k$ and $\ell$, there exists a constant $A$ such that
\[
\max_{r \in \R} \left| r^k \frac{d^\ell}{dr^\ell} e^{\pi i z r^2} \right| \ll |z|^A,
\]
and the same holds if $z$ is replaced with $-1/z$, $1-1/z$, or $-1-1/z$ on
the left side of the inequality (this follows from
\eqref{eq:moderateGaussian} and $\Im(-1/z) = \Im(z)/|z|^2$). We can assume
that $A \ge 0$ because $\Im(z)$ is bounded away from $0$. We have thus
reduced the verification of \eqref{4.2toshow1}--\eqref{4.2toshow3} to
estimates for large $\Im(\tau)$ of the three integrals
\begin{equation}\label{threecontourintegrals}
I_j(\tau) = \int_{C_j}\left|\frac{e^{\pi i (B_\tau \tau +B_z z)}}{e^{2\pi i \tau}-e^{2\pi i z}}\right| |z|^A \,dz
\end{equation}
for $j=1,2,3$, where $A$ is a positive integer depending on $k$ and $\ell$.
In all cases
\[
1\le B_z\le 2 \quad \text{and} \quad B_\tau+B_z \ge 2,
\]
by \eqref{npmconstants}.

For $z\in C_1$, the denominator $|e^{2\pi i \tau}-e^{2\pi i z}|$ in
\eqref{threecontourintegrals} is at least a constant multiple of $|e^{2\pi i
z}|$, and
\[
  |I_1(\tau)| \ll |\tau|^A e^{-\pi B_\tau \Im(\tau)}\left(1+ \int_1^{\Im(\tau)-1/4} e^{ (2-B_z)\pi t}\,dt\right),
\]
which is $O\big(e^{-\pi(B_\tau+B_z-2)\Im(\tau)}|\tau|^{A+1}\big)$; note that
in the displayed formula above, the term $1$ inside the parentheses accounts
for the portion of the contour below imaginary part~$1$. Next, let $z\in
C_2$, which keeps distance at least 1/4 from all integral translates of
$\tau$, but has imaginary part within $1/4$ of $\Im(\tau)$; that is,
\[
z-\tau \in \left\{w\in\C: |\Im(w)|\le \frac 14 \text{ and } |w-n|\ge \frac 14 \text{ for all }
n\in\Z\right\}.
\]
The image of this last region under the map $w\mapsto e^{2\pi i w}$ is
compact but omits $1$, and hence is bounded away from 1.  It follows that the
denominator $|e^{2\pi i \tau}-e^{2\pi i z}|=e^{-2\pi\Im(\tau)}|1-e^{2\pi i
(z-\tau)}|$ is at least some constant multiple of $e^{-2\pi\Im(\tau)}$.
Consequently,
\[
 |I_2(\tau)| \ll e^{-\pi(B_\tau+B_z-2)\Im(\tau)}|\tau|^A.
\]
Finally, for $z\in C_3$ the denominator satisfies $|e^{2\pi i \tau}-e^{2\pi i
z}|\gg |e^{2\pi i \tau}|$, and the fact that $B_z\ge 1$ allows us to show
\[
\aligned
 |I_3(\tau)| & \le e^{-\pi(B_\tau-2)\Im(\tau)}\int_{\Im(\tau)+1/4}^\infty e^{-\pi B_z t}t^A\,dt\\
& \ll e^{-\pi(B_\tau+B_z-2)\Im(\tau)}|\tau|^{A}.
\endaligned
\]
Combined,
$I_1(\tau)+I_2(\tau)+I_3(\tau)=O\big(e^{-\pi(B_\tau+B_z-2)\Im(\tau)}|\tau|^{A+1}\big)$.
Thus \eqref{4.2toshow1} follows, since $B_\tau+B_z=2$ in all cases in
\eqref{quotientbounds1new} (recall \eqref{slashKagain}).  Finally, the
estimates \eqref{quotientbounds2new} imply \eqref{4.2toshow2} (with
$B_\tau+B_z=3$) and \eqref{4.2toshow3} (with $B_\tau+B_z=5$), which completes
the proof of Theorem~\ref{theorem:interpolation}.

\subsection{Proof of Theorem~\ref{theorem:interpolation-isom}}\label{sec:sub:Thm110proof}

The following lemma is a direct consequence of applying
Lemma~\ref{lem:noPsolutions} to the eigenfunctions $g\pm\widetilde{g}$
of $|_{d/2}S$:

\begin{lemma}
Let $(d,n_0)$ be $(8,1)$ or $(24,2)$. If $g,\widetilde{g}\in\mathcal P$
satisfy
\begin{equation}\label{DRinsert}
\aligned
g(\tau+2)-2g(\tau+1)+g(\tau)=0,\\
\widetilde{g}(\tau+2)-2\widetilde{g}(\tau+1)+\widetilde{g}(\tau)=0,\\
g(\tau)+(i/\tau)^{d/2}\widetilde{g}(-1/\tau) = 0,
\endaligned
\end{equation}
and $g(\tau),\widetilde{g}(\tau) = o\big( e^{-2\pi (n_0-1)\Im(\tau)}\big)$ as
$\Im(\tau)\to\infty$ with $-1 \le \Re(\tau) \le 1$, then $g=\widetilde{g}=0$.
\end{lemma}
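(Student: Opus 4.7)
The plan is to reduce the system to the situation covered by Lemma~\ref{lem:noPsolutions} by forming the combinations $g_\pm := g \pm \widetilde g$ and checking that each is annihilated (under the weight-$d/2$ slash action) by one of the ideals $\mathcal I_\pm$.

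First I would rewrite the third hypothesis in slash-operator form. Because $S\cdot\tau = -1/\tau$ with factor of automorphy $\tau$, we have $(\widetilde g|_{d/2}S)(\tau) = \tau^{-d/2}\widetilde g(-1/\tau)$, so the identity $g(\tau)+(i/\tau)^{d/2}\widetilde g(-1/\tau)=0$ reads $g = -i^{d/2}\,\widetilde g|_{d/2}S$. Since $d/2\in\{4,12\}$ is divisible by $4$, $i^{d/2}=1$, giving simply $g = -\widetilde g|_{d/2}S$ and, using $S^2=-I$ together with the triviality of $-I$ in even weight, also $\widetilde g = -g|_{d/2}S$. Consequently $g_\pm|_{d/2}S = g|_{d/2}S \pm \widetilde g|_{d/2}S = -\widetilde g \mp g = \mp g_\pm$, so $g_\pm|_{d/2}(S\pm I) = 0$.

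Second, the first two hypotheses simply say $g|_{d/2}(T-I)^2 = 0 = \widetilde g|_{d/2}(T-I)^2$, because $T$ has $j(T,\tau)=1$ so the weight plays no role. Therefore $g_\pm|_{d/2}(T-I)^2 = 0$ as well. Combining these observations with the defining generators in \eqref{freeideals}, we get
\[
g_+ \in \operatorname{Ann}_{d/2}(\mathcal I_+,\mathcal P) \quad\text{and}\quad g_- \in \operatorname{Ann}_{d/2}(\mathcal I_-,\mathcal P),
\]
where the membership in $\mathcal P$ is inherited from $g,\widetilde g\in\mathcal P$.

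Finally I would invoke Lemma~\ref{lem:noPsolutions} with weight $k=d/2$. For $(d,n_0)=(8,1)$ the weight is $4$ and the decay hypothesis reads $g_\pm(\tau)=o(1)$ as $\Im(\tau)\to\infty$; for $(d,n_0)=(24,2)$ the weight is $12$ and the hypothesis reads $g_\pm(\tau)=o(e^{-2\pi\Im(\tau)})$. In either case Lemma~\ref{lem:noPsolutions} forces $g_+ = g_- = 0$, and hence $g = \tfrac12(g_+ + g_-) = 0$ and $\widetilde g = \tfrac12(g_+ - g_-) = 0$. There is no real obstacle here — the argument is essentially bookkeeping — but the one place to be careful is the sign/weight check that guarantees $i^{d/2}=1$ and that $g_\pm$ land in the correct eigenspace of $|_{d/2}S$; everything else is a direct appeal to the previously established annihilator computation.
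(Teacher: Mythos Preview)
Your proposal is correct and follows exactly the approach indicated in the paper, which simply states that the lemma is a direct consequence of applying Lemma~\ref{lem:noPsolutions} to $g\pm\widetilde{g}$. You have carefully fleshed out the bookkeeping (the sign check $i^{d/2}=1$ for $d\in\{8,24\}$ and the identification of the correct eigenspaces under $|_{d/2}S$), and all of it is accurate.
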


Recall that the conditions of Theorem~\ref{thm:FEimpliesIF} for $F(\tau,x)$
and $\widetilde{F}(\tau,x)$ were shown to hold in the previous subsection.
Therefore $F(\tau,x)$ and $\widetilde{F}(\tau,x)$ have the form
\eqref{eq2:FFourier} and \eqref{eq2:FtFourier}, and it follows that they and their partial
derivatives of all orders in $x$ are $o\big(e^{-2\pi (n_0-1)\Im(\tau)}\big)$
as $\Im(\tau)\rightarrow\infty$, for any fixed $x$.  Furthermore, for any
fixed $x$, the functions $F(\tau,x)$ and $\widetilde{F}(\tau,x)$ satisfy the
first two equations in \eqref{DRinsert}, and an inhomogeneous variant of the
third, in which the right side is replaced by $e^{\pi i\tau|x|^2}$.  For
$m\ge n_0$ and $|x|=\sqrt{2m}$, the pair of functions
$g(\tau)=F\big(\tau,\sqrt{2m}\big)-e^{2\pi i m\tau}$ and
$\widetilde{g}(\tau)=\widetilde{F}\big(\tau,\sqrt{2m}\big)$ satisfies
\eqref{DRinsert}, since $e^{2\pi i m \tau}$ is periodic.
Thus the lemma shows $g$ and $\widetilde{g}$ are
identically zero, i.e.,
\[
F\big(\tau,\sqrt{2m}\big)=e^{2\pi i m\tau}\quad \text{and} \quad \widetilde{F}\big(\tau,\sqrt{2m}\big)=0.
\]
In terms of the coefficients of the Fourier series expansion
\eqref{eq:FFourier}, we deduce for $m,n\ge n_0$ that
$a_n\big(\sqrt{2m}\big)=\delta_{n,m}$ and $b_n\big(\sqrt{2m}\big)=\widetilde
a_n\big(\sqrt{2m}\big)=\widetilde b_n\big(\sqrt{2m}\big)=0$.

Next consider the radial derivatives (i.e., derivatives with respect to
$r=|x|$) of equations
\eqref{eq:interpolation-identity}--\eqref{eq:2nddiffFt}, which again have
unique solutions for any fixed $x$ by the same logic as above.  Suppose
$n,m\ge n_0$.  At $|x|=\sqrt{2m}$ we similarly deduce that $\frac{\partial
F}{\partial r}\big(\tau,\sqrt{2m}\big)=2\pi i \sqrt{2m} \tau e^{2\pi im\tau}$
and $\frac{\partial \widetilde{F}}{\partial r}\big(\tau,\sqrt{2m}\big)=0$,
from which we obtain $b_n'\big(\sqrt{2m}\big)=\delta_{n,m}$ and
$a'_n\big(\sqrt{2m}\big)=\widetilde
a_n\,\!\!\!'\big(\sqrt{2m}\big)=\widetilde
b_n\,\!\!\!'\big(\sqrt{2m}\big)=0$.

Applying the Fourier transform $\mathcal F_x$ in $x$ and replacing $\tau$
with $-1/\tau$ in equations
\eqref{eq:interpolation-identity}--\eqref{eq:2nddiffFt} shows that if
$(F(\tau,x),\widetilde{F}(\tau,x))$ is a solution to these three equations,
then so is $(\mathcal F_x \widetilde{F}(\tau,x),\mathcal F_x F(\tau,x))$.
Applying the lemma to the difference of these two solutions shows that
$\widetilde{F}(\tau,x)=\mathcal F_x F(\tau,x)$.  In terms of the Fourier
series \eqref{eq:FFourier} and \eqref{eq:FtFourier},
$\widetilde{a}_n=\widehat{a}_n$ and $\widetilde{b}_n=\widehat{b}_n$. This
shows that \eqref{eqn:IF} is in fact inverse to the map in the statement of
Theorem~\ref{theorem:interpolation-isom}.  The image of the latter map is
contained in $\mathcal S(\N)^4$ because of the decay of Schwartz functions,
and the image of the inverse map is a radial Schwartz function because the
radial seminorms of the interpolation basis grow at most polynomially by
Theorem~\ref{thm:FEimpliesIF}. This completes the proof of
Theorem~\ref{theorem:interpolation-isom}.

\subsection{Integral formulas for the interpolation basis} \label{subsec:basisformulas}

We can now prove integral formulas for the interpolation basis, which
generalize the formulas for the sphere packing auxiliary functions from
\cite{V,CKMRV}. These formulas are not needed to prove the interpolation
theorem or universal optimality, but they are of interest in their own right,
and they help clarify the relationship with the sphere packing constructions.

We begin by noting that for $z$ in any fixed compact subset of $\Hyp$,
whenever $\Im(\tau)$ is sufficiently large the kernels can be expanded as
\begin{equation} \label{eq:Kexpansion}
\K(\tau,z) = \sum_{n \ge n_0} \alpha_n(z)\,e^{2\pi i n \tau}+2\pi i \tau\sum_{n \ge
n_0}\sqrt{2n}\, \beta_n(z)\,e^{2\pi i n \tau}
\end{equation}
and
\begin{equation} \label{eq:Khatexpansion}
\widehat{\K}(\tau,z) = \sum_{n \ge n_0} \widetilde{\alpha}_n(z)\,e^{2\pi i n \tau}+2\pi i \tau\sum_{n \ge
n_0}\sqrt{2n}\, \widetilde{\beta}_n(z)\,e^{2\pi i n \tau}
\end{equation}
for some functions $\alpha_n$, $\beta_n$, $\widetilde{\alpha}_n$, and
$\widetilde{\beta}_n$ that depend only on the dimension $d \in \{8,24\}$. The
existence of expansions of these forms is ensured by parts~(2) and~(4) of
Theorem~\ref{thm: K}.  To obtain the explicit expansions, we use the
$q$-series in $\tau$ for the quasimodular forms appearing in the explicit
constructions of $\K$ and $\widehat{\K}$, as well as the analogous expansions
of $\mathcal{L}$ and $\mathcal{L}_S$, and we write
\[
\frac{1}{j(\tau)-j(z)} = \sum_{n \ge 0} \frac{j(z)^n}{j(\tau)^{n+1}}
\]
to deal with the factor of $j(\tau)-j(z)$ in the denominator.  (Note that
$1/j(\tau)$ has an expansion in terms of positive powers of $e^{2\pi i
\tau}$.)

\begin{proposition} \label{prop:ibf}
For $d \in \{8,24\}$, the interpolation basis functions from
Theorem~\ref{theorem:interpolation} satisfy
\[
a_n(r) = 4 \sin\mathopen{}\big(\pi r^2/2\big)^2\mathclose{}\,\int_0^\infty \alpha_n(it)\, e^{-\pi r^2 t} \, dt
\]
and
\[
b_n(r) = 4 \sin\mathopen{}\big(\pi
r^2/2\big)^2\mathclose{}\,\int_0^\infty \beta_n(it) \,e^{-\pi r^2 t} \, dt
\]
whenever $r^2 > 2n$, and
\[
\widetilde{a}_n(r) = 4 \sin\mathopen{}\big(\pi r^2/2\big)^2\mathclose{}\,\int_0^\infty \widetilde{\alpha}_n(it) \,e^{-\pi r^2 t} \, dt
\]
and
\[
\widetilde{b}_n(r) = 4 \sin\mathopen{}\big(\pi
r^2/2\big)^2\mathclose{}\,\int_0^\infty \widetilde{\beta}_n(it) \,e^{-\pi r^2
t} \, dt
\]
whenever $r^2 > 0$ for $d=8$, and whenever $r^2>2$ for $d=24$.
\end{proposition}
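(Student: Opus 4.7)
The plan is to derive the four formulas by substituting the Laplace representations \eqref{FfromKlaplace2} and \eqref{FtildefromKhatlaplace} of $F_2$ and $\widetilde F$ into the contour-integral formulas \eqref{eq:anformula} and \eqref{eq:bnformula} for the Fourier coefficients, and then applying Fubini to swap the $\tau$- and $t$-integrals.

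For $a_n$, I would split $F=F_1+F_2$ with $F_1(\tau,r)=e^{\pi i r^2\tau}$, so that \eqref{eq:anformula} becomes $I_{F_1}(y)+I_{F_2}(y)$ on the contour $L_y$ from $-1+iy$ to $iy$. The $F_1$-contribution is bounded by $O\mathopen{}\big((1+|y|)\,e^{-\pi(r^2-2n)y}\big)\mathclose{}$ and vanishes as $y\to\infty$ precisely when $r^2>2n$. For $I_{F_2}(y)$, Fubini is justified by the integrability estimates already used in Section~\ref{sec:sub:continueFtoH}, and the inner $\tau$-integral equals $\alpha_n(it)$ by definition of \eqref{eq:Kexpansion} whenever $y>t$; for $y<t$, closing the contour in $\Re(\tau)$ (using the periodicity of the integrand in $\Re(\tau)$) picks up residues at the poles $\tau=it$ and $\tau=it-1$, each contributing $(it)/(2\pi)\cdot e^{2\pi n t}$ by part~(3) of Theorem~\ref{thm: K} together with the relation $\phi(T^k)=k$ coming from $T^k\equiv kT+(1-k)I\pmod{\mathcal I}$. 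This produces a correction $-8\sin(\pi r^2/2)^2\int_y^\infty t\,e^{-\pi(r^2-2n)t}\,dt$ that itself vanishes as $y\to\infty$ when $r^2>2n$. The formula then holds for $|r|$ large, and extends to all $r^2>2n$ by analytic continuation since both sides are holomorphic in $r$ on this range. The formula for $b_n$ follows by the same argument applied to \eqref{eq:bnformula}, using that $\K(\tau+1,it)-\K(\tau,it)$ has residue $-1/(2\pi)$ at both poles.

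For $\widetilde a_n$ and $\widetilde b_n$, the same strategy applied to $\widetilde F$ via \eqref{FtildefromKhatlaplace} is actually simpler: $\widetilde F$ satisfies the homogeneous recurrence with no $F_1$-like source, and since $\K_+$ and $\K_-$ have equal residues $-k/(2\pi)$ at $\tau=z+k$ (because $\phi_\pm(T^k)=k$), the kernel $\widehat{\K}=(\K_+-\K_-)/2$ has no residue contribution at these points, so the contour-shift argument produces no correction. The convergence constraint is thus just that of the defining Laplace integral, namely $r^2>0$ for $d=8$ (where $\mathcal G_{-1,j,\pm}^{(8)}=0$) and $r^2>2$ for $d=24$, matching the proposition. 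The main technical obstacle is the growth bound $|\alpha_n(it)|,|\beta_n(it)|=O\mathopen{}\big(e^{2\pi n t}\big)\mathclose{}$ as $t\to\infty$, needed both for convergence of the stated integrals when $r^2>2n$ and for the Fubini step; this follows by evaluating the Fourier coefficient formula for $\K(\tau,it)$ over $L_{t+1}$ and combining Lemma~\ref{lem:boundsonKnew} with the cuspidal asymptotics $|\Delta(\tau)\Delta(it)(j(\tau)-j(it))|\sim|\Delta(\tau)|$ as $t\to\infty$.
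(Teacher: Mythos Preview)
Your strategy (substitute the Laplace representations into the Fourier-coefficient integrals \eqref{eq:anformula}--\eqref{eq:bnformula}, swap by Fubini, then track residues) is the same as the paper's, but you organize the residue bookkeeping differently. The paper fixes the contour at height $y=1$: it shows that shifting the defining integral for $\alpha_n(it)$ down to $y=1$ produces the explicit correction $\alpha_n(it)=t\,e^{2\pi nt}+\int_{-1+i}^{i}(\cdots)\,d\tau$ for $t>1$ (and similarly for $\beta_n$), and then observes that this correction cancels \emph{exactly} against the $F_1$-integral over $L_1$, after splitting the $t$-integral at $t=1$. Your route instead sends $y\to\infty$ so that the $F_1$-contribution and the residue correction vanish separately; this also works and is arguably slicker. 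Two points to flag. First, since the integrand $((1{+}\tau)\K(\tau,it)-\tau\K(\tau{+}1,it))e^{-2\pi in\tau}$ is $1$-periodic in $\tau$, the poles at $\tau=it$ and $\tau=it-1$ are periodic copies of the \emph{same} pole and contribute one full residue in total, so your correction should carry a factor $-4$ rather than $-8$; this is harmless since that term vanishes in the limit regardless. Second, your justification of the growth bound $\alpha_n(it)=O(e^{2\pi nt})$ via Lemma~\ref{lem:boundsonKnew} on the contour $L_{t+1}$ does not give the sharp exponent: those estimates on $\Im(\tau)=t{+}1$ lose at least a factor $e^{\pi t}$ (from $n_{-,z}=1$, and a further $e^{\pi t}$ for $d=24$ because of the $e^{2\pi t}$ growth of $\mathcal{G}^{(24)}$), yielding only convergence for $r^2>2n{+}1$ or $r^2>2n{+}2$. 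The sharp bound requires the expansion \eqref{eq:coeffexpansions}, which isolates the leading term $t\,e^{2\pi nt}$ directly from the $q$-series of the kernel; this is precisely what the paper's fixed-$y$ cancellation makes visible.
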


In other words, the interpolation basis functions are integral transforms of
the coefficients in the series expansions of $\K$ and $\widehat{\K}$. As
usual, one can meromorphically continue the integrals in $r$ by removing the
non-decaying terms from the $q$-expansion of the integrand and handling them
separately.  Specifically, one can show (using the proof of
Proposition~\ref{prop:ibf} and the explicit formulas for the kernels $\K$ and
$\widehat{\K}$) that the coefficients have expansions of the form
\begin{equation} \label{eq:coeffexpansions}
\aligned
\alpha_n(z) &= -iz \,e^{-2\pi i n z} + \sum_{m \ge m_0} \big(\alpha_{0,m,n} + \alpha_{1,m,n} z + \alpha_{2,m,n} z^2 \big) \,e^{\pi i m z},\\
\beta_n(z) &= \frac{e^{-2 \pi i n z}}{2\pi\sqrt{2n}} + \sum_{m \ge m_0} \big(\beta_{0,m,n} + \beta_{1,m,n} z + \beta_{2,m,n} z^2 \big) \,e^{\pi i m z}, \\
\widetilde{\alpha}_n(z) &= \sum_{m \ge m_0} \big(\widetilde{\alpha}_{0,m,n} + \widetilde{\alpha}_{1,m,n} z + \widetilde{\alpha}_{2,m,n} z^2 \big) \,e^{\pi i m z}, \quad\text{and}\\
\widetilde{\beta}_n(z) &= \sum_{m \ge m_0} \big(\widetilde{\beta}_{0,m,n} + \widetilde{\beta}_{1,m,n} z + \widetilde{\beta}_{2,m,n} z^2 \big) \,e^{\pi i m z}\\
\endaligned
\end{equation}
for some constants $\alpha_{\ell,m,n}$, $\beta_{\ell,m,n}$,
$\widetilde{\alpha}_{\ell,m,n}$, and $\widetilde{\beta}_{\ell,m,n}$, where
$m_0=0$ for $d=8$ and $m_0=-2$ for $d=24$. However, we will not need these
expansions.

\begin{proof}[Proof of Proposition~\ref{prop:ibf}]
As in \eqref{eq:anformula} and \eqref{eq:bnformula}, we can write the basis
functions as integrals involving the generating functions $F$ and
$\widetilde{F}$.  We begin with $\widetilde{F}$, for which the subsequent
analysis is a little simpler. For any $y>0$, the analogues of
\eqref{eq:anformula} and \eqref{eq:bnformula} for $\widetilde{a}_n$ and
$\widetilde{b}_n$ are
\begin{equation} \label{eq:atildenr}
\widetilde{a}_n(r) = \int_{-1+iy}^{iy} \left(\widetilde{F}(\tau,r) -
\tau\big(\widetilde{F}(\tau+1,r)-\widetilde{F}(\tau,r)\big)\right) \,e^{-2\pi i n \tau} \, d\tau
\end{equation}
and
\begin{equation} \label{eq:btildenr}
\widetilde{b}_n(r) = \frac{1}{2\pi i \sqrt{2n}} \int_{-1+iy}^{iy}
\big(\widetilde{F}(\tau+1,r)-\widetilde{F}(\tau,r)\big) \,e^{-2\pi i n \tau} \, d\tau.
\end{equation}
Now we use the identity \eqref{FtildefromKhatlaplace}, i.e.,
\begin{equation} \label{eq:Ftildeformula}
\widetilde{F}(\tau,r)= 4 \sin\mathopen{}\big(\pi
r^2/2\big)^2\mathclose{}\,\int_0^\infty \widehat{\K}(\tau,it)\,e^{-\pi r^2
t}\,dt,
\end{equation}
which holds for $\tau \in \mathcal{D}$ and $|r|$ sufficiently large
(specifically, $r^2>0$ when $d=8$ and $r^2>2$ when $d=24$).  We would like to
use the formulas
\begin{equation} \label{eq:alphatilden}
\widetilde{\alpha}_n(z) = \int_{-1+iy}^{iy} \left(\widehat{\K}(\tau,z) -
\tau\big(\widehat{\K}(\tau+1,z)-\widehat{\K}(\tau,z)\big)\right) \,e^{-2\pi i n \tau} \, d\tau
\end{equation}
and
\begin{equation} \label{eq:betatilden}
\widetilde{\beta}_n(z) = \frac{1}{2\pi i \sqrt{2n}} \int_{-1+iy}^{iy}
\big(\widehat{\K}(\tau+1,z)-\widehat{\K}(\tau,z)\big) \,e^{-2\pi i n \tau} \, d\tau,
\end{equation}
which hold for sufficiently large $y$ (depending on $z$) because of the
series expansions \eqref{eq:Kexpansion} and \eqref{eq:Khatexpansion}.  Before
we can use these formulas, we must check whether there is a single choice of
$y$ that works for all $z$ on the imaginary axis.  Fortunately, any $y \ge 1$
will work.  Specifically, it follows from the residue formulas in part~(3) of
Theorem~\ref{thm: K} that $\tau \mapsto \widehat{\K}(\tau,z)$ has no poles at
$z$, $-1/z$, $z\pm1$, or $-1/z\pm1$. In particular, if $z$ is on the
imaginary axis, then this function is holomorphic for $\Im(\tau)\ge1$ and $-1
\le \Re(\tau) \le 1$. Therefore the contour integral of the integrands in
\eqref{eq:alphatilden} and \eqref{eq:betatilden} around the rectangle with
vertex set $\{-1+iy$, $iy$, $iy'$, $-1+iy'\}$ is zero for any $y,y' \geq 1$.
The contributions from the vertical sides cancel since the integrand is
invariant under $\tau \mapsto \tau+1$, which proves that the integrals in
\eqref{eq:alphatilden} and \eqref{eq:betatilden} are independent of $y$ as
long as $y \geq 1$.

Now we substitute \eqref{eq:Ftildeformula} into \eqref{eq:atildenr} and
\eqref{eq:btildenr} for an arbitrary choice of $y \ge 1$.  By the
Fubini-Tonelli theorem we can interchange the order of integration. (To prove
absolute convergence of the iterated integral, we can use inequalities
\eqref{quotientbounds1new} and \eqref{quotientbounds1.5} from
Lemma~\ref{lem:boundsonKnew} with $\gamma=I$ to obtain bounds on the
integrand as $t \to 0$ or $t \to \infty$ that are uniform in $\tau$. Again we
need $r^2>0$ when $d=8$ and $r^2>2$ when $d=24$, because of the behavior of
$\mathcal G_{\pm}^{(d)}$ in these cases.) When we do so and apply
\eqref{eq:alphatilden} and \eqref{eq:betatilden}, we arrive at the desired
formulas for $\widetilde{a}_n$ and $\widetilde{b}_n$ in terms of
$\widetilde{\alpha}_n$ and $\widetilde{\beta}_n$.

The case of $a_n$ and $b_n$ involves two complications.  The function $\tau
\mapsto \K(\tau,z)$ has no poles at $z$, $-1/z$, or $-1/z\pm1$, but it has
poles at $z+1$ and $z-1$ with residues $-1/(2\pi)$ and $1/(2\pi)$,
respectively. Furthermore, we must account for the $e^{\pi i \tau r^2}$ term
in \eqref{FfromKlaplace2}, which says that
\begin{equation} \label{eq:restateF}
F(\tau,r)= e^{\pi i \tau r^2} + 4 \sin\mathopen{}\big(\pi
r^2/2\big)^2\mathclose{}\,\int_0^\infty \K(\tau,it)\,e^{-\pi r^2
t}\,dt,
\end{equation}
as long as $\tau \in \mathcal{D}$ and $r$ satisfies $r^2>0$ when $d=8$ and
$r^2>2$ when $d=24$. The $e^{\pi i \tau r^2}$ term will turn out to cancel
the contributions from the poles.

\begin{figure}
\begin{tikzpicture}[scale=1.5]
\filldraw[black] (-2,2) circle (0.03);
\draw (-2,2) node[left=2pt] {$-1+iy$};
\filldraw[black] (0,2) circle (0.03);
\draw (0,2) node[right=2pt] {$iy$};
\filldraw[black] (-2,0) circle (0.03);
\draw (-2,0) node[left=2pt] {$-1+i$};
\filldraw[black] (0,0) circle (0.03);
\draw (0,0) node[right=2pt] {$i$};
\filldraw[black] (-2,1) circle (0.03);
\draw (-2,1) node[left=2pt] {$z-1$};
\filldraw[black] (0,1) circle (0.03);
\draw (0,1) node[left=2pt] {$z$};
\draw[thick] (-2,0)--(-1,0);
\draw[->,thick] (0,0)--(-1,0);
\draw[thick,->] (-2,2)--(-1,2);
\draw[thick] (-1,2)--(0,2);
\draw[thick] (0,0.75) arc (-90:90:0.25);
\draw[thick] (-2,0.75) arc (-90:90:0.25);
\draw[thick] (0,0)--(0,0.75);
\draw[thick] (0,1.25)--(0,2);
\draw[thick] (-2,0)--(-2,0.75);
\draw[thick] (-2,1.25)--(-2,2);
\end{tikzpicture}
\caption{Contour shift when $\Im(z)>1$.}
\label{fig:cshift}
\end{figure}
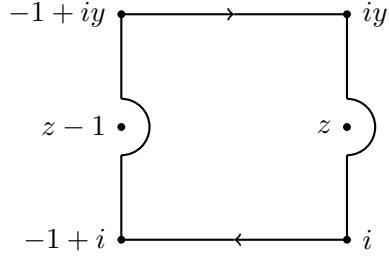

We begin by analyzing the effects of the poles.  As above,
\[
\alpha_n(z) = \int_{-1+iy}^{iy} \left(\K(\tau,z) -
\tau\big(\K(\tau+1,z)-\K(\tau,z)\big)\right) \,e^{-2\pi i n \tau} \, d\tau
\]
and
\[
\beta_n(z) = \frac{1}{2\pi i \sqrt{2n}} \int_{-1+iy}^{iy}
\big(\K(\tau+1,z)-\K(\tau,z)\big) \,e^{-2\pi i n \tau} \, d\tau
\]
when $z$ is on the imaginary axis and $y$ is sufficiently large. If $\Im(z) <
1$, then we can shift the contour in these integrals for $\alpha_n$ and
$\beta_n$ to $y=1$, as in the case of $\widetilde{\alpha}_n$ and
$\widetilde{\beta}_n$, because the integrand is holomorphic (since $\tau
\mapsto \K(\tau,z)$ has no poles at $-1/z$ and $-1/z\pm1$). We may and shall
ignore the case of $z=i$, since it has measure zero in the integrals for
$a_n$ and $b_n$ and the integrand is not singular there. If $\Im(z)>1$, then
the integrand has poles at $z-1$ and $z$. If we shift the contour to $y=1$ by
passing to the right of these poles, as in Figure~\ref{fig:cshift}, then this
contour shift contributes a residue from integrating the terms involving
$\K(\tau+1,z)$ clockwise around the pole at $z$. Thus, we obtain the formulas
\[
\alpha_n(z) = -iz\,e^{-2\pi i n z} + \int_{-1+i}^{i} \left(\K(\tau,z) -
\tau\big(\K(\tau+1,z)-\K(\tau,z)\big)\right) \,e^{-2\pi i n \tau} \, d\tau
\]
and
\[
\beta_n(z) = \frac{1}{2\pi i \sqrt{2n}} \left(i e^{-2\pi i n z} + \int_{-1+i}^{i}
\big(\K(\tau+1,z)-\K(\tau,z)\big) \,e^{-2\pi i n \tau} \, d\tau\right)
\]
for $\Im(z)>1$ (note that the terms coming from the poles are the dominant
terms in the first two asymptotic expansions listed in
\eqref{eq:coeffexpansions}).

Now substituting \eqref{eq:restateF} into \eqref{eq:anformula} and
\eqref{eq:bnformula} and interchanging the order of integration yields
\[
\aligned
a_n(r) &= \int_{-1+i}^{i} \left(e^{\pi i \tau r^2} -
\tau\left(e^{\pi i (\tau+1) r^2}-e^{\pi i \tau r^2}\right)\right)\, e^{-2\pi i n \tau} \, d\tau\\
& \quad \phantom{} +
4 \sin\mathopen{}\big(\pi
r^2/2\big)^2\mathclose{}\,\int_0^1 \alpha_n(it)\,e^{-\pi r^2
t}\,dt\\
& \quad \phantom{} +
4 \sin\mathopen{}\big(\pi
r^2/2\big)^2\mathclose{}\,\int_1^\infty \big(\alpha_n(it) -te^{2\pi n t}\big)\,e^{-\pi r^2
t}\,dt
\endaligned
\]
and
\[
\aligned
b_n(r) &= \frac{1}{2\pi i \sqrt{2n}}\int_{-1+i}^{i} \left(e^{\pi i (\tau+1) r^2}-e^{\pi i \tau r^2}\right)\, e^{-2\pi i n \tau} \, d\tau\\
& \quad \phantom{} +
4 \sin\mathopen{}\big(\pi
r^2/2\big)^2\mathclose{}\,\int_0^1 \beta_n(it)\,e^{-\pi r^2
t}\,dt\\
& \quad \phantom{} +
4 \sin\mathopen{}\big(\pi
r^2/2\big)^2\mathclose{}\,\int_1^\infty \left(\beta_n(it) -\frac{e^{2\pi n t}}{2\pi \sqrt{2n}}\right)\,e^{-\pi r^2
t}\,dt.
\endaligned
\]
As above, the estimates \eqref{quotientbounds1new} and
\eqref{quotientbounds1.5} imply absolute convergence for the original
integrals, and then the Fubini-Tonelli theorem shows that the interchanged
integrals converge absolutely and the interchange is justified, as long as
$r$ satisfies $r^2>0$ when $d=8$ and $r^2>2$ when $d=24$, to justify the
application of \eqref{eq:restateF}. If furthermore $r^2>2n$, then $t \mapsto
\alpha_n(it)\,e^{-\pi r^2 t}$ and $t \mapsto \beta_n(it)\,e^{-\pi r^2 t}$ are
integrable over $[1,\infty)$, because the equations
\[
\alpha_n(it)\,e^{-\pi r^2 t} = \big(\alpha_n(it) -te^{2\pi n t}\big)\,e^{-\pi r^2
t} + te^{-\pi(r^2-2n)t}
\]
and
\[
\beta_n(it)\,e^{-\pi r^2 t} = \left(\beta_n(it) -\frac{e^{2\pi n t}}{2\pi \sqrt{2n}}\right)\,e^{-\pi r^2
t} + \frac{e^{-\pi(r^2-2n)t}}{2\pi \sqrt{2n}}
\]
express them as the sum of integrable functions.  Thus,
\[
\int_1^\infty \big(\alpha_n(it) -te^{2\pi n t}\big)\,e^{-\pi r^2
t}\,dt =\int_1^\infty \alpha_n(it)\,e^{-\pi r^2
t}\,dt - \int_1^\infty te^{-\pi(r^2-2n)t} \, dt
\]
and
\[
\int_1^\infty \left(\beta_n(it) -\frac{e^{2\pi n t}}{2\pi \sqrt{2n}}\right)\,e^{-\pi r^2
t}\,dt = \int_1^\infty \beta_n(it)\,e^{-\pi r^2
t}\,dt - \int_1^\infty \frac{e^{-\pi(r^2-2n)t}}{2\pi \sqrt{2n}}\,dt.
\]
All the extraneous terms not involving $\alpha_n$ or $\beta_n$ cancel, and we
obtain the desired formulas for $a_n$ and $b_n$.
\end{proof}

\section{Positivity of kernels and universal optimality} \label{sec:inequality}

\subsection{Sharp bounds for energy} \label{subsec:sharpbounds}

In this section we will prove that $E_8$ and the Leech lattice are
universally optimal (Theorem~\ref{theorem:univopt}).  Let $d$ be $8$ or $24$,
with $\Lambda_d$ being the corresponding lattice and $F$ the generating
function from Theorem~\ref{thm:FEimpliesIF}.  The key inequality is the
following proposition:

\begin{proposition}\label{prop:positivity}
Suppose $\Re(\tau)=0$ and $r>0$.  Then $\widetilde{F}(\tau,r)\ge 0$, with
equality if and only if $r^2$ is an even integer and $r^2 \ge 2n_0$, where
$n_0=1$ if $d=8$ and $n_0=2$ if $d=24$.
\end{proposition}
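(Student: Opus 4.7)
The plan is to reduce Proposition~\ref{prop:positivity} to pointwise nonnegativity of the Laplace kernel $\widehat{\K}(is,it)=\tfrac{1}{2}(\K_+-\K_-)(is,it)=-(\K|_{d/2}^\tau S)(is,it)$ on the quadrant $s,t>0$, supplemented when $d=24$ by nonnegativity of the truncated kernel $\widehat{\K}-\widehat{\mathcal{G}}$, where $\widehat{\mathcal{G}}=-\mathcal{G}|_{d/2}^\tau S$. Since $is\in\mathcal{D}$ for $s>0$, the Laplace representation
\[
\widetilde{F}(is,r) \;=\; 4\sin(\pi r^2/2)^2\int_0^\infty \widehat{\K}(is,it)\,e^{-\pi r^2 t}\,dt
\]
from \eqref{FtildefromKhatlaplace} holds in its regime of absolute convergence, namely $r>0$ when $d=8$ and $r>\sqrt{2}$ when $d=24$. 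Kernel nonnegativity immediately gives $\widetilde{F}(is,r)\ge 0$ there, and real-analyticity of the integral in $r$ forces any equality to occur exactly when $\sin(\pi r^2/2)^2=0$; in this regime those $r$ are precisely the $r$ with $r^2$ an even positive integer of size at least $2n_0$, as claimed.

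For $d=24$ and $0<r\le\sqrt 2$, I would use the analytic continuation of $\widetilde{F}$ via the analogue of \eqref{decomposeF2}--\eqref{truncatingF2} in which $\K$ and $\mathcal{G}$ are replaced by $\widehat{\K}$ and $\widehat{\mathcal{G}}$. The $F_{2,\text{low}}$ and $F_{2,\text{trunc}}$ pieces are Laplace transforms of $\widehat{\K}$ over $[0,p]$ and of $\widehat{\K}-\widehat{\mathcal{G}}$ over $[p,\infty)$, so they inherit nonnegativity from the two kernel positivity statements. The explicit meromorphic piece $F_{2,\text{high}}$ has an apparent simple pole at $r^2=2$ in the $k=-1$ summand that is cancelled by the double zero of $\sin(\pi r^2/2)^2$; a direct L'H\^opital-type computation shows that the corresponding limit as $r\to\sqrt 2$ is a finite multiple of $\widehat{\mathcal{G}}^{(24)}_{-1,1}(is)$, and I would verify using the explicit formulas of Section~\ref{sec:sub:explicitkernels} that this limit, combined with the boundary values of the other two pieces at $r=\sqrt 2$, is strictly positive on the imaginary axis.

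The substantive content is the pointwise nonnegativity of $\widehat{\K}(is,it)$ on $(0,\infty)^2$ (and of $\widehat{\K}-\widehat{\mathcal{G}}$ on $(0,\infty)\times[p,\infty)$ when $d=24$). My plan is to substitute the closed-form expressions for $\K^{(d)}_\pm$ from Section~\ref{sec:sub:explicitkernels} and convert to real elliptic integrals via \eqref{thetafromK}, \eqref{zintermsofKlambda}, and \eqref{E2fromK}, together with $\mathcal{L}(is)=\log\lambda(is)$ and $\mathcal{L}_S(is)=\log(1-\lambda(is))$. The outcome is an explicit real-analytic function of $x=\lambda(is)$ and $y=\lambda(it)$ in $(0,1)$, built from $E$ and $K$ at $x,1-x,y,1-y$ and from $\log x,\log(1-x),\log y,\log(1-y)$, so the problem reduces to a two-variable real inequality on $(0,1)^2$.

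The main obstacle, as the introduction forewarns, is verifying this final two-variable inequality, for which no conceptual proof is available. I would split $(0,1)^2$ into neighborhoods of the four corners $\{0,1\}^2$ and a compact interior region. Near each corner, the series expansions \eqref{EandKnear1}, combined with the sign structure that $A_1,A_3$ have nonnegative and $A_2,A_4$ have nonpositive Taylor coefficients about $0$, and with the definite signs of $\log x$ and $\log(1-x)$ on $(0,1)$, should yield a provably positive leading term dominating explicit remainders. The compact interior region would then be dispatched by rigorous interval arithmetic, propagating certified enclosures of $E,K$ and their derivatives through the algebraic expression over a sufficiently fine subdivision. Assembling the asymptotic corner analysis with the certified numerical verification delivers the pointwise kernel nonnegativity and, through the reduction above, Proposition~\ref{prop:positivity}.
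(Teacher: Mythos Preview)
Your overall strategy matches the paper's: reduce to pointwise positivity of $\widehat{\K}(is,it)$ on $(0,\infty)^2$, pass to the unit square via $(\lambda(is),\lambda(it))$, and combine asymptotic analysis near the boundary with interval arithmetic on a compact core. However, your treatment of the range $0<r\le\sqrt{2}$ for $d=24$ has a genuine gap, and your interval-arithmetic plan omits a singularity that the paper must handle carefully.

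\textbf{The $d=24$, $0<r\le\sqrt{2}$ range.} You propose subtracting the full $\widehat{\mathcal{G}}$ (both $k=-1$ and $k=0$ terms) and then arguing that each of $F_{2,\textup{low}}$, $F_{2,\textup{trunc}}$, and $F_{2,\textup{high}}$ is nonnegative. This does not work. First, nonnegativity of $F_{2,\textup{trunc}}$ requires $\widehat{\K}-\widehat{\mathcal{G}}\ge 0$ on $[p,\infty)$, which is a different kernel inequality from $\widehat{\K}>0$; you would have to prove it separately, and you do not address it. Second, and more seriously, you only discuss $F_{2,\textup{high}}$ at the single point $r=\sqrt{2}$; for $0<r<\sqrt{2}$ the $k=-1$ summand of $F_{2,\textup{high}}$ contains $\sin(\pi r^2/2)^2/(r^2-2)$, which is negative, and there is no reason for the combined explicit piece to be nonnegative on its own. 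The paper avoids this by a \emph{milder} truncation: it subtracts only the exponentially growing $k=-1$ part, called $\widehat{\mathcal{E}}$, and leaves the bounded $k=0$ asymptotics inside the kernel. The truncated kernel $\widehat{\K}^{\textup{trunc}}=\widehat{\K}-\widehat{\mathcal{E}}$ is shown to be strictly positive for $t\ge p$ (a second two-variable inequality), and the explicit tail $\int_p^\infty \widehat{\mathcal{E}}(is,it)e^{-\pi r^2 t}\,dt$ evaluates to a positive factor times an expression that is \emph{linear} in $r^2-2$. Positivity of that linear expression for $r\le\sqrt{2}$ then reduces to two one-variable inequalities in $s$ alone (positivity at $r=\sqrt{2}$ and negativity of the slope), which are much easier to verify than anything your decomposition would require.

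\textbf{The diagonal singularities.} Your interval-arithmetic plan splits $(0,1)^2$ into corner neighborhoods and a compact interior, but the closed-form kernel carries a factor of $1/(j(\tau)-j(z))$, which becomes $0/0$ along the lines $x=y$ and $x+y=1$ in the unit square. These are removable singularities of the kernel (the residues vanish by part~(3) of Theorem~\ref{thm: K}), but naive interval evaluation blows up there, and the crossing point $(\tfrac12,\tfrac12)$, where both lines meet, is especially delicate. The paper devotes separate subsections to Taylor-expansion arguments near the diagonals and an integral-representation trick at the crossing; your plan would need analogous machinery, not just corner asymptotics plus a compact subdivision.
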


The rest of this section is devoted to proving
Proposition~\ref{prop:positivity}, but first we show in this subsection
that it implies universal optimality.

\begin{lemma} \label{lemma:optimalfn}
For $\alpha>0$, the function $f \colon \R^d \to \R$ defined by $f(x) =
F(i\alpha/\pi,x)$ is a Schwartz function and satisfies the inequalities $f(x)
\le e^{-\alpha |x|^2}$ and $\widehat{f}(x) \ge 0$ for all $x \in \R^d$. When
$x \ne 0$, equality holds if and only if $|x|$ is the length of a nonzero
vector in $\Lambda_d$.
\end{lemma}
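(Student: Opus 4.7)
The plan is to view this lemma as a direct repackaging of Proposition~\ref{prop:positivity}, applied at the two conjugate points $i\alpha/\pi$ and $i\pi/\alpha$ of the positive imaginary axis (which are exchanged by $\tau \mapsto -1/\tau$). First I would observe that $f$ is Schwartz because Theorem~\ref{thm:FEimpliesIF} constructs $F(\tau,\cdot) \in \Schw_{\textup{rad}}(\R^d)$ for every $\tau \in \Hyp$, and it is real-valued because the integral formulas of Proposition~\ref{prop:ibf} show that the basis functions $a_n,b_n$ are real-valued, so the expansion
\[
F(i\alpha/\pi, x) \;=\; \sum_{n\ge n_0}\big(a_n(x) - 2\alpha\sqrt{2n}\,b_n(x)\big)\,e^{-2\alpha n}
\]
produces a real number at each $x$.

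To prove $f(x) \le e^{-\alpha|x|^2}$ I would substitute $\tau = i\alpha/\pi$ into the functional equation~\eqref{eq:interpolation-identity}. Since then $-1/\tau = i\pi/\alpha$ and $(i/\tau)^{d/2} = (\pi/\alpha)^{d/2}$, this identity reads
\[
e^{-\alpha|x|^2} - f(x) \;=\; (\pi/\alpha)^{d/2}\, \widetilde{F}(i\pi/\alpha, |x|).
\]
Because $\Re(i\pi/\alpha) = 0$, Proposition~\ref{prop:positivity} gives $\widetilde{F}(i\pi/\alpha, r) \ge 0$ for $r > 0$, with equality exactly when $r^2 \ge 2n_0$ is an even integer, i.e., when $r$ is the length of a nonzero vector in $\Lambda_d$. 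This supplies the inequality and its equality characterization.

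For $\widehat f \ge 0$ I would first identify $\widehat f$ with $\widetilde F(i\alpha/\pi, \cdot)$. The identities $\widetilde a_n = \widehat a_n$ and $\widetilde b_n = \widehat b_n$ recorded just after Theorem~\ref{theorem:interpolation-isom}, combined with the expansions \eqref{eq:FFourier}--\eqref{eq:FtFourier}, allow termwise Fourier transformation (justified by the rapid decay of $e^{2\pi i n \tau}$ for $\Im(\tau) > 0$ together with the polynomial growth in $n$ of the Schwartz seminorms of $a_n,b_n,\widetilde a_n,\widetilde b_n$ provided by Theorem~\ref{thm:FEimpliesIF}) to conclude that $\widehat{F(\tau,\cdot)}(y) = \widetilde F(\tau, y)$ for every $\tau \in \Hyp$. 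Specializing to $\tau = i\alpha/\pi$ and again invoking Proposition~\ref{prop:positivity}, this time directly and without any intervening inversion, gives $\widehat f(y) \ge 0$ for $|y| > 0$ with the matching equality characterization; the case $y = 0$ then follows from continuity of the Schwartz function $\widetilde F(i\alpha/\pi, \cdot)$.

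The heavy analytic work is entirely packaged inside Proposition~\ref{prop:positivity}, so no serious obstacle remains and the lemma is essentially a one-substitution corollary of positivity on the imaginary axis. The only mildly delicate point is the bookkeeping around termwise Fourier transformation used to pass from $f = F(i\alpha/\pi,\cdot)$ to $\widehat f = \widetilde F(i\alpha/\pi,\cdot)$, and the verification of real-valuedness; both are routine given the quantitative Schwartz estimates already established for the interpolation basis.
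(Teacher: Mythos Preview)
Your proposal is correct and follows essentially the same route as the paper: both arguments deduce $\widehat f \ge 0$ from $\widehat f = \widetilde F(i\alpha/\pi,\cdot)$ together with Proposition~\ref{prop:positivity}, and both obtain $f(x)\le e^{-\alpha|x|^2}$ by substituting $\tau=i\alpha/\pi$ into the functional equation~\eqref{eq:interpolation-identity} to reduce to Proposition~\ref{prop:positivity} at the conjugate point $i\pi/\alpha$. The only cosmetic difference is that the paper cites Section~\ref{sec:sub:Thm110proof} directly for the identity $\widehat f = \widetilde F(i\alpha/\pi,\cdot)$ (where it was proved via a uniqueness argument), whereas you reach it by termwise Fourier transformation of the expansion using $\widetilde a_n=\widehat a_n$, $\widetilde b_n=\widehat b_n$; both justifications are valid and ultimately rest on the same material.
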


In fact, equality does not hold when $x=0$ either.  That follows from
\eqref{eq:conj61} below, but we will not need it.

\begin{proof}
This function is a Schwartz function since $F$ satisfies condition~(3) from
Theorem~\ref{thm:FEimpliesIF}, so the substantive content of the lemma is the
inequalities. The second inequality follows directly from
Proposition~\ref{prop:positivity}, because $\widehat{f}(x) =
\widetilde{F}(i\alpha/\pi,x)$ (as shown at the end of
Section~\ref{sec:sub:Thm110proof}). To prove that $f(x) \le e^{-\alpha
|x|^2}$, we use the functional equation $F(\tau,x) + (i/\tau)^{d/2}
\widetilde{F}(-1/\tau,x) = e^{\pi i \tau |x|^2}$ with $\tau = i\alpha/\pi$ to
obtain
\[
e^{-\alpha |x|^2} - f(x) = (\pi/\alpha)^{d/2} \widetilde{F}(i\pi/\alpha,x).
\]
Thus, the first inequality amounts to Proposition~\ref{prop:positivity} as
well, as do the conditions for equality.
\end{proof}

Although the inequalities $f(x) \le e^{-\alpha |x|^2}$ and $\widehat{f}(x)
\ge 0$ look different, the preceding proof derives them from the same
underlying inequality. More generally, Cohn and Miller \cite[Section~6]{CM}
observed a duality principle when the potential function $p$ is a Schwartz
function: the auxiliary function $f$ proves a bound for $p$-energy in
Proposition~\ref{prop:LP} if and only if $\widehat{p}-\widehat{f}$ proves a
bound for $\widehat{p}$-energy, and this transformation interchanges the two
inequalities.  Furthermore, a lattice $\Lambda$ attains the $p$-energy bound
proved by $f$ if and only if $\Lambda^*$ attains the $\widehat{p}$-energy
bound proved by $\widehat{p}-\widehat{f}$.

It follows immediately from Lemma~\ref{lemma:optimalfn} that $\Lambda_d$
minimizes energy for all Gaussian potential functions (recall the
conditions~\eqref{eq:necessaryconds1} for equality in the linear programming
bounds). Furthermore, we can prove uniqueness among periodic configurations
under Gaussian potential functions
as follows.  If $\mathcal{C}$ is any periodic configuration in $\R^d$ of
density~$1$ with the same energy as $\Lambda_d$ under some Gaussian, then the
distances between points in $\mathcal{C}$ must be a subset of those in
$\Lambda_d$, because of the equality conditions. Without loss of generality
we can assume $0 \in \mathcal{C}$.  Then, by \cite[Lemma~8.2]{CE},
$\mathcal{C}$ is contained in an even integral lattice (namely, the subgroup
of $\R^d$ generated by $\mathcal{C}$), because all the distances between
points in $\mathcal{C}$ are square roots of even integers. Because
$\mathcal{C}$ has density~$1$, it must be the entire lattice. We conclude
that it must be isometric to $\Lambda_d$, because there is only one such
lattice with minimal vector length $\sqrt{2n_0}$ (see \cite[Chapters~16
and~18]{SPLAG}).  Thus, Theorem~\ref{theorem:univopt} holds for Gaussian
potential functions.

Handling other potential functions via linear programming bounds is slightly
more technical, because the potential function might decrease too slowly for
any Schwartz function to interpolate its values.  For example, no Schwartz
function can directly prove a sharp bound for energy under an inverse power law
potential in Proposition~\ref{prop:LP}. Nevertheless, we will show that
Schwartz functions come arbitrarily close to a sharp bound. Suppose we are
using a completely monotonic function of squared distance $p \colon
(0,\infty) \to \R$. By Bernstein's theorem \cite[Theorem~9.16]{Simon2}, there
is some measure $\mu$ on $[0,\infty)$ such that
\[
p(r) = \int e^{-\alpha r^2} \, d\mu(\alpha)
\]
for all $r \in (0,\infty)$ (which implies that $\mu$ must be locally finite).
Without loss of generality we can assume $\mu(\{0\})=0$, since otherwise all
configurations of density~$1$ have infinite energy. We would like to use
\[
f(x) = \int F(i\alpha/\pi,x) \, d\mu(\alpha)
\]
as an auxiliary function for the potential function $p$, and it might
plausibly work under the weaker hypotheses for linear programming bounds
proved in \cite[Proposition~2.2]{CdCI}. However, it will not be a Schwartz
function in general, and we will not analyze the behavior of this integral.
Instead, let
\[
f_\varepsilon(x) = \int_{\varepsilon}^{1/\varepsilon} F(i\alpha/\pi,x) \, d\mu(\alpha),
\]
which defines a Schwartz function for each $\varepsilon>0$ because $F$
satisfies condition~(3) of Theorem~\ref{thm:FEimpliesIF}.  Then
\[
\widehat{f_\varepsilon}(y) = \int_{\varepsilon}^{1/\varepsilon} \widetilde{F}(i\alpha/\pi,y) \, d\mu(\alpha),
\]
and the inequalities $f_\varepsilon(x) \le p\big(|x|\big)$ for all $x \in
\R^d\setminus\{0\}$ and $\widehat{f_\varepsilon}(y) \ge 0$ for all $y \in
\R^d$ follow from Lemma~\ref{lemma:optimalfn}. Thus, every configuration in
$\R^d$ of density~$1$ has lower $p$-energy at least
$\widehat{f_\varepsilon}(0)-f_\varepsilon(0)$, by Proposition~\ref{prop:LP}.
Because of the sharp bound for each $\alpha$,
\[
\aligned
\widehat{f_\varepsilon}(0)-f_\varepsilon(0) &= \int_{\varepsilon}^{1/\varepsilon} E_{r \mapsto e^{-\alpha r^2}}(\Lambda_d) \, d\mu(\alpha)\\
&= \int_{\varepsilon}^{1/\varepsilon} \sum_{x \in \Lambda_d \setminus\{0\}} e^{-\alpha |x|^2} \, d\mu(\alpha)\\
&= \sum_{x \in \Lambda_d \setminus\{0\}} \int_{\varepsilon}^{1/\varepsilon}  e^{-\alpha |x|^2} \, d\mu(\alpha).
\endaligned
\]
As $\varepsilon \to 0$, this bound converges to the $p$-energy of $\Lambda_d$
by monotone convergence since $\mu(\{0\})=0$, and we conclude that
$\Lambda_d$ has minimal $p$-energy.

Uniqueness among periodic configurations also follows from Bernstein's
theorem.  Suppose $\mathcal{C}$ is any periodic configuration of density~$1$
that is not isometric to $\Lambda_d$.  We have seen that the energy of
$\mathcal{C}$ under $r \mapsto e^{-\alpha r^2}$ is strictly greater than that
of $\Lambda_d$ for each $\alpha>0$, and these energies are continuous
functions of $\alpha$ by \eqref{eq:periodicenergy}.  By continuity, for each
compact subinterval $I$ of $(0,\infty)$, there exists $\varepsilon>0$ such
that the energy gap between $\mathcal{C}$ and $\Lambda_d$ is at least
$\varepsilon$ for all $\alpha \in I$.  Thus, Bernstein's theorem and monotone
convergence show that $E_p(\mathcal{C}) \ge E_p(\Lambda_d) + \delta$ for some
$\delta>0$. In particular, $E_p(\mathcal{C}) > E_p(\Lambda_d)$ if
$E_p(\Lambda_d) < \infty$, as desired. This completes the proof of
Theorem~\ref{theorem:univopt}, except for proving
Proposition~\ref{prop:positivity}.

In addition to proving universal optimality, our construction also
establishes other properties of the optimal auxiliary functions.  For
example, the following proposition follows directly from \eqref{FfromKlaplace2},
\eqref{decomposeF2},
and \eqref{truncatingF2} for $\tau \in \mathcal{D}$, and for all $\tau \in
\Hyp$ by analytic continuation:

\begin{proposition} \label{prop:specialvalues}
For all $\tau \in \Hyp$ and $d \in \{8,24\}$,
\begin{align*}
F(\tau,0) &= 1+i\,\mathcal{G}_{0,1}(\tau),\\
F\big(\tau,\sqrt{2}\big) &= e^{2\pi i \tau} + i\,\mathcal{G}_{-1,1}(\tau),\\
\left.\frac{\partial^2}{\partial r^2}\right|_{r=0}F(\tau,r) &= 2\pi i \tau + 2\pi\,\mathcal{G}_{0,0}(\tau), \quad \text{and}\\
\left.\frac{\partial}{\partial r}\right|_{r=\sqrt{2}}F(\tau,r) &= 2\pi i \sqrt{2} e^{2\pi i \tau} + 2\pi\sqrt{2}\,\mathcal{G}_{-1,0}(\tau).\\
\end{align*}
\end{proposition}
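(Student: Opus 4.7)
The plan is to unpack the decomposition
$F(\tau,r)=e^{\pi i\tau r^2}+F_{2,\textup{low}}(\tau,r)+F_{2,\textup{trunc}}(\tau,r)+F_{2,\textup{high}}(\tau,r)$
from \eqref{decomposeF2} and \eqref{truncatingF2}, valid for $\tau\in\mathcal{D}$, and exploit the fact that the first two pieces carry an outer factor $\sin^2(\pi r^2/2)$ while the $t$-integrals defining them are smooth in $r$ near $\R$ (as established in Section~\ref{sec:sub:continueFtoH}). Since $\sin^2(\pi r^2/2)$ vanishes to order four at $r=0$ and to order two at $r=\sqrt{2}$, both $F_{2,\textup{low}}$ and $F_{2,\textup{trunc}}$, together with the relevant derivatives ($\partial_r^2$ at $r=0$ and $\partial_r$ at $r=\sqrt{2}$), vanish at the prescribed points.

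Only $e^{\pi i\tau r^2}+F_{2,\textup{high}}$ therefore needs to be analyzed, and $F_{2,\textup{high}}$ is already presented as an explicit finite sum over $k\in\{-1,0\}$ of rational expressions in $r^2$ with possible double poles at $r^2=-2k$, tempered by the double zero of $\sin^2(\pi r^2/2)$ at those same points. The plan is to Taylor expand each $k$-summand in $s=r^2+2k$: about $r=0$ the $k=0$ summand contributes $i\mathcal{G}_{0,1}(\tau)+\pi r^2\,\mathcal{G}_{0,0}(\tau)+O(r^4)$ after the pole/zero cancellation, while the $k=-1$ summand is already $O(r^4)$ and irrelevant. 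About $r=\sqrt{2}$ (equivalently $s=r^2-2$), the roles swap: the $k=-1$ summand contributes $i\mathcal{G}_{-1,1}(\tau)+\pi s\,\mathcal{G}_{-1,0}(\tau)+O(s^2)$, and the $k=0$ summand is $O(s^2)$. Reading off the value and derivative in $r$ (using $\partial_r s|_{r=\sqrt{2}}=2\sqrt{2}$) and adding the elementary contributions from $e^{\pi i\tau r^2}$ yields the four identities for $\tau\in\mathcal{D}$.

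To promote the identities from $\mathcal{D}$ to all of $\Hyp$, it remains to observe that both sides are holomorphic in $\tau$ on all of $\Hyp$: the left sides by Corollary~\ref{corin4.1} together with the seminorm bounds of Proposition~\ref{prop:extendF2} (which allow differentiation in $r$ to commute with the continuation in $\tau$), and the right sides because each $\mathcal{G}_{k,j}$ was shown in Section~\ref{sec:sub:explicitkernels} to be a polynomial in $\tau$, $E_2$, $U$, $V$, $W$, $\mathcal{L}$, and $\mathcal{L}_S$. Since the identities already hold on the open set $\mathcal{D}$, analytic continuation delivers them on $\Hyp$.

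The one place requiring care is the Taylor expansion itself: each summand of $F_{2,\textup{high}}$ depends on the auxiliary parameter $p$, yet the value and first $s$-derivative at $s=0$ must be $p$-independent, since $F$ is $p$-independent and the parts of $F_{2,\textup{low}}+F_{2,\textup{trunc}}$ killed by the zero of $\sin^2$ bear all of the $p$-dependence to the required order. I expect this internal $p$-cancellation in the expansions of $F_{2,\textup{high}}$ to be the only nontrivial book-keeping step; no deeper obstacle arises.
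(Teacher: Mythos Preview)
Your proposal is correct and follows exactly the route the paper indicates: the paper's entire proof is the sentence preceding the proposition, stating that it ``follows directly from \eqref{decomposeF2} and \eqref{truncatingF2} for $\tau \in \mathcal{D}$, and for all $\tau \in \Hyp$ by analytic continuation.'' You have simply spelled out the Taylor-expansion bookkeeping in more detail, including the (genuine) $p$-cancellation in $F_{2,\textup{high}}$, which the paper leaves implicit.
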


Equivalently, the proposition specifies these values and derivatives of the
interpolation basis functions.  It gives another interpretation of the
non-decaying asymptotics $\mathcal{G}_{k,j}$ of the kernel $\K$, and it
generalizes the computation of special values of the optimal sphere packing
auxiliary functions in \cite[Propositions~4 and~8]{V} and \cite[Sections~2
and~3]{CKMRV}.

If one computes $\widetilde{F}(\tau,0)$ using
Proposition~\ref{prop:specialvalues} and the functional equation relating $F$
and $\widetilde{F}$, one obtains $\tau$ times an explicit quasimodular form.
Using the theta series for $\Lambda_d$ and Ramanujan's derivative formulas
for modular forms \cite[Section~5]{Z}, a straightforward calculation shows
that
\begin{equation} \label{eq:conj61}
\widetilde{F}(\tau,0) = -\frac{2\pi i \tau}{d} \sum_{x \in \Lambda_d} |x|^2 e^{\pi i |x|^2 \tau}.
\end{equation}
Equivalently, if $f$ is the auxiliary function from
Lemma~\ref{lemma:optimalfn} for the potential function $r \mapsto e^{-\alpha
r^2}$ (corresponding to $\tau = i \alpha/\pi$ in the formula above), then
\[
\widehat{f}(0) = \frac{2\alpha}{d} E_{r \mapsto r^2 e^{-\alpha r^2}} (\Lambda_d),
\]
in agreement with the prediction in \cite[Conjecture~6.1]{CM}.

\subsection{Reduction to positivity of kernels}

To complete the proof of universal optimality, all that remains is to prove
Proposition~\ref{prop:positivity}, i.e., the inequality
$\widetilde{F}(\tau,r) \ge 0$ and the conditions for equality. For the rest
of the section we thus assume $\Re(\tau)=0$ (in particular, $\tau\in\mathcal
D$).

The first obstacle to proving that $\widetilde{F}(\tau,r) \ge 0$ is dealing
with the integral transform that defines $\widetilde{F}$ in terms of the
kernel $\widehat{\K}$.  The kernel is written explicitly in terms of
well-known special functions, and we will deduce the positivity of the
integral at the level of the kernel itself. As in the sphere packing papers
\cite{V} and \cite{CKMRV}, that will involve additional complications for
$d=24$ beyond those that occur in the case of $d=8$.

Specifically, recall from \eqref{FtildefromKhatlaplace} that
\begin{equation}\label{Ftildedefagain}
\widetilde{F}(\tau,r)=4 \sin\mathopen{}\big(\pi r^2/2\big)^2\mathclose{} \int_0^\infty \widehat{\K}(\tau,it)e^{-\pi r^2 t}\,dt,
\end{equation}
which is absolutely convergent for $\tau\in\mathcal D$ and $|r|$ sufficiently
large, and has an analytic continuation to $r$ in some open neighborhood of
$\R$ in $\C$.   We showed in Section~\ref{sec:sub:continueFtoH} that this
continuation of \eqref{Ftildedefagain} can be achieved by subtracting pieces
of the asymptotics of $\mathcal K(\tau,it)$ as $t \to \infty$, as in
\eqref{decomposeF2} and \eqref{truncatingF2}.  Since
Proposition~\ref{prop:positivity} does not involve the point $r=0$, here it
suffices to perform a milder truncation by subtracting only the $k=-1$ terms
in \eqref{Gjkexpansion}, and only for dimension $d=24$, since for $d=8$ the
$k = -1$ terms vanish and the integral in \eqref{Ftildedefagain} is
absolutely convergent for all $r>0$.

When $d=8$, our strategy for proving Proposition~\ref{prop:positivity} is to
show that $\widehat{\K}^{(8)}(\tau,it)>0$, which immediately implies both the
desired inequality and the equality conditions. The analogous inequality
$\widehat{\K}^{(24)}(\tau,it)>0$ for $d=24$ holds as well, but additional
work is needed to deal with small $r$. Specifically, we write
$\widehat\K^{(24)}(\tau,it)=\widehat{\mathcal E}(\tau,it)+O(t)$ as $t \to
\infty$, where
\[
\widehat{\mathcal E}(\tau,z)  = \frac{e^{-2\pi i  z}}{3456\pi}( z \widehat{\mathcal E}_1(\tau)+\widehat{\mathcal E}_0(\tau))
\qquad \text{and}\qquad
 \widehat{\mathcal E}_j(\tau)=\tau \widehat{\mathcal E}_{j,1}(\tau)  + \widehat{\mathcal E}_{j,0}(\tau),
\]
with
\begin{align*}
\widehat{\mathcal E}_{0,0}& =-6912 \log(2) \Delta-36 E_2 E_4 E_6+16 E_4^3+20E_6^2+108 E_4  \sqrt{\Delta} (V\mathcal L + W \mathcal L_S),\\
\widehat{\mathcal E}_{0,1}&=-\pi i  \big(6 E_2^2 E_4 E_6-5E_2E_4^3-7 E_2 E_6^2+6 E_4^2E_6\big),\\
\widehat{\mathcal E}_{1,0}&=12 \pi i  \big({-E_2E_4E_6} + E_6^2 + 720\Delta\big),\quad \text{and} \\
\widehat{\mathcal E}_{1,1}&=2\pi ^2  \big(E_2^2 E_4E_6-2E_2E_6^2-1728E_2 \Delta+E_4^2E_6 \big)
\end{align*}
expressed in terms of quasimodular forms, $\mathcal L$, and $\mathcal{L}_S$.

For $d=24$ the integral in \eqref{Ftildedefagain} converges absolutely for
$|r|>\sqrt{2}$.  For the range $0<|r|\le \sqrt{2}$ we will use the truncation
method from \cite{CKMRV}, by instead setting $p=1.01$ and writing
\[
\int_0^\infty \widehat{\K}^{(24)}(\tau,it)e^{-\pi r^2 t}\,dt=  \int_0^\infty \widehat{\K}^{\textup{trunc}}(\tau,it)e^{-\pi r^2 t}\,dt
+ \int_p^\infty \widehat{\mathcal E}(\tau,it)e^{-\pi r^2t}\,dt,
\]
where
\begin{equation}\label{truncation}
\widehat{\K}^{\textup{trunc}}(\tau,it)=\left\{
\begin{array}{ll}
\widehat{\K}^{(24)}(\tau,it) & \text{for $t<p$, and} \\
\widehat{\K}^{(24)}(\tau,it)-\widehat{\mathcal E}(\tau,it) & \text{for $t\ge p$.}
\end{array}
\right.
\end{equation}
The value of $p$ has been chosen to ensure the positivity properties below,
in particular so that $\lambda(ip)<0.49$, where $\lambda$ is the modular
function from Section~\ref{sec:sub:sub:modularlambda} (for comparison,
$\lambda(i)=1/2$). The last integral in \eqref{truncation} can be evaluated
as
\begin{multline*}\int_p^\infty\frac{it \widehat{\mathcal E}_1(\tau)+\widehat{\mathcal E}_0(\tau)}{3456\pi} e^{\pi(2- r^2)t} \,dt=\\
\frac{e^{-p \pi (r^2-2)}}{(r^2-2)^2}\cdot\frac{\widehat{\mathcal E}_0(\tau)\pi(r^2-2)+i \widehat{\mathcal E}_1(\tau)(1+p \pi (r^2-2))}{3456\pi^3}.
\end{multline*}
The singularities from the factor of $(r^2-2)^2$ in the denominator are
compensated for by the vanishing of the $\sin\mathopen{}\big(\pi
r^2/2\big)^2\mathclose{}$ factor in \eqref{Ftildedefagain}. Because $n_0=2$
and $\sin\mathopen{}\big(\pi r^2/2\big)^2\mathclose{}$ vanishes at other
$r^2\in 2\Z$, we deduce that Proposition~\ref{prop:positivity} is a
consequence of the following three statements for $\Re(\tau)=0$ and $t \in
(0,\infty)$:
\begin{equation}\label{imply123}
 \aligned
(1) \ &  \widehat{\K}^{(d)}(\tau,it) >0 \text{~for~} d\in\{8,24\},\\
(2)\ & \widehat{\K}^{\textup{trunc}}(\tau,it) >0 \text{~for~} t \ge p,\text{~and}\\
(3)\  & \widehat{\mathcal E}_0(\tau)\pi(r^2-2)+i \widehat{\mathcal E}_1(\tau)(1+p \pi (r^2-2))>0 \text{~for~}
r\le \sqrt{2}.
\endaligned
\end{equation}
Statement (3) is itself a consequence of
\[ 
\aligned
\quad\!& (\textup{3a}) \  \widehat{\mathcal E}_0(\tau)+ip \widehat{\mathcal E}_1(\tau)<0,\text{~and}
\qquad\qquad\qquad\qquad\qquad\quad  \ \
\\
\quad\!& (\textup{3b})\   i\widehat{\mathcal E}_1(\tau)>0.\qquad\qquad\qquad\qquad\qquad\quad \ \
\endaligned
\]
We have no simple proof of these inequalities, but we will outline below how
we have proved them by mathematically rigorous computer calculations.

Proving inequalities of this sort for quasimodular forms also arose in the
sphere packing papers \cite{V} and \cite{CKMRV}, but the computations are
much more challenging in our case.  In the sphere packing cases, all that was
needed was to prove positivity for relatively simple functions of a single
variable. Asymptotic calculations reduce the proof to analyzing functions on
compact intervals, and that is a straightforward and manageable computation
using any of several techniques (\cite{V} used interval arithmetic and
\cite{CKMRV} used $q$-expansions).

By contrast, inequalities~(1) and~(2) in \eqref{imply123} involve much more
complicated functions of two variables. We must analyze singularities along
curves, which are more subtle than the point singularities in one dimension.
Furthermore, these curves intersect, and the intersection points are
particularly troublesome. In the rest of this section we explain how to
overcome these obstacles.

Inequalities~(3a) and~(3b) involve only a single variable $\tau$, and can be
verified using the methods from \cite[Appendix~A]{CKMRV}, or the $\lambda$
function coordinates that we use below for the other inequalities. We thus
focus on inequalities~(1) and~(2), describing the underlying mathematical
ideas that were rigorously verified by a computer calculation.

\subsection{Passing to the unit square} \label{sub:sec:passing}

Recall from Section~\ref{sec:sub:sub:modularlambda} that $t \mapsto
\lambda(it)$ is a decreasing function mapping $(0,\infty)$ onto $(0,1)$. Our
first step is to express the kernels $\widehat{\K}^{(d)}$ and
$\widehat{\K}^\textup{trunc}$ in terms of functions on the interior of the
unit square by inverting the map $(\tau,z) \mapsto
(\lambda(\tau),\lambda(z))$. Rewriting the kernels in this way is not
logically necessary, but it has the advantage of expressing everything in
terms of a small number of functions that can be bounded systematically and
efficiently, and we can take advantage of relationships between these
functions to obtain more accurate estimates when proving bounds.

Specifically, if we use the identities \eqref{thetatoEisenstein},
\eqref{thetafromK}, \eqref{zintermsofKlambda}, and \eqref{E2fromK} to write
modular forms, $z$, $\tau$, and $E_2$ in terms of $\lambda$  and write
$\mathcal{L} = \log(\lambda)$ and $\mathcal{L}_S = \log (1-\lambda)$ on the
imaginary axis, we obtain functions $L^{(d)}$ and $L^\textup{trunc}$ on
$(0,1)\times (0,1)$ such that
\[
\K^{(d)}(\tau,z) = L^{(d)}(\lambda(\tau),\lambda(z)) \qquad \text{and}\qquad
\K^{\textup{trunc}}(\tau,z) = L^{\textup{trunc}}(\lambda(\tau),\lambda(z))
\]
when $\Re(\tau)=\Re(z)=0$. The resulting functions $L^{(d)}(x,y)$ are
rational functions of $x$, $y$, and the logarithms and complete elliptic
integrals of $x$, $1-x$, $y$, and $1-y$, while $L^{\textup{trunc}}$ is
slightly more complicated, as described below.

The inequalities (1) and (2) in \eqref{imply123} thus transform into the
assertions that
\[
L^{(d)}(x,y)>0
\]
for $0<x,y<1$ and $d=8$ or $24$, and that
\[
L^{\textup{trunc}}(x,y)=L^{(24)}(x,y)-\psi^\textup{trunc}(x,y)>0
\]
for $0<x<1$ and $0<y<0.49$, where
$\psi^\textup{trunc}(\lambda(\tau),\lambda(z))=\widehat{\mathcal E}(\tau,z)$
by \eqref{truncation} and the constant $0.49$ is just slightly larger than
$\lambda(ip)=\lambda(1.01i)=0.4891135\dots.$  One complication with
$L^{\textup{trunc}}$ is that $\widehat{\mathcal E}(\tau,z)$ involves a factor
of $e^{-2\pi i z}$, which becomes $e^{2\pi K(1-y)/K(y)}$ by
\eqref{zintermsofKlambda} when we set $y=\lambda(z)$. We write
\[
\psi^\textup{trunc}(x,y)=e^{2\pi
K(1-y)/K(y)}\widetilde{\psi}^\textup{trunc}(x,y),
\]
where
$\widetilde{\psi}^\textup{trunc}(\lambda(\tau),\lambda(z))=(z{\widehat{\mathcal E}}_1(\tau)+{\widehat{\mathcal
E}}_0(\tau))/(3456\pi)$. Observe that
\[
L^\textup{trunc}(x,y) \ge L^{(24)}(x,y)
\]
whenever $\widetilde{\psi}^\textup{trunc}(x,y)\le 0$, and otherwise
$L^\textup{trunc}(x,y)$ is bounded below by
\[
\widetilde{L}^\textup{trunc}(x,y):=
L^{(24)}(x,y)-\left(\frac{256}{y^2}-\frac{256}{y}+24+\frac{4\cdot 10^9}{970299}y^2\right)\widetilde{\psi}^\textup{trunc}(x,y)
\]
according to \eqref{expKratiobound} below. In particular, inequality (1) in
\eqref{imply123} and the positivity of $\widetilde{L}^\textup{trunc}(x,y)$
for $0<x<1$ and $0<y<0.49$ together imply inequality (2).

The lower bound used above can be obtained by truncating the Taylor series of
$e^{2\pi K(1-y)/K(y)}$ and bounding the omitted coefficients.  It is most
convenient to obtain such bounds via complex analysis.  For example, the
functions $A_j(z)$ in \eqref{EandKnear1}, along with $E(z)$, $K(z)$, and
$\log(1-z)$, all have modulus bounded by $5$ on $\{z \in \C : |z|=0.99\}$,
and so their Taylor series coefficients of $z^n$ are bounded above by $5\cdot
0.99^{-n}$. (This bound of $5$, which is easily improved for some of these
individual functions, comes from the constant sign of the coefficients of
$z^n$ for $n>1$ and the value at $z=0.99$.) For the bound needed above, one
can check that $z^2 e^{2\pi K(1-z)/K(z)}$ is holomorphic on the open unit
disk and
\begin{equation}\label{expKratiobound}
e^{2\pi K(1-y)/K(y)}\le \frac{256}{y^2}-\frac{256}{y}+24+ \frac{4\cdot 10^9}{970299} y^2
\end{equation}
for $0<y<1/2$, where the error term comes from the bound $|K(z)|\ge 1.3$ for
$|z|=0.99$ (which itself can be shown by evaluation at close points on the
circle and derivative bounds).

The next several subsections describe the verification of inequalities (1)
and (2) in \eqref{imply123}.  The primary difficulty is dealing with
singularities. Since our formulas for the kernels involve denominators of
$j(\tau)-j(z)$, which vanish when $\tau=z$ or $\tau=-1/z$, our formulas for
$L^{(d)}(x,y)$ and $\widetilde{L}^\textup{trunc}(x,y)$ naively yield $0/0$
when $x=y$ or $x=1-y$. The kernels themselves are not actually singular along
these lines, because $\phi(I)=\phi(S)=0$ in the residue formulas from
part~(3) of Theorem~\ref{thm: K}, but in practice we must treat the diagonal
lines $x=y$ and $x=1-y$ as singularities in the formulas. In addition, there
are singularities at the edges of the unit square coming from
\eqref{EandKnear1}. In this rest of this section, we describe the methods
used to treat these singularities, starting away from any singularities and
working our way up to the most singular points: the four corners of the unit
square, at which three singularities meet.

In our numerical calculations, it is convenient to remove obviously positive
factors from $L^{(d)}(x,y)$ and $\widetilde{L}^\textup{trunc}(x,y)$. To do
so, we multiply each of them by
\[
\big(1-xy\big)\big(1-x(1-y)\big)\big(1-y(1-x)\big)\big(1-(1-x)(1-y)\big),
\]
and we furthermore multiply $L^{(8)}(x,y)$ by
\[
\frac{\pi^4 K(y)^2K(1-x)}{K(x)^4},
\]
$L^{(24)}(x,y)$ by
\[
\frac{6\pi^4 y^2(1-y)^2 K(y)^{10}K(1-x)}{K(x)^{12}},
\]
and $\widetilde{L}^\textup{trunc}(x,y)$ by
\[
\frac{54\pi^{14} y^2(1-y)^2 K(y)^{12}}{K(x)^{11}}.
\]
For simplicity of notation, in the remainder of Section~\ref{sec:inequality}
we use the notation $L^{(d)}(x,y)$ and $\widetilde{L}^\textup{trunc}(x,y)$ to
refer to the functions after removing these factors.

\subsection{Interval bounds for elliptic integrals}
\label{subsec:elliptic}

Away from all singularities it is possible to prove positivity via interval
arithmetic estimates on $L^{(d)}(x,y)$ and
$\widetilde{L}^\textup{trunc}(x,y)$.  Interval arithmetic provides rigorous
upper and lower bounds on the values of a function over a given interval
(see, for example, \cite{MKC}).  It works beautifully for small intervals and
well-behaved functions, but the bounds become much less tight for large
intervals or near singularities.  In practice, instead of simply subdividing
intervals to improve the bounds, we obtained better results by using interval
arithmetic to evaluate Taylor series expansions, while controlling the error
terms by using crude interval arithmetic bounds on partial derivatives,
because these error bounds do not need to be tight.

Interval arithmetic for polynomials and logarithms is standard and is part of
many software packages. We will next describe how to obtain rigorous interval
bounds for the complete elliptic integrals $E$ and $K$ by adapting the
arithmetic-geometric mean algorithms for computing them from \cite[Chapter
1]{BB}. Consider the sequences $(a_n)_{n\ge 0}$ and $(b_n)_{n\ge 0}$ with
$a_0 \ge b_0>0$ and
\[
a_{n+1} = \frac{a_n+b_n}{2}\quad\text{and}\quad b_{n+1}=\sqrt{a_nb_n}.
\]
Then $b_n\le b_{n+1}\le a_{n+1}\le a_n$ and both $a_n$ and $b_n$ converge to
the same limit $M(a_0,b_0)$ (the \emph{arithmetic-geometric mean} of $a_0$
and $b_0$), which is related to the complete elliptic integral $K$ by
\[
K(x) = \frac{\pi}{2 M\big(1,\sqrt{1-x}\big)}
\]
for $0<x<1$. Since the interval $[b_n,a_n]$ contains $M(a_0,b_0)$, these
recurrences give a fast interval-arithmetic algorithm for $K$.

Computations of $E$ are more subtle and use the formula
\begin{equation}\label{BBEK}
\frac{E(x)}{K(x)}=1-\sum_{n\ge 0}2^{n-1}c_n^2
\end{equation}
from \cite[Algorithm~1.2]{BB}, where $c_0=\sqrt{x}$ and
\[
c_{n+1}=\frac{a_n-b_n}{2}
\]
with $a_n$ and $b_n$ defined as above, starting with
$(a_0,b_0)=\big(1,\sqrt{1-x}\big)$; one can show that
$c_{n+1}=c_n^2/(4a_{n+1})$, which avoids potential precision loss from
subtracting $a_n$ and $b_n$. Since
\[
c_n = \frac{a_{n-1}-b_{n-1}}{2} \leq \frac{a_{n-1}}{2} \leq a_n \leq
2a_{n+1},
\]
we have
\[
c_{n+1}=\frac{c_n^2}{4a_{n+1}}\le \frac{c_n}{2},
\]
and therefore the tail of the series is
\[
\sum_{n> m}2^{n-1}c_n^2 \le \sum_{n>
m}2^{n-1}(2^{m+1-n}c_{m+1})^2=2^{m+1}c_{m+1}^2.
\]
From this tail bound, truncating the series in \eqref{BBEK} yields interval
bounds for the ratio $E(x)/K(x)$, and hence $E(x)$ itself in light of the
algorithm for $K(x)$ above.

\subsection{Near the diagonals and their crossing}

After removing factors that are obviously positive as discussed above, the
kernels $L^{(d)}(x,y)$ and $\widetilde{L}^\textup{trunc}(x,y)$ have
denominator $(x-y)(1-x-y)$, and numerators that vanish at $x=y$ and $x=1-y$,
as they must in order to be well defined on the interior of the unit square.
We use Taylor expansions in one of the variables (along with rigorous
interval bounds on partial derivatives, which are themselves expressible in
terms of polynomials, logarithms, $E$, and $K$) to prove that the kernels are
positive near the diagonals.  For this purpose it is convenient to work with
the coordinate system $(u,v)=(x-y,1-x-y)$ and take partial derivatives in $u$
and $v$.

The most subtle point is the diagonal crossing point
$(x,y)=(\frac{1}{2},\frac{1}{2})$.  There both $u=x-y$ and $v=1-x-y$ vanish,
and so $L^{(d)}(x,y)$ and $\widetilde{L}^\textup{trunc}(x,y)$ can each be
written in the form $\frac{f(u,v)}{uv}$, where $f(u,0) = f(0,v) = 0$ for all
$u$ and $v$. To analyze these functions, we obtain bounds on their Taylor
series coefficients by writing
\[
\frac{f(u,v)}{uv} = \int_0^1 \int_0^1 f^{(1,1)}(us,vt) \, ds \, dt,
\]
where $f^{(i,j)}(u,v)$ denotes $(\partial^i/\partial u^i)(\partial^j/\partial
v^j) f(u,v)$, and hence
\[
\frac{\partial^i}{\partial u^i} \frac{\partial^j}{\partial v^j} \frac{f(u,v)}{uv} = \int_0^1 \int_0^1 s^i t^j f^{(i+1,j+1)}(us,vt) \, ds \, dt.
\]
It follows that
\[
\left|\frac{\partial^i}{\partial u^i} \frac{\partial^j}{\partial v^j} \frac{f(u,v)}{uv}\right|
\le \frac{\max_{0 \le s,t \le 1} \big|f^{(i+1,j+1)}(us,vt)\big|}{(i+1)(j+1)}.
\]
Interval arithmetic bounds on the derivatives then gives rigorous upper
bounds on the error terms in the Taylor expansion, which can be used to show
positivity close to $(x,y)=(\frac{1}{2},\frac{1}{2})$.

\subsection{Near the edges}\label{sec:sub:nearedges}

For the rest of this section, we reduce to the situation of $0<x,y\le
\frac{1}{2}$ by changing coordinates from $x$ to $1-x$ and from $y$ to $1-y$
as needed. The singularities at the edges come from logarithms as well as the
logarithmic behavior of the elliptic integrals $E(z)$ and $K(z)$ near $z=1$
from \eqref{EandKnear1}. After substituting those formulas for $E$ and $K$,
we arrive at a sum of powers of logarithms times holomorphic functions.  We
compute Taylor series for these holomorphic functions in the direction
orthogonal to the edge in question, and use interval arithmetic in the
tangential direction, while taking into account the Taylor coefficient bounds
for special functions mentioned in the paragraph containing
\eqref{expKratiobound}.  On narrow strips near the edges we obtain a lower
bound for $L^{(d)}(x,y)$ and $\widetilde{L}^\textup{trunc}(x,y)$ as a linear
or quadratic polynomial in the logarithm that is responsible for the
singularity, whose positivity is straightforward to verify (e.g., using
Sturm's theorem or interval arithmetic).

\subsection{The corners}

The corners are intersections of three distinct singular curves.  As in
Section~\ref{sec:sub:nearedges}, we change coordinates if necessary so that
the corner is at $(x,y)=(0,0)$. After substituting the formulas in
\eqref{EandKnear1}, we obtain expressions for $L^{(d)}(x,y)$ and
$\widetilde{L}^\textup{trunc}(x,y)$ of the form
\begin{equation}\label{curlyH1}
\mathcal{H}(x,y) = \frac{1}{x-y}\sum_{i,j\ge 0}h_{i,j}(x,y)\log(x)^i\log(y)^j,
\end{equation}
where the sum contains only finitely many terms and each coefficient function
$h_{i,j}(x,y)$ is holomorphic in $\{x \in \C : |x|<1\}\times \{y \in \C :
|y|<1\}$.  In particular, these coefficient functions can be written in terms
of $\log(1-u)$, $E(u)$, $K(u)$, and the functions $A_j(u)$ from
\eqref{EandKnear1} for $u=x$ and $u=y$.

Since the kernel ${\mathcal H}(x,y)$ is well defined on the diagonal, the sum
in \eqref{curlyH1} must vanish there, but the individual coefficients
$h_{i,j}(x,y)$ might not.  To remedy this, we choose $\beta \in \Z$ and write
\begin{equation}\label{curlyH}
\aligned \mathcal{H}(x,y)&=\sum_{i,j\ge 0}\frac{\widetilde{h}_{i,j}(x,y)}{x-y}\log(x)^i\log(y)^j\\
&\quad\phantom{} +
\left(\frac yx\right)^{\beta}\sum_{i,j}h_{i,j}(x,x)\frac{\log(x)^i\log(y)^j}{x-y}
,
\endaligned
\end{equation}
with
\[
\widetilde{h}_{i,j}(x,y)=h_{i,j}(x,y)-\left(\frac
yx\right)^{\beta}h_{i,j}(x,x),
\]
so that $\widetilde{h}_{i,j}(x,x)=0$ and therefore
$\widetilde{h}_{i,j}(x,y)/(x-y)$ is holomorphic at $x=y$. This procedure is
required in only one corner for $L^{(8)}(x,y)$ and $L^{(24)}(x,y)$ (with
$\beta=0$), and one corner of $\widetilde{L}^\textup{trunc}(x,y)$ (with
$\beta=2$, in order to obtain better estimates).

Each function $\widetilde{h}_{i,j}(x,y)$ has a Taylor series expansion
$\sum_{n,m\ge 0}b_{n,m}x^n y^m$ that vanishes on the diagonal.  In other
words,
\[
\sum_{n,m\ge 0}b_{n,m}y^n y^m = 0,
\]
and therefore
\[
\frac{\widetilde{h}_{i,j}(x,y)}{x-y}= \sum_{n,m\ge 0}b_{n,m}\frac{x^n
y^m-y^n y^m}{x-y}=\sum_{n,m\ge 0}b_{n,m}(x^{n-1}+x^{n-2}y+\cdots +
y^{n-1})y^m
\]
gives a series expansion of this ratio.  The individual coefficients
$b_{n,m}$ can be computed from the Taylor series of $\log(1-z)$, $E(z)$,
$K(z)$, and $A_j(z)$, while at the same time the $5\cdot 0.99^{-n}$ bound on
the Taylor coefficients of these special functions gives an upper bound on
all $b_{n,m}$.  Computing a finite number of coefficients explicitly and
using this upper bound on the rest, we obtain a rigorous lower bound on the
first sum in \eqref{curlyH}.

When $h_{i,j}(x,x)$ is nonzero, direct computations show that it has a fairly
simple form, from which its positivity for small values of $x$ is manifest.
For example, $\widetilde{L}^\textup{trunc}(x,y)$ has $h_{i,j}(x,x)$ nonzero
only for one corner and two choices of indices $(i,j)$, where it equals
\[
(2-x) (x-1)^4 (x+1) (1-2 x)(x^2-x+1)^2 K(x)^2
\]
up to a positive constant factor (as well as the obviously positive factors
that were removed from $L^{(d)}$ and $\widetilde{L}^\textup{trunc}$ earlier,
as mentioned above). Similar, and in fact simpler, formulas hold in all other
cases, and direct computation shows that the second sum in \eqref{curlyH},
namely
\[
\sum_{i,j}h_{i,j}(x,x)\frac{\log(x)^i\log(y)^j}{x-y},
\]
can be rewritten as $(\log(y)-\log(x))/(y-x)$ times an explicit, manifestly
positive holomorphic function $d(x,y)$. From this we obtain a rigorous lower
bound for $\mathcal H(x,y)$ for $0<x,y\le \frac{1}{2}$, of the form
\begin{equation}\label{curlyHriglowerbd}
\sum_{i,j}p_{i,j}(x,y)\log(x)^i\log(y)^j + d(x,y)\frac{\log(y)-\log(x)}{y-x}
\end{equation}
with $p_{i,j}(x,y)\in \R[x,y]$ coming from the Taylor series argument above.
The coefficient functions $p_{i,j}(x,y)$ and $d(x,y)$ can all be approximated
well on small regions using interval arithmetic, as can the quotient
$(\log(y)-\log(x))/(y-x)$. To obtain the positivity as both $x$ and $y$
approach zero, we found it efficient in situations where $d(x,y)$ vanishes
identically and $p_{i,j}(0,0)=0$ to deduce lower bounds on
\eqref{curlyHriglowerbd} and similar expressions via lower bounds on
gradients, for which interval arithmetic had better behavior. These arguments
complete the proof of Proposition~\ref{prop:positivity}, and thus
Theorem~\ref{theorem:univopt}.

\subsection{Computer implementation}

Code to prove the inequalities needed for Proposition~\ref{prop:positivity}
is available from the MIT Libraries at
\url{https://hdl.handle.net/1721.1/141226}.
We provide two implementations, one via Mathematica notebooks \cite{wolfram}
and the other via Sage code \cite{sage}.

The Mathematica notebooks closely follow the techniques we have outlined in
this section. They run for about a week on a 16-core server. Documentation is
included in the notebooks, and we provide a README file that gives an
overview of how to use the notebooks.

The Sage code is a revised and optimized version of the proof. The
fundamental approach to treating the diagonals, edges, and corners remains
the same, but instead of using the arithmetic-geometric mean algorithm from
Section~\ref{subsec:elliptic} to compute elliptic integrals, we instead
expand all the functions we use as multivariate power series and derive
bounds for the error introduced by truncation. These techniques and bounds
are explained in the documentation provided with the code. The resulting
calculations finish in less than an hour on a single core.

\section{Generalizations and open questions}
\label{sec:generalizations}

Theorem~\ref{theorem:interpolation} is ideally suited to proving universal
optimality for $E_8$ and the Leech lattice, but the underlying analytic
phenomena are not limited to $8$ and $24$ dimensions. Instead, it seems that
the remarkable aspect of these dimensions is the existence of the lattices,
while interpolation theorems may hold much more broadly.

\begin{problem}
\label{prob:interpolation} Let $d$ and $k$ be positive integers.  If $f \in
\Schw_{\textup{rad}}(\R^d)$ satisfies $f^{(j)}\big(\sqrt{kn}\big) =
\widehat{f}\,^{(j)}\big(\sqrt{kn}\big) = 0$ for all integers $n \ge 0$ and $0
\le j < k$, then must $f$ vanish identically?
\end{problem}

The answer is yes when $k=d=1$ by \cite[Corollary~1]{RV}, and it follows for
$k=1$ and all dimensions using the techniques of \cite{RV} without much
difficulty. It also holds when $(d,k)=(8,2)$ or $(d,k)=(24,2)$ by
Theorem~\ref{theorem:interpolation}, and likely holds more broadly for $k=2$.
As far as we know, interpolation theorems of this form would not lead to any
optimality theorems in packing or energy minimization beyond $E_8$ and the
Leech lattice.

The special nature of the interpolation points $\sqrt{2n}$ plays an essential
role in our proofs.  For example, the functional equations
\[
F(\tau+2,x)-2F(\tau+1,x)+F(\tau,x)=0
\]
and
\[
\widetilde{F}(\tau+2,x)-2\widetilde{F}(\tau+1,x)+\widetilde{F}(\tau,x)=0
\]
encode algebraic properties of $\sqrt{2n}$, without which we would be unable
to construct the interpolation basis.  This framework (i.e.,
Theorem~\ref{thm:FEimpliesIF}) generalizes naturally to the interpolation
points $\sqrt{kn}$ from Open~Problem~\ref{prob:interpolation}, with the
corresponding functional equations using a $k$-th difference operator in
$\tau$.  However, our methods cannot apply to $k>2$ without serious
modification.

There is no reason why interpolation theorems should be restricted to radii
that are square roots of integers. We expect that far more is true, at the
cost of giving up the algebraic structure behind our proofs. In particular,
optimizing the linear programming bound for Gaussian energy seems to lead to
interpolation formulas, except in low dimensions. Recall that the optimal
functions for linear programming bounds seem to work as specified by the
following conjecture, which is a variant of \cite[Section~7]{CE} and
\cite[Section~9]{CK07}:

\begin{conjecture} \label{conj:LP}
Fix a dimension $d \ge 3$, density $\rho=1$, and $\alpha>0$. Then the optimal
linear programming bound $\widehat{f}(0) - f(0)$ from
Proposition~\ref{prop:LP} for the potential $p(r) = e^{-\alpha r^2}$ is
achieved by some radial Schwartz function~$f$, the radii $|x|$ for which
$f(x) = e^{-\alpha|x|^2}$ are the same as the radii $|y|$ for which
$\widehat{f}(y)=0$, they form a discrete, infinite set, and these radii do
not depend on $\alpha$.
\end{conjecture}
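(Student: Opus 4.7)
The plan is to attack the conjecture in three stages: existence of an optimizer, its characterization via complementary slackness, and the $\alpha$-independence of the contact set. I should stress at the outset that this is a conjecture, open for $d \notin \{1, 8, 24\}$, so what follows is a speculative strategy rather than a genuine proof outline.

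For existence, I would work in the convex cone $\mathcal{A}_\alpha = \{f \in \Schw_{\textup{rad}}(\R^d) : f(x) \le e^{-\alpha|x|^2} \text{ and } \widehat{f}(y) \ge 0 \text{ for all } x,y\}$ of admissible auxiliary functions. The functional $f \mapsto \widehat{f}(0) - f(0)$ is linear, bounded above (since any ground state energy lower bound is finite), and $\mathcal{A}_\alpha$ is a closed convex set. A maximizing sequence is bounded in a suitable Sobolev-weighted norm thanks to the two-sided control coming from $f \le e^{-\alpha|x|^2}$ and $\widehat{f} \ge 0$; passing to a weak-$*$ limit in tempered distributions and showing this limit is itself a radial Schwartz function is the technical crux. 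One could hope to bootstrap smoothness using that both $f$ and $\widehat{f}$ are pointwise bounded.

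Granted existence of an optimizer $f$, first-order variational conditions should yield positive Radon measures $\mu_1, \mu_2$ on $(0,\infty)$, supported on $R_1 = \{r > 0 : f(r) = e^{-\alpha r^2}\}$ and $R_2 = \{r > 0 : \widehat{f}(r) = 0\}$ respectively, such that any admissible perturbation $g$ satisfies $\widehat{g}(0) - g(0) = \int (e^{-\alpha r^2} - g(r))\,d\mu_1(r) + \int \widehat{g}(r)\,d\mu_2(r)$ at the optimum, which pins down $f$ via a Fourier-theoretic duality in the spirit of Section~6 of \cite{CM}. Uniqueness of the optimizer, combined with this duality and the self-duality of Gaussians, should force $R_1 = R_2$. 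Discreteness follows because $f(r) - e^{-\alpha r^2}$ extends to a real-analytic function on $\R$ whose zero set cannot accumulate; infinitude of $R_1$ follows from a dimension-counting argument, since any finite interpolation dataset leaves a positive-dimensional family of Schwartz functions and the bound can then be strictly improved by moving inside that family.

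The heart of the conjecture is the $\alpha$-independence of the radii, and this is where I expect the main obstacle. The natural approach is to seek an interpolation theorem in the spirit of Theorem~\ref{theorem:interpolation}, asserting that a radial Schwartz function on $\R^d$ is determined by its values and radial derivatives, together with those of its Fourier transform, at some $\alpha$-independent sequence $r_1 < r_2 < \cdots$ depending only on $d$. Applying this to $p(r) = e^{-\alpha r^2}$ would then force the optimal $f$ to be the interpolant of $p$ at these radii, and the equality sets $R_1 = R_2 = \{r_n\}$ would automatically not vary with $\alpha$. For $d \in \{8, 24\}$, the modular-form machinery of Section~\ref{sec:analysis} produces such an interpolation at the algebraically natural radii $\sqrt{2n}$, but for other dimensions we have neither an analogue of this machinery nor an ansatz for the radii: numerical computation of $R_1$ appears to give aperiodic sequences lacking the arithmetic structure that drives the present proof. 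Reducing the conjecture to such an interpolation theorem would therefore be conditional on proving a substantially more general version of Theorem~\ref{thm:FEimpliesIF}, which seems to require fundamentally new analytic tools beyond $\PSL_2(\Z)$-symmetry.
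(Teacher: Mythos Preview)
You have correctly identified that this statement is a \emph{conjecture}, not a theorem: the paper offers no proof of it whatsoever. It appears in Section~7 (Generalizations and open questions) precisely because it is open for all $d \ge 3$ except $d \in \{8,24\}$, where it follows from the main results of the paper. The only remark the paper makes toward the conjecture is the observation immediately following it: if the radii are $\alpha$-independent, then the duality transformation from \cite[Section~6]{CM} forces the equality set of $f(x)=e^{-\alpha|x|^2}$ to coincide with the zero set of $\widehat{f}$. You allude to this same duality in your second paragraph, so your discussion is consistent with what little the paper says.

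Since there is no proof in the paper to compare against, your submission is appropriately framed as a speculative strategy. A few honest caveats on that strategy: the existence argument is genuinely delicate, since $\Schw_{\textup{rad}}(\R^d)$ is not weak-$*$ closed in tempered distributions and there is no obvious reason a weak limit of admissible Schwartz functions remains Schwartz (this is a known difficulty, not a gap in your presentation). Your discreteness argument via real-analyticity is also not quite right as stated, since $f$ is merely Schwartz, not a priori real-analytic; one would need additional structure on the optimizer. Finally, your assessment of the $\alpha$-independence step is accurate: the paper's own discussion (Open Problem~7.3 and the surrounding text) confirms that for $d \notin \{8,24\}$ the interpolation radii appear to lack arithmetic structure, and no substitute for the modular-form machinery is known.
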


Note that at least one implication holds among the assertions of this
conjecture: the radii at which $f(x) = e^{-\alpha|x|^2}$ must be the same as
those for which $\widehat{f}(y)=0$ if all these radii are independent of
$\alpha$, thanks to the duality symmetry from \cite[Section~6]{CM}.

\begin{problem} \label{prob:LPinterp}
Let $r_1,r_2,\dots$ be the radii from Conjecture~\ref{conj:LP} for some $d$.
For which $d$ is every $f \in \Schw_{\textup{rad}}(\R^d)$ uniquely determined
by the values $f(r_n)$, $f'(r_n)$, $\widehat{f}(r_n)$, and
$\widehat{f}\,'(r_n)$ for $n \ge 1$ through an interpolation formula?
\end{problem}

Numerically constructing the interpolation basis seems to work about as well
for general $d \ge 3$ as it does for $d=8$ or $d=24$, which suggests that
interpolation holds for many or even all such $d$. In particular, simple
variants of the algorithms from \cite{CE} and \cite{CG} yield interpolation
formulas of this sort for all functions of the form $x \mapsto p(|x|^2)
e^{-\pi|x|^2}$, where $p$ is a polynomial of degree at most some bound~$N$.
As $N$ grows, these interpolation formulas seem to converge to well-behaved
limits when $d \ge 3$.

From an interpolation perspective, the numerical evidence suggests that these
dimensions behave much like $d=8$ and $d=24$. On the other hand, we know of
no simple description of the interpolation points $r_1,r_2,\dots$ when $d
\not\in\{8,24\}$, and we are not aware of any point configurations in $\R^d$
that meet the optimal linear programming bounds in these cases.  In other
words, $d=8$ and $d=24$ differ both algebraically and geometrically from the
other dimensions.

When $d \le 2$, universally optimal configurations exist (conjecturally for
$d=2$), but the radii $r_1,r_2,\dots$ are more sparsely spaced, seemingly too
much so to allow for an interpolation theorem.  For $d=1$, we can in fact
rule out an interpolation theorem corresponding to the point configuration
$\Z$:

\begin{lemma} \label{lemma:dim1fails}
Radial Schwartz functions $f \colon \R \to \R$ are not uniquely
determined by the values $f(n)$, $f'(n)$, $\widehat{f}(n)$, and
$\widehat{f}\,'(n)$ for integers $n \ge 1$.
\end{lemma}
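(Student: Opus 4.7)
Since radial Schwartz functions on $\R$ are exactly even Schwartz functions and the Fourier transform preserves evenness, the plan is to exhibit a single nonzero even Schwartz function $f$ for which $f(n)$, $f'(n)$, $\widehat{f}(n)$, and $\widehat{f}\,'(n)$ all vanish for every integer $n \ge 1$; any such $f$ lies in the kernel of the sampling map and shows it is not injective.

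I would construct $f$ explicitly as follows. First, consider the even Schwartz function
\[
g(x) = \tfrac{1}{2}e^{-\pi x^2} - \tfrac{1}{4}\bigl(e^{-\pi(x-1)^2} + e^{-\pi(x+1)^2}\bigr).
\]
A short computation using Gaussian self-duality and the shift formula for the Fourier transform yields
\[
\widehat{g}(y) = \sin^2(\pi y)\,e^{-\pi y^2}.
\]
I then define $f(x) = \sin^2(\pi x)\,g(x)$, which is again an even Schwartz function and is nonzero because $f(1/2) = g(1/2) = \tfrac{1}{4}\bigl(e^{-\pi/4} - e^{-9\pi/4}\bigr) > 0$. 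Since the factor $\sin^2(\pi x)$ has a double zero at every integer, $f(n) = f'(n) = 0$ for all $n \in \Z$.

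The heart of the argument is to verify that $f$ is Fourier self-dual. Writing $\sin^2(\pi x) = \tfrac{1}{2} - \tfrac{1}{4}\bigl(e^{2\pi i x} + e^{-2\pi i x}\bigr)$ and using the modulation identity $\widehat{e^{2\pi i a \,\cdot\,}\,g}(y) = \widehat{g}(y-a)$, I compute
\[
\widehat{f}(y) = \tfrac{1}{2}\widehat{g}(y) - \tfrac{1}{4}\widehat{g}(y-1) - \tfrac{1}{4}\widehat{g}(y+1).
\]
The identity $\sin^2\bigl(\pi(y\pm 1)\bigr) = \sin^2(\pi y)$ lets me factor $\sin^2(\pi y)$ out of all three summands on the right, collapsing them to $\sin^2(\pi y)\,g(y) = f(y)$. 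Thus $\widehat{f} = f$, and the vanishing of $\widehat{f}(n)$ and $\widehat{f}\,'(n)$ at every integer is inherited from the vanishing of $f$ and $f'$. There is no real obstacle here; the only substantive idea is choosing $g$ so that $\widehat{g}$ carries the same $\sin^2(\pi\,\cdot\,)$ factor that $f$ carries on the spatial side, which forces Fourier self-duality and reduces both halves of the statement to a single calculation.
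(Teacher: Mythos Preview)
Your proof is correct and genuinely different from the paper's. You build an explicit nonzero even Schwartz function in the kernel of the sampling map: by engineering $g$ so that $\widehat{g}(y)=\sin^2(\pi y)e^{-\pi y^2}$ carries the same $\sin^2$ factor you put on the spatial side, the periodicity $\sin^2(\pi(y\pm 1))=\sin^2(\pi y)$ makes $f=\sin^2(\pi\cdot)\,g$ Fourier self-dual, so a single double-zero condition handles both $f$ and $\widehat f$ at once. The paper instead convolves the tent function $t$ (whose Fourier transform is $(\sin\pi y/\pi y)^2$) with an arbitrary even bump $b$ supported in $[-1/2,1/2]$; the resulting $f=b*t$ is compactly supported in $[-3/2,3/2]$, so it vanishes to infinite order at all integers $|n|\ge 2$, while $\widehat f=\widehat b\,\widehat t$ has double zeros at all nonzero integers. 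Rather than exhibiting a kernel element directly, the paper argues by dimension count: if the interpolation property held, such an $f$ would be determined by just $f(1)$ and $f'(1)$, contradicting the evident freedom in choosing $b$. Your construction is cleaner and fully explicit; the paper's has the minor conceptual advantage of visibly producing an infinite-dimensional family of counterexamples.
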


This lemma follows immediately from \cite[Theorem~2]{RV}, but for
completeness we will give a simpler, direct proof.

\begin{proof}
Let $t$ be the tent function
\[
t(x) = \begin{cases} 1-|x| & \textup{if $|x| \le 1$, and}\\
0 & \textup{otherwise.}
\end{cases}
\]
Then
\[
\widehat{t}(y) = \left(\frac{\sin \pi y}{\pi y}\right)^2,
\]
which vanishes to second order at all nonzero integers. Now let $b$ be any
even, smooth, nonnegative function with support contained in
$[-\frac{1}{2},\frac{1}{2}]$, and consider the convolution $f = b*t$.  This
function is smooth and compactly supported, and it is therefore a Schwartz
function, whose Fourier transform $\widehat{f} = \widehat{b} \,\widehat{t}$
again vanishes to second order at all nonzero integers.  Furthermore, $f$
vanishes to infinite order at all nonzero integers other than $\pm 1$,
because its support is contained in $[-3/2,3/2]$. If the interpolation
property in the lemma statement held, then $f$ would be uniquely determined
by $f(1)$ and $f'(1)$. In other words, there would be just a two-dimensional
space of functions of this form. Furthermore, those two dimensions would have
to correspond to the support of $f$ and scalar multiplication.  Thus, $f$
would be completely determined by the support and total integral of $b$.
However, that is manifestly false: the value
\[
f(0) = \int_{-1}^1 b(x)(1-|x|)\,dx
\]
is not determined by the support and integral of $b$.
\end{proof}

The two-dimensional case is more difficult to analyze. If we scale the
hexagonal lattice so that it has density $1$, then the distances between the
lattice points are given by
\[
(4/3)^{1/4}\sqrt{j^2+jk+k^2},
\]
where $j$ and $k$ range over the integers.  Bernays \cite{Bernays} proved
that as $N \to \infty$, the number of such distances between $0$ and $N$
(counted without multiplicity) is asymptotic to
\[
\frac{CN^2}{\sqrt{\log N}}
\]
for some positive constant $C$. Thus, the distances in the hexagonal lattice
are slightly sparser than those in $E_8$ or the Leech lattice, for which the
corresponding counts of distinct distances are $N^2/2 + O(1)$. This sparsity
suggests that the interpolation property might fail in $\R^2$, and numerical
computations indicate that it does:

\begin{conjecture} \label{conj:interpolation-failure}
Let $r_1,r_2,\dots$ be the positive real numbers of the form
$(4/3)^{1/4}\sqrt{j^2+jk+k^2}$, where $j$ and $k$ are integers. Then radial
Schwartz functions $f \colon \R^2 \to \R$ are not uniquely determined by the
values $f(r_n)$, $f'(r_n)$, $\widehat{f}(r_n)$, and $\widehat{f}\,'(r_n)$ for
integers $n \ge 1$.
\end{conjecture}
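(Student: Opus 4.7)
The plan is to mimic the generating-function framework of Section~\ref{sec:interpolation} and exhibit a nonzero radial Schwartz function $f\colon\R^2\to\R$ that is killed by the interpolation data map.  Let $\Lambda\subset \R^2$ denote the hexagonal lattice scaled to density $1$, so that $r_n^2 = (4/3)^{1/2} q_n$ with $q_n = j^2+jk+k^2$.  After the substitution $\widetilde\tau = (4/3)^{1/2}\tau$, one looks for generating functions
\[
F(\widetilde\tau,x) = \sum_{n \ge 1} a_n(x)\,e^{\pi i q_n\widetilde\tau} + \pi i\, \widetilde\tau\sum_{n\ge 1} r_n\, b_n(x)\, e^{\pi i q_n \widetilde\tau}
\]
and an analogous $\widetilde F$ satisfying the \emph{homogeneous} analogues of the functional equations in Theorem~\ref{thm:FEimpliesIF}, namely
\[
F|^{\widetilde\tau}_{1}(T-I)^2 \;=\; \widetilde F|^{\widetilde\tau}_{1}(T-I)^2 \;=\; 0
\quad\text{and}\quad
F(\widetilde\tau,x)+(i/\widetilde\tau)\,\widetilde F(-1/\widetilde\tau,x)=0.
\]
Any nonzero pair $(F,\widetilde F)$ satisfying the holomorphy and growth conditions of Theorem~\ref{thm:FEimpliesIF} yields a nonzero coefficient function (say $a_1 \in \Schw_{\textup{rad}}(\R^2)$) in the kernel of the interpolation map, which is exactly the counterexample we want.

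The first step is to redo the annihilator analysis of Section~\ref{sec:annihilatorcalcs} at weight $d/2 = 1$.  Because this weight is odd, one must work with modular forms for $\Gamma_1(3)$ equipped with a multiplier system---the natural congruence subgroup adapted to the hexagonal lattice, since $\Theta_\Lambda$ is itself a weight-$1$ form for $\Gamma_1(3)$.  I expect the spaces $\operatorname{Ann}_1(\mathcal I_\pm,\mathcal P)$ to be genuinely \emph{larger} than what one would naively predict by extrapolating Propositions~\ref{prop: HFE plus}--\ref{prop: HFE minus dual}, because $\Gamma_1(3)$ admits weight-$1$ Eisenstein series and CM forms with no analogue at the weights $4$ and $12$ relevant to $E_8$ and the Leech lattice.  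The target is then to show that the analogue of Lemma~\ref{lem:noPsolutions}---the assertion that no nonzero element of $\operatorname{Ann}_1(\mathcal I_\pm,\mathcal P)$ decays like $o(1)$ as $\Im(\widetilde\tau)\to \infty$---\emph{fails} at this weight, which would immediately furnish the required $(F,\widetilde F)$ by the explicit construction of Section~\ref{sec:sub:explicitkernels}.

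The main obstacle is twofold.  First, the rigid bookkeeping of weight-$1$ modular forms on $\Gamma_1(3)$ is genuinely more delicate than the $\Gamma(2)$ case used in the paper: the decomposition \eqref{EichlerZagier} must be replaced, and the additional cusps of $X_1(3)$ impose constraints on the slash-operator action that have to be tracked by hand.  Second, and more seriously, having produced a candidate weight-$1$ form in the annihilator space, one still has to verify that the resulting $F$ satisfies the analytic continuation and growth bounds in parts~(3)--(4) of Theorem~\ref{thm:FEimpliesIF}; in the absence of a cusp form playing the role of $\Delta$ at small weight, the Proposition~\ref{prop:propagationandextension}-style contour estimates lose their natural buffer and may fail to produce a genuine Schwartz function rather than a merely tempered distribution.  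As a parallel route or sanity check, one could attempt a rigorous computer-assisted verification along the lines of Section~\ref{sec:inequality}: truncate to the constraints from $\{r_n : n \le N\}$, use interval arithmetic to show rigorously that the truncated kernel has dimension growing with $N$---consistent with Bernays's asymptotic $\#\{n : r_n\le N\}\sim CN^2/\sqrt{\log N}$, which leaves a gap of order $\sqrt{\log N}$ relative to the full two-dimensional degrees of freedom---and then bootstrap these finite-dimensional kernels up to the Schwartz setting using the analytic estimates from Section~\ref{sec:proofs}.
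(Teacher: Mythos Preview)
The statement you are attempting to prove is labeled a \emph{conjecture} in the paper, and the paper offers no proof of it whatsoever: the surrounding text says only that ``numerical computations indicate'' the failure of interpolation, with the Bernays density asymptotic cited as heuristic support.  So there is no argument in the paper to compare against, and what you have written is a research plan rather than a proof.

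That said, your plan has a structural gap that would prevent it from going through as written.  The generating-function framework of Theorem~\ref{thm:FEimpliesIF} is tied to interpolation nodes of the form $\sqrt{2n}$ for \emph{all} integers $n\ge n_0$: the functional equation $F|_{d/2}^{\tau}(T-I)^2=0$, together with the growth bound, forces the expansion of $F$ to run over every integer exponent, not just over the values $q=j^2+jk+k^2$ represented by the quadratic form.  After your rescaling $\widetilde\tau=(4/3)^{1/2}\tau$, a nontrivial solution of the homogeneous system would produce coefficient functions $a_n(x)$ attached to \emph{every} integer $n\ge 1$, including those not of the form $j^2+jk+k^2$; nothing in your setup forces the unwanted coefficients to vanish, so you do not obtain a Schwartz function whose data at the hexagonal radii alone are zero.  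The Bernays density gap---that only a density-zero set of integers is represented---is precisely what makes the $(T-I)^2$ machinery \emph{inapplicable} here, not what makes it succeed.

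There is a second gap in the logic.  Even granting a nontrivial homogeneous solution $(F,\widetilde F)$, you assert that it ``yields a nonzero coefficient function \dots\ in the kernel of the interpolation map,'' but this implication is nowhere established in the paper and is not obvious.  In the paper's architecture, the failure of Lemma~\ref{lem:noPsolutions} would obstruct the \emph{uniqueness of the interpolation basis} $\{a_n,b_n,\widetilde a_n,\widetilde b_n\}$, which is a different assertion from the existence of a nonzero $f\in\Schw_{\textup{rad}}(\R^2)$ annihilated by the data map.  Your computer-assisted fallback is closer in spirit to what the paper actually reports, but bootstrapping finite-dimensional truncated kernels up to an honest Schwartz-space element is itself a nontrivial step that you have left entirely open.
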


Indeed, this conjecture has subsequently been proved by Sardari \cite{Sar}.
More generally, in each dimension we expect that interpolation fails whenever
the sequence $r_1,r_2,\dots$ contains only $o(N^2)$ elements between $0$
and~$N$ as $N \to \infty$, and perhaps even $(c+o(1))N^2$ elements for some
constant $c<\frac{1}{2}$.  However, we have not explored this possibility
thoroughly. Note that the interpolation property cannot depend solely on the
asymptotic growth rate of the radii, because it is sensitive to deleting a
single interpolation point. We have no characterization of when the
interpolation property holds, and it is unclear just how sensitive it is. For
example, does moving (but not removing) finitely many interpolation points
preserve the interpolation property?

Cohn and Kumar \cite{CK07} conjectured that the linear programming bounds are
sharp in two dimensions and prove the universal optimality of the hexagonal
lattice. There is strong numerical evidence in favor of this conjecture, and
in fact the numerics converge far more quickly than in eight or twenty-four
dimensions. However, it seems surprisingly difficult to prove that they
converge to a sharp bound. Assuming
Conjecture~\ref{conj:interpolation-failure} holds, one cannot prove universal
optimality in $\R^2$ via a straightforward adaptation of the interpolation
strategy used in $\R^8$ and $\R^{24}$. Instead, a more sophisticated approach
may be needed.

Despite Lemma~\ref{lemma:dim1fails}, universal optimality in $\R^1$ can be
proved using an interpolation theorem for a different function space, namely
Shannon sampling for band-limited functions (see~\cite[p.~142]{CK07}).  Is it
possible that universal optimality in $\R^2$ also corresponds to an
interpolation theorem for some space of radial functions?  That would
establish a satisfying pattern, but we are unable to propose what the
relevant function space might be.

\section*{Acknowledgements}

We are grateful to Ganesh Ajjanagadde, Andriy Bondarenko, Matthew de
Courcy-Ireland, Bill Duke, Noam Elkies, Yuri Gurevich, Tom Hales, Mathieu
Lewin, Ed Saff, Peter Sarnak, Sylvia Serfaty, Rich Schwartz, Sal Torquato,
Ramarathnam Venkatesan, Akshay Venkatesh, and Don Zagier for helpful
conversations.  We also thank Princeton and the Institute for Advanced Study
for hosting Viazovska during part of this work.

\end{document}